\newif\iflong
\newcommand{\LongVersionShortVersion}[2]{\iflong #1 \else #2 \fi}
\numberwithin{equation}{section}
\numberwithin{figure}{section}
\newtheorem{theorem}{Theorem}[section]
\newtheorem*{theorem*}{Theorem}
\newtheorem*{thm113}{\cite[Theorem 1.13]{ckw}}
\newtheorem*{thm117}{\cite[Theorem 1.17]{ckw2}}
\newtheorem{lemma}[theorem]{Lemma}
\newtheorem{prop}[theorem]{Proposition}
\newtheorem{corollary}[theorem]{Corollary}
\theoremstyle{definition}
\newtheorem{definition}[theorem]{Definition}
\newtheorem{example}[theorem]{Example}
\newtheorem{remark}[theorem]{Remark}
\newtheorem{assumption}[theorem]{Assumption}
\newcommand{\Var}{\mathrm{Var}}
\newcommand{\floor}[1]{\left\lfloor #1 \right\rfloor}
\newcommand{\supp}{\mathrm{supp}}
\newcommand{\diam}{\mathrm{diam}}
\DeclareMathOperator*{\esssup}{\,ess\, sup}
\DeclareMathOperator*{\essinf}{\,ess\, inf}
\newcommand{\diag}{\mathrm{diag}}
\newcommand{\cutoff}{\mathrm{cutoff}}
\newcommand{\death}{\partial}
\newcommand{\borel}{\mathscr{B}}
\newcommand{\mean}{\mathrm{mean}}
\newcommand{\rms}{\mathrm{rms}}
\newcommand{\VD}{\hyperref[VdDefinition]{\mathrm{VD}}}
\newcommand{\RVD}{\hyperref[RvdDefinition]{\mathrm{RVD}}}
\newcommand{\QRVD}{\hyperref[QrvdDefinition]{\mathrm{QRVD}}}
\newcommand{\HK}{\hyperref[HKphiDefinition]{\mathrm{HK}_\phi}}
\newcommand{\UHK}{\hyperref[HKphiDefinition]{\mathrm{UHK}_\phi}}
\newcommand{\LHK}{\hyperref[HKphiDefinition]{\mathrm{LHK}_\phi}}
\newcommand{\PHI}{\hyperref[PhiDefinition]{\mathrm{PHI}_\phi}}
\newcommand{\PHIplus}{\hyperref[PhiplusDefinition]{\mathrm{PHI}^+_\phi}}
\newcommand{\J}{\hyperref[JphiDefinition]{\mathrm{J}_\phi}}
\newcommand{\Jleq}{\hyperref[JphiDefinition]{\mathrm{J}_{\phi, \leq}}}
\newcommand{\Jgeq}{\hyperref[JphiDefinition]{\mathrm{J}_{\phi, \geq}}}
\newcommand{\E}{\hyperref[EphiDefinition]{\mathrm{E}_\phi}}
\newcommand{\Eleq}{\hyperref[EphiDefinition]{\mathrm{E}_{\phi, \leq}}}
\newcommand{\Egeq}{\hyperref[EphiDefinition]{\mathrm{E}_{\phi, \geq}}}
\newcommand{\QE}{\hyperref[QEphiDefinition]{\mathrm{QE}_\phi}}
\newcommand{\QEleq}{\hyperref[QEphiDefinition]{\mathrm{QE}_{\phi, \leq}}}
\newcommand{\QEgeq}{\hyperref[QEphiDefinition]{\mathrm{QE}_{\phi, \geq}}}
\newcommand{\CSJ}{\hyperref[CsjDefinition]{\mathrm{CSJ}_\phi}}
\newcommand{\SCSJ}{\hyperref[ScsjDefinition]{\mathrm{SCSJ}_\phi}}
\newcommand{\UHKD}{\hyperref[UhkdDefinition]{\mathrm{UHKD}_\phi}}
\newcommand{\NDL}{\hyperref[NdlDefinition]{\mathrm{NDL}_\phi}}
\newcommand{\UJS}{\hyperref[UjsDefinition]{\mathrm{UJS}}}
\newcommand{\PI}{\hyperref[PoincareDefinition]{\mathrm{PI}_\phi}}
\newcommand{\PHR}{\hyperref[PhrDefinition]{\mathrm{PHR}_\phi}}
\newcommand{\EHR}{\hyperref[EhrDefinition]{\mathrm{EHR}}}
\newcommand{\COND}{\mathrm{COND}}
\newcommand{\CHAR}{\mathrm{CHAR}}
\newcommand{\Fmax}{\mathcal{F}_{\max}}
\newcommand{\Fscrmax}{\mathscr{F}_{\max}}
\newcommand{\Dtilde}{\tilde{\mathcal{D}}}
\newcommand{\Etilde}{\tilde{\mathcal{E}}}
\newcommand{\Btilde}{\tilde{B}}
\newcommand{\Vtilde}{\tilde{V}}
\newcommand{\Jtilde}{\tilde{J}}
\newcommand{\Mhat}{\widehat{M}}
\newcommand{\Nhat}{\widehat{\mathcal{N}}}
\newcommand{\Mohat}{\widehat{M_0}}
\newcommand{\Jhat}{\widehat{J}}
\newcommand{\dhat}{\widehat{d}}
\newcommand{\Dhat}{\widehat{\mathcal{D}}}
\newcommand{\Ehat}{\widehat{\mathcal{E}}}
\newcommand{\Fhat}{\widehat{\mathcal{F}}}
\newcommand{\Fhatmax}{\widehat{\mathcal{F}}_{\max}}
\newcommand{\muhat}{\widehat{\mu}}
\newcommand{\Xhat}{\widehat{X}}
\newcommand{\phat}{\widehat{p}}
\newcommand{\qhat}{\widehat{q}}
\newcommand{\Vhat}{\widehat{V}}
\newcommand{\Bhat}{\widehat{B}}
\title{\textbf{Stability results for symmetric jump processes on metric measure spaces with atoms}}
\author{Jens Malmquist\\
\textit{Department of Mathematics, The University of British Columbia}\\
\textit{Vancouver, BC, Canada}\\
\textit{ORCID: 0000-0002-5439-2367}\\
\texttt{jens@math.ubc.ca}}
\date{}
\begin{document}
\maketitle
\begin{abstract}
Consider a symmetric Markovian jump process $\{X_t\}$ on a metric measure space $(M, d, \mu)$. Chen, Kumagai, and Wang recently showed that two-sided heat kernel estimates and the parabolic Harnack inequality are both stable under bounded perturbations of the jumping measure, assuming $(M, d, \mu)$ satisfies the volume-doubling and reverse-volume-doubling conditions. These results do not apply if $(M, d, \mu)$ is a graph (or more generally, if $M$ contains any atoms $x$ such that $\mu(x)>0$) because it is impossible for reverse-volume-doubling to hold on a space with atoms. We generalize the results of Chen, Kumagai, and Wang to a larger class of metric measure spaces, including all infinite graphs with volume-doubling. Our main tool is the construction of an ``auxiliary space" that smooths out the atoms. We show that many properties transfer from $(M, d, \mu)$ to the auxiliary space, and vice versa, including heat kernel estimates, the parabolic Harnack inequality, and their stable characterizations.
\end{abstract}

\providecommand{\classification}[1]
{
  \textbf{\textit{MSC 2020 Mathematics Subject Classification---}} #1
}

\providecommand{\keywords}[1]
{
  \textbf{\textit{Keywords---}} #1
}

\providecommand{\keywords}[1]
{
  \textbf{\textit{Keywords---}} #1
}

\providecommand{\funding}[1]
{
  \textbf{\textit{Funding---}} #1
}

\providecommand{\conflicts}[1]
{
  \textbf{\textit{Conflicts of interest/Competing interests---}} #1
}

\providecommand{\availabilityOfData}[1]
{
    \textbf{\textit{Availability of data and material---}} #1
}

\providecommand{\codeAvailability}[1]
{
  \textbf{\textit{Code availability---}} #1
}

\providecommand{\authorsContributions}[1]
{
  \textbf{\textit{Authors' contributions---}} #1
}

\classification{Primary 60J76, 60J25, 35K08; Secondary 31C25, 60J27, 60J74, 60J45}


\keywords{Jump process, heat kernel, parabolic Harnack inequality, stability, cut-off Sobolev inequality}

\section*{Declarations}

\,

\funding{Not applicable}

\conflicts{Not applicable}

\availabilityOfData{Not applicable}

\codeAvailability{Not applicable}

\authorsContributions{Not applicable}

\pagebreak

\tableofcontents

\section{Introduction}

Let $(M, d)$ be a locally compact, separable metric space. Let $\mu$ be a positive Radon measure on $M$ with full support, such that $\mu(M) = \infty$. We refer to the triple $(M, d, \mu)$ as the \textit{metric measure space}.

Let $(\mathcal{E}, \mathcal{F})$ be a regular symmetric Dirichlet form on $L^2(M, \mu)$. By the Beurling-Deny formula \cite[Theorem 4.5.2]{fukush}, $\mathcal{E}$ can be decomposed into a strongly local part, a jump part, and a killing part. We assume that all but the jump part are identically $0$. This means that there exists a positive Radon measure $J$ on $M \times M \setminus \diag_M$ such that
\begin{equation} \label{energyDef}
    \mathcal{E}(f, g) = \int_{M \times M \setminus \diag_M} (f(x)-f(y))(g(x)-g(y)) \, J(dx, dy) \qquad\mbox{for $f, g \in \mathcal{F}$}.
\end{equation}
Here, $\diag_M := \{(x, x): x\in M\}$ denotes the diagonal of $M$. Throughout the paper, we will use the notation $\diag_E$ to refer to the set $\{(x, x) : x \in E\}$, for any set $E$. We refer to the symmetric Radon measure $J$ as the \textit{jumping measure}, and $(M, d, \mu, \mathcal{E}, \mathcal{F})$ as the \textit{metric measure Dirichlet space}. We let $\mathcal{E}(f) := \mathcal{E}(f, f)$ for all $f \in \mathcal{F}$, and we sometimes refer to $\mathcal{E}(f)$ as the \textit{energy} of $f$.

Let $X = \{X_t: t \geq 0; \mathbb{P}^x: x \in M \setminus \mathcal{N}\}$ be the $\mu$-symmetric Hunt process associated with $(\mathcal{E}, \mathcal{F})$, where $\mathcal{N}$ is a \textit{properly exceptional set}, meaning that $\mu(\mathcal{N})=0$ and $\mathbb{P}^x(\mbox{$X_t\in\mathcal{N}$ for some $t>0$}) = 0$ for all $x \in M \setminus \mathcal{N}$.
The Hunt process associated with a regular Dirichlet form is unique up to the properly exceptional set (see \cite[Theorem 4.2.8]{fukush}). Fix $X$ and $\mathcal{N}$, and let $M_0 := M\setminus \mathcal{N}$.
We refer to the process $\{X_t\}$ as the \textit{jump process}.

We say that a continuous, strictly increasing function $\phi : [0, \infty) \rightarrow [0,\infty)$ is \textit{of regular growth} if $\phi(0)=0$, $\phi(1)=1$, and there exist some constants $c_1, c_2 > 0$ and $\beta_2 \geq \beta_1 > 0$ such that
\begin{equation}\label{regularGrowth}
    c_1 \left(\frac{R}{r}\right)^{\beta_1} \leq \frac{\phi(R)}{\phi(r)} \leq c_2 \left(\frac{R}{r}\right)^{\beta_2} \qquad\mbox{for all $R \geq r > 0$}.
\end{equation}
From this point on, for the rest of the paper, let us fix a function $\phi$ of regular growth.

We are primarily interested in two conditions on the metric measure Dirichlet space $(M, d, \mu, \mathcal{E}, \mathcal{F})$, each of which may or may not be satisfied:
\begin{itemize}
    \item Condition $\HK$ (see Definition \ref{HKphiDefinition}), which gives two-sided estimates for the heat kernel of $\{X_t\}$.
    \item Condition $\PHI$ (see Definition \ref{PhiDefinition}): the parabolic Harnack inequality, which controls the growth of non-negative caloric functions.
\end{itemize}
Both of these conditions depend on $\phi$, which may be broadly thought of as the scaling function between space and time: it takes time on the order of $\phi(r)$ for the jump process to travel a distance of $r$.

The heat kernel estimates given by $\HK$ have tails that decay polynomially with respect to distance from the diagonal. These heavy tails are characteristic of jump processes, in contrast to diffusions, which often have Gaussian tails.

Let us give a bit of background to the significance of $\PHI$.
Harnack inequalities are the subject of significant research in probability, harmonic analysis, and partial differential equations. The earliest Harnack inequality, proved by Carl Gustav Axel von Harnack \cite{harnack}, was for harmonic functions on the plane: if $B(x_0, r)$ and $B(x_0, R)$ are balls in $\mathbb{R}^2$, with $R > r > 0$, and $u$ is a non-negative solution to the Laplace equation ($\Delta u = 0$) on $B(x_0, R)$, then
\begin{equation} \label{EarliestEhi}
    \sup_{B(x_0, r)} u \leq C \inf_{B(x_0, r)} u,
\end{equation}
where $C$ is a constant depending on $\frac{R}{r}$ but not on $u$.
Pini \cite{pini} and Hadamard \cite{hadamard} established an anologous inequality for caloric functions: if $R>r>0$, $t_4 > t_3 > t_2 > t_1 > 0$, and $u \in C^\infty((0, \infty) \times \mathbb{R}^d)$ is a non-negative solution to the heat equation ($\frac{\partial u}{\partial t} - \Delta u = 0$) on $(0, t_4) \times B(0, R)$, then
\begin{equation}\label{EarliestPhi}
    \sup_{B(0, r) \times [t_1, t_2]} u(t, x) \leq C \inf_{B(0, r) \times [t_3, t_4]} u(t, x),
\end{equation}
where $C$ is a constant depending on $d$, $r$, $R$, $t_1$, $t_2$, $t_3$, and $t_4$. Both of these inequalities have since been generalized to many other settings, and to other operators playing the role of $\Delta$. Generalizations of \eqref{EarliestEhi} are called \textit{elliptic} Harnack inequalities and generalizations of \eqref{EarliestPhi} are called \textit{parabolic} Harnack inequalities. For a more detailed introduction to the history and basic theory of Harnack inequalities, we refer to reader to \cite{kassman}.

A major use of Harnack inequalities is that they imply H\"{o}lder continuity for harmonic/caloric functions. 
This theory is well-developed in the case of diffusions, where Harnack inequalities are a central ingredient in the De Giorgi-Nash-Moser theory in harmonic analysis and partial differential equations. In the case of jump process, the theory is in its infancy, but \cite{ckw} establishes results of this type for jump processes.

Conveniently, the Harnack inequalities \eqref{EarliestEhi} and \eqref{EarliestPhi} are stable, in the sense that they are preserved when the operator playing the role of $\Delta$ is perturbed. Our goal is to show that in our setting of a jump process on a general metric measure space, under minimal assumptions about volume-growth in $(M, d, \mu)$, our conditions $\HK$ and $\PHI$ are stable in a similar sense.

Recall the jumping measure $J$ from \eqref{energyDef}. If $J'$ is another symmetric Radon measure on $M\times M\setminus\diag_M$, and if there exist constants $C \geq c > 0$ such that
\begin{equation*}
    c J(A) \leq J'(A) \leq CJ(A) \qquad\mbox{for all Borel-measurable $A \subseteq M\times M \setminus \diag_M$}
\end{equation*}
then $J'$ is called a \textit{bounded perturbation} of $J$.
Let $\COND$ be some condition on $(M, d, \mu, \mathcal{E}, \mathcal{F})$ (for example, $\COND=\HK$ or $\COND=\PHI$). We say that $\COND$ is \textit{stable} if whenever $J'$ is a bounded perturbation of $J$, and $(\mathcal{E}', \mathcal{F})$ is the pure-jump Dirichlet form given by
\begin{equation*}
    \mathcal{E}'(f, g) = \int_{M \times M \setminus \diag_M} (f(x)-f(y))(g(x)-g(y)) \, J'(dx, dy) \qquad\mbox{for $f, g \in \mathcal{F}$},
\end{equation*}
we have
\begin{equation*}
    \mbox{$\COND$ for $(M, d, \mu, \mathcal{E}, \mathcal{F})$} \Longleftrightarrow \mbox{$\COND$ for $(M, d, \mu, \mathcal{E}', \mathcal{F})$}.
\end{equation*}

There have been many stability results of the flavor we seek. Grigor'yan \cite{gr} and Saloff-Coste \cite{sa} independently showed that for diffusions of walk dimension $2$ on a smooth, geodesically complete Riemannian manifold, $\PHI$ is equivalent to Aronson type Gaussian heat kernel estimates, which are in turn equivalent to the volume-doubling condition $\VD$ (see Definition \ref{VdDefinition}) plus a Poincar\'e inequality. Since both $\VD$ and the Poincar\'e inequality are stable, this means that both $\PHI$ and the Aronson type heat kernel estimates are stable too. Sturm \cite{sturm1, sturm2} extended this result to symmetric diffusions on metric measure spaces, and Delmotte \cite{del} extended it to nearest-neighbor random walks on graphs (with edges of not-necessarily equal weight). Other subsequent generalizations (such as \cite{bb}, \cite{bbk}, and \cite{ab}) allow for the walk dimension to exceed $2$, but require a cut-off Sobolev inequality in addition to the Poincar\'e inequality.
The Poincar\'e and cut-off Sobolev inequalities are both stable under bounded perturbations, so each of \cite{gr}, \cite{sa}, \cite{sturm1}, \cite{sturm2}, \cite{del}, \cite{bb}, \cite{bbk}, and \cite{ab} provides a stable characterization of both $\PHI$ and its respective heat kernel estimates, for its respective context. (Note that these heat kernel estimates are different from our $\HK$, in that they are Gaussian.)

However, in this paper we are interested not in diffusions or nearest-neighbor random walks, but in jump processes.
Chen and Kumagai \cite{ck1} showed that for jump processes, if $M$ is an Ahlfors-regular $d$-set in $\mathbb{R}^n$, and $\phi(r)=r^\alpha$ for some $\alpha \in (0, 2)$, then $\HK$ is equivalent to the existence of constants $C \geq c>0$ such that
\begin{equation} \label{AhlforsRegJ}
    cd(x, y)^{-(d+\alpha)}\leq J(x, y) \leq C d(x, y)^{-(d+\alpha)} \qquad\mbox{for all $x, y \in M\times M\setminus\diag_M$}.
\end{equation}
Equation \eqref{AhlforsRegJ} is stable under bounded perturbations (if we allow the constants to change), so this is a stable 
characterization of $\HK$. There have been some generalizations of \cite{ck1}, such as \cite{ck2} and \cite{ck3}, but each of these requires the constant $\beta_2$ in \eqref{regularGrowth} to be less than $2$.
In two recent papers (\cite{ckw} and \cite{ckw2}), Chen, Kumagai, and Wang provide stable characterizations of both $\HK$ and $\PHI$, assuming $(M, d, \mu)$ satisfies the volume doubling condition $\VD$ (see Definition \ref{VdDefinition}) and the reverse volume doubling condition $\RVD$ (see Definition \ref{RvdDefinition}), and $\phi$ is of regular growth. Notably, \cite{ckw} answered a previously open problem by showing that $\HK$ is stable for $\alpha$-stable-like processes, even when $\alpha \geq 2$.

Unfortunately, $\RVD$ does not hold in any metric measure space that contains atoms, as we will see in Proposition \ref{RvdImpliesNoAtoms}. (By an \textit{atom}, we mean a point $x \in M$ such that $\mu(\{x\})>0$.) Thus, the results of \cite{ckw} and \cite{ckw2} do not apply to graphs, since every point in a graph is an atom. Heat kernels and the parabolic Harnack inequality
on graphs are of interest (for example, see \cite{del}, \cite{ms1}, \cite{ms2} \cite{ms3}, \cite{bb}, \cite{groups}, \cite{kumagaihkandharnack}).
In this paper, we generalize the characterizations from \cite{ckw} and \cite{ckw2} to graphs. In fact, our results hold not only for graphs, but also for ``mixed" spaces that contain both atoms and non-atoms. See Example \ref{MixedExample} for a simple example of such a mixed space.

We define a new volume-growth condition $\QRVD$ (or ``quasi-$\RVD$," see Definition \ref{QrvdDefinition}) which is a weaker version of $\RVD$ and can be thought of as ``$\RVD$ at large scales." We show that the assumptions of \cite{ckw} and \cite{ckw2} can be relaxed from $\VD$ and $\RVD$ to $\VD$ and $\QRVD$. As a result, spaces like graphs are now allowed. Rather than try to adapt the proofs of \cite{ckw} and \cite{ckw2} to the weaker setting of $\VD$ and $\QRVD$, or prove the stable characterizations from scratch, our solution is to construct a new metric measure Dirichlet space entirely, that preserves the important structure from $(M, d, \mu, \mathcal{E}, \mathcal{F})$, but ``smooths out" the atoms and satisfies $\VD$ and $\RVD$.

In Section \ref{auxiliarySection}, we construct a new metric measure space $(\Mhat, \dhat, \muhat)$, which we call the \textit{auxiliary metric measure space}. Essentially, each atom $x \in M$ is replaced with a continuous mass $W_x$, and each non-atom in $M$ remains the same in the auxiliary metric measure space. The resulting metric measure space has no atoms.
See Figures \ref{IntegerLatticeFigure} and \ref{MixedFigure} for examples of what the auxiliary metric measure space looks like.
We show that if $(M, d, \mu)$ satisfies $\VD$ and $\QRVD$, then $(\Mhat, \dhat, \muhat)$ satisfies $\VD$ and $\RVD$.
Therefore, the results of \cite{ckw} and \cite{ckw2} can be applied to $(\Mhat, \dhat, \muhat)$. For each atom $x \in M$, the set $W_x$ that replaces $x$ in the auxiliary metric measure space has the \textit{ultrametric property} (see \eqref{WxUltrametric} and \eqref{WxUltrametricBalls}), which will have many nice consequences.

Later in Section \ref{auxiliarySection}, we construct a pure-jump regular Dirichlet form $(\Ehat, \Fhat)$ on $L^2(\Mhat, \muhat)$. We refer to the metric measure Dirichlet space $(\Mhat, \dhat, \muhat, \Ehat, \Fhat)$ as simply the \textit{auxiliary space}, and we refer to $(M, d, \mu, \mathcal{E}, \mathcal{F})$ as the \textit{original space}.
We let $\{\Xhat_t\}_{t \geq 0}$ denote the Hunt process associated with $(\Ehat, \Fhat)$. We then show that the processes $\{X_t\}$ and $\{\Xhat_t\}$ can be coupled so that if $x \in M$ is an atom, then the process $\Xhat_t$ jumps around $W_x$ (the mass in the auxiliary space associated with $x$) whenever $X_t=x$, and if $x \in M$ is not an atom, then $\Xhat_t = x$ whenever $X_t=x$.

In Section \ref{proofMainResults}, we state (without proof) a list of results of the form
\begin{align} \label{typeOfResult}
\begin{split}
    \mbox{$\COND$ for $(M, d, \mu, \mathcal{E}, \mathcal{F})$} \Longrightarrow \mbox{$\COND$ for $(\Mhat, \dhat, \muhat, \Ehat, \Fhat)$}\\
    \mbox{or} \qquad \mbox{$\COND$ for $(M, d, \mu, \mathcal{E}, \mathcal{F})$} \Longleftarrow \mbox{$\COND$ for $(\Mhat, \dhat, \muhat, \Ehat, \Fhat)$}
\end{split}
\end{align}
for various conditions $\COND$. We then use these to prove our desired characterizations of $\HK$ and $\PHI$. See Figures \ref{HkStableDiagram}-\ref{PhiStableDiagram} for diagrams detailing these arguments.
The general gist of each argument is as follows: if $\CHAR_1$ and $\CHAR_2$ are two characterizations, then in order to prove that $\CHAR_1$ implies $\CHAR_2$, we argue as follows:
\begin{align*}
   \boxed{\mbox{$\CHAR_1$ for $(M, d, \mu, \mathcal{E}, \mathcal{F})$}} &\Longrightarrow \boxed{\mbox{$\CHAR_1$ for $(\Mhat, \dhat, \muhat, \Ehat, \Fhat)$}} \qquad\mbox{(by our results of the form \eqref{typeOfResult})}\\
   &\Longrightarrow \boxed{\mbox{$\CHAR_2$ for $(\Mhat, \dhat, \muhat, \Ehat, \Fhat)$}} \qquad\mbox{(by \cite{ckw} and \cite{ckw2})}\\
   &\Longrightarrow \boxed{\mbox{$\CHAR_2$ for $(M, d, \mu, \mathcal{E}, \mathcal{F})$}} \qquad\mbox{(by our results of the form \eqref{typeOfResult})}.\\
\end{align*}

In Sections \ref{EscapeSection}-\ref{CsPoincSection}, we prove the results of the form \eqref{typeOfResult} that we stated without proof at the beginning of Section \ref{proofMainResults}. 
Many of the proofs in these sections are made simpler by the ultrametric property of $W_x$.

In
Section \ref{EscapeSection}, we prove the results of the form \eqref{typeOfResult} relating to escape times.

In Section \ref{HeatKernelSection1}, we derive a formula for the heat kernel of the jump process $\{\Xhat_t\}$ on the auxiliary space.

In Section \ref{HeatKernelSection2}, we prove the results of the form \eqref{typeOfResult} relating to heat kernels. The proofs in Section \ref{HeatKernelSection2} all use the formula derived in Section \ref{HeatKernelSection1} for the heat kernel of $\{\Xhat_t\}$.

In Section \ref{CsPoincSection}, we prove the results of the form \eqref{typeOfResult} that relate to the cut-off Sobolev and Poincar\'e inequalities. These require us to first establish some technical results about the domains $\mathcal{F}$ and $\Fhat$.

Later in this introduction, after we state our main results, we show that for the special case of graphs with infinite diameter, $\VD$ implies $\QRVD$, so $\HK$ and $\PHI$ are stable for graphs of infinite diameter that satisfy $\VD$.

The novel ingredient in our solution is the construction of the auxiliary space. This idea allows us to extend arguments that rely heavily on $\RVD$ to a space in which $\RVD$ does not hold. We note that the auxiliary space we construct is analogous to the cable system of a graph: the space formed by replacing each edge in the graph with a ``cable" between its two endpoints. (See \cite[Section 2]{bb} for more details on cable systems.) The cable system construction has many properties (such as connectedness) that make it convenient for studying nearest-neighbor random walks on a graph (by instead looking at a diffusion on the cable system), but our construction has properties (such as the ultrametric property on $W_x$ for all atoms $x \in M$) that make it more suitable for studying jump processes.
We are hopeful that other results can be extended from continuous-space to discrete-space settings by turning to an auxiliary space similar to the one we construct here. 

We would like to emphasize that our results themselves are novel. The stability of $\HK$ was not known for $\alpha$-stable-like jump processes until \cite{ckw}, which did not address the case of graphs. The stability of $\PHI$ for graphs was not known for the kinds of jump processes we consider, since results like \cite{bb} only apply to nearest-neighbor random walks on locally finite graphs, whereas our results apply to jump processes that may jump from any $x$ to any $y$ in a single step.

\LongVersionShortVersion
{}
{ 
Many of our arguments require us to verify statements that are easy (but long) to check. We omit some of these details here, but include them all in \cite{longversion}, a longer version of this paper, published on arXiv. This way, the reader need not get bogged down in these details, but has the option of seeing how they are verified.
} 

Before we state our main results in their entirety, we must define some terminology.

\subsection*{Volume growth}

For all $x \in M$ and $r>0$, let $B(x, r)$ denote the open ball $\{y \in M : d(x, y) < r\}$ with center $x$ and radius $r$. Let $V(x, r)$ denote $\mu(B(x, r))$, the volume of the open ball with center $x$ and radius $r$. Let us define the following conditions relating to volume growth in $(M, d, \mu)$:

\begin{definition}\label{VdDefinition}
We say that $(M, d, \mu)$ satisfies condition $\VD$ (\textit{volume doubling}) if there exists a constant $C>0$ such that
\begin{equation} \label{VdFormula}
    V(x,2r) \leq CV(x, r) \qquad\mbox{for all $x\in M$, $r>0$}.
\end{equation}
\end{definition}

Note that $\VD$ would remain true (or remain false) if the constant $2$ in \eqref{VdFormula} was replaced with any other $\ell>1$. (However, the constant $C$ would depend on $\ell$.) Also note that $\VD$ is equivalent to the existence of constants $C>0$ and $d>0$ such that
\begin{equation} \label{VdAlternate}
    \frac{V(x, R)}{V(x, r)} \leq C \left( \frac{R}{r} \right)^d \qquad\mbox{for all $x \in M$ and $R \geq r>0$}.
\end{equation}

\begin{definition} \label{RvdDefinition}
We say that condition $(M, d, \mu)$ satisfies condition $\RVD$ (\textit{reverse volume doubling}) holds if there exist constants $\ell>1$ and $c>1$ such that
\begin{equation} \label{rvdDef}
    V(x,\ell r) \geq cV(x, r) \qquad\mbox{whenever $x \in M$ and $r>0$}.
\end{equation}
\end{definition}

Qualitatively, if we fix a point $x$ and consider open balls centered at $x$ with growing radii, $\VD$ means that the volumes do not grow too quickly, and $\RVD$ means that the volumes do not grow too slowly.

We say that a point $x \in M$ is an \textit{atom} if $\mu(x) > 0$. In the following proposition, we see that if $\RVD$ holds, there can be no atoms.

\begin{prop} \label{RvdImpliesNoAtoms}
If $\RVD$ holds, then $\mu(x)=0$ for all $x \in M$.
\end{prop}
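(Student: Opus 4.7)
I would argue by contradiction. Suppose some $x \in M$ has $m := \mu(\{x\}) > 0$, and aim to push the $\RVD$ inequality $V(x, \ell r) \geq c V(x, r)$ to its limit as $r \to 0^+$ to obtain $m \geq cm$, which contradicts $c > 1$.

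Since $x \in B(x, r)$ for every $r > 0$, there is a trivial lower bound $V(x, r) \geq m$. The substantive step is to establish the matching upper bound in the limit, namely $\lim_{r \to 0^+} V(x, r) = m$. Since $(M, d)$ is locally compact, $x$ has a relatively compact open neighborhood, so there exists $r_0 > 0$ with $\overline{B(x, r_0)}$ compact; because $\mu$ is a Radon measure, this gives $V(x, r_0) < \infty$. The balls $B(x, 1/n)$ for $1/n \leq r_0$ form a nested decreasing sequence with intersection $\{x\}$, and the first term has finite measure, so continuity from above of $\mu$ yields $V(x, 1/n) \to \mu(\{x\}) = m$. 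Monotonicity of $r \mapsto V(x, r)$ upgrades this to $V(x, r) \downarrow m$ as $r \downarrow 0$ through all positive reals.

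With this limit in hand, I would apply $\RVD$ to any $r < r_0/\ell$ to get $V(x, \ell r) \geq c V(x, r)$, and let $r \to 0^+$. Both sides converge to $m$, giving $m \geq cm$, i.e.\ $m(c - 1) \leq 0$. Since $c > 1$, this forces $m \leq 0$, contradicting the assumption $m > 0$. Hence $\mu(\{x\}) = 0$ for every $x \in M$.

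I do not anticipate any real obstacle. The only step requiring a moment's thought is the justification of $V(x, r) \to m$, which is exactly where the hypotheses that $(M, d)$ is locally compact and $\mu$ is Radon come in; once this is in place, the contradiction is a one-line computation.
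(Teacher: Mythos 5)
Your proof is correct and rests on essentially the same mechanism as the paper's: the paper iterates $\RVD$ to get $V(x,\ell^{-n}) \leq c^{-n}V(x,1) \to 0$ and identifies $\mu(x)$ with $\lim_{n} V(x,\ell^{-n})$, while you identify $\lim_{r\to 0^+}V(x,r)=\mu(x)$ via continuity from above and apply $\RVD$ once in the limit to force $m \geq cm$, a contradiction. Your choice of $r_0$ with $\overline{B(x,r_0)}$ compact, guaranteeing the finiteness needed for continuity from above, is a detail the paper glosses over by working with $V(x,1)$, so your write-up is if anything slightly more careful.
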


\begin{proof}

Let $c$ be the constant from \eqref{rvdDef}.
For all $x \in M$, by induction and $\RVD$, $V(x, \ell^{-n}) \leq c^{-n} V(x, 1)$ for all $n$, so $\mu(x) = \lim_{n\rightarrow \infty} V(x, \ell^{-n}) = 0$.
\end{proof}

Recall that \cite{ckw} and \cite{ckw2} assume $(M, d, \mu)$ satisfies $\VD$ and $\RVD$. Proposition \ref{RvdImpliesNoAtoms} tells us that this can never be the case for metric measure spaces containing atoms. In order to extend the results of \cite{ckw} and \cite{ckw2} to metric measure spaces that contain atoms, we introduce a new condition $\QRVD$ (or \textit{quasi-$\RVD$}), which is the same as $\RVD$, with the exception that it does not require \eqref{rvdDef} to hold when $x$ is an atom and $r < D_x$.
This new condition $\QRVD$ allows for atoms.
We will show that the assumption of $\VD$ and $\RVD$ in \cite{ckw} and \cite{ckw2} can be relaxed to $\VD$ and $\QRVD$.

For all $x \in M$, let
\begin{equation} \label{DxFormula}
    D_x := \inf_{y \in M \setminus \{x\}} d(x, y).
\end{equation}
If $D_x>0$, we say that $x$ is \textit{isolated}.

\begin{definition}\label{QrvdDefinition}
We say $(M, d, \mu)$ satisfies $\QRVD$ (\textit{quasi-reverse volume doubling}) if there exist constants $\ell>1$ and $c>1$ such that
\begin{equation}\label{QrvdFormula}
    V(x, \ell r) \geq cV(x, r) \qquad\mbox{for all $x\in M$, $r \geq D_x$, $r>0$}
\end{equation}
where $D_x$ is as defined in \eqref{DxFormula}.
\end{definition}

While $\VD$ and $\RVD$ are standard conditions, $\QRVD$ is a condition of our own invention.

\subsection*{Jump kernel}

We say that $(\mathcal{E}, \mathcal{F})$ \textit{admits a jump kernel} if $$J(dx, dy) = J(x, y) \mu(dy) \mu(dx)$$ for a  non-negative function $J(x, y)$ on $M \times M \setminus \diag_M$, which we call the \textit{jump kernel}. The jump kernel $J(x, y)$ is not to be confused with the jumping measure $J(dx, dy)$. We are abusing notation by using ``$J$" to refer to both.

\begin{definition}\label{JphiDefinition}
We say that condition $\J$ holds if $(\mathcal{E}, \mathcal{F})$ admits a jump kernel and there exist constants $C\geq c>0$ such that
\begin{equation}\label{JphiDef}
    \frac{c}{V(x, d(x, y)) \phi(d(x,y))} \leq J(x, y) \leq \frac{C}{V(x, d(x, y)) \phi(d(x,y))} \qquad\mbox{for $\mu \times \mu$-almost all distinct $x, y \in M$}.
\end{equation}
We will refer to the upper and lower bounds of \eqref{JphiDef}, respectively, as $\Jleq$ and $\Jgeq$.
\end{definition}

If $(\mathcal{E}, \mathcal{F})$ admits a jump kernel $J(x, y)$, we can alter its values on a null set (without changing the Dirichlet form $(\mathcal{E}, \mathcal{F})$) so that $J(x, y)$ is symmetric, and the ``$\mu \times \mu$-almost all" in the definitions of $\Jleq$ and $\Jgeq$ is not necessary. Let us always do so. (See \cite[Remark 1.3]{ckw}.)

\begin{definition} \label{UjsDefinition}
We say that $\UJS$ holds if $(\mathcal{E}, \mathcal{F})$ admits a jump kernel and there exists a constant $C>0$ such that for $\mu\times\mu$-almost all distinct $x, y \in M$,
\begin{equation*}
    J(x, y) \leq \frac{C}{V(x, r)} \int_{z \in B(x, r)} J(z, y) \mu(dz) \qquad\mbox{for all $0<r \leq \frac{d(x, y)}{2}$}.
\end{equation*}
\end{definition}

\subsection*{Escape times}

For any open $U \subseteq M$, let
\begin{equation} \label{tauFormula}
    \tau_U := \inf\{t \geq 0 : X_t \notin U\}.
\end{equation}

Recall that $M_0 := M \setminus \mathcal{N}$, where $\mathcal{N}$ is the properly exceptional set of the Hunt process $\{X_t\}$.

\begin{definition}\label{EphiDefinition}
We say that condition $\E$ (\textit{escape times}) holds if there exist constants $C\geq c>0$ such that
\begin{equation} \label{EphiDef}
    c\phi(r) \leq \mathbb{E}^x \tau_{B(x, r)} \leq C \phi(r) \qquad\mbox{for all $x\in M_0$, $r>0$}.
\end{equation}
We will refer to the upper and lower bounds of \eqref{EphiDef}, respectively, as $\Eleq$ and $\Egeq$.
\end{definition}

\LongVersionShortVersion
{ 
In Appendix \ref{EphiNotPossibleAppendix}, we prove that $\Eleq$ never holds on any space $(M, d, \mu)$ containing atoms. Therefore, we will need to consider the following weaker form of $\E$.
}
{ 
The condition $\Eleq$ never holds on any space $(M, d, \mu)$ containing atoms. (For a proof of this, see \cite[Appendix B.3]{longversion}, from the longer version of this paper that we put on arXiv.) Therefore, we will need to consider the following weaker form of $\E$.
}

\begin{definition}\label{QEphiDefinition}

Recall how we defined the quantity $D_x$ in \eqref{DxFormula}.
We say that $\QE$ (\textit{quasi}-$\E$) holds if there exist constants $C \geq c>0$ such that
\begin{equation} \label{qeDef}
    c \phi(r) \leq \mathbb{E}^x \tau_{B(x, r)} \leq C\phi(r) \qquad\mbox{for all $x\in M$, $r \geq D_x$, $r>0$}.
\end{equation}
As usual, let $\QEleq$ and $\QEgeq$ refer to the upper and lower bounds of $\QE$.
\end{definition}

Note that the definition of $\QE$ (like $\QRVD$) is new, whereas $\E$ is standard.

\subsection*{Heat kernel}

Recall that $M_0 := M \setminus \mathcal{N}$, where $\mathcal{N}$ is the properly exceptional set of the Hunt process $\{X_t\}$.
If it exists, let $p : (0, \infty) \times M_0 \times M_0 \to [0, \infty)$ denote the \textit{heat kernel} on $(M, d, \mu, \mathcal{E}, \mathcal{F})$ such that
\begin{align}
    \mathbb{E}^x f(X_t) = \int_M p(t, x, y) f(y) \mu(dy) & \qquad \mbox{for all $f \in L^\infty(M, \mu)$, $x \in M_0$, $t>0$}. \label{HeatKernelDefinitionDistribution} \\
    p(t, x, y) = p(t, y, x) & \qquad \mbox{for all $x, y \in M_0$, $t>0$}. \label{HeatKerelDefinitionSymmetry} \\
    p(s+t, x, z) = \int_M p(s, x, y) p(t, y, z) \mu(dy) & \qquad\mbox{for all $x, z \in M_0$ and $s,t > 0$}. \label{HeatKernelDefinitionChapmanKolmogorov}
\end{align}

Equation \eqref{HeatKernelDefinitionChapmanKolmogorov} is called \textit{Chapman-Kolmogorov}.

Recall that throughout the paper, we have a fixed function $\phi$ of regular growth. Let
\begin{equation} \label{QtxyFormula}
    q(t, x, y) := \frac{1}{V(x, \phi^{-1}(t))} \wedge \frac{t}{V(x, d(x, y)) \phi(d(x,y))}.
\end{equation}
We take $\frac10$ to be $+\infty$, so that on the diagonal both $V(x, d(x, x))$ and $\phi(d(x, x))$ are $0$, so
\begin{equation} \label{onDiagonalQt}
    q(t, x, x) = \frac{1}{V(x, \phi^{-1}(t))} \wedge (+\infty) = \frac{1}{V(x, \phi^{-1}(t))}.
\end{equation}

\begin{definition} \label{HKphiDefinition}
We say that $\HK$ (\textit{kernel estimates}) holds if there exist constants $C\geq c>0$ such that
\begin{equation}\label{HKphiFormula}
    cq(t, x, y) \leq p(t, x, y) \leq Cq(t, x, y) \qquad\mbox{for all $x, y \in M_0$, $t>0$}.
\end{equation}
We will refer to the upper and lower bounds of \eqref{HKphiFormula}, respectively, as $\UHK$ and $\LHK$.
\end{definition}

\begin{definition}\label{UhkdDefinition}
We say that $\UHKD$ (\textit{upper bound for heat kernel on the diagonal}) holds if there exists a constant $C>0$ such that
\begin{equation*}
    p(t, x, x) \leq \frac{C}{V(x, \phi^{-1}(t))} \qquad\mbox{for all $x \in M_0$, $t>0$}.
\end{equation*}
\end{definition}

For any open $U$, let $p^U : (0, \infty) \times M_0 \times M_0 \to [0, \infty)$ be the \textit{Dirichlet heat kernel} (in other words, the heat kernel of the jump+death process $\{Y_t\}$ defined by
\begin{equation*}
    Y_t := \left\{ \begin{matrix}
        X_t &:& \mbox{if $t < \tau_U$}\\
        \death &:& \mbox{if $t \geq \tau_U$}
    \end{matrix}\right.
\end{equation*}
where $\death$ is the death-state, and $\tau_U$ is as in \eqref{tauFormula}).

\begin{definition}\label{NdlDefinition}
We say $\NDL$ (\textit{near-diagonal lower bound}) holds if there exist $\varepsilon \in (0, 1)$ and $c_1>0$ such that for all $B=B(x_0, r)$,
\begin{equation*}
    p^B(t, x, y) \geq \frac{c_1}{V(x_0, \phi^{-1}(t))} \qquad\mbox{for all $x, y \in B(x_0, \varepsilon\phi^{-1}(t)) \cap M_0$, $0 < t \leq \phi(\varepsilon r)$}.
\end{equation*}
\end{definition}

\subsection*{Parabolic Harnack inequality}

Let $\{Z_t\}_{t \geq 0} = \{ (V_t, X_t) \}_{t \geq 0}$ be the space-time process where $V_t = V_0 - t$ (for some starting time $V_0$). Let $\mathbb{P}^{(t_0, x_0)}$ be the law of $t \mapsto Z_t$ starting from $(t_0, x_0)$. For all open $D \subseteq [0, \infty) \times M$, let
\begin{equation*}
    \tau_D := \inf \{t \geq 0 : Z_t \notin D \}.
\end{equation*}
We say that a set $A \subseteq [0, \infty) \times M$ is \textit{nearly Borel measurable} if for any probability measure $\mu_0$ on $[0, \infty) \times M$, there exist Borel measurable sets $A_1$ and $A_2$ such that $A_1 \subseteq A \subseteq A_2$ and
\begin{equation*}
    \mathbb{P}^{\mu_0}(\mbox{$Z_t \in A_2 \setminus A_1$ for some $t \geq 0$}) = 0.
\end{equation*}
We say that a nearly Borel measurable function $u(t, x)$ on $[0, \infty) \times M$ is \textit{caloric} on $D = (a, b) \times B(x_0, r)$ if there is a properly exceptional set $\mathcal{N}_\mu$ of $X$ such that for every relatively compact open subset $U$ of $D$,
\begin{equation*}
    u(t, x) = \mathbb{E}^{(t, x)} u(Z_{\tau_U}) \qquad\mbox{for all $(t, x) \in U \cap ([0, \infty) \times (M \setminus \mathcal{N}_\mu))$}.
\end{equation*}

\begin{definition}\label{PhiDefinition}
We say that $\PHI$ (the \textit{parabolic Harnack inequality}) holds if there exist constants $0<C_1<C_2<C_3<C_4$, $C_5 \in (0, 1)$, and $C_6>0$, such that for all $x_0 \in M$, $t_0 \geq 0$, $R>0$, and every non-negative $u$ that is caloric on the cylinder $Q(t_0, x_0, C_4 \phi(R)) := (t_0, t_0 + C_4 \phi(R)) \times B(x_0, R)$,
\begin{equation*}
    \esssup_{\left(t_0 + C_1 \phi(R), t_0 + C_2 \phi(R)\right) \times B(x_0, C_5 R)} u \leq C_6 \essinf_{\left(t_0 + C_3 \phi(R), t_0 + C_4\phi(R)\right) \times B(x_0, C_5 R)} u.
\end{equation*}
\end{definition}

We also consider the following variant of the parabolic Harnack inequality, which is more restrictive about the constants:

\begin{definition}\label{PhiplusDefinition}
We say that $\PHIplus$ holds if we have $\PHI$ with $C_k = k C_1$ for all $k \in \{2, 3, 4\}$.
\end{definition}

\subsection*{H\"{o}lder regularity}

Let $D$ be an open subset of $M$. We say that a nearly Borel measurable function $u : M \to \mathbb{R}$ is \textit{harmonic} on $D$ (with respect to $X$) if for any relatively compact subset $U \subseteq D$, the process $\{u(X_t)\}_{t\geq 0}$ is a uniformly integrable martingale under $\mathbb{P}^x$ for q.e. $x \in U$.

\begin{definition}\label{EhrDefinition}
We say that $\EHR$ (\textit{elliptic H\"{o}lder regularity}) holds if there exist constants $c >0$, $\theta \in (0, 1]$, and $\varepsilon \in (0, 1)$ such that for all $x_0\in M$ and $r>0$, for every bounded measurable function $u$ on $M$ that is harmonic on $B(x_0, r)$, there is a properly exceptional set $\mathcal{N}_u \supseteq \mathcal{N}$ such that
\begin{equation*}
    |u(x)-u(y)| \leq c \left( \frac{d(x, y)}{r} \right)^\theta \esssup_M |u|
\end{equation*}
for all $x, y \in B(x_0, \varepsilon r) \setminus \mathcal{N}_u$.
\end{definition}

\begin{definition}\label{PhrDefinition}
We say that $\PHR$ (parabolic H\"{o}lder regularity) holds if there exist constants $c>0$, $\theta \in (0, 1]$, and $\varepsilon \in (0, 1)$ such that for all $x_0 \in M$, $t\geq0$, and $r>0$, for every bounded measurable function $u(t, x)$ that is caloric on $Q(t_0, x_0, \phi(r), r) := (t_0, t_0 + \phi(r)) \times B(x_0, r)$, there exists a properly exceptional set $\mathcal{N}_u \subseteq \mathcal{N}$ such that
\begin{equation} \label{PhrFormula}
    |u(s, x)-u(t, y)| \leq c \left( \frac{\phi^{-1}(|s-t|) + d(x, y)}{r} \right)^\theta \esssup_{[t_0, t_0 + \phi(r)] \times M} |u|
\end{equation}
for all $s, t \in (t_0, t_0+\phi(r)))$ and $x, y \in B(x_0, \varepsilon r) \setminus \mathcal{N}_u$.
\end{definition}

Clearly, $\PHR \Longrightarrow \EHR$.

\subsection*{Cut-off Sobolev}

For $f, g \in \mathcal{F}$, we define the \textit{carr\'e du-Champ operator} $\Gamma(f, g)$ by
\begin{equation} \label{carreDef}
    \Gamma(f, g) (dx) := \int_{y \in M} (f(x)-f(y))(g(x)-g(y))\, J(dx, dy).
\end{equation}
Let $\Gamma(f):=\Gamma(f, f)$.

If $U$ and $V$ are open sets in $M$ such that $\overline{U} \subseteq V$, we define $\cutoff(U, V)$ to be the set of functions $\varphi \in \mathcal{F}$ such that $0 \leq \varphi \leq 1$ everywhere, $\varphi=1$ inside $U$, and $\varphi=0$ on $V^c$.

Equipped with the carr\'e du-Champ operator and cut-off functions, we are now ready to define the cut-off Sobolev inequalities. Recall that cut-off Sobolev inequalities are often required for characterizations of heat kernel estimates or parabolic Harnack inequality when $\beta_2$ from \eqref{regularGrowth} is greater than or equal to $2$. The forms of cut-off Sobolev that we consider, $\CSJ$ and $\SCSJ$, were first introduced in \cite{ckw}.

\begin{definition}\label{CsjDefinition}
We say that $\CSJ$ holds if there exist constants $C_0 \in (0, 1]$ and $C_1, C_2>0$ such that for all $R \geq r>0$, for $\mu$-almost all $x_0 \in M$, and for all $f \in \mathcal{F}$, there exists a $\varphi \in \cutoff(B(x_0, R), B(x_0, R+r))$ such that
\begin{equation} \label{CsjFormula}
    \int_{B(x_0, R+(1+C_0)r)} f^2 \, d\Gamma(\varphi) \leq C_1\int_{U} \int_{U^*} (f(x)-f(y))^2 J(dx, dy) + \frac{C_2}{\phi(r)} \int_{B(x_0, R+(1+C_0)r)} f^2 \, d\mu 
\end{equation}
where
\begin{align*}
    U &:= B(x_0, R+r) \setminus B(x_0, R),\\
    \mbox{and}\qquad U^* &:= B(x_0, R+(1+C_0)r) \setminus B(x_0, R-C_0 r).
\end{align*}

\end{definition}

\begin{definition}\label{ScsjDefinition}

We say that $\SCSJ$ holds if if there exist constants $C_0 \in (0, 1]$ and $C_1, C_2>0$ such that for all $R\geq r>0$, for $\mu$-almost all $x_0 \in M$, there exists a $\varphi \in \cutoff(B(x_0, R), B(x_0, R+r))$ for which \eqref{CsjFormula} holds for all $f \in \mathcal{F}$.

\end{definition}

Clearly, $\SCSJ \Longrightarrow \CSJ$, since $\SCSJ$ is more restrictive than $\CSJ$ (in that $\varphi$ is not allowed to depend on $f$). Note that $\CSJ$ and $\SCSJ$ are stable.

\subsection*{Poincar\'e inequality}

Let $\mathcal{F}_b := \{f \in \mathcal{F} : \norm{f}_\infty<\infty\}$.

\begin{definition} \label{PoincareDefinition}
We say that the \textit{Poincar\'e inequality} $\PI$ holds if there exist constants $C>0$ and $\kappa\geq1$ such that for all $f \in \mathcal{F}_b$,
\begin{equation} \label{PoincareFormula}
    \int_{B} (f-f_B)^2 \, d\mu \leq C\phi(r) \int_{\kappa B \times \kappa B} (f(x)-f(y))^2 J(dx, dy) \qquad\mbox{for all $x_0 \in M$, $r>0$}
\end{equation}
where
\begin{align*}
    B:&=B(x_0, r),\\
    \kappa B :&= B(x_0, \kappa r),\\
    f_B :&= \frac{1}{\mu(B)} \int_B f \, d\mu.
\end{align*}
\end{definition}

\subsection*{Basic assumptions}

The majority of our results have the same basic assumptions.
So that we do not need to keep stating them over and over again, let us wrap them all together into Assumption \ref{BasicAssumptions}.

\begin{assumption} \label{BasicAssumptions}
For the metric measure Dirichlet space $(M, d, \mu, \mathcal{E}, \mathcal{F})$,

(a) $(M, d)$ is locally compact and separable, $\mu$ is a positive Radon measure on $M$ with full support, and $\mu(M)=\infty$.

(b) $(\mathcal{E}, \mathcal{F})$ is a pure-jump regular Dirichlet form.
\end{assumption}

After we construct the auxiliary space $(\Mhat, \dhat, \muhat, \Ehat, \Fhat)$ in Section \ref{auxiliarySection}, we will need to show that $(\Mhat, \dhat, \muhat, \Ehat, \Fhat)$ also satisfies Assumption \ref{BasicAssumptions}, in order to apply the results of \cite{ckw} and \cite{ckw2} to the auxiliary space.

\subsection*{Previous results}

Let us state the results of Chen, Kumagai, and Wang that we will extend.

\begin{thm113}
Suppose $(M, d, \mu, \mathcal{E}, \mathcal{F})$ satisfies Assumption \ref{BasicAssumptions}, $\VD$ and $\RVD$ hold, and $\phi$ is of regular growth. The following are equivalent:
\begin{enumerate}
    \item $\HK$
    \item $\J + \E$
    \item $\J + \CSJ$ \label{J+CSJ}
    \item $\J + \SCSJ$. \label{J+SCSJ}
\end{enumerate}
\end{thm113}
Characterizations \ref{J+CSJ} and \ref{J+SCSJ} from \cite[Theorem 1.13]{ckw} are stable.

\begin{thm117}
Suppose $(M, d, \mu, \mathcal{E}, \mathcal{F})$ satisfies Assumptions \ref{BasicAssumptions}, $\VD$ and $\RVD$ hold, and $\phi$ is of regular growth. The following are equivalent:

\begin{enumerate}
    \item $\PHI$
    \item $\PHIplus$
    \item $\UHK + \NDL + \UJS$
    \item $\NDL+\UJS$
    \item $\PHR +\Eleq + \UJS$ \label{ConditionInvolvingPhr}
    \item $\EHR+\E + \UJS$ \label{ConditionInvolvingEhr}
    \item $\PI + \Jleq + \CSJ + \UJS$ \label{PI+Jleq+CSJ+UJS}
\end{enumerate}
\end{thm117}
Characterization \ref{PI+Jleq+CSJ+UJS} of \cite[Theorem 1.17]{ckw2} is stable.
Technically, \cite{ckw2} also assumes that for each $x \in M$, there exists a kernel $J(x, dy)$ such that $J(dx, dy) = J(x, dy) \, \mu(dx)$. However, Liu and Murugan \cite{LiMu} later proved that $\PHI$ implies the existence of a jump kernel, so this extra assumption is not necessary.

\subsection*{Our main results}

We show that the characterizations of $\HK$ from \cite{ckw} still hold when the assumption of $\RVD$ is relaxed to $\QRVD$, and the characterization $\J+\E$ is replaced with $\J+\QE$.

\begin{theorem} \label{ourMainResult1}
Suppose $(M, d, \mu, \mathcal{E}, \mathcal{F})$ satisfies Assumption \ref{BasicAssumptions}, $\VD$ and $\QRVD$ hold, and $\phi$ is of regular growth. The following are equivalent:

\begin{itemize}
    \item $\HK$
    \item $\J + \QE$
    \item $\J + \CSJ$
    \item $\J + \SCSJ$.
\end{itemize}
\end{theorem}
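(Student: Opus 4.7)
The plan is to reduce Theorem \ref{ourMainResult1} to \cite[Theorem 1.13]{ckw} via the auxiliary space $(\Mhat, \dhat, \muhat, \Ehat, \Fhat)$ constructed in Section \ref{auxiliarySection}. Since the auxiliary space is designed to satisfy Assumption \ref{BasicAssumptions} together with $\VD$ and $\RVD$ (the latter becomes available precisely because the atoms have been smoothed out, so that $\QRVD$ on $(M,d,\mu)$ upgrades to $\RVD$ on $(\Mhat, \dhat, \muhat)$), \cite[Theorem 1.13]{ckw} applies there and gives
\begin{equation*}
\HK \Longleftrightarrow \J + \E \Longleftrightarrow \J + \CSJ \Longleftrightarrow \J + \SCSJ
\end{equation*}
for the auxiliary space. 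The remaining task is to build bridges that transfer each of these conditions back and forth between $(M,d,\mu,\mathcal{E},\mathcal{F})$ and $(\Mhat, \dhat, \muhat, \Ehat, \Fhat)$, and then chain equivalences as in the schema following \eqref{typeOfResult}.

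First I would establish the transfer results of the form \eqref{typeOfResult} listed in Section \ref{proofMainResults}: for $\HK$, $\J$, $\CSJ$, $\SCSJ$ I would need both directions, and for escape times the bridge I need is
\begin{equation*}
\QE \text{ on } (M,d,\mu,\mathcal{E},\mathcal{F}) \ \Longleftrightarrow\ \E \text{ on } (\Mhat, \dhat, \muhat, \Ehat, \Fhat).
\end{equation*}
This is the point at which $\QE$, rather than $\E$, naturally appears in the statement: $\E$ itself cannot hold on the original space when atoms are present, but since the auxiliary space has no atoms, $\E$ and $\QE$ coincide there, and $\QE$ on the original space should correspond to genuine escape estimates on the auxiliary space, where the small balls around an atom $x$ have been replaced by the set $W_x$ on which $\Xhat$ is free to move.

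The main obstacle will be executing the transfer results themselves, carried out in Sections \ref{EscapeSection}--\ref{CsPoincSection}. The jump-kernel transfer should be comparatively straightforward, since $(\Ehat, \Fhat)$ is constructed to respect the jumping structure of $(\mathcal{E}, \mathcal{F})$, and the ultrametric property on each $W_x$ keeps the jump-measure bookkeeping manageable. The heat-kernel transfer will be more delicate: it requires first deriving a workable formula for the heat kernel of $\{\Xhat_t\}$ (Section \ref{HeatKernelSection1}) and then comparing the quantity $q(t,x,y)$ from \eqref{QtxyFormula} on the two spaces, which forces a careful accounting of how volumes $V(x,r)$ relate between $(M,d,\mu)$ and $(\Mhat,\dhat,\muhat)$, particularly at small scales around atoms. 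The cut-off Sobolev transfer looks hardest of all, because $\CSJ$ and $\SCSJ$ are formulated in terms of test functions living in the domains $\mathcal{F}$ and $\Fhat$, so a cut-off built on one side cannot simply be transported to the other without first establishing a suitable correspondence between these domains — which is why Section \ref{CsPoincSection} is prefaced with technical results about $\mathcal{F}$ and $\Fhat$. Once these transfers are in hand, chaining them with \cite[Theorem 1.13]{ckw} applied on the auxiliary space delivers all four equivalences in Theorem \ref{ourMainResult1}.
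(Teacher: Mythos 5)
Your overall strategy is the paper's: construct the auxiliary space, note that $\QRVD$ upgrades to $\RVD$ there (Proposition \ref{CkwAppliesToAuxiliaryProp}), apply \cite[Theorem 1.13]{ckw} on $(\Mhat, \dhat, \muhat, \Ehat, \Fhat)$, and close the loop with transfer results of the form \eqref{typeOfResult}. The difference is in your inventory of bridges. You say you would need \emph{both} directions of the transfer for $\HK$, $\CSJ$ and $\SCSJ$; the paper never proves (and does not need) the forward direction for $\HK$, and uses the backward direction for $\CSJ$ only in an unused appendix (Proposition \ref{CsjBackwardsProp}). Instead it arranges a one-way cycle (Figure \ref{HkStableDiagram}): on the original space, $\HK \Rightarrow \J+\SCSJ \Rightarrow \J+\CSJ$ via Proposition \ref{HkImpliesJScsj}, which holds under $\VD$ alone because the relevant propositions of \cite{ckw} do not use $\RVD$; then $\J+\CSJ$ is pushed forward to the auxiliary space (\eqref{JphiBothways}, \eqref{CsjForwards}), the CKW equivalences run there, and only $\HK$ is pulled back (\eqref{HkBackwards}). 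If you insist on a genuinely two-way $\HK$ bridge, the direction ``$\HK$ on the original space implies $\HK$ on the auxiliary space'' is not among the implications of Proposition \ref{MasterProp} and would require a new lower-bound analysis of $\phat$ inside each $W_x$ (the correction term $a(t,m,D_x)-1$ in \eqref{PhatFormula} is negative for $m=1$, so this is not a formality); the cycle lets you avoid that work entirely.

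Two smaller points you should make explicit. First, the auxiliary space can only be invoked after checking that $(\mathcal{E},\mathcal{F})$ admits a jump kernel; this is immediate when $\J$ is part of the hypothesis, and for the $\HK$ starting point it follows from $\HK \Rightarrow \J$ under $\VD$ (\cite[Proposition 3.3]{ckw}), which is exactly how the paper justifies the construction. Second, your bridge ``$\QE$ on the original space $\Longleftrightarrow$ $\E$ on the auxiliary space'' is not unconditional: the forward lower-bound direction \eqref{EgeqForwards} (and likewise the $\CSJ$ transfer \eqref{CsjForwards}) requires $\VD$ and $\Jleq$, since one must control $v(x)\phi(D_x)$ via \eqref{bowtie1} to bound escape times from small balls inside $W_x$. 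This is harmless in the theorem because $\QE$ and $\CSJ$ only ever appear packaged with $\J$, but the conditionality should be stated when you use the bridge.
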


We also show that the stable characterization of $\PHI$ from \cite{ckw2} holds when the assumption of $\RVD$ is relaxed to $\VD$, as do many of the other characterizations.

\begin{theorem}\label{ourMainResult3}
Suppose $(M, d, \mu, \mathcal{E}, \mathcal{F})$ satisfies Assumptions \ref{BasicAssumptions}, $\VD$ and $\QRVD$ hold, and $\phi$ is of regular growth. The following are equivalent:

\begin{itemize}
    \item $\PHI$
    \item $\PHIplus$
    \item $\UHK + \NDL + \UJS$
    \item $\NDL+\UJS$
    \item $\PI + \Jleq + \SCSJ + \UJS$
    \item $\PI + \Jleq + \CSJ + \UJS$.
\end{itemize}
Furthermore, if any of these equivalent conditions holds, then so does $\PHR$ (and therefore so does $\EHR$).
\end{theorem}

Note that the characterizations \ref{ConditionInvolvingPhr} and \ref{ConditionInvolvingEhr} from \cite[Theorem 1.17]{ckw2} are absent from our Theorem \ref{ourMainResult3}.
\LongVersionShortVersion
{ 
This is because, as we will see in Appendix \ref{EphiNotPossibleAppendix}, $\Eleq$ does not hold for any $(M, d, \mu, \mathcal{E}, \mathcal{F})$ containing atoms.
It is possible that we could have replaced $\Eleq$ with $\QEleq$, but we did not investigate this possibility since these characterizations were not necessary for the proof of stability.
}
{ 
This is because, as we show in \cite[Appendix B.3]{longversion}, condition $\Eleq$ does not hold for any $(M, d, \mu, \mathcal{E}, \mathcal{F})$ containing atoms.
It is possible that we could have replaced $\Eleq$ with $\QEleq$, but we did not investigate this possibility since these characterizations were not necessary for the proof of stability.
} 

Note however that we do prove $\PHI \Longrightarrow \PHR \Longrightarrow \EHR$.
 
\subsection*{Main results applied to graphs} \label{graphSubsection}

The condition $\QRVD$ may seem artificial. In this subsection, we show if the underlying metric measure space is a graph, then $\QRVD$ is a direct consequence of $\VD$.

\begin{definition}
We say that $(M, d)$ is \textit{uniformly perfect} if there exists a constant $C>1$ such that the annulus $B(x, Cr) \setminus B(x, r)$ is non-empty for all $x \in M$, $r>0$ such that $B(x, r) \neq M$.
\end{definition}

\begin{definition}
We say that $(M, d)$ is \textit{quasi-uniformly perfect} if there exists a constant $C>1$ such that $B(x, Cr) \setminus B(x, r)$ is non-empty for all $x \in M$, $r \geq D_x$ such that $B(x, r) \neq M$.
\end{definition}

Uniform perfectness is a standard term, whereas quasi-uniform perfectness is a term of our invention, like $\QRVD$ and $\QE$.

It is a standard result (see \cite[Exercise 13.1]{heinonen}) that for a metric measure space, $\VD$ and uniform perfectness imply $\RVD$. It is not hard to show the analogous result that $\VD$ and quasi-uniform perfectness imply $\QRVD$.

\begin{lemma}\label{VdPlusQufImpliesQrvd}
Suppose $(M, d)$ is quasi-uniformly perfect and $\mu$ is a Radon measure on $M$ with full support, such that $\mu(M) = \infty$. Then $\VD \Longrightarrow \QRVD$.
\end{lemma}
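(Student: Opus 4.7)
The plan is to adapt the classical argument (see \cite[Exercise 13.1]{heinonen}) that $\VD$ together with uniform perfectness implies $\RVD$, replacing uniform perfectness by its quasi-version and keeping track of the extra hypothesis $r \geq D_x$. The one real change is that I would apply quasi-uniform perfectness at scale $2r$ instead of $r$, so that the annulus point $y$ it produces sits far enough from $x$ to guarantee that $B(y, r)$ is \emph{disjoint} from $B(x, r)$.

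More concretely, I would let $C > 1$ denote the quasi-uniform perfectness constant and set $\ell := 2C + 1$. Given $x \in M$ and $r \geq D_x$ with $r > 0$, the case $B(x, 2r) = M$ is immediate, since then $V(x, \ell r) = \mu(M) = \infty$ and any $c$ works. Otherwise $2r \geq D_x$ and $B(x, 2r) \neq M$, so quasi-uniform perfectness supplies a point $y \in B(x, 2Cr) \setminus B(x, 2r)$. Then $B(x, r)$ and $B(y, r)$ are disjoint (because $d(x, y) \geq 2r$) and both are contained in $B(x, \ell r)$, yielding
\begin{equation*}
    V(x, \ell r) \geq V(x, r) + V(y, r).
\end{equation*}

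To compare $V(y, r)$ with $V(x, r)$, I would use $\VD$: since $B(x, r) \subseteq B(y, \ell r)$, iterating the doubling inequality a bounded number of times (depending only on $\ell$ and the $\VD$ constant) gives $V(y, r) \geq V(x, r) / C''$ for some constant $C'' > 0$. Substituting into the previous display produces $V(x, \ell r) \geq (1 + 1/C'') V(x, r)$, which is $\QRVD$ with $c := 1 + 1/C'' > 1$.

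There is no real obstacle here. The only fussy points are verifying that quasi-uniform perfectness can legitimately be invoked at radius $2r$ (automatic from $2r \geq r \geq D_x$) and handling the edge case where $B(x, 2r)$ exhausts $M$; both are disposed of as above.
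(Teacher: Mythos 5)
Your proof is correct and follows essentially the same route as the paper's proof in Appendix B.1: apply quasi-uniform perfectness at scale $2r$ (legitimate since $2r \geq r \geq D_x$), obtain $y$ with $B(y,r)$ disjoint from $B(x,r)$ and both inside $B(x,(2C+1)r)$, and use $\VD$ to show $V(y,r)$ is comparable to $V(x,r)$. The only cosmetic differences are that the paper derives $V(y,r) \geq V(x,r)/(C_D-1)$ from both balls fitting inside $B(y,(2C+1)r)$ while you instead iterate $\VD$ over the single containment $B(x,r) \subseteq B(y,(2C+1)r)$, and that you explicitly dispose of the edge case $B(x,2r)=M$, which the paper leaves implicit.
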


The proof of Lemma \ref{VdPlusQufImpliesQrvd} is virtually identical to the standard proof that $\VD$ and $\mbox{uniform perfectness} \Longrightarrow \RVD$.
\LongVersionShortVersion
{ 
For the sake of completeness, we include a proof of Lemma \ref{VdPlusQufImpliesQrvd} in Appendix \ref{UniformPerfectnessAppendix}.
}
{ 
For the sake of completeness, we prove Lemma \ref{VdPlusQufImpliesQrvd} in \cite[Appendix B.1]{longversion}, in the longer version of this paper that we put on arXiv.
} 

It is easy to see that if $M$ is the vertex set of a connected, undirected graph with infinite diameter, and $d$ is the graph metric (i.e. the metric of shortest path) for $G$, then $(M, d)$ is uniformly perfect. Therefore, we obtain the following corollary.

\begin{corollary} \label{ourMainResultGraphs}
Suppose $G=(M, E)$ is a connected, undirected, countable graph of infinite diameter, $d$ is the graph metric of $G$, $\mu$ is a positive Radon measure on $M$ such that $\mu(M)=\infty$, and $(\mathcal{E}, \mathcal{F})$ is a pure-jump regular Dirichlet form on $L^2(M, \mu)$. If $(M, d, \mu)$ satisfies $\VD$, and $\phi$ is a function of the regular growth, then
\begin{align*}
    \HK &\Longleftrightarrow \J+\QE\\ &\Longleftrightarrow \J+\CSJ\\ &\Longleftrightarrow \J+\SCSJ
\end{align*}
and
\begin{align*}
    \PHI &\Longleftrightarrow \PHIplus\\ &\Longleftrightarrow \UHK+\NDL+\UJS\\
    &\Longleftrightarrow \NDL+\UJS\\
    &\Longleftrightarrow \PI+\Jleq+\SCSJ+\UJS\\
    &\Longleftrightarrow \PI+\Jleq+\CSJ+\UJS\\
    &\Longrightarrow \PHR \Longrightarrow \EHR.
\end{align*}
\end{corollary}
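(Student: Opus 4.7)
The plan is to reduce the corollary to Theorems \ref{ourMainResult1} and \ref{ourMainResult3} by verifying their hypotheses in the graph setting. Both theorems share the hypotheses of Assumption \ref{BasicAssumptions}, $\VD$, $\QRVD$, and regular growth of $\phi$; since $\VD$ and the regular growth of $\phi$ are given by the corollary, the entire argument reduces to verifying Assumption \ref{BasicAssumptions} and deriving $\QRVD$.

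First I would check Assumption \ref{BasicAssumptions}(a). Since $M$ is countable and every vertex $x$ satisfies $D_x \geq 1$ under the graph metric, the topology on $(M, d)$ is discrete, so $(M, d)$ is locally compact and separable. The remaining properties of $\mu$ are hypothesized (positive Radon, $\mu(M) = \infty$, full support being the standard convention throughout the paper). Assumption \ref{BasicAssumptions}(b) is an explicit hypothesis of the corollary.

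For $\QRVD$, I would apply Lemma \ref{VdPlusQufImpliesQrvd} after checking that $(M, d)$ is quasi-uniformly perfect with constant $C = 2$. Fix $x \in M$ and $r \geq D_x$ with $B(x, r) \neq M$; since $D_x \geq 1$ under the graph metric, we have $r \geq 1$. The connectedness and infinite diameter of the graph guarantee a vertex $y$ with $d(x, y) > r$, together with a geodesic $x = v_0, v_1, \ldots, v_n = y$ on which $d(x, v_k) = k$. Setting $k := \lceil r \rceil$ gives $r \leq k < r + 1 \leq 2r$, so $v_k \in B(x, 2r) \setminus B(x, r)$. Hence $(M, d)$ is quasi-uniformly perfect, and Lemma \ref{VdPlusQufImpliesQrvd} promotes the hypothesized $\VD$ to $\QRVD$.

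With all hypotheses in place, Theorem \ref{ourMainResult1} yields the listed chain of equivalences for $\HK$ and Theorem \ref{ourMainResult3} yields those for $\PHI$. I expect no genuine obstacle: the corollary is essentially a packaging statement, and the only piece of non-trivial content is the quasi-uniform-perfectness computation above.
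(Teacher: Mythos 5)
Your proof is correct and follows essentially the same route as the paper: verify quasi-uniform perfectness (via $D_x = 1$ and the existence, by connectedness and infinite diameter, of a vertex at distance exactly $k$ for any integer $k \in [r, 2r)$), invoke Lemma \ref{VdPlusQufImpliesQrvd} to promote $\VD$ to $\QRVD$, and then apply Theorems \ref{ourMainResult1} and \ref{ourMainResult3}. The only differences are minor additions of detail — you explicitly check Assumption \ref{BasicAssumptions}(a) and spell out the geodesic argument that produces a vertex at distance $k$, both of which the paper takes as obvious.
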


\begin{proof}
Fix $x \in M$ and $r \geq D_x = 1$. Since $r \geq 1$, there exists an integer $k \in [r, 2r)$. Since the graph has infinite diameter, there exists a $y$ such that $d(x, y) = k$. Therefore, the annulus $B(x, 2r) \setminus B(x, r)$ is non-empty. Since this holds for all $x \in M$ and $r \geq D_x$, $(M, d)$ is quasi-uniformly perfect. By Lemma \ref{VdPlusQufImpliesQrvd}, $(M, d, \mu)$ satisfies $\QRVD$. Therefore, we can apply Theorems \ref{ourMainResult1} and \ref{ourMainResult3} to $(M, d, \mu, \mathcal{E}, \mathcal{F})$.
\end{proof}

By \cite[Remark 1.7]{ckw}, if the constant $\beta_2$ from \eqref{regularGrowth} is less than $2$, then $\VD+\Jleq \Longrightarrow \SCSJ$. Therefore, if $(M, d, \mu, \mathcal{E}, \mathcal{F})$ satisfies the conditions of Theorems \ref{ourMainResult1}-\ref{ourMainResult3} or Corollary \ref{ourMainResultGraphs}, and the constant $\beta_2$ from \eqref{regularGrowth} is less than $2$, then we have the even nicer stable characterizations
\begin{equation*}
    \HK \Longleftrightarrow \J \Longrightarrow \QE + \SCSJ \qquad\mbox{and}\qquad \PHI \Longleftrightarrow \PI + \Jleq + \UJS.
\end{equation*}

\subsection*{Acknowledgement}

My deepest gratitude goes to Mathav Murugan, for proposing the problem tackled in this paper (originally as a Masters Essay under his supervision), teaching me a great amount so that I could understand the necessary background information for it, and offering invaluable feedback throughout the writing process. I would also like to thank the reviewer for their helpful comments, and for suggesting I include the implication of H\"{o}lder continuity in Theorem \ref{ourMainResult3}.

\section{Preliminaries} \label{preliminariesSection}

Before we construct the auxiliary space, let us state a few facts that we will use throughout the paper. Most of these facts are straightforward to prove.
\LongVersionShortVersion
{ 
For the sake of completeness, every result that is not proved in this section is proved in Appendix \ref{preliminariesSectionProofs}.
}
{ 
For the sake of completeness, every result that is not proved in this section is proved in \cite[Appendix C.2]{longversion}, in the longer version of this paper that we put on arXiv.
} 

Occasionally throughout this paper, we deal with exponentially-distributed random variables. We say that a random variable $\xi$ is exponential($\lambda$), or that $\xi$ is exponential with \textit{rate} $\lambda$, if the probability density function of $\xi$ is $\lambda e^{-\lambda x}$ (in which case the mean of $\xi$ is $1/\lambda$).

Recall how the heat kernel of $\{X_t\}$ is defined in Definition \ref{HKphiDefinition}. In order to be a heat kernel, $p(t, x, y)$ must satisfy \eqref{HeatKernelDefinitionDistribution}-\eqref{HeatKernelDefinitionChapmanKolmogorov}. The following elementary proposition tells us that if $p(t, x, y)$ satisfies \eqref{HeatKernelDefinitionDistribution}, then it satisfies Chapman-Kolmogorov \eqref{HeatKernelDefinitionChapmanKolmogorov} almost everywhere.

\begin{prop}\label{AeChapmanKolmogorov}
If $p(t, x, y)$ is a kernel that satisfies \eqref{HeatKernelDefinitionDistribution}, then for any fixed $x \in M_0$ and $s, t > 0$, equation \eqref{HeatKernelDefinitionChapmanKolmogorov} holds for $\mu$-almost every $z \in M$.
\end{prop}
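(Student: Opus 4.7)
The plan is to derive the almost-everywhere version of Chapman--Kolmogorov directly from the Markov property together with equation \eqref{HeatKernelDefinitionDistribution}, which can be read as saying that $p(t,x,\cdot)$ is the $\mu$-density of the distribution of $X_t$ under $\mathbb{P}^x$. First I would fix $x \in M_0$ and $s,t>0$, and pick an arbitrary non-negative bounded Borel test function $f$ on $M$. By applying \eqref{HeatKernelDefinitionDistribution} to $f$ at time $s+t$, the left-hand side $\mathbb{E}^x f(X_{s+t})$ equals $\int_M p(s+t,x,z) f(z)\,\mu(dz)$.

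Next I would compute the same expectation by conditioning at time $s$. Since $\mathcal{N}$ is properly exceptional, $X_s \in M_0$ almost surely under $\mathbb{P}^x$, so the strong Markov property gives $\mathbb{E}^x f(X_{s+t}) = \mathbb{E}^x\bigl[\mathbb{E}^{X_s} f(X_t)\bigr]$. Define $g(y) := \int_M p(t,y,z) f(z)\,\mu(dz)$, which by \eqref{HeatKernelDefinitionDistribution} equals $\mathbb{E}^y f(X_t)$ for $y \in M_0$. Taking $f\equiv 1$ in \eqref{HeatKernelDefinitionDistribution} bounds $\int_M p(t,y,z)\,\mu(dz) \leq 1$, so $g$ is a bounded (and hence $L^\infty(M,\mu)$) function of $y$. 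Applying \eqref{HeatKernelDefinitionDistribution} once more with $g$ in place of $f$ and time $s$, I get
\begin{equation*}
\mathbb{E}^x f(X_{s+t}) \;=\; \mathbb{E}^x g(X_s) \;=\; \int_M p(s,x,y) g(y)\,\mu(dy) \;=\; \int_M \int_M p(s,x,y)\, p(t,y,z)\, f(z)\,\mu(dz)\,\mu(dy).
\end{equation*}

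Since the integrand is non-negative, Fubini--Tonelli lets me interchange the order and arrive at
\begin{equation*}
\int_M p(s+t,x,z) f(z)\,\mu(dz) \;=\; \int_M \left(\int_M p(s,x,y) p(t,y,z)\,\mu(dy)\right) f(z)\,\mu(dz).
\end{equation*}
Because this equality holds for every bounded Borel $f$ (for instance, by taking $f = \mathbb{1}_A$ as $A$ ranges over Borel subsets of finite $\mu$-measure, and using $\sigma$-finiteness of $\mu$), the two integrable-against-$f$ functions of $z$ must agree for $\mu$-almost every $z \in M$, which is exactly \eqref{HeatKernelDefinitionChapmanKolmogorov} for $\mu$-a.e. $z$. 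The only subtlety I would need to be careful about is that $g$ is genuinely measurable and bounded; measurability follows from Fubini applied to the jointly measurable kernel $p$, and the uniform bound by $\|f\|_\infty$ follows from $\int p(t,y,\cdot)\,d\mu \leq 1$. This is the main obstacle and it is mild.
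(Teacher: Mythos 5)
Your proposal is correct and follows essentially the same route as the paper's proof: test both sides against bounded functions, use \eqref{HeatKernelDefinitionDistribution} together with the Markov property at time $s$ and Fubini, and conclude $\mu$-a.e.\ equality of the two sides of \eqref{HeatKernelDefinitionChapmanKolmogorov}. The extra care you take with measurability and boundedness of $g$ (and noting $X_s\in M_0$ a.s.) is fine but adds nothing beyond the paper's argument; also, the deterministic time $s$ only requires the simple Markov property, not the strong one.
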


Our  only use of Proposition \ref{AeChapmanKolmogorov} is in Section \ref{HeatKernelSection1}, when we calculate the heat kernel of the jump process on the auxiliary space. We first propose a kernel $\phat$, then show that $\phat$ satisfies \eqref{HeatKernelDefinitionDistribution}-\eqref{HeatKernelDefinitionChapmanKolmogorov} for the auxiliary space. In our proof that $\phat$ satisfies \eqref{HeatKernelDefinitionChapmanKolmogorov}, we use Proposition \ref{AeChapmanKolmogorov} to show that $\phat$ satisfies Chapman-Kolmogorov almost everywhere, and then argue by further calculations that $\phat$ satisfies Chapman-Kolmogorov everywhere.

The following lemma gives a recipe for constructing a regular Dirichlet form on a general measure space $(\mathcal{X}, m)$. A collection $\mathcal{D}$ of functions on $\mathcal{X}$ is called \textit{Markovian} if for all $f \in \mathcal{D}$, the function
\begin{equation} \label{Markovian}
    f^*(x) := \max\{ 0, \min\{ f(x), 1 \} \}
\end{equation}
also belongs to $\mathcal{D}$. Let us use the notation $C_c(\mathcal{X})$ for the class of continuous, compactly supported (measurable) functions on $\mathcal{X}$.

\begin{lemma}\label{constructRegularDirichletForms}

Let $\mathcal{X}$ be a locally compact separable metric space. Let $m$ be a positive Radon measure on $\mathcal{X}$ with full support. Let $\diag_{\mathcal{X}}$ denote the diagonal of $\mathcal{X}$, and let $j$ be a symmetric non-negative measurable function on $\mathcal{X} \times \mathcal{X} \setminus \diag_{\mathcal{X}}$.
For all $f \in L^2(\mathcal{X}, m)$, let
\begin{equation*}
    \mathscr{E}(f) := \int_{\mathcal{X} \times\mathcal{X} \setminus \diag_{\mathcal{X}}} (f(x)-f(y))^2 \,j(x, y) \, m(dx) m(dy).
\end{equation*}
Let
\begin{equation*}
    \Fscrmax := \left\{ f \in L^2(\mathcal{X},m) : \mathscr{E}(f) < \infty \right\}.
\end{equation*}
For all $f, g \in \Fscrmax$, let
\begin{equation} \label{EscrFormula}
    \mathscr{E}(f, g) := \int_{\mathcal{X} \times\mathcal{X} \setminus \diag_{\mathcal{X}}} (f(x)-f(y))(g(x)-g(y)) \, j(x, y) \, m(dx) m(dy)
\end{equation}
and
\begin{equation*}
    \mathscr{E}_1(f, g) := \mathscr{E}(f, g) + \int_{\mathcal{X}} fg \, dm.
\end{equation*}
Let $\mathcal{D}$ be a Markovian subspace of $\Fscrmax \cap C_c(\mathcal{X})$, such that $\mathcal{D}$ is dense in $C_c(\mathcal{X})$, under the uniform norm. Let $\mathscr{F}$ be the closure of $\mathcal{D}$, under the $\mathscr{E}_1$-norm. Then we can conclude that $(\mathscr{E}, \mathscr{F})$ is a regular Dirichlet form, with $\mathcal{D}$ as a core.

\end{lemma}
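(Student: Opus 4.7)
The plan is to verify the three defining properties of a Dirichlet form — symmetric bilinear, closed on $L^2(\mathcal{X}, m)$, and Markovian — together with the two density conditions that constitute regularity. Bilinearity, symmetry, and non-negativity of $\mathscr{E}$ are immediate from \eqref{EscrFormula}, and the integral defining $\mathscr{E}(f,g)$ is well-defined on $\Fscrmax\times\Fscrmax$ by Cauchy--Schwarz applied to the measure $j(x,y)\,m(dx)\,m(dy)$. Regularity will also be essentially free: by hypothesis $\mathcal{D}\subseteq C_c(\mathcal{X})$ and by construction $\mathcal{D}\subseteq\mathscr{F}$, so $\mathcal{D}\subseteq\mathscr{F}\cap C_c(\mathcal{X})$; moreover $\mathcal{D}$ is uniformly dense in $C_c(\mathcal{X})$ by assumption and $\mathscr{E}_1$-dense in $\mathscr{F}$ by construction, so $\mathcal{D}$ is a core. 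The substantive work is therefore closedness and the Markov property.

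For closedness, the task is to identify the abstract $\mathscr{E}_1$-completion of $\mathcal{D}$ with a concrete subspace of $L^2(\mathcal{X}, m)$ on which $\mathscr{E}$ acts via its explicit integral formula. Given an $\mathscr{E}_1$-Cauchy sequence $\{f_n\}\subseteq\mathcal{D}$, it is automatically $L^2$-Cauchy, so $f_n\to f$ in $L^2$ for some $f$; passing to an $m$-a.e.\ convergent subsequence and applying Fatou's lemma to the nonnegative integrand $(f_n(x)-f_n(y))^2 j(x,y)$ gives $f\in\Fscrmax$ with $\mathscr{E}(f)\leq\liminf_n \mathscr{E}(f_n)$, and a second Fatou application to $((f_n-f_k)(x)-(f_n-f_k)(y))^2$ for fixed $n$ (sending $k\to\infty$) yields $\mathscr{E}(f_n-f)\to 0$. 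This places $\mathscr{F}\subseteq\Fscrmax$, extends $\mathscr{E}$ continuously from $\mathcal{D}$ to $\mathscr{F}$, and makes $(\mathscr{E}, \mathscr{F})$ closed. The same argument shows $(\Fscrmax, \mathscr{E}_1)$ is itself a Hilbert space, which I will need for the next step.

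For the Markov property, the pointwise inequalities $|f^*(x)-f^*(y)|\leq|f(x)-f(y)|$ (since the unit contraction is $1$-Lipschitz) and $|f^*|\leq|f|$ immediately yield $\mathscr{E}_1(f^*)\leq\mathscr{E}_1(f)$ whenever $f\in\Fscrmax$. What remains is to show $f\in\mathscr{F}\Rightarrow f^*\in\mathscr{F}$, not merely $f^*\in\Fscrmax$. Approximating $f$ in $\mathscr{E}_1$-norm by $f_n\in\mathcal{D}$, the Markovian hypothesis on $\mathcal{D}$ gives $f_n^*\in\mathcal{D}$, and the pointwise contraction inequalities force $f_n^*\to f^*$ in $L^2$ while $\{f_n^*\}$ remains $\mathscr{E}_1$-bounded. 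Using reflexivity of the Hilbert space $(\Fscrmax, \mathscr{E}_1)$, I extract a subsequence converging weakly in $\mathscr{E}_1$, whose weak limit must agree with the $L^2$-limit $f^*$, and then invoke Mazur's lemma to produce convex combinations of $\{f_n^*\}$ — lying in the vector subspace $\mathcal{D}$ — that converge to $f^*$ strongly in $\mathscr{E}_1$. This places $f^*\in\overline{\mathcal{D}}^{\mathscr{E}_1}=\mathscr{F}$ and completes the verification. The hard part is precisely this final Mazur step: the pointwise cutoff interacts badly with the difference quotients $(f_n-f)(x)-(f_n-f)(y)$ appearing in $\mathscr{E}$, so there is no direct way to promote $\mathscr{E}_1$-convergence of $\{f_n\}$ to $\mathscr{E}_1$-convergence of $\{f_n^*\}$ without a weak-compactness argument.
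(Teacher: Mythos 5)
Your proof is correct, and at both substantive steps it follows a genuinely different route from the paper. For closedness the paper does not reprove anything: it cites the completeness of $(\Fscrmax,\mathscr{E}_1)$ (as in \cite[Example 1.2.4]{fukush}), notes $\mathcal{D}\subseteq\Fscrmax$, and concludes that $\mathscr{F}=\overline{\mathcal{D}}^{\mathscr{E}_1}\subseteq\Fscrmax$ is closed; your double Fatou argument reproves that completeness, which is more self-contained but is the same underlying fact. The real divergence is the Markov property. The paper argues that $f_n\to f$ in $\mathscr{E}_1$ already forces $f_n^*\to f^*$ in $\mathscr{E}_1$, deducing $\mathscr{E}(f_n^*-f^*)\leq\mathscr{E}(f_n-f)$ from the pointwise bound $|f_n^*(x)-f^*(y)|\leq|f_n(x)-f(y)|$; as you observe, a bound of that shape does not control the quantity actually appearing in $\mathscr{E}(f_n^*-f^*)$, namely $\bigl((f_n^*-f^*)(x)-(f_n^*-f^*)(y)\bigr)^2$ (for instance with $f_n(x)=2$, $f(x)=1$, $f_n(y)=1$, $f(y)=0$ the untruncated difference of differences is $0$ while the truncated one is $1$), so strong $\mathscr{E}_1$-convergence of the truncations is not automatic. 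Your substitute --- $\mathscr{E}_1$-boundedness of $\{f_n^*\}$ via the contraction inequality, weak compactness in the Hilbert space $(\Fscrmax,\mathscr{E}_1)$, identification of the weak limit with the strong $L^2$-limit $f^*$ through the continuous embedding into $L^2$, and Mazur's lemma to produce convex combinations (still lying in the subspace $\mathcal{D}$) converging strongly --- is exactly the classical Ces\`aro-mean/Banach--Saks device used in \cite{fukush} to show that the closure of a closable Markovian form is Markovian; it costs only the reflexivity of $(\Fscrmax,\mathscr{E}_1)$, which your Fatou step has already supplied, and it is the more robust argument. One bookkeeping item to add: a Dirichlet form requires $\mathscr{F}$ to be dense in $L^2(\mathcal{X},m)$, which the paper gets by observing that uniform density of $\mathcal{D}$ in $C_c(\mathcal{X})$ gives $L^2$-density of $\mathcal{D}$ in $C_c(\mathcal{X})$, and $C_c(\mathcal{X})$ is dense in $L^2(\mathcal{X},m)$ by \cite[Proposition 7.9]{fol}; this is routine but should be stated.
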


The setting of Lemma \ref{constructRegularDirichletForms} is a general measure space $(\mathcal{X}, m)$, which may be distinct from $(M, \mu)$. This is because after we construct the auxiliary metric measure space $(\Mhat, \dhat, \muhat)$, we will apply Lemma \ref{constructRegularDirichletForms} to $(\mathcal{X}, m) = (\Mhat, \muhat)$ in order to construct the regular Dirichlet form $(\Ehat, \Fhat)$ for the auxiliary space.

Lemma \ref{constructRegularDirichletForms} is inspired by the recipe provided by \cite[Theorem 2.2(II)]{um} to construct regular Dirichlet forms on ultrametric spaces. Since the $(\Mhat, \dhat)$ that we construct in Section \ref{auxiliarySection} is not quite an ultrametric space, we could not simply cite \cite[Theorem 2.2(II)]{um}. Our Lemma \ref{constructRegularDirichletForms} is slightly more general.

Next, we will see that in the setting of our main results, each $x \in M$ is isolated if and only if it is an atom.
Let
\begin{equation} \label{MaMcDef}
    M_A = \left\{ x \in M : \mbox{$x$ is an isolated atom} \right\} \qquad\mbox{and} \qquad M_C = \left\{ x \in M : \mu(x)=0, D_x=0 \right\}.
\end{equation}
The subscripts $A$ and $C$ stand for ``atom" and ``continuous."
Obviously, $M_A$ and $M_C$ are disjoint subsets of $M$. If we assume $\QRVD$, it turns out that every $x \in M$ belongs to either $M_A$ or $M_C$:

\begin{prop} \label{Atom=Isolated}
If $(M, d, \mu)$ satisfies Assumption \ref{BasicAssumptions}(a) and $\QRVD$, then $M = M_A \cup M_C$.
\end{prop}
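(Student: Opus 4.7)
My plan is to establish the equivalence ``$x$ is an atom $\Longleftrightarrow$ $x$ is isolated'' for every $x \in M$, which immediately gives the desired partition $M = M_A \cup M_C$. Since $M_A$ and $M_C$ are defined as, respectively, the set of isolated atoms and the set of non-isolated non-atoms, showing this equivalence ensures no $x$ can fall outside $M_A \cup M_C$. I would prove the two directions separately, using full support for one and adapting the proof of Proposition \ref{RvdImpliesNoAtoms} for the other.

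For the direction ``isolated $\Rightarrow$ atom,'' I would fix $x$ with $D_x > 0$ and observe that $B(x, D_x) = \{x\}$: by definition of $D_x$, every $y \neq x$ satisfies $d(x, y) \geq D_x$, so no such $y$ lies in the open ball. Thus $\{x\}$ is a non-empty open subset of $M$. Since Assumption \ref{BasicAssumptions}(a) says $\mu$ has full support, every non-empty open set has positive $\mu$-measure, and hence $\mu(\{x\}) > 0$. So $x \in M_A$.

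For the direction ``atom $\Rightarrow$ isolated,'' I would argue by contrapositive: assume $D_x = 0$ and show $\mu(\{x\}) = 0$. With $D_x = 0$, the $\QRVD$ inequality \eqref{QrvdFormula} holds for all $r > 0$ at this particular $x$, because the constraint $r \geq D_x$ is trivially satisfied. So one can repeat the argument of Proposition \ref{RvdImpliesNoAtoms} at this point: iterating $V(x, \ell^{-n}) \geq c V(x, \ell^{-n-1})$ yields $V(x, \ell^{-n}) \leq c^{-n} V(x, 1)$, and by continuity of measure from above, $\mu(\{x\}) = \lim_{n \to \infty} V(x, \ell^{-n}) = 0$. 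Thus $x \in M_C$.

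The main ``obstacle,'' really just a bookkeeping point, is that $\QRVD$ only guarantees the reverse doubling inequality at scales $r \geq D_x$, so the iteration used in Proposition \ref{RvdImpliesNoAtoms} is not automatically available at every $x$. The key observation that unlocks the argument is that precisely when $x$ could possibly be a non-isolated atom (i.e.\ $D_x = 0$), the condition $r \geq D_x$ becomes vacuous, and $\QRVD$ provides the full strength of $\RVD$ at $x$. This is exactly why $\QRVD$ was crafted to exclude only scales $r < D_x$, and why it suffices to rule out non-isolated atoms while still permitting isolated ones.
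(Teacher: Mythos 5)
Your proof is correct and follows essentially the same route as the paper's: full support rules out isolated non-atoms (since an isolated point gives a non-empty open singleton), and the $\QRVD$ iteration from Proposition \ref{RvdImpliesNoAtoms} applies verbatim at any $x$ with $D_x = 0$ to rule out non-isolated atoms. You phrase the two halves as direct implications rather than the paper's proof-by-contradiction, but the content is identical.
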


\begin{proof}

If there was an $x$ with $\mu(x) > 0$ but $D_x=0$, then just as in the proof of Proposition \ref{RvdImpliesNoAtoms}, we would have $V(x, \ell^{-n}) \leq c^{-n} V(x, 1)$ and $\mu(x) = \lim_{n\to\infty} V(x, \ell^{-n}) = 0$, a contradiction.

If there was an $x$ with $\mu(x)=0$ but $D_x>0$, then $x$ would not belong to the support of $\mu$, contradicting the assumption that $\mu$ is of full support.

Therefore, for all $x$, either $\mu(x)>0$ and $D_x>0$, or $\mu(x)=0$ and $D_x=0$.
\end{proof}

The assumption that $M = M_A \cup M_C$ is necessary for our construction of the auxiliary space. Luckily, Proposition \ref{Atom=Isolated} tells us that $M=M_A \cup M_C$ is guaranteed in the setting of our main results.

Suppose $(M, d, \mu, \mathcal{E}, \mathcal{F})$ satisfies Assumption \ref{BasicAssumptions}, and $(\mathcal{E}, \mathcal{F})$ admits a jump kernel.
For all $x \in M$ and $\rho>0$, let
\begin{equation} \label{MathcaljFormula}
    \mathcal{J}(x, \rho) := \int_{M \setminus B(x, \rho)} J(x, y) \mu(dy).
\end{equation}
Given $x$ and $\rho$, $\mathcal{J}(x, \rho)$ measures the rate at which jumps from $x$ of magnitude at least $\rho$ occur.
For all $x \in M_A$, let
\begin{equation} \label{v(x)Formula}
    v(x) := \int_{M \setminus \{x\}} J(x, y) \mu(dy)= \mathcal{J}(x, D_x)
\end{equation}
(where $D_x$ is as defined in \eqref{DxFormula}).

For all $E \subseteq M$, let $1_E$ denote the indicator
\begin{equation*}
    1_E(x) := \left\{ \begin{matrix}
        1 &:& \mbox{if $x \in E$}\\
        0 &:& \mbox{if $x \notin E$}.
    \end{matrix}\right.
\end{equation*}
For all $x \in M$, let $\delta_x:= 1_{\{x\}}$.

The following lemma is a collection of facts about $\mathcal{J}(x,\rho)$ and $v(x)$.

\begin{lemma} \label{JumpsAndAtomsPreliminaryFactsLemma}
If $(M, d, \mu, \mathcal{E}, \mathcal{F})$ satisfies Assumption \ref{BasicAssumptions}, $\phi$ is of regular growth, and $(\mathcal{E}, \mathcal{F})$ admits a jump kernel, then

(a) For all $x \in M_A$, the indicator $\delta_x$ belongs to $\mathcal{F}$ (the domain of the regular Dirichlet form $(\mathcal{E}, \mathcal{F})$).

(b) For all $x \in M_A$, $0 \leq v(x) < \infty$.

(c) If $\VD$ and $\Jleq$ hold, there exists a $C_{\mathcal{J}}>0$ such that
\begin{equation}\label{bigJumpsVd}
    \mathcal{J}(x, \rho) \leq \frac{C_{\mathcal{J}}}{\phi(\rho)} \qquad\mbox{for all $x \in M$, all $\rho>0$}.
\end{equation}

(d) If $\QRVD$ and $\Jgeq$ hold, there exists a $c_{\mathcal{J}}>0$ such that
\begin{equation}\label{bigJumpsQrvd}
    \mathcal{J}(x, \rho) \geq \frac{c_{\mathcal{J}}}{\phi(\rho)} \qquad\mbox{for all $x \in M$, all positive $\rho \geq D_x$}.
\end{equation}
\end{lemma}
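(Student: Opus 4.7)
I would prove the four statements in the order (a), (b), (c), (d), with (b) being essentially immediate from (a). The main idea for (a) is a cleverly-chosen Lipschitz truncation that, when composed with a core approximant, lands exactly on $\delta_x$; parts (c) and (d) are standard annular computations using the regular growth of $\phi$.

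For \textbf{(a)}, I use the fact that, since $x\in M_A$ is isolated, $\{x\}$ is simultaneously open and closed in $M$, so $\delta_x$ is continuous with compact support $\{x\}$ and hence lies in $C_c(M)$. By the regularity of $(\mathcal{E},\mathcal{F})$, the core $\mathcal{F}\cap C_c(M)$ is uniformly dense in $C_c(M)$, so for any $\epsilon\in(0,1/2)$ there is some $\varphi\in\mathcal{F}\cap C_c(M)$ with $\|\varphi-\delta_x\|_\infty\le\epsilon$. Define the Lipschitz function $\Phi(t):=\bigl((t-\epsilon)/(1-2\epsilon)\bigr)^+\wedge 1$, which has $\Phi(0)=0$ and Lipschitz constant $1/(1-2\epsilon)$. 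Writing $\Phi$ as a constant multiple of a normal contraction and invoking the Markovian property of $\mathcal{F}$, we conclude $\Phi(\varphi)\in\mathcal{F}$. The cutoffs in $\Phi$ are chosen precisely so that $\Phi(\varphi)(x)=1$ (since $\varphi(x)\ge 1-\epsilon$) and $\Phi(\varphi)(y)=0$ for every $y\ne x$ (since $\varphi(y)\le\epsilon$), giving $\Phi(\varphi)=\delta_x$ pointwise, so $\delta_x\in\mathcal{F}$. For \textbf{(b)}, part (a) yields $\mathcal{E}(\delta_x)<\infty$, and a direct computation from \eqref{energyDef}, using that $(\delta_x(u)-\delta_x(w))^2=1$ iff exactly one of $u,w$ equals $x$, gives $\mathcal{E}(\delta_x)=2\mu(\{x\})v(x)$. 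Since $\mu(\{x\})>0$, this forces $v(x)<\infty$.

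For \textbf{(c)}, I dyadically decompose $M\setminus B(x,\rho)=\bigsqcup_{k\ge 0} A_k$ with $A_k:=B(x,2^{k+1}\rho)\setminus B(x,2^k\rho)$. On $A_k$, monotonicity of $V(x,\cdot)$ and $\phi$ combined with $\Jleq$ give $J(x,y)\le C/[V(x,2^k\rho)\phi(2^k\rho)]$, while $\VD$ gives $\mu(A_k)\le V(x,2^{k+1}\rho)\le C_{\mathrm{VD}}V(x,2^k\rho)$. The volumes cancel, reducing the problem to summing $\sum_{k\ge 0}1/\phi(2^k\rho)$, which by the regular growth of $\phi$ (with exponent $\beta_1>0$) is a geometric series dominated by $C/\phi(\rho)$. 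For \textbf{(d)}, for $\rho\ge D_x$ I restrict the integral defining $\mathcal{J}(x,\rho)$ to the single annulus $A:=B(x,\ell\rho)\setminus B(x,\rho)$ with $\ell>1$ from $\QRVD$. On $A$, $\Jgeq$ gives $J(x,y)\ge c/[V(x,\ell\rho)\phi(\ell\rho)]$, and $\QRVD$ gives $\mu(A)=V(x,\ell\rho)-V(x,\rho)\ge(1-1/c_{\mathrm{QRVD}})V(x,\ell\rho)$. After cancellation of $V(x,\ell\rho)$ we obtain $\mathcal{J}(x,\rho)\ge c'/\phi(\ell\rho)$, and the upper bound $\phi(\ell\rho)\le c_2\ell^{\beta_2}\phi(\rho)$ from regular growth finishes the estimate. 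The only genuinely non-routine step in the four parts is the exact-match truncation of (a): $\Phi$ must be designed so that $\Phi(\varphi)$ equals $\delta_x$ on the nose rather than merely approximately, so that the Markovian property delivers $\delta_x$ itself into $\mathcal{F}$; once (a) is in hand, (b)--(d) are mechanical.
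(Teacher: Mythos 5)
Your proposal is correct, and parts (b) and (c) coincide with the paper's argument: (b) is the identity $\mathcal{E}(\delta_x,\delta_x)=2\mu(x)v(x)$, and (c) is the same dyadic-annulus computation with $\VD$ cancelling the volumes and the lower exponent $\beta_1$ in \eqref{regularGrowth} summing the geometric series. The two places where you diverge are (a) and (d). For (a), the paper does not approximate at all: it invokes the fact (\cite[Exercise 1.4.1]{fukush}) that a regular Dirichlet form admits a \emph{standard special core}, so there is directly an $f\in\mathcal{F}\cap C_c(M)$ with $f=1$ on the compact set $\{x\}$ and $f=0$ outside the open set $\{x\}$, i.e.\ $f=\delta_x$. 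Your route — uniform approximation of $\delta_x\in C_c(M)$ by the core, followed by the exact-match truncation $\Phi$ — is a valid and more hands-on alternative; note only that the step ``$\Phi(\varphi)\in\mathcal{F}$'' uses the theorem that every normal contraction operates on a (closed) Dirichlet form, not just the defining unit-contraction property, and your formulation as a scalar multiple of the $1$-Lipschitz map $t\mapsto (t-\epsilon)^+\wedge(1-2\epsilon)$ is exactly what makes this legitimate (a naive shift-then-unit-contract would fail, since constants are not in $L^2(M,\mu)$ when $\mu(M)=\infty$). For (d), you keep only the single annulus $B(x,\ell\rho)\setminus B(x,\rho)$, whereas the paper lower-bounds the contribution of every annulus $\{\ell^{k-1}\rho\le d(x,y)<\ell^k\rho\}$ and sums; your one-annulus version is simpler and suffices, since $\QRVD$ (applicable because $\rho\ge D_x$) gives $\mu(A)\ge(1-c^{-1})V(x,\ell\rho)$ and the upper bound $\phi(\ell\rho)\le c_2\ell^{\beta_2}\phi(\rho)$ converts $1/\phi(\ell\rho)$ into $c/\phi(\rho)$. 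In short: same skeleton, a more self-contained (a) at the cost of quoting the normal-contraction theorem instead of the special-core exercise, and a leaner (d).
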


Lemma \ref{JumpsAndAtomsPreliminaryFactsLemma}(c) allows us to put a useful upper bound on the quantity $v(x)$, defined in \eqref{v(x)Formula}.
For all $x \in M_A$, by applying Lemma \ref{JumpsAndAtomsPreliminaryFactsLemma}(c) to $\rho=D_x$, we have
\begin{equation} \label{bowtie1}
    v(x) \leq \frac{C_{\mathcal{J}}}{\phi(D_x)} \qquad\mbox{whenever we have $\VD$ and $\Jleq$.}
\end{equation}
Similarly, Lemma \ref{JumpsAndAtomsPreliminaryFactsLemma}(d) gives us the lower bound
\begin{equation} \label{bowtie2}
    v(x) \geq \frac{c_{\mathcal{J}}}{\phi(D_x)} \qquad\mbox{whenever we have $\QRVD$ and $\Jgeq$.}
\end{equation}

Lemma \ref{JumpsAndAtomsPreliminaryFactsLemma}(a) is used to establish our technical results relating the domains $\mathcal{F}$ and $\Fhat$ in Appendix \ref{DomainsAppendix}, which are in turn used to prove that $\CSJ$ for the original space implies $\CSJ$ for the auxiliary space, and that $\PI$ for the original space implies $\PI$ for the auxiliary space.

The upper bound on $v(x)$ from Lemma \ref{JumpsAndAtomsPreliminaryFactsLemma}(c) and \eqref{bowtie1} is used in the construction of the Dirichlet form $(\Ehat, \Fhat)$ in Section \ref{auxiliarySection}, and in our proofs of $\QEgeq$ and $\CSJ$ for the auxiliary space (under suitable conditions) in Sections \ref{EscapeSection} and \ref{CsPoincSection}.

The quantity $v(x)$ (and the fact that it is finite, by Lemma \ref{JumpsAndAtomsPreliminaryFactsLemma}(b)) also appears in Section \ref{HeatKernelSection1} in our formula for the heat kernel of the jump process on the auxiliary space.

We do not actually use Lemma \ref{JumpsAndAtomsPreliminaryFactsLemma}(d) or \eqref{bowtie2} anywhere in this paper, but point them out as analogs of Lemma \ref{JumpsAndAtomsPreliminaryFactsLemma}(c) and \eqref{bowtie1}.

\section{The auxiliary space} \label{auxiliarySection}

In this section, we construct the auxiliary metric measure space $(\Mhat, \dhat, \muhat, \Ehat, \Fhat)$. The construction will only make sense if we have the following assumptions: $(M, d, \mu, \mathcal{E}, \mathcal{F})$ satisfies Assumption \ref{BasicAssumptions}, $\phi$ is of regular growth, $(\mathcal{E}, \mathcal{F})$ admits a jump kernel, and $M=M_A \cup M_C$. Luckily, we will have each of these whenever we use the auxiliary space in our proofs of our main results.

Recall how $M_A$ and $M_C$ are defined in \eqref{MaMcDef}. The construction of $(\Mhat, \dhat, \muhat)$ that we are about to give relies on the assuumption that $M=M_A \cup M_C$. Luckily for us, Proposition \ref{Atom=Isolated} tells us that $M=M_A \cup M_C$ under the conditions of our main results.

The underlying set $\Mhat$ of the auxiliary space is defined by replacing each atom $x \in M_A$ with a continuous mass of points, which we will call $W_x$. Each non-atom $x \in M_C$ stays the same in the auxiliary space.
Recall that we must construct the auxiliary space such that if $(M, d, \mu)$ satisfies $\VD$ and $\QRVD$, then $(\Mhat, \dhat, \muhat)$ satisfies $\VD$ and $\RVD$. This way, the results of \cite{ckw} and \cite{ckw2} can be applied to $(\Mhat, \dhat, \muhat)$.

Before diving into the construction, let us motivate some of our choices.
Recall the notation $D_x$, defined in \eqref{DxFormula}, for the distance between an isolated point $x$ and its nearest neighbor. We would like to construct the auxiliary space in such a way that $\muhat(W_x)=\mu(x)$ for all $x \in M_A$, and $\dhat(z, z')$ is less than $D_x$ for any points $z, z' \in W_x \subseteq \Mhat$, but $\sup \left\{ \dhat(z, z') : z, z' \in W_x \right\}$ is on the order of $D_x$.

We would also like to construct $(\Mhat, \dhat, \muhat)$ so that escape times, jump kernels, heat kernels, and cut-off Sobolev inequalities are as convenient as possible to study.
Because of these considerations, we choose to construct $(\Mhat, \dhat, \muhat)$ so that for all atoms $x \in M_A$, the continuous mass $W_x$ in the auxiliary space has the \textit{ultrametric property}.
A metric space $(\mathcal{X}, d_{\mathcal{X}})$ is called \textit{ultrametric} if the following inequality (which is stronger than the triangle inequality) is satisfied:
\begin{equation*}
    d_{\mathcal{X}}(x, z) \leq \max \left\{ d_{\mathcal{X}}(x,y), d_{\mathcal{X}}(y, z) \right\} \qquad\mbox{for all $x, y, z \in \mathcal{X}$}.
\end{equation*} The ultrametric property has the following crucial consequence: 
\begin{equation} \label{UltrametricProperty}
    B_{\mathcal{X}}(x, r) = B_{\mathcal{X}}(x_0, r) \qquad\mbox{whenever $x_0 \in \mathcal{X}$, $r>0$, and $x \in B_{\mathcal{X}}(x_0, r)$}
\end{equation}
(where $B_{\mathcal{X}}(\cdot, \cdot)$ denotes the open balls in $(\mathcal{X}, d_{\mathcal{X}})$). See \cite{um} for an exploration of consequences of the ultrametric property for jump processes. Property \eqref{UltrametricProperty} will greatly simplify the work of Sections \ref{EscapeSection}-\ref{CsPoincSection}.

The simplest way to construct $(\Mhat, \dhat, \muhat)$ such that all of these desired properties hold is to let $W_x$ (for each $x \in M_A$) be a binary-tree-like structure, whose elements are of the form $(x, w)$, where $w$ is an infinite binary string (or equivalently, a path to infinity in an infinite binary tree). Each non-atom $x \in M_C$ stays the same in $\Mhat$, but we will also use the notation $W_x$ to refer to the singleton $\{x\}$ for all $x \in M_C$. This way, each $x \in M$ has an associated $W_x \subseteq \Mhat$: for $x \in M_A$, $W_x$ is a binary-tree-like set, while for $x \in M_C$, $W_x$ is a singleton.

To illustrate the idea behind our construction, we include the following rough drawings of the original metric measure space and the auxiliary metric measure space for some examples of $(M, d, \mu)$ that satisfy $\VD$ and $\QRVD$ but not $\RVD$. In these examples, most elements of $M$ and $\Mhat$ are colored red, while some specifed elements of $M$ and $\Mhat$ are colored blue. Hopefully, these figures help make it clear what an element of the auxiliary space looks like.

\begin{example}\label{IntegerLatticeExample}\textbf{The integer lattice.} Suppose $M=\mathbb{Z}^1$, $d$ is the Euclidean metric, and $\mu$ is the counting measure. 
The original space and auxiliary space for this $(M, d, \mu)$ are shown in Figure \ref{IntegerLatticeFigure}.
Every $x \in M$ is an atom, so every point is replaced with a rooted tree-like structure in the auxiliary space. Each infinite path to the bottom in one of these trees is an element of $\Mhat$. 


\begin{figure}[ht]
\begin{framed}
\captionof{figure}{The original space and auxiliary space in Example \ref{IntegerLatticeExample}}
\label{IntegerLatticeFigure}
    \centering
    \begin{tikzpicture}
        \node (original) at (0, 2) {\textbf{Original space}};
        
        \node at (-2, .3) {$-2$};
        \node at (-1, .3) {$-1$};
        \node at (0, .3) {$0$};
        \node at (1, .3) {$1$};
        \node at (2, .3) {$2$};
        
        \node (leftDotDotDot1) at (-2.3, 0) {...};
        \node[shape=circle, fill=red, scale=0.8] (neg2) at (-2, 0) {};
        \node[shape=circle, fill=red, scale=0.8] (neg1) at (-1, 0) {};
        \node[shape=circle, fill=blue, scale=0.8] (0) at (0, 0) {};
        \node[shape=circle, fill=red, scale=0.8] (1) at (1, 0) {};
        \node[shape=circle, fill=red, scale=0.8] (2) at (2, 0) {};
        \node (rightDotDotDot1) at (2.3, 0) {...};
        
        \draw (neg2) -- (neg1) -- (0) -- (1) -- (2);
        
        \node (auxiliary) at (9, 2) {\textbf{Auxiliary space}};
        
        \node (leftDotDotDot2) at (6.7, 0) {...};
        \node[shape=circle, fill=black, scale=0.2] (auxneg2) at (7, 0) {};
        \node[shape=circle, fill=black, scale=0.2] (auxneg1) at (8, 0) {};
        \node[shape=circle, fill=black, scale=0.2] (aux0) at (9, 0) {};
        \node[shape=circle, fill=black, scale=0.2] (aux1) at (10, 0) {};
        \node[shape=circle, fill=black, scale=0.2] (aux2) at (11, 0) {};
        \node (rightDotDotDot2) at (11.3, 0) {...};
        
        \draw (auxneg2) -- (auxneg1) -- (aux0) -- (aux1) -- (aux2);
        
        \node[shape=circle, fill=black, scale=0.2] (auxneg2L) at (6.7, -0.5) {};
        \node[shape=circle, fill=black, scale=0.2] (auxneg2R) at (7.3, -0.5) {};
        \node[shape=circle, fill=black, scale=0.2] (auxneg2LL) at (6.61, -1) {};
        \node[shape=circle, fill=black, scale=0.2] (auxneg2LR) at (6.79, -1) {};
        \node[shape=circle, fill=black, scale=0.2] (auxneg2RL) at (7.21, -1) {};
        \node[shape=circle, fill=black, scale=0.2] (auxneg2RR) at (7.39, -1) {};
        \node[shape=circle, fill=black, scale=0.2] (auxneg2LLL) at (6.583, -1.5) {};
        \node[shape=circle, fill=black, scale=0.2] (auxneg2LLR) at (6.627, -1.5) {};
        \node[shape=circle, fill=black, scale=0.2] (auxneg2LRL) at (6.763, -1.5) {};
        \node[shape=circle, fill=black, scale=0.2] (auxneg2LRR) at (6.817, -1.5) {};
        \node[shape=circle, fill=black, scale=0.2] (auxneg2RLL) at (7.163, -1.5) {};
        \node[shape=circle, fill=black, scale=0.2] (auxneg2RLR) at (7.237, -1.5) {};
        \node[shape=circle, fill=black, scale=0.2] (auxneg2RRL) at (7.363, -1.5) {};
        \node[shape=circle, fill=black, scale=0.2] (auxneg2RRR) at (7.417, -1.5) {};
        \draw[color=red] (auxneg2) -- (auxneg2L) -- (auxneg2LL) -- (auxneg2LLL);
        \draw[color=red] (auxneg2LL) -- (auxneg2LLR);
        \draw[color=red] (auxneg2L) -- (auxneg2LR) -- (auxneg2LRL); 
        \draw[color=red] (auxneg2LR) -- (auxneg2LRR);
        \draw[color=red] (auxneg2) -- (auxneg2R) -- (auxneg2RL) -- (auxneg2RLL);
        \draw[color=red] (auxneg2RL) -- (auxneg2RLR);
        \draw[color=red] (auxneg2R) -- (auxneg2RR) -- (auxneg2RRL); 
        \draw[color=red] (auxneg2RR) -- (auxneg2RRR);
        
        \node[shape=circle, fill=black, scale=0.2] (auxneg1L) at (7.7, -0.5) {};
        \node[shape=circle, fill=black, scale=0.2] (auxneg1R) at (8.3, -0.5) {};
        \node[shape=circle, fill=black, scale=0.2] (auxneg1LL) at (7.61, -1) {};
        \node[shape=circle, fill=black, scale=0.2] (auxneg1LR) at (7.79, -1) {};
        \node[shape=circle, fill=black, scale=0.2] (auxneg1RL) at (8.21, -1) {};
        \node[shape=circle, fill=black, scale=0.2] (auxneg1RR) at (8.39, -1) {};
        \node[shape=circle, fill=black, scale=0.2] (auxneg1LLL) at (7.583, -1.5) {};
        \node[shape=circle, fill=black, scale=0.2] (auxneg1LLR) at (7.627, -1.5) {};
        \node[shape=circle, fill=black, scale=0.2] (auxneg1LRL) at (7.763, -1.5) {};
        \node[shape=circle, fill=black, scale=0.2] (auxneg1LRR) at (7.817, -1.5) {};
        \node[shape=circle, fill=black, scale=0.2] (auxneg1RLL) at (8.163, -1.5) {};
        \node[shape=circle, fill=black, scale=0.2] (auxneg1RLR) at (8.237, -1.5) {};
        \node[shape=circle, fill=black, scale=0.2] (auxneg1RRL) at (8.363, -1.5) {};
        \node[shape=circle, fill=black, scale=0.2] (auxneg1RRR) at (8.417, -1.5) {};
        \draw[color=red] (auxneg1) -- (auxneg1L) -- (auxneg1LL) -- (auxneg1LLL);
        \draw[color=red] (auxneg1LL) -- (auxneg1LLR);
        \draw[color=red] (auxneg1L) -- (auxneg1LR) -- (auxneg1LRL); 
        \draw[color=red] (auxneg1LR) -- (auxneg1LRR);
        \draw[color=red] (auxneg1) -- (auxneg1R) -- (auxneg1RL) -- (auxneg1RLL);
        \draw[color=red] (auxneg1RL) -- (auxneg1RLR);
        \draw[color=red] (auxneg1R) -- (auxneg1RR) -- (auxneg1RRL); 
        \draw[color=red] (auxneg1RR) -- (auxneg1RRR);
        
        \node[shape=circle, fill=black, scale=0.2] (aux0L) at (8.7, -0.5) {};
        \node[shape=circle, fill=black, scale=0.2] (aux0R) at (9.3, -0.5) {};
        \node[shape=circle, fill=black, scale=0.2] (aux0LL) at (8.61, -1) {};
        \node[shape=circle, fill=black, scale=0.2] (aux0LR) at (8.79, -1) {};
        \node[shape=circle, fill=black, scale=0.2] (aux0RL) at (9.21, -1) {};
        \node[shape=circle, fill=black, scale=0.2] (aux0RR) at (9.39, -1) {};
        \node[shape=circle, fill=black, scale=0.2] (aux0LLL) at (8.583, -1.5) {};
        \node[shape=circle, fill=black, scale=0.2] (aux0LLR) at (8.627, -1.5) {};
        \node[shape=circle, fill=black, scale=0.2] (aux0LRL) at (8.763, -1.5) {};
        \node[shape=circle, fill=black, scale=0.2] (aux0LRR) at (8.817, -1.5) {};
        \node[shape=circle, fill=black, scale=0.2] (aux0RLL) at (9.163, -1.5) {};
        \node[shape=circle, fill=black, scale=0.2] (aux0RLR) at (9.237, -1.5) {};
        \node[shape=circle, fill=black, scale=0.2] (aux0RRL) at (9.363, -1.5) {};
        \node[shape=circle, fill=black, scale=0.2] (aux0RRR) at (9.417, -1.5) {};
        \draw[color=blue] (aux0) -- (aux0L) -- (aux0LL) -- (aux0LLL);
        \draw[color=red] (aux0LL) -- (aux0LLR);
        \draw[color=red] (aux0L) -- (aux0LR) -- (aux0LRL); 
        \draw[color=red] (aux0LR) -- (aux0LRR);
        \draw[color=red] (aux0) -- (aux0R) -- (aux0RL) -- (aux0RLL);
        \draw[color=red] (aux0RL) -- (aux0RLR);
        \draw[color=red] (aux0R) -- (aux0RR) -- (aux0RRL); 
        \draw[color=red] (aux0RR) -- (aux0RRR);
        
        \node[shape=circle, fill=black, scale=0.2] (aux1L) at (9.7, -0.5) {};
        \node[shape=circle, fill=black, scale=0.2] (aux1R) at (10.3, -0.5) {};
        \node[shape=circle, fill=black, scale=0.2] (aux1LL) at (9.61, -1) {};
        \node[shape=circle, fill=black, scale=0.2] (aux1LR) at (9.79, -1) {};
        \node[shape=circle, fill=black, scale=0.2] (aux1RL) at (10.21, -1) {};
        \node[shape=circle, fill=black, scale=0.2] (aux1RR) at (10.39, -1) {};
        \node[shape=circle, fill=black, scale=0.2] (aux1LLL) at (9.583, -1.5) {};
        \node[shape=circle, fill=black, scale=0.2] (aux1LLR) at (9.627, -1.5) {};
        \node[shape=circle, fill=black, scale=0.2] (aux1LRL) at (9.763, -1.5) {};
        \node[shape=circle, fill=black, scale=0.2] (aux1LRR) at (9.817, -1.5) {};
        \node[shape=circle, fill=black, scale=0.2] (aux1RLL) at (10.163, -1.5) {};
        \node[shape=circle, fill=black, scale=0.2] (aux1RLR) at (10.237, -1.5) {};
        \node[shape=circle, fill=black, scale=0.2] (aux1RRL) at (10.363, -1.5) {};
        \node[shape=circle, fill=black, scale=0.2] (aux1RRR) at (10.417, -1.5) {};
        \draw[color=red] (aux1) -- (aux1L) -- (aux1LL) -- (aux1LLL);
        \draw[color=red] (aux1LL) -- (aux1LLR);
        \draw[color=red] (aux1L) -- (aux1LR) -- (aux1LRL); 
        \draw[color=red] (aux1LR) -- (aux1LRR);
        \draw[color=red] (aux1) -- (aux1R) -- (aux1RL) -- (aux1RLL);
        \draw[color=red] (aux1RL) -- (aux1RLR);
        \draw[color=red] (aux1R) -- (aux1RR) -- (aux1RRL); 
        \draw[color=red] (aux1RR) -- (aux1RRR);
        
        \node[shape=circle, fill=black, scale=0.2] (aux2L) at (10.7, -0.5) {};
        \node[shape=circle, fill=black, scale=0.2] (aux2R) at (11.3, -0.5) {};
        \node[shape=circle, fill=black, scale=0.2] (aux2LL) at (10.61, -1) {};
        \node[shape=circle, fill=black, scale=0.2] (aux2LR) at (10.79, -1) {};
        \node[shape=circle, fill=black, scale=0.2] (aux2RL) at (11.21, -1) {};
        \node[shape=circle, fill=black, scale=0.2] (aux2RR) at (11.39, -1) {};
        \node[shape=circle, fill=black, scale=0.2] (aux2LLL) at (10.583, -1.5) {};
        \node[shape=circle, fill=black, scale=0.2] (aux2LLR) at (10.627, -1.5) {};
        \node[shape=circle, fill=black, scale=0.2] (aux2LRL) at (10.763, -1.5) {};
        \node[shape=circle, fill=black, scale=0.2] (aux2LRR) at (10.817, -1.5) {};
        \node[shape=circle, fill=black, scale=0.2] (aux2RLL) at (11.163, -1.5) {};
        \node[shape=circle, fill=black, scale=0.2] (aux2RLR) at (11.237, -1.5) {};
        \node[shape=circle, fill=black, scale=0.2] (aux2RRL) at (11.363, -1.5) {};
        \node[shape=circle, fill=black, scale=0.2] (aux2RRR) at (11.417, -1.5) {};
        \draw[color=red] (aux2) -- (aux2L) -- (aux2LL) -- (aux2LLL);
        \draw[color=red] (aux2LL) -- (aux2LLR);
        \draw[color=red] (aux2L) -- (aux2LR) -- (aux2LRL); 
        \draw[color=red] (aux2LR) -- (aux2LRR);
        \draw[color=red] (aux2) -- (aux2R) -- (aux2RL) -- (aux2RLL);
        \draw[color=red] (aux2RL) -- (aux2RLR);
        \draw[color=red] (aux2R) -- (aux2RR) -- (aux2RRL); 
        \draw[color=red] (aux2RR) -- (aux2RRR);

        \node[color=red] at (6.583, -1.8) {$\vdots$};
        \node[color=red] at (11.417, -1.8) {$\vdots$};
        \node[color=blue] at (8.583, -1.8) {$\vdots$};
        
        \node at (0, -3.8) {The point $0 \in M$ is in blue.};
        \node at (9, -3.8) {The point $(0, 00000\dots) \in \Mhat$ is in blue.};
        
    \end{tikzpicture}
\end{framed}
\end{figure}


\end{example}

\begin{example}\label{MixedExample}\textbf{The real line, with $(-1, 1)$ replaced by an atom.}
Let $M = (-\infty, -1] \cup \{0\} \cup [1, \infty)$. Let $d$ be the Euclidean metric restricted to $M$. Let $\mu$ be the measure given by
\begin{equation*}
    \mu(A) = \left\{ \begin{matrix}
        m(A) &:& \mbox{if $ 0 \notin A$}\\
        m(A) + 2 &:& \mbox{if $0 \in A$}
    \end{matrix}\right.
\end{equation*}
for all Borel $A \subseteq M$, where $m$ is the Lebesgue measure. The original space and auxiliary space for this example are shown in Figure \ref{MixedFigure}. In this case, $0$ is the only atom in $M$, so $0$ is replaced by a tree-like structure while every other point of $M$ stays the same in the auxiliary metric measure space. Elements of the auxiliary space are either points $(0, w)$ (where $w\in W$) or $x$ (where $x \in (-\infty, -1] \cup [1, \infty)$).


\begin{figure}[ht]
\begin{framed}
\captionof{figure}{The original space and auxiliary space in Example \ref{MixedExample}}
\label{MixedFigure}
    \centering
    \begin{tikzpicture}
        \node (original) at (0, 2) {\textbf{Original space}};
        \node (auxiliary) at (8, 2) {\textbf{Auxiliary space}};
        
        \node at (-1, .3) {$-1$};
        \node at (0, .3) {$0$};
        \node at (1, .3) {$1$};
        \node at (2, .3) {$2$};
        
        \node (neg3) at (-3, 0) {};
        \node[shape=circle, fill=red, scale=0.5] (neg1) at (-1, 0) {};
        \node[shape=circle, fill=blue, scale=0.8] (0) at (0, 0) {};
        \node[shape=circle, fill=red, scale=0.5] (1) at (1, 0) {};
        \node[shape=circle, fill=blue, scale=0.5] (2) at (2, 0) {};
        \node (3) at (3, 0) {};
        \path[->, color=red, scale=0.3] (1) edge node {} (3);
        \path[->, color=red, scale=0.3] (neg1) edge node {} (neg3);
        
        \node at (7, .3) {$-1$};
        \node at (9, .3) {$1$};
        \node at (10, .3) {$2$};
        
        \node (auxneg3) at (5, 0) {};
        \node[shape=circle, fill=red, scale=0.5] (auxneg1) at (7, 0) {};
        \node[shape=circle, fill=black, scale=0.3] (aux0) at (8, 0) {};
        \node[shape=circle, fill=red, scale=0.5] (aux1) at (9, 0) {};
        \node[shape=circle, fill=blue, scale=0.5] (aux2) at (10, 0) {};
        \node (aux3) at (11, 0) {};
        \path[->, color=red, scale=0.3] (aux1) edge node {} (aux3);
        \path[->, color=red, scale=0.3] (auxneg1) edge node {} (auxneg3);
        
        \node[shape=circle, fill=black, scale=0.2] (aux0L) at (7.7, -0.5) {};
        \node[shape=circle, fill=black, scale=0.2] (aux0R) at (8.3, -0.5) {};
        \node[shape=circle, fill=black, scale=0.2] (aux0LL) at (7.61, -1) {};
        \node[shape=circle, fill=black, scale=0.2] (aux0LR) at (7.79, -1) {};
        \node[shape=circle, fill=black, scale=0.2] (aux0RL) at (8.21, -1) {};
        \node[shape=circle, fill=black, scale=0.2] (aux0RR) at (8.39, -1) {};
        \node[shape=circle, fill=black, scale=0.2] (aux0LLL) at (7.583, -1.5) {};
        \node[shape=circle, fill=black, scale=0.2] (aux0LLR) at (7.627, -1.5) {};
        \node[shape=circle, fill=black, scale=0.2] (aux0LRL) at (7.763, -1.5) {};
        \node[shape=circle, fill=black, scale=0.2] (aux0LRR) at (7.817, -1.5) {};
        \node[shape=circle, fill=black, scale=0.2] (aux0RLL) at (8.163, -1.5) {};
        \node[shape=circle, fill=black, scale=0.2] (aux0RLR) at (8.237, -1.5) {};
        \node[shape=circle, fill=black, scale=0.2] (aux0RRL) at (8.363, -1.5) {};
        \node[shape=circle, fill=black, scale=0.2] (aux0RRR) at (8.417, -1.5) {};
        \draw[color=blue] (aux0) -- (aux0L) -- (aux0LL) -- (aux0LLL);
        \draw[color=red] (aux0LL) -- (aux0LLR);
        \draw[color=red] (aux0L) -- (aux0LR) -- (aux0LRL); 
        \draw[color=red] (aux0LR) -- (aux0LRR);
        \draw[color=red] (aux0) -- (aux0R) -- (aux0RL) -- (aux0RLL);
        \draw[color=red] (aux0RL) -- (aux0RLR);
        \draw[color=red] (aux0R) -- (aux0RR) -- (aux0RRL); 
        \draw[color=red] (aux0RR) -- (aux0RRR);
        
        \node[color=blue] at (7.583, -1.8) {$\vdots$};
        \node[color=red] at (8.417, -1.8) {$\vdots$};
        
        \node at (0, -3.8) {The points $0 \in M$ and $2 \in M$ are in blue.};
        \node at (8.3, -3.8) {The points $(0, 00000\dots) \in \Mhat$ and $2 \in \Mhat$ are in blue.};
    \end{tikzpicture}
\end{framed}
\end{figure}
\end{example}


In Section \ref{convenientSection}, we construct a simple ultrametric measure space $(W, \rho^D, \nu)$. The underlying set $W$ is the set of infininte binary strings $w=(w(1), w(2), w(3), \dots)$ such that $w(i) \in \{0, 1\}$ for all $i \in \mathbb{N}$. The metric $\rho^D$ depends on a parameter diameter $D>0$. The measure $\nu$ is the uniform self-similar probability measure (or the ``coin-flipping measure") on $W$. As a metric space, $(W, \rho^D)$ satisfies the ultrametric property \eqref{UltrametricProperty}.

In Section \ref{auxiliaryMmsConstruction}, we construct the auxiliary metric space $(\Mhat, \dhat, \muhat)$, by replacing each atom $x \in M$ with a copy of $(W, \rho^{D_x}, \mu(x) \cdot \nu)$, where $D_x$ is as defined in \eqref{DxFormula} and $\mu(x) \cdot \nu$ refers to the measure $\nu$ multiplied by $\mu(x)$.

In Section \ref{auxiliaryEFconstruction}, we construct the regular Dirichlet form $(\Ehat, \Fhat)$ on $L^2(\Mhat, \muhat)$.

\subsection{The convenient tree-like ultrametric space \texorpdfstring{$(W, \rho^D, \nu)$}{TEXT}} \label{convenientSection}

Let $W$ be the set of infinite binary strings $w = (w(1), w(2), w(3), \dots)$, where $w(i) \in \{0, 1\}$ for all $i \in \mathbb{N}$. For all $w \in W$, $m\in\mathbb{N}$, let
\begin{dmath} \label{AwmDef}
    A^w_m = {\left\{ w' \in W : \mbox{$m$ is the first index of disagreement between $w'$ and $w$} \right\}}
    ={\left\{ w' \in W : \mbox{$w'(i) = w(i)$ for all $i < m$, but $w'(m) \neq w(m)$} \right\}}.
\end{dmath}
For all $w \in W$, $m \in \mathbb{Z}_+$, let
\begin{dmath} \label{EwmDef}
    E^w_m = {\left\{ w \in W : \mbox{$w'(i) = w(i)$ for all $i \leq m$} \right\}}
    = \{w\} \cup \bigcup_{j>m} A^w_j.
\end{dmath}

Fix $D>0$. In this subsection, we construct a metric measure space $(W, \rho^D, \nu)$, where the metric $\rho^D$ depends on $D$. This space will have total measure $\nu(W)=1$ and diameter less than (but on the order of) $D$. The open balls of $(W, \rho^D)$ are the sets $E^w_m$, and the spheres of $(W, \rho^D)$ are the sets $A^w_m$.
Recall that when we construct $(\Mhat, \dhat, \muhat)$ in the next subsection, we replace each atom $x \in M$ with a copy of $(W, \rho^{D_x}, \mu(x) \cdot \nu)$, where $D_x$ is as defined in \eqref{DxFormula} and $\mu(x) \cdot \nu$ refers to the measure $\nu$ multiplied by $\mu(x)$. Therefore, the reader may think of $D$ as playing the role of $D_x$ for some atom $x$.

Let $\mathscr{B}_W$ be the minimal $\sigma$-field of $W$ that contains $E^w_m$ for all $w, m$. Let $\nu$ be the measure on $(W, \mathscr{B}_W)$ such that
\begin{equation} \label{NuDef}
    \nu(E^w_m) = 2^{-m} \qquad\mbox{for all $w, m$}.
\end{equation}
(This is the uniform self-similar probability measure.) Note that
\begin{dmath} \label{NuOnSpheres}
    \nu(A^w_m) = \nu \left( E^w_{m-1} \setminus E^w_m \right) = \nu(E^w_{m-1}) - \nu(E^w_m) = 2^{-(m-1)} - 2^{-m} = 2^{-m} \qquad\mbox{for all $w \in W$, $m \in\mathbb{Z}_+$}.
\end{dmath}

For all $m \in \mathbb{Z}_+$, let
\begin{equation} \label{dDm}
    d^D_m := \phi^{-1}\left( \frac{\phi(D)}{2^m} \right).
\end{equation}
and let $\rho^D$ be the metric
\begin{equation}\label{RhoFormula}
    \rho^D(w, w') = \left\{ \begin{matrix}
        0 &:& \mbox{if $w=w'$}\\
        \\
        d^D_m &:& \mbox{if $w' \in A^w_m, m \in \mathbb{N}$}.
    \end{matrix}\right.
\end{equation}
Note that $(W, \rho^D)$ is ultrametric. The diameter of $(W, \rho^D)$ is not quite $D$, but
\begin{equation*}
    \diam(W, \rho^D) = d^D_1 = \phi^{-1}\left(\frac{\phi(D)}{2}\right).
\end{equation*}

Let $\Btilde^D(w, r)$ denote the open ball in $(W, \rho^D)$ with center $w$ and radius $r$, and let $\Vtilde^D(w, r) := \nu(\Btilde^D(w, r))$ denote its volume. If $r \leq D$, then
\begin{equation} \label{VtildeIntermsofM}
    \left.\begin{matrix}
        \Btilde^D(w, r) = E^w_m\\
        \Vtilde^D(w, r) = 2^{-m}
    \end{matrix}\quad\right\}\qquad\mbox{for the $m$ such that $d^D_{m+1} < r \leq d^D_m$}.
\end{equation}
As an explicit function of $r$, we have
\begin{equation} \label{VtildeFloorFormula}
    \Vtilde^D(r) = 2^{-\floor{\log_2 \left( \frac{\phi(D)}{\phi(r)} \right)}} \in \left[ \frac{\phi(r)}{\phi(D)}, 2 \frac{\phi(r)}{\phi(D)} \right) \qquad\mbox{for $0 < r \leq D$}.
\end{equation}
If $r>D$, then $\Btilde^D(w, r) = W$ and $\Vtilde^D(w, r) = 1$.

Since $\Vtilde^D(w, r)$ does not depend on $w$, we will just call it $\Vtilde^D(r)$.

Since the open balls in $(W, \rho^D)$ are precisely the sets $E^w_m$, no matter what the value of $D$ is, $\mathscr{B}_W$ is the Borel $\sigma$-field of $(W, \rho^D)$.

\subsection{The auxiliary metric measure space \texorpdfstring{$(\Mhat, \dhat, \muhat)$}{TEXT}} \label{auxiliaryMmsConstruction}

We remind the reader that the construction of $(\Mhat, \dhat, \muhat)$ we are about to give relies on the assumption that $M=M_A \cup M_C$. For all $x \in M$, let
\begin{equation*}
    W_x := \left\{ \begin{matrix}
        \{x\} \times W &:& \mbox{if $x \in M_A$}\\
        \\
        \{x\} &:& \mbox{if $x \in M_C$}
    \end{matrix}\right.
\end{equation*}
where $W$ is the set from Section \ref{convenientSection}.
Let
\begin{equation*}
    \Mhat := \bigcup_{x \in M} W_x = (M_A \times W) \cup M_C.
\end{equation*}
In other words, $\widehat{M}$ contains all the points in $M$ that are not isolated atoms, but replaces each isolated atom $x \in M_A$ with $\{x\} \times W$.

We will usually use the variable names $x$ and $y$ for elements of $M$, $w$ for elements of $W$, and $z$ for elements of $\Mhat$.

Let us endow $\widehat{M}$ with the metric
\begin{equation}\label{DefinitionOfDhat}
    \widehat{d}(z_1, z_2) = \left\{ \begin{matrix}
        d(x, y) &:& \mbox{if $\pi(z_1) = x \neq y = \pi(z_2)$ for some $x, y \in M$}\\
        \\
        \rho^{D_x}(w_1, w_2) &:& \mbox{if $z_1=(x, w_1)$ and $z_2=(x, w_2)$ for some $x \in M_A$ and $w_1, w_2 \in W$}
    \end{matrix}\right.
\end{equation}
where $D_x$ is as defined in \eqref{DxFormula} and $\rho^{D_x}$ is as defined in \eqref{RhoFormula}.

Let $\borel_M$ be the Borel $\sigma$-field of $(M, d)$, and let $\borel_{\Mhat}$ be the Borel $\sigma$-field of $(\Mhat, \dhat)$.
Let us now define the measure $\muhat$ on $(\Mhat, \borel_{\Mhat})$. Let $\mu_{M_A}$ and $\mu_{M_C}$ be the restrictions of $\mu$ to $M_A$ and $M_C$ respectively. That is,
\begin{equation*}
    \mu_{M_A}(E) = \mu(E \cap M_A) \quad \mbox{and} \quad \mu_{M_C}(E) = \mu(E \cap M_c) \qquad\mbox{for all $E \in \borel_M$}.
\end{equation*}
Then let $\muhat$ be the measure on $\Mhat = (M_A \times W) \cup M_C$ defined by
\begin{equation} \label{MuhatDefinition}
    \muhat := (\mu_{M_A} \times \nu) + \mu_{M_C}.
\end{equation}
(where $\nu$ is the uniform self-similar measure on $W$, as defined in \eqref{NuDef}).
Equivalently, for all $S$ belonging to the Borel $\sigma$-field of $\Mhat$,
\begin{equation*}
    \widehat{\mu}(S) = \left( \sum_{x \in M_A} \mu(x) \nu \left( \left\{ w \in W: (x, w) \in S \right\}\right) \right) + \mu(S \cap M_C).
\end{equation*}

The triple $(\Mhat, \dhat, \muhat)$ forms a metric measure space.
Let us use $\Bhat$ and $\Vhat$ refer to balls and their volumes in $(\Mhat, \dhat, \muhat)$. A consequence of \eqref{MuhatDefinition} is that
\begin{equation} \label{MeasuresAgree}
    \muhat(\pi^{-1}(E)) = \mu(E) \qquad\mbox{for all Borel $E \subseteq M$}.
\end{equation}
Recall how $\pi : \Mhat \to M$ is the projection that maps each point in $W_x$ to $x$. If $f$ is a function on $M$, then $f \circ \pi$ is the function on $\Mhat$ that is constant on $W_x$ for each $x$, assigning every point in $W_x$ the value $f(x)$. A consequence of \eqref{MeasuresAgree} is
\begin{equation} \label{integralsAgree}
    \int_{\pi^{-1}(E)} (f \circ \pi) \, d\muhat = \int_E f \, d\mu \qquad\mbox{for all Borel $E \subseteq M$, integrable $f : M \to \mathbb{R}$}.
\end{equation}
Recall that $\Btilde^D(\cdot, \cdot)$ and $\Vtilde^D( \cdot, \cdot)$ denote balls and their volumes in the metric measure space $(W, \rho^D, \nu)$ parametrized by $D>0$. A simple application of \eqref{DefinitionOfDhat} tells us that for all $x \in M$ and $z_0 \in W_x$,
\begin{equation} \label{BhatFormula}
    \Bhat(z_0, r) = \left\{ \begin{matrix}
        \pi^{-1}(B(x, r)) &:& \mbox{if $r \geq D_x$}\\
        \{x\} \times \Btilde^{D_x}(w_0, r) &:& \mbox{if $r \leq D_x$ and $z_0 = (x, w_0)$}
    \end{matrix}\right.
\end{equation}
By using \eqref{MuhatDefinition} and \eqref{MeasuresAgree} to calculate the volumes of the sets in \eqref{BhatFormula},
\begin{equation} \label{VhatFormula}
    \Vhat(z_0, r) = \left\{ \begin{matrix}
        V(x, r) &:& \mbox{if $r \geq D_x$}\\
        \mu(x) \Vtilde^{D_x}(r) &:& \mbox{if $r \leq D_x$}
    \end{matrix}\right. \qquad \mbox{for all $x \in M$, $z_0 \in W_x$, $r>0$}.
\end{equation}
Note that $\left(W_x, \left.\dhat\right|_{W_x}\right)$ is ultrametric:
\begin{equation} \label{WxUltrametric}
    \dhat(z, z'') \leq \max \left\{ \dhat(z, z'), \dhat(z', z'') \right\} \qquad\mbox{for all $z, z', z'' \in W_x$, for all $x \in M_A$}.
\end{equation}
Consequently,
\begin{equation} \label{WxUltrametricBalls}
    \Bhat(z_0, r) = \Bhat(z, r) \qquad\mbox{for all $z_0 \in \Mhat$, $r>0$, and $z \in \Bhat(z_0, r)$}.
\end{equation}
(Note however that $(\Mhat, \dhat)$ is not necessarily an ultrametric space itself, even if $\left(W_x, \left.\dhat\right|_{W_x}\right)$ is for all $x \in M_A$.)

In the next proposition, we show that $(\Mhat, \dhat, \muhat)$ inherits the regularity assumptions from $(M, d, \mu)$.

\begin{prop} \label{volumeGrowthProp}
Suppose $(M, d, \mu)$ satisfies Assumption \ref{BasicAssumptions}(a), $M=M_A \cup M_C$, and $\phi$ is a function of regular growth. Then

(a) $(\Mhat, \dhat, \muhat)$ satisfies Assumption \ref{BasicAssumptions}(a).

(b) If $(M, d, \mu)$ satisfies $\VD$, then so does $(\Mhat, \dhat, \muhat)$.

(c) If $(M, d, \mu)$ satisfies $\QRVD$, then $(\Mhat, \dhat, \muhat)$ satisfies $\RVD$.
\end{prop}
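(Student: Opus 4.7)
The plan is to verify each of (a), (b), (c) by a case analysis keyed to the relative sizes of $r$ and $D_x$ (where $x = \pi(z_0)$ and we set $D_x = 0$ when $x \in M_C$), using the explicit ball and volume formulas \eqref{BhatFormula}, \eqref{VhatFormula} together with the compatibility \eqref{MeasuresAgree} and the regular growth of $\phi$.

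For (a), separability of $\Mhat$ reduces to separability of $M$ together with countable density of eventually-zero binary strings in each copy of $W$: isolated points in a separable metric space form a countable set, so $M_A$ is countable and $\Mhat$ is a countable union of a separable subset of $M$ together with countably many copies of the separable space $W$. Local compactness splits into two cases. For $z \in W_x$ with $x \in M_A$, the ball $\Bhat(z, D_x) = W_x$ is compact because $(W, \rho^{D_x})$ is homeomorphic to $\{0,1\}^{\mathbb{N}}$ with the product topology, which is compact by Tychonoff. For $z \in M_C$, the projection $\pi$ is $1$-Lipschitz (hence continuous) with compact fibers, and a sequential-compactness argument (using isolation of atoms when the projected limit lies in $M_A$) shows $\pi^{-1}(K)$ is compact for every compact $K \subseteq M$. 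Radon-ness of $\muhat$ follows from local finiteness ($\muhat(W_x) = \mu(x) < \infty$ and $\muhat(\pi^{-1}(\mathrm{int}\, K)) \leq \mu(K) < \infty$), while full support and $\muhat(\Mhat) = \mu(M) = \infty$ are immediate from \eqref{VhatFormula} and \eqref{MeasuresAgree}.

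For (b), fix $z_0$, let $x = \pi(z_0)$ and $r > 0$. If $x \in M_C$ or $r \geq D_x$, both $\Bhat(z_0, r)$ and $\Bhat(z_0, 2r)$ project onto $M$-balls by \eqref{BhatFormula}, so $\VD$ for $M$ finishes directly. If $2r \leq D_x$, both balls lie inside $W_x$, so by \eqref{VhatFormula} the ratio equals $\Vtilde^{D_x}(2r)/\Vtilde^{D_x}(r)$, which \eqref{VtildeFloorFormula} combined with regular growth bounds by a constant depending only on $c_2, \beta_2$. In the crossover case $r < D_x \leq 2r$, combine the upper bound $V(x, 2r) \leq V(x, 2D_x) \leq C \mu(x)$ (a bounded number of $\VD$ doubling steps from $V(x, D_x) = \mu(x)$) with the lower bound $\Vhat(z_0, r) = \mu(x) \Vtilde^{D_x}(r) \geq \mu(x)\phi(r)/\phi(D_x) \geq \mu(x)/(c_2 2^{\beta_2})$ (using \eqref{VtildeFloorFormula} and regular growth applied to $r \geq D_x/2$).

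For (c), let $\ell, c$ be the $\QRVD$ constants for $M$ and choose $\hat\ell := \ell \cdot \max\{1, (4/c_1)^{1/\beta_1}\}$, $\hat c := \min\{c, 2\}$. If $r \geq D_x$, iterated $\QRVD$ on $M$ (stretching from factor $\ell$ to $\hat\ell$) gives the desired ratio. If $\hat\ell r \leq D_x$, the analysis inside $(W, \rho^{D_x}, \nu)$ using \eqref{VtildeIntermsofM} shows the difference $\log_2(\Vtilde^{D_x}(r)/\Vtilde^{D_x}(\hat\ell r))$ is at least $\log_2(c_1 \hat\ell^{\beta_1}) - 1 \geq 1$ by our choice of $\hat\ell$, so the ratio is $\geq 2$. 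In the crossover case $r < D_x < \hat\ell r$, further split: if $\hat\ell r \geq \ell D_x$, then $V(x, \hat\ell r) \geq V(x, \ell D_x) \geq c \mu(x) \geq c \Vhat(z_0, r)$; if $\hat\ell r < \ell D_x$, then $r < \ell D_x / \hat\ell$ and regular growth gives $\Vhat(z_0, r) \leq 2\mu(x)\phi(r)/\phi(D_x) \leq (2/c_1)(\ell/\hat\ell)^{\beta_1} \mu(x) \leq \mu(x)/2 \leq \Vhat(z_0, \hat\ell r)/2$. The main obstacle is precisely this crossover case: when $\hat\ell r$ is only marginally above $D_x$, the new $M$-mass in $B(x, \hat\ell r) \setminus \{x\}$ may be vanishingly small, so $\QRVD$ on $M$ alone cannot overpower $\Vhat(z_0, r)$. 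The remedy is to shift the work to the ultrametric side by taking $\hat\ell$ sufficiently large in terms of the regular-growth constants $c_1, \beta_1$, forcing $r$ to be small enough relative to $D_x$ in the bad subcase that $\Vtilde^{D_x}(r)$ — and hence $\Vhat(z_0, r)$ — is already at most half of $\mu(x) \leq V(x, \hat\ell r)$.
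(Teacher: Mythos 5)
Your proof is correct and follows the same overall plan as the paper's: an explicit case analysis on the size of $r$ relative to $D_{\pi(z_0)}$, using \eqref{BhatFormula}, \eqref{VhatFormula}, \eqref{VtildeFloorFormula}, and the regular growth of $\phi$. The one place where you genuinely diverge is the crossover case of (c) (when $r < D_x < \hat\ell r$). You flag it as "the main obstacle" and resolve it by further splitting into $\hat\ell r \geq \ell D_x$ vs.\ $\hat\ell r < \ell D_x$, which forces you to inflate your dilation constant to $\hat\ell = \ell \cdot \max\{1, (4/c_1)^{1/\beta_1}\}$ so that in the bad sub-case $r$ is far enough below $D_x$ that $\Vhat(z_0,r) \leq \mu(x)/2$. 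The paper sidesteps this entirely: with $\ell_1 = (2/c_1)^{1/\beta_1}\vee 1$, it simply writes $\Vhat(z_0, \ell\ell_1 r)/\Vhat(z_0, r) \geq \Vhat(z_0,\ell\ell_1 r)/\Vhat(z_0,\ell_1 r) \geq c$, where the last step is $\QRVD$ on $M$ applied at scale $\ell_1 r \geq D_x$, and the first is just monotonicity of $\Vhat(z_0,\cdot)$. The obstacle you worried about (the new $M$-mass above scale $D_x$ being tiny) never arises because the comparison is anchored at $\ell_1 r$ rather than at $D_x$; this buys a smaller constant and a shorter argument. The analogous crossover in (b) you handle with a direct sandwich (upper bound via $V(x,2D_x)\le C\mu(x)$, lower bound via $\Vhat(z_0,r) \ge \mu(x)/(c_2 2^{\beta_2})$), while the paper chains $\Vhat(z_0,2r)/\Vhat(z_0,r) \le \bigl(\Vhat(z_0,2D_x)/\Vhat(z_0,D_x)\bigr)\cdot\bigl(\Vhat(z_0,D_x)/\Vhat(z_0,D_x/2)\bigr)$ and reuses the first two cases; both are fine. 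For (a), your reduction of Radon-ness to local finiteness via the standard theorem for locally compact second-countable spaces is a legitimate shortcut — the paper instead proves inner and outer regularity of $\muhat$ by hand in the appendix — and your sequential-compactness argument for $\pi^{-1}(K)$ and the Tychonoff observation for $W_x$ match the paper's lemmas.
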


\begin{proof}
\LongVersionShortVersion
{ 
(a) is proven in Appendix \ref{AppendixAssumption1}. The proof of (a) is not deep, but requires several lemmas.
}
{ 
The proof of (a) is not deep, but requires several lemmas. The interested reader may find it in \cite[Appendix B.4]{longversion}.
} 

In order to prove (b) and (c), first observe that if $z_0 \in W_x$ for some $x \in M_A$, and $0 < r \leq R \leq D_x$, then by \eqref{VhatFormula} and \eqref{VtildeFloorFormula},
\begin{equation*}
    \frac{\Vhat(z_0, R)}{\Vhat(z_0, r)} = \frac{\Vtilde^{D_x}(R)}{\Vtilde^{D_x}(r)} = \frac{2^{-\floor{\log_2 \left( \frac{\phi(D)}{\phi(R)}\right)}}}{2^{-\floor{\log_2 \left( \frac{\phi(D)}{\phi(r)}\right)}}}
\end{equation*}
or equivalently,
\begin{equation} \label{Log2VolumesWx}
    \log_2 \left( \frac{\Vhat(z_0, R)}{\Vhat(z_0, r)} \right) = \floor{\log_2 \left( \frac{\phi(D)}{\phi(r)} \right)} - \floor{\log_2 \left( \frac{\phi(D)}{\phi(R)} \right)}
\end{equation}

Let $c_1$, $c_2$, $\beta_1$, and $\beta_2$ be the constants from \eqref{regularGrowth}.

Proof of (b): Assume $(M, d, \mu)$ satisfies $\VD$. Let $C$ be the constant from \eqref{VdFormula}. Fix $z_0 \in \Mhat$ and $r>0$. Let $x=\pi(z_0)$. If $r \geq D_x$, then by \eqref{VhatFormula},
\begin{equation} \label{MhatVdProp1}
    \frac{\Vhat(z_0, 2r)}{\Vhat(z_0, r)} = \frac{V(x, 2r)}{V(x, r)} \leq C. 
\end{equation}
If $2r \leq D_x$, then
\begin{align*}
    \log_2 \left( \frac{\Vhat(z_0, 2r)}{\Vhat(z_0, r)} \right)& = \floor{\log_2 \left( \frac{\phi(D)}{\phi(r)} \right)} - \floor{\log_2 \left( \frac{\phi(D)}{\phi(2r)} \right)} &\mbox{(by \eqref{Log2VolumesWx})}\\
    &\leq 1 + \log_2 \left( \frac{\phi(D)}{\phi(r)} \right) - \log_2 \left( \frac{\phi(D)}{\phi(2r)} \right) &\mbox{(since $\floor{b-a} \leq 1+b-a$ for all $a \leq b$)}\\
    &= 1 + \log_2 \left( \frac{\phi(2r)}{\phi(r)}\right)\\
    &\leq 1 + \log_2(c_2 2^{\beta_2}) =: c_3,
\end{align*}
or equivalently
\begin{equation} \label{MhatVdProp2}
    \frac{\Vhat(z_0, 2r)}{\Vhat(z_0, r)} \leq 2^{c_3}.
\end{equation}
The only other case is $D_x/2 < r < D_x < 2r < 2D_x$. In this case, by \eqref{MhatVdProp1} and \eqref{MhatVdProp2},
\begin{equation} \label{MhatVdProp3}
    \frac{\Vhat(z_0, 2r)}{\Vhat(z_0, r)} \leq \frac{\Vhat(z_0, 2D_x)}{\Vhat(z_0, D_x/2)} = \frac{\Vhat(z_0, 2D_x)}{\Vhat(z_0, D_x)} \cdot \frac{\Vhat(z_0, D_x)}{\Vhat(z_0, D_x/2)} \leq C \cdot 2^{c_3}.
\end{equation}
By \eqref{MhatVdProp1}, \eqref{MhatVdProp2}, and \eqref{MhatVdProp3}, $\VD$ holds for $(\Mhat, \dhat, \muhat)$.

Proof of (c): Assume $(M, d, \mu)$ satisfies $\QRVD$. Let $\ell>1$ and $c>1$ be the constants from \eqref{QrvdFormula}. Fix $z_0 \in \Mhat$ and $r>0$. Let $x=\pi(z_0)$. If $r \geq D_x$, then by \eqref{VhatFormula},
\begin{equation} \label{MhatRvdProp1}
    \frac{\Vhat(z_0, \ell r)}{\Vhat(z_0, r)} \geq c.
\end{equation}
Let $\ell_1 := (2/c_1)^{1/\beta_1} \vee 1$, for reasons that will soon be clear. Suppose $\ell_1 r \geq D_x$. Then
\begin{align*}
    \log_2 \left( \frac{\phi(D)}{\phi(r)} \right) - \log_2 \left( \frac{\phi(D)}{\phi(\ell_1 r)} \right) &= \log_2 \left( \frac{\phi(\ell_1 r)}{\phi(r)} \right)\\
    &\geq \log_2 \left( c_1 \ell_1^{\beta_1} \right) &\mbox{(by \eqref{regularGrowth})}\\
    &\geq \log_2 \left(c_1 \left(\frac{2}{c_1}\right)\right) &\mbox{(by our choice of $\ell_1$)}\\
    &=1.
\end{align*}
Note that if $b-a \geq 1$, then $\floor{b}-\floor{a} \geq 1$. Applying this fact to $a = \log_2 \left( \frac{\phi(D)}{\phi(\ell_1 r)} \right)$ and $b=\log_2 \left( \frac{\phi(D)}{\phi(r)} \right)$, we see that
\begin{equation} \label{MhatRvdProp2}
    \floor{\log_2 \left( \frac{\phi(D)}{\phi(r)} \right)} - \floor{\log_2 \left( \frac{\phi(D)}{\phi(2r)} \right)} \geq 1.
\end{equation}
Thus,
\begin{align*}
    \log_2 \left( \frac{\Vhat(z_0, \ell_1 r)}{\Vhat(z_0, r)} \right) &= \floor{\log_2 \left( \frac{\phi(D)}{\phi(r)} \right)} - \floor{\log_2 \left( \frac{\phi(D)}{\phi(\ell_1 r)} \right)} &\mbox{(by \eqref{Log2VolumesWx})}\\
    &\geq 1 &\mbox{(by \eqref{MhatRvdProp2})},
\end{align*}
or equivalently,
\begin{equation} \label{MhatRvdProp3}
    \frac{\Vhat(z_0, \ell_1 r)}{\Vhat(z_0, r)} \geq 2.
\end{equation}
Finally, if $r<D_x <\ell_1 r$, then
\begin{equation} \label{MhatRvdProp4}
    \frac{\Vhat(z_0, \ell \cdot \ell_1 r)}{\Vhat(z_0, r)} \geq \frac{\Vhat(z_0, \ell \cdot \ell_0 r)}{\Vhat(z_0, \ell_1 r)} \geq c.
\end{equation}
By \eqref{MhatRvdProp1}, \eqref{MhatRvdProp3}, \eqref{MhatRvdProp4}, and the fact that $\ell \cdot \ell_1$ is greater than or equal to both $\ell$ and $\ell_1$, we have $\Vhat(z_0, \ell \cdot \ell_1 r) \geq \max\{2, c\} \Vhat(z_0, r)$ for all $r>0$. Therefore, $(\Mhat, \dhat, \muhat)$ satisfies $\RVD$.
\end{proof}

\subsection{The regular Dirichlet form \texorpdfstring{$(\Ehat, \Fhat)$}{TEXT} on the auxiliary space} \label{auxiliaryEFconstruction}

Assume $(M, d, \mu, \mathcal{E}, \mathcal{F})$ satisfies Assumption \ref{BasicAssumptions}, $\phi$ is of regular growth, $(\mathcal{E}, \mathcal{F})$ admits a jump kernel, and $M=M_A \cup M_C$ (where $M_A$ and $M_C$ are as defined in \eqref{MaMcDef}).

In this subsection, we construct the jump process on $\Mhat$. Recall that the jump process $\{X_t\}$ on $M$ was the Hunt process associated with a pure-jump regular Dirichlet form $(\mathcal{E}, \mathcal{F})$ on $L^2(M, \mu)$. Similarly, the jump process $\{\Xhat_t\}$ on the auxiliary space will be the Hunt process associated with a pure-jump regular Dirichlet form $(\Ehat, \Fhat)$ on $L^2(\Mhat, \muhat)$.

Lemma \ref{constructRegularDirichletForms} gives us a recipe to construct a regular Dirichlet form on $L^2(\Mhat, \muhat)$. It is up to us to determine a symmetric non-negative measurable function $\Jhat(z, z')$ on $\Mhat\times\Mhat\setminus\diag_{\Mhat}$ (which will play the role of $j$ in Lemma \ref{constructRegularDirichletForms}) and a set $\Dhat$ (which will play the role of $\mathcal{D}$). When we feed a suitable $\Jhat$ and $\Dhat$ into Lemma \ref{constructRegularDirichletForms}, the lemma will give us a regular Dirichlet form $(\Ehat, \Fhat)$.

Let us briefly discuss what motivates our choice of $\Jhat(z, z')$. We would like the process $\{\Xhat_t\}$ to make jumps from $W_x$ to $W_y$ with the same rate as $\{X_t\}$ makes jumps from $x$ to $y$. More precisely, if $\pi : \Mhat \to M$ is the projection that maps every point in $W_x$ to $x$, we would like the process $\{\pi(\Xhat_t)\}$ to have the same law as $\{X_t\}$. In other words, $\{X_t\}$ and $\{\Xhat_t\}$ can be coupled so that $\{\Xhat_t\}$ jumps around on $W_x$ while $\{X_t\}$ is being held at $x$, and jumps to $W_y$ whenever $\{X_t\}$ jumps to $y$. This will make it possible to show that
\begin{equation*} \label{WantEphiBothWays}
    \boxed{\mbox{$\QE$ for $(M, d, \mu, \mathcal{E}, \mathcal{F})$}} \Longleftrightarrow \boxed{\mbox{$\E$ for $(\Mhat, \dhat, \muhat, \Ehat, \Fhat)$}} \qquad\mbox{(under suitable assumptions)}.
\end{equation*}
The only remaining question is how to set the rates for the jumps $\{\Xhat_t\}$ takes \textit{within} $W_x$. We would like to choose these transition rates so that
\begin{equation*} \label{WantJphiBothWays}
    \boxed{\mbox{$\J$ for $(M, d, \mu, \mathcal{E}, \mathcal{F})$}} \Longleftrightarrow \boxed{\mbox{$\J$ for $(\Mhat, \dhat, \muhat, \Ehat, \Fhat)$}}.
\end{equation*}

Let us reiterate that the construction of $(\Mhat, \dhat, \muhat, \Ehat, \Fhat)$ relies on the assumptions that $(M, d, \mu, \mathcal{E}, \mathcal{F})$ satisfies Assumptions \ref{BasicAssumptions}, $(\mathcal{E}, \mathcal{F})$ admits a jump kernel, and $M=M_A \cup M_C$. In the proofs of our main results, we only refer to the auxiliary space when all of these assumptions hold (see Section \ref{proofMainResults}). The construction of $(\Mhat, \dhat, \muhat, \Ehat, \Fhat)$ also depends on $\phi$ (and our permanent assumption that $\phi$ is of regular growth).

Let us start by defining the kernel $\Jhat : \Mhat\times\Mhat\setminus\diag_{\Mhat} \to [0, \infty)$, which will become the jump kernel of $\{\Xhat_t\}$. If $J(x, y)$ be the jump kernel of $\{X_t\}$, let
\begin{equation}\label{JhatFormula}
    \Jhat(z, z') = \left\{
    \begin{matrix}
        J(\pi(z), \pi(z')) &:& \mbox{if $\pi(z)\neq \pi(z')$}\\
       \\
       \frac{1}{\Vhat(z, \dhat(z, z')) \phi(\dhat(z, z'))} &:& \mbox{if $\pi(z) = \pi(z')$}.
    \end{matrix}
    \right.
\end{equation}
It will be useful to have an explicit formula for $\Jhat(z, z')$. Suppose $z=(x, w)$ and $z'=(x, w')$ for some $x \in M_A$, where $w$ and $w'$ are distinct elements of $W$. Let $m$ be the first index of disagreement between the words $w$ and $w'$. By \eqref{DefinitionOfDhat} and \eqref{RhoFormula}, $\dhat(z, z') = \phi^{-1} \left( 2^{-m} \phi(D_x) \right)$. Thus, $\phi(\dhat(z, z')) = 2^{-m} \phi(D_x)$. By \eqref{VhatFormula} and \eqref{VtildeIntermsofM}, $\Vhat(z, \dhat(z, z')) = \mu(x) \cdot \nu(E^w_m) = \mu(x) \cdot 2^{-m}$. Therefore, another formula for \eqref{JhatFormula} is

\begin{equation}\label{JhatFormulaIntermsofM}
    \Jhat(z, z') = \left\{
    \begin{matrix}
        J(x, y) &:& \mbox{if $x = \pi(z)\neq \pi(z') = y$}\\
       \\
       \frac{4^m}{\mu(x) \phi(D_x)} &:& \mbox{if $x \in M_A$, $z=(x, w)$, $z'=(x, w')$, and $w' \in A^w_m$}.
    \end{matrix}
    \right.
\end{equation}
Let us abuse notation by also using $\Jhat$ to refer to the measure $$\Jhat(dz, dz') = \Jhat(z, z') \muhat(dz) \muhat(dz')$$ on $\Mhat\times\Mhat\setminus\diag_{\Mhat}$.

Now let us follow the recipe of Lemma \ref{constructRegularDirichletForms} to construct a regular Dirichlet form with jump kernel $\Jhat$.
Let $\Ehat : L^2(\Mhat, \muhat) \to [0, \infty]$ be the function
\begin{equation*}
    \Ehat(f) := \int_{\Mhat\times\Mhat\setminus\diag_{\Mhat}} (f(z)-f(z'))^2 \Jhat(dz, dz')
\end{equation*}
Let
\begin{equation*}
    \Fhatmax := \left\{ f \in L^2(\Mhat, \muhat) : \Ehat(f) < \infty \right\}.
\end{equation*}
For all $f, g \in \Fhatmax$, let
\begin{equation}\label{EhatFormula}
    \Ehat(f, g) := \int_{\Mhat\times\Mhat\setminus\diag_{\Mhat}} (f(z)-f(z'))(g(z)-g(z')) \Jhat(dz, dz')
\end{equation}
and
\begin{equation}\label{Ehat1normFormula}
    \Ehat_1(f, g) := \Ehat(f, g) + \int_{\Mhat} fg \, d\muhat.
\end{equation}

In order to use Lemma \ref{constructRegularDirichletForms}, we must find a Markovian space $\Dhat \subseteq \Fhatmax \cap C_c(\Mhat)$ that is dense in $C_c(\Mhat)$ under the uniform norm.

For any function $h : W \to \mathbb{R}$, and any $x \in M_A$, let $H_{x, h} : \Mhat\to\mathbb{R}$ be the function
\begin{equation} \label{HxhFormula}
    H_{x, h}(z) = \left\{ \begin{matrix}
        h(w) &:& \mbox{if $z = (x, w)$ for some $w \in W$}\\
        \\
        0 &:& \mbox{if $z \notin W_x$}.
    \end{matrix}
    \right.
\end{equation}
Recall the projection $\pi : \Mhat \to M$, which maps every point on $W_x$ to $x$, for all $x \in M$. Recall that for any function $g : M\to\mathbb{R}$, $g \circ \pi$ is the function from $\Mhat$ to $\mathbb{R}$ that is constant on each $W_x$, mapping every point on $W_x$ to $g(x)$.

\begin{definition}\label{DhatDef}
Let $\Dhat$ be the set of functions $f$ of the form
\begin{equation*}
    f = (g \circ \pi) + \sum_{j=1}^N H_{x_j, h_j}
\end{equation*}
where $g \in \mathcal{F} \cap C_c(M)$, each $x_j$ belongs to $M_A$, and each $h_j$ is a locally constant function on $W$. Let $\Fhat$ be the closure of $\Dhat$ under the $\Ehat_1$-norm.
\end{definition}

\LongVersionShortVersion
{ 
The following lemma is proved in Appendix \ref{DhatAppendix}. It says that $\Dhat$ satisfies the assumptions of Lemma \ref{constructRegularDirichletForms}.
}
{ 
The following lemma is proved in \cite[Appendix B.5]{longversion}. It says that $\Dhat$ satisfies the assumptions of Lemma \ref{constructRegularDirichletForms}.
} 

\begin{lemma} \label{DhatSatisfiesLemma}
If $(M, d, \mu, \mathcal{E}, \mathcal{F})$ satisfies Assumption \ref{BasicAssumptions}, $\phi$ is of regular growth, $(\mathcal{E}, \mathcal{F})$ admits a jump kernel, and $M=M_A \cup M_C$,
then the set $\Dhat$ (constructed in Definition \ref{DhatDef}) satisfies the following properties:

(a) $\Dhat$ is a subspace of $\Fhatmax \cap C_c(\Mhat)$.

(b) $\Dhat$ is Markovian (in the sense of \eqref{Markovian}).

(c) $\Dhat$ is dense in $\Fhatmax \cap C_c(\Mhat)$, under the uniform norm.
\end{lemma}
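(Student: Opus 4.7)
The claim splits into parts (a), (b), (c), of which only (c) is substantive.

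Part \emph{(a)} is a direct verification. Linearity and closure under scalars are evident from the form of elements of $\Dhat$: given two elements, merge their ``$g \circ \pi$'' pieces inside $\mathcal{F} \cap C_c(M)$ and concatenate their finite sums of $H_{x_j, h_j}$. For continuity and compact support the key observations are: $\pi$ is $1$-Lipschitz by \eqref{DefinitionOfDhat}; each $W_x$ with $x \in M_A$ is compact and clopen in $\Mhat$ (compact because $(W, \rho^{D_x})$ is a Cantor-like ultrametric space; clopen because $\dhat(W_x, W_x^c) = D_x > 0$); and $h$ locally constant on $W$ implies $H_{x,h}$ is continuous on $W_x$ (and vanishes off $W_x$). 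Finite energy splits as $\Ehat(g \circ \pi) = \mathcal{E}(g) < \infty$ (by \eqref{integralsAgree} together with the matching of $\Jhat$ with $J$ across distinct fibers from \eqref{JhatFormula}, while the within-fiber contribution is zero for $g \circ \pi$), and $\Ehat(H_{x,h}) < \infty$: the within-$W_x$ piece is a finite double sum since $h$ takes finitely many values on a finite clopen partition of compact $W$, and the cross piece is bounded by $\mu(x) \|h\|_\infty^2 v(x) < \infty$ using \eqref{v(x)Formula} and Lemma \ref{JumpsAndAtomsPreliminaryFactsLemma}(b). Part \emph{(b)} is also bookkeeping: for $f = g \circ \pi + \sum_{j=1}^N H_{x_j, h_j}$, I would set $g^*$ to be the usual truncation of $g$ and
\begin{equation*}
h_j^*(w) := \max\{0, \min\{g(x_j) + h_j(w), 1\}\} - g^*(x_j).
\end{equation*}
A pointwise check on $W_{x_j}$ and on $\Mhat \setminus \bigcup_j W_{x_j}$ then gives $f^* = g^* \circ \pi + \sum_{j=1}^N H_{x_j, h_j^*} \in \Dhat$, since $\mathcal{F}$ is Markovian (so $g^* \in \mathcal{F} \cap C_c(M)$) and $h_j^*$ is locally constant because $h_j$ is.

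For \emph{(c)}, given $F \in \Fhatmax \cap C_c(\Mhat)$ and $\epsilon > 0$, I would uniformly approximate $F$ by an element of $\Dhat$ as follows. By uniform continuity of $F$ on $K := \supp F$, pick $\delta > 0$ with $|F(z) - F(z')| < \epsilon/4$ whenever $\dhat(z, z') < \delta$. Let $A := \{x \in M_A : W_x \cap K \neq \emptyset,\ D_x \geq \delta\}$; this set is finite because the family $\{W_x\}_{x \in A}$ is pairwise $\delta$-separated in compact $K$. For each $x \in A$, partition $W$ into finitely many $\rho^{D_x}$-balls $E^w_m$ of radius less than $\delta$, and define a locally constant $\tilde h_x$ on $W$ by taking values of $F$ at a representative from each piece, minus a reference constant $F(z_x)$ for a chosen $z_x \in W_x$. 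Next, define $g : M \to \mathbb{R}$ by $g(x) := F(z_x)$ for $x \in M_A$ (with $z_x \in W_x$ a fixed reference) and $g(y) := F(y)$ for $y \in M_C$; continuity of $g$ follows from continuity of $F$, $1$-Lipschitz continuity of $\pi$, and isolation of atoms in $M$, and $\supp g \subseteq \pi(K)$ is compact. Regularity of $(\mathcal{E}, \mathcal{F})$ gives a $g_\epsilon \in \mathcal{F} \cap C_c(M)$ with $\|g - g_\epsilon\|_\infty < \epsilon/4$, and I set $f := g_\epsilon \circ \pi + \sum_{x \in A} H_{x, \tilde h_x} \in \Dhat$. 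Combining the three uniform errors (step approximation on $W_x$ for $x \in A$, constant approximation $F \approx F(z_x)$ on $W_x$ for $x \in M_A \setminus A$ where $\diam W_x < \delta$, and the sup-norm error $g \to g_\epsilon$) yields $\|F - f\|_\infty < \epsilon$.

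The hard part will be part (c), specifically the simultaneous handling of the possibly infinitely many small-diameter atoms $x \in M_A \setminus A$: these contribute to $f$ only through $g_\epsilon \circ \pi$ (no explicit $H_{x, \tilde h_x}$ term is added), so one must argue at once that $g$ is well-defined and continuous on $M$ (using the compatibility of $F$'s continuity across the $M_A$--$M_C$ boundary in $\Mhat$) and that the uniform error is controlled on every such $W_x$ by $\|g - g_\epsilon\|_\infty$ together with $\diam W_x < \delta$.
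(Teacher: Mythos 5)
Your proposal is correct and follows essentially the same route as the paper for all three parts: verify membership of the generators $g\circ\pi$ and $H_{x,h}$ in $\Fhatmax\cap C_c(\Mhat)$; observe that the normal contraction acts compatibly on the $g$-piece and the $h_j$-pieces; and, for density, isolate finitely many ``large'' atoms, define $g(x)=F(z_x)$ on $M$, approximate $g$ using the core of $(\mathcal{E},\mathcal{F})$, and approximate $F$ on each large $W_x$ by a locally constant function. Two of your minor variations are worth noting as modest simplifications: in (b) you truncate $g$ directly and absorb the shift into $h_j^*$, which sidesteps the paper's use of $g'=g-\sum_j g(x_j)\delta_{x_j}$ and hence the invocation of Lemma~\ref{JumpsAndAtomsPreliminaryFactsLemma}(a) at that point; and in (c) you identify the large-atom set explicitly as $A=\{x\in M_A : W_x\cap K\neq\emptyset,\ D_x\ge\delta\}$ and prove finiteness by $\delta$-separation in the compact support, rather than extracting a finite subcover of $\{W_x\}\cup\{\Bhat(y,\delta_y)\}$ as the paper does. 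Both variants are correct and neither changes the underlying structure of the argument.
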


By Lemma \ref{DhatSatisfiesLemma} and Lemma \ref{constructRegularDirichletForms}, $(\Ehat, \Fhat)$ is a regular Dirichlet form. By \eqref{EhatFormula}, $(\Ehat, \Fhat)$ is pure-jump (and thus $(\Mhat, \dhat, \muhat, \Ehat, \Fhat)$ satisfies Assumption \ref{BasicAssumptions}(b)). From now on, let us refer to $(M, d, \mu, \mathcal{E}, \mathcal{F})$ as ``the original space" and $(\Mhat, \dhat, \muhat, \Ehat, \Fhat)$ as ``the auxiliary space." The following proposition summarizes what we have shown about the auxiliary space so far.

\begin{prop} \label{CkwAppliesToAuxiliaryProp}
Let $\phi$ be a function of regular growth.
Suppose $(M, d, \mu, \mathcal{E}, \mathcal{F})$ satisfies Assumption \ref{BasicAssumptions}, $\VD$, and $\QRVD$. Suppose also that $(\mathcal{E}, \mathcal{F})$ admits a jump kernel. Then $(\Mhat, \dhat, \muhat, \Ehat, \Fhat)$ satisfies Assumption \ref{BasicAssumptions}, $\VD$, and $\RVD$.
\end{prop}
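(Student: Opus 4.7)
The plan is to observe that this proposition is essentially a summary of the results already established throughout Section \ref{auxiliarySection}, so the proof amounts to assembling those pieces in the right order. The construction of $(\Mhat, \dhat, \muhat, \Ehat, \Fhat)$ requires the decomposition $M = M_A \cup M_C$, so the first step is to invoke Proposition \ref{Atom=Isolated}, which guarantees this decomposition under Assumption \ref{BasicAssumptions}(a) and $\QRVD$. Once this is in hand, the construction of Sections \ref{auxiliaryMmsConstruction} and \ref{auxiliaryEFconstruction} is well-defined.

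Next I would handle the metric measure space part by invoking Proposition \ref{volumeGrowthProp}: part (a) gives Assumption \ref{BasicAssumptions}(a) for $(\Mhat, \dhat, \muhat)$, part (b) transfers $\VD$ from the original to the auxiliary space, and part (c) upgrades $\QRVD$ on $(M, d, \mu)$ to full $\RVD$ on $(\Mhat, \dhat, \muhat)$ (this is the place where the specific geometry of the replacement set $W_x$, together with the regular growth of $\phi$, is crucial, but all that work has already been done).

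To finish, I would address the Dirichlet form side, i.e.\ Assumption \ref{BasicAssumptions}(b). The construction gives a candidate bilinear form $(\Ehat, \Fhat)$ with $\Fhat$ defined as the $\Ehat_1$-closure of the core $\Dhat$ from Definition \ref{DhatDef}. By Lemma \ref{DhatSatisfiesLemma}, the core $\Dhat$ lies in $\Fhatmax \cap C_c(\Mhat)$, is Markovian, and is uniformly dense in $C_c(\Mhat)$; so the hypotheses of Lemma \ref{constructRegularDirichletForms} (applied with $(\mathcal{X}, m) = (\Mhat, \muhat)$ and $j = \Jhat$) are all met, and $(\Ehat, \Fhat)$ is a regular Dirichlet form on $L^2(\Mhat, \muhat)$. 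Finally, the defining formula \eqref{EhatFormula} exhibits $\Ehat$ as a pure-jump form (no strongly local or killing components appear in its Beurling–Deny decomposition), completing Assumption \ref{BasicAssumptions}(b) for the auxiliary space.

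There is really no main obstacle here: all the substantive content (the volume-doubling and reverse-volume-doubling calculations on the binary-tree-like fibers $W_x$, the verification that $\Dhat$ is Markovian and uniformly dense, and the abstract construction lemma for regular Dirichlet forms) has already been carried out earlier in the section, so this proposition reduces to a bookkeeping step — one simply has to check that the hypotheses of each prior result are met under the hypotheses being assumed, and then cite them in sequence.
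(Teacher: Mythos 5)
Your proof is correct and takes essentially the same approach as the paper: invoke Proposition \ref{Atom=Isolated} so the construction is well-defined, cite Proposition \ref{volumeGrowthProp} for Assumption \ref{BasicAssumptions}(a), $\VD$, and $\RVD$, and note that \eqref{EhatFormula} (together with the regularity established via Lemmas \ref{DhatSatisfiesLemma} and \ref{constructRegularDirichletForms}) gives Assumption \ref{BasicAssumptions}(b). You spell out the Dirichlet-form side a little more explicitly than the paper's very terse proof, but the content is identical.
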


In other words, if the original space satisfies the assumptions of Theorems \ref{ourMainResult1} and \ref{ourMainResult3} (our main results), and $(\mathcal{E}, \mathcal{F})$ admits a jump kernel, then the auxiliary space satisfies the assumptions of \cite{ckw} and \cite{ckw2}.

\begin{proof}[Proof of Proposition \ref{CkwAppliesToAuxiliaryProp}]
By Proposition \ref{Atom=Isolated}, Assumption \ref{BasicAssumptions}(a) and $\QRVD$ are enough to guarantee $M=M_A \cup M_C$, so the construction of $(M, d, \mu)$ is well-defined. By Proposition \ref{volumeGrowthProp}, the auxiliary space satisfies Assumption \ref{BasicAssumptions}(a), $\VD$, and $\RVD$. The extra assumption that $(\mathcal{E}, \mathcal{F})$ admits a jump kernel means that the construction of $(\Ehat, \Fhat)$ is well-defined, and the auxiliary space satisfies Assumption \ref{BasicAssumptions}(b) by \eqref{EhatFormula}.
\end{proof}

Let $\Xhat = \left\{\Xhat_t : t\geq 0 ; \mathbb{P}^z : z \in \Mhat\setminus\Nhat \right\}$ be the $\muhat$-symmetric Hunt process associated with $(\Ehat, \Fhat)$, where $\Nhat$ is a properly exceptional set. For any open $U \subseteq \Mhat$, let $\widehat{\tau}_U$ denote the exit time of $\Xhat$ from $U$.

For all $x \in M$, $z_0 \in W_x$, and $A \subseteq M \setminus \{x\}$,
\begin{equation*}
    \int_{\pi^{-1}(A)} \Jhat(z_0, z) \muhat(dz) = \int_{\pi^{-1}(A)} J(x, \pi(z)) \muhat(dz) = \int_A J(x,y) \mu(dy).
\end{equation*}
In other words, for all $z_0 \in W_x$, jumps in $\Xhat$ from $z_0$ to $\pi^{-1}(A)$ occur with the same rate as jumps in $X$ from $x$ to $A$. Therefore, $\{\pi(\Xhat_t)\}$ and $\{X_t\}$ have the same jump kernel, so
\begin{equation} \label{couple}
    \{ \pi(\Xhat_t)\}_{t\geq0} \overset{d}{=} \left\{X_t\right\}_{t\geq0}.
\end{equation}

\begin{remark}\label{NhatRemark}
By \eqref{couple}, we can take $\Nhat$ to be $\pi^{-1}(\mathcal{N})$. Let us do so. Since $\mu(\mathcal{N}) = 0$, $\mathcal{N}$ must be a subset of $M_C$, so $\Nhat = \pi^{-1}(\mathcal{N}) = \mathcal{N}$. Recall that we defined $M_0$ to be $M \setminus \mathcal{N}$. Let us similarly define $\Mohat$ to be $\Mhat \setminus \Nhat = \Mhat \setminus \mathcal{N}$.
\end{remark}

We emphasize that our construction of $(\Mhat, \dhat, \muhat, \Ehat, \Fhat)$ only makes sense if $(M, d, \mu, \mathcal{E}, \mathcal{F})$ satisfies Assumption \ref{BasicAssumptions}, $\phi$ is of regular growth, $(\mathcal{E}, \mathcal{F})$ admits a jump kernel, and $M=M_A \cup M_C$ (where $M_A$ and $M_C$ are as defined in \eqref{MaMcDef}), and that all of these conditions must be verified whenever we refer to the auxiliary space during the proof of our main results.

\section{Proof of main results}\label{proofMainResults}

In this section, we present diagrams (Figures \ref{HkStableDiagram}-\ref{PhiStableDiagram}) showing how to prove our main results, assuming the following proposition. This proposition will be proved over the course of Sections \ref{EscapeSection}-\ref{CsPoincSection}.

\begin{prop} \label{MasterProp}
If $(M, d, \mu, \mathcal{E}, \mathcal{F})$ satisfies Assumption \ref{BasicAssumptions}, $\phi$ is of regular growth, $(\mathcal{E}, \mathcal{F})$ admits a jump kernel, and $M=M_A \cup M_C$, then we have the following implications between the original space and the auxiliary space:
\begin{align}
    &\boxed{\mbox{$\Jleq$ for $(M, d, \mu, \mathcal{E}, \mathcal{F})$}} \Longleftrightarrow \boxed{\mbox{$\Jleq$ for $(\Mhat, \dhat, \muhat, \Ehat, \Fhat)$}}. \label{JleqBothways}\\
    &\boxed{\mbox{$\Jgeq$ for $(M, d, \mu, \mathcal{E}, \mathcal{F})$}} \Longleftrightarrow \boxed{\mbox{$\Jgeq$ for $(\Mhat, \dhat, \muhat, \Ehat, \Fhat)$}}.\label{JgeqBothways}\\
    &\boxed{\mbox{$\UJS$ for $(M, d, \mu, \mathcal{E}, \mathcal{F})$}} \Longleftrightarrow \boxed{\mbox{$\UJS$ for $(\Mhat, \dhat, \muhat, \Ehat, \Fhat)$}}.\label{UjsBothways}\\
    &\boxed{\mbox{$\QEleq$ for $(M, d, \mu, \mathcal{E}, \mathcal{F})$}} \Longleftrightarrow \boxed{\mbox{$\Eleq$ for $(\Mhat, \dhat, \muhat, \Ehat, \Fhat)$}}.\label{EleqBothways}\\
    &\boxed{\mbox{$\QEgeq$ for $(M, d, \mu, \mathcal{E}, \mathcal{F})$}} \Longleftarrow \boxed{\mbox{$\Egeq$ for $(\Mhat, \dhat, \muhat, \Ehat, \Fhat)$}}.\label{EgeqBackwards}\\
    &\boxed{\mbox{$\UHK$ for $(M, d, \mu, \mathcal{E}, \mathcal{F})$}} \Longleftarrow \boxed{\mbox{$\UHK$ for $(\Mhat, \dhat, \muhat, \Ehat, \Fhat)$}}.\label{UhkBackwards}\\
    &\boxed{\mbox{$\LHK$ for $(M, d, \mu, \mathcal{E}, \mathcal{F})$}} \Longleftarrow \boxed{\mbox{$\LHK$ for $(\Mhat, \dhat, \muhat, \Ehat, \Fhat)$}}.\label{LhkBackwards}\\
    &\boxed{\mbox{$\PI$ for $(M, d, \mu, \mathcal{E}, \mathcal{F})$}} \Longrightarrow \boxed{\mbox{$\PI$ for $(\Mhat, \dhat, \muhat, \Ehat, \Fhat)$}}.\label{PoincareForward}\\
    &\boxed{\mbox{$\PHR$ for $(M, d, \mu, \mathcal{E}, \mathcal{F})$}} \Longleftarrow \boxed{\mbox{$\PHR$ for $(\Mhat, \dhat, \muhat, \Ehat, \Fhat)$}}.\label{PhrBackward}
\end{align}
Under the additional assumption that $\VD$ holds on the original space, we also have
\begin{align}
    &\boxed{\mbox{$\NDL$ for $(M, d, \mu, \mathcal{E}, \mathcal{F})$}} \Longleftarrow \boxed{\mbox{$\NDL$ for $(\Mhat, \dhat, \muhat, \Ehat, \Fhat)$}}. \label{NdlBackwards}\\
    &\boxed{\mbox{$\UHKD$ for $(M, d, \mu, \mathcal{E}, \mathcal{F})$}} \Longrightarrow \boxed{\mbox{$\UHKD$ for $(\Mhat, \dhat, \muhat, \Ehat, \Fhat)$}}. \label{UhkdForwards}
\end{align}
Under the additional assumption that $\VD$ and $\Jleq$ hold on the original space, we also have
\begin{align}
    &\boxed{\mbox{$\QEgeq$ for $(M, d, \mu, \mathcal{E}, \mathcal{F})$}} \Longrightarrow \boxed{\mbox{$\Egeq$ for $(\Mhat, \dhat, \muhat, \Ehat, \Fhat)$}}.\label{EgeqForwards}\\
    & \boxed{\mbox{$\CSJ$ for $(M, d, \mu, \mathcal{E}, \mathcal{F})$}} \Longrightarrow \boxed{\mbox{$\CSJ$ for $(\Mhat, \dhat, \muhat, \Ehat, \Fhat)$}}. \label{CsjForwards}
\end{align}
\end{prop}

\LongVersionShortVersion
{ 
Implications \eqref{JleqBothways}-\eqref{UjsBothways} are straightforward consequences of how we defined $\Jhat$ in \eqref{JhatFormula}. They are proven in Appendix \ref{JumpKernelAppendix}.
Implication \eqref{PhrBackward} is straightforward using the definition of caloric. We prove it in \ref{PhrAppendix}.
In Section \ref{EscapeSection}, we prove \eqref{EleqBothways}, \eqref{EgeqBackwards}, and \eqref{EgeqForwards} (the implications relating to escape times). These are also relatively straightforward, but they do require us to first estimate the escape times $\mathbb{E}^{z_0} \widehat{\tau}_{\Bhat(z_0, r)}$ in the auxiliary space, when $\Bhat(z_0, r) \subseteq W_x$ for some $x \in M_A$.
In Section \ref{HeatKernelSection1}, we derive an explicit formula for the heat kernel of $\{\Xhat_t\}$ in terms of the heat kernel of $\{X_t\}$, and then in Section \ref{HeatKernelSection2} we prove implications \eqref{UhkBackwards}-\eqref{LhkBackwards} and \eqref{NdlBackwards}-\eqref{UhkdForwards} by directly comparing the two heat kernels. In Section \ref{CsPoincSection}, we first make some observations about the domain $\Fhat$ of the regular Dirichlet form on the auxiliary space, and then use these to prove \eqref{CsjForwards} and \eqref{PoincareForward}.
}
{ 
Implications \eqref{JleqBothways}-\eqref{UjsBothways} are straightforward consequences of how we defined $\Jhat$ in \eqref{JhatFormula}.
They are proven in \cite[Appendix B.6]{longversion}.
Implication \eqref{PhrBackward} is straightforward using the definition of caloric. We prove it in \cite[Appendix B.7]{longversion}.
In Section \ref{EscapeSection}, we prove \eqref{EleqBothways}, \eqref{EgeqBackwards}, and \eqref{EgeqForwards} (the implications relating to escape times). These are also relatively straightforward, but they do require us to first estimate the escape times $\mathbb{E}^{z_0} \widehat{\tau}_{\Bhat(z_0, r)}$ in the auxiliary space, when $\Bhat(z_0, r) \subseteq W_x$ for some $x \in M_A$.
In Section \ref{HeatKernelSection1}, we derive an explicit formula for the heat kernel of $\{\Xhat_t\}$ in terms of the heat kernel of $\{X_t\}$, and then in Section \ref{HeatKernelSection2} we prove implications \eqref{UhkBackwards}-\eqref{LhkBackwards} and \eqref{NdlBackwards}-\eqref{UhkdForwards} by directly comparing the two heat kernels. In Section \ref{CsPoincSection}, we first make some observations about the domain $\Fhat$ of the regular Dirichlet form on the auxiliary space, and then use these to prove \eqref{CsjForwards} and \eqref{PoincareForward}.
} 

Suppose the assumptions of Proposition \ref{MasterProp} hold.
Let us ``clean up" some of the statements among \eqref{JleqBothways}-\eqref{CsjForwards}.
By combining \eqref{JleqBothways} and \eqref{JgeqBothways},
\begin{equation} \label{JphiBothways}
    \boxed{\mbox{$\J$ for $(M, d, \mu, \mathcal{E}, \mathcal{F})$}} \Longleftrightarrow \boxed{\mbox{$\J$ for $(\Mhat, \dhat, \muhat, \Ehat, \Fhat)$}}.
\end{equation}
Assuming $\VD$ and $\Jleq$ hold on the original space, by combining \eqref{EleqBothways}, \eqref{EgeqBackwards}, and \eqref{EgeqForwards}, we also have
\begin{equation}\label{EscapeBothways}
    \boxed{\mbox{$\QE$ for $(M, d, \mu, \mathcal{E}, \mathcal{F})$}} \Longleftrightarrow \boxed{\mbox{$\E$ for $(\Mhat, \dhat, \muhat, \Ehat, \Fhat)$}}.
\end{equation}
Finally, by combining \eqref{LhkBackwards} and \eqref{UhkBackwards},
\begin{equation}\label{HkBackwards}
    \boxed{\mbox{$\HK$ for $(M, d, \mu, \mathcal{E}, \mathcal{F})$}} \Longleftarrow \boxed{\mbox{$\HK$ for $(\Mhat, \dhat, \muhat, \Ehat, \Fhat)$}}.
\end{equation}

It is worth noting that the converse of \eqref{CsjForwards} holds as well:

\begin{prop} \label{CsjBackwardsProp}
If $(M, d, \mu, \mathcal{E}, \mathcal{F})$ satisfies Assumption \ref{BasicAssumptions}, $\phi$ is of regular growth, $(\mathcal{E}, \mathcal{F})$ admits a jump kernel, $M=M_A \cup M_C$, and $\VD$ and $\Jleq$ hold for $(M, d, \mu, \mathcal{E}, \mathcal{F})$, then
\begin{equation*}
    \boxed{\mbox{$\CSJ$ for $(M, d, \mu, \mathcal{E}, \mathcal{F})$}} \Longleftarrow \boxed{\mbox{$\CSJ$ for $(\Mhat, \dhat, \muhat, \Ehat, \Fhat)$}}.
\end{equation*}
\end{prop}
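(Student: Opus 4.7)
The plan is to invoke $\CSJ$ on the auxiliary space and pull the resulting cut-off back to $M$ by fibrewise averaging. Given $f \in \mathcal{F}$, first lift it to $\widehat{f} := f \circ \pi$ on $\Mhat$; the domain-lifting lemmas earmarked for the ``Domains Appendix'' referenced at the end of Section \ref{CsPoincSection} imply $\widehat{f} \in \Fhat$. Fix $z_0 \in \pi^{-1}(x_0)$ (any choice for which auxiliary $\CSJ$ applies; these form a set of full $\muhat$-measure in $W_{x_0}$), pick radii $\widehat{R}, \widehat{r}$ as specified below, and apply auxiliary $\CSJ$ to $\widehat{f}$ at $(z_0, \widehat{R}, \widehat{r})$ to obtain a cut-off $\widehat{\varphi}$ for the pair $\big(\Bhat(z_0, \widehat{R}),\, \Bhat(z_0, \widehat{R} + \widehat{r})\big)$ in $\Fhat$. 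The candidate cut-off on $M$ is the fibre average
$$
    \varphi(x) := \frac{1}{\muhat(W_x)} \int_{W_x} \widehat{\varphi} \, d\muhat \quad (x \in M_A), \qquad \varphi(x) := \widehat{\varphi}(x) \quad (x \in M_C).
$$

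I split into two cases based on $R$ vs.\ $D_{x_0}$, using the ball description \eqref{BhatFormula}. If $R \geq D_{x_0}$, take $\widehat{R} := R$, $\widehat{r} := r$, and set the original $\CSJ$ constant $C_0 := \widehat{C_0}$. Then $\Bhat(z_0, s) = \pi^{-1}(B(x_0, s))$ whenever $s \geq D_{x_0}$, so $\widehat{\varphi}$ is identically $1$ on $\pi^{-1}(B(x_0, R))$ and identically $0$ outside $\pi^{-1}(B(x_0, R+r))$, making $\varphi$ a valid cut-off for $(B(x_0, R), B(x_0, R+r))$ in $M$. If instead $R < D_{x_0}$, then $B(x_0, R) = \{x_0\}$, and I simply take $\varphi := \delta_{x_0}$, which lies in $\mathcal{F}$ by Lemma \ref{JumpsAndAtomsPreliminaryFactsLemma}(a); the $\CSJ$ inequality must then be verified by direct computation, using $\Gamma(\delta_{x_0})(\{x_0\}) = v(x_0) \mu(x_0)$, $\Gamma(\delta_{x_0})(dx) = J(x, x_0) \mu(x_0) \mu(dx)$ on $\{x_0\}^c$, and the bound $v(x_0) \leq C_{\mathcal{J}}/\phi(D_{x_0})$ from \eqref{bowtie1}.

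In the main case $R \geq D_{x_0}$, the energy estimate rests on Jensen's inequality: for $x \neq y$,
$$
    (\varphi(x) - \varphi(y))^2 \;\leq\; \frac{1}{\muhat(W_x)\muhat(W_y)} \int_{W_x \times W_y} (\widehat{\varphi}(z) - \widehat{\varphi}(z'))^2\, d\muhat(z)\, d\muhat(z').
$$
Combined with $\Jhat(z, z') = J(\pi(z), \pi(z'))$ for cross-fibre $(z, z')$ (by \eqref{JhatFormula}), $\widehat{f}(z) = f(\pi(z))$, and non-negativity of intra-fibre contributions in the auxiliary carr\'e du-Champ $\widehat{\Gamma}$, this produces
$$
    \int_{B(x_0, R + (1+C_0) r)} f^2 \, d\Gamma(\varphi) \;\leq\; \int_{\Bhat(z_0,\, \widehat{R} + (1+\widehat{C_0})\widehat{r})} \widehat{f}^2 \, d\widehat{\Gamma}(\widehat{\varphi}).
$$
Applying auxiliary $\CSJ$ to the right side, the jump-energy piece $\int_{\widehat{U}}\int_{\widehat{U}^*}(\widehat{f}(z) - \widehat{f}(z'))^2\,\Jhat(dz, dz')$ collapses (by fibrewise constancy of $\widehat{f}$) to $\int_U \int_{U^*}(f(x)-f(y))^2 J(dx, dy)$, and the $L^2$ piece equals $\int_{B(x_0, R + (1+C_0)r)} f^2 \, d\mu$ by \eqref{integralsAgree}. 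With $C_1 := \widehat{C_1}$ and $C_2 := \widehat{C_2}$, this delivers the desired $\CSJ$ inequality on $M$.

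The principal technical obstacles are (i) showing $\varphi \in \mathcal{F}$ and $\widehat{f} \in \Fhat$ (rather than merely $\Fmax$ and $\Fhatmax$), which requires approximation by the core $\Dhat$ of Definition \ref{DhatDef} and is exactly the domain-lifting material of the referenced appendix; and (ii) the boundary regime $R \in [D_{x_0}, D_{x_0} + C_0 r]$, where $\widehat{R} - \widehat{C_0}\widehat{r} < D_{x_0}$ so $\widehat{U}^*$ picks up part of the fibre $W_{x_0}$ and the projection to $M$ of the jump-energy term includes an extra cross contribution of the shape $\int_U (f(x) - f(x_0))^2 J(x, x_0) \mu(x_0) \mu(dx)$. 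This extra term must be absorbed into the $\int_U\int_{U^*} (f-f)^2 J$ and $\phi(r)^{-1} \int_B f^2 d\mu$ parts of $\CSJ$, using $\Jleq$ to control $J(\cdot, x_0)$ and $\VD$ (together with $d(x, x_0) \asymp R$ for $x \in U$) to control $\mu(x_0)$; this is precisely where the hypotheses $\VD$ and $\Jleq$ are essential. Analogous bookkeeping completes the $\varphi = \delta_{x_0}$ verification in the degenerate case.
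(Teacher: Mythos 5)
Your proposal matches the paper's own proof (given in the appendix under Proposition \ref{CsjBackwardsProp}, using Lemmas \ref{CsjBackwardsSmallRLemma} and \ref{CsjBackwardsBigRLemma}) in all essentials: lift $f$ to $f\circ\pi$ via Proposition \ref{DomainProp}(a), apply auxiliary $\CSJ$, push the cut-off down by fibre averaging $\psi\mapsto\psi_\mean$ (which the paper shows lies in $\mathcal{F}$ via Proposition \ref{DomainProp}(b)), bound $\int_{B_3}f^2\,d\Gamma(\psi_\mean)$ by $\int_{S_3}(f\circ\pi)^2\,d\Gamma(\psi)$ via Jensen (i.e.\ Lemma \ref{differencesSquaredLem}), split off and control the extra $W_{x_0}$-slab in $V^*$ with $\Jleq$ and $\VD$, and handle $R\leq D_{x_0}$ separately with $\varphi=\delta_{x_0}$. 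One minor imprecision: you write that the extra cross-fibre term is absorbed into both the jump-energy and the $L^2$ pieces and that one can take $C_2:=\widehat{C_2}$, but the paper simply dumps the whole extra term into the $\phi(r)^{-1}\int_{B_3}f^2\,d\mu$ piece, which forces the original $C_2$ to be strictly larger than $\widehat{C_2}$ (it picks up a $2C_JC_D$-type contribution); this is purely a bookkeeping point and does not affect the structure of the argument.
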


\LongVersionShortVersion
{ 
For a proof of Proposition \ref{CsjBackwardsProp}, see Appendix \ref{CsjBackwardsAppendix}.
We do not use Proposition \ref{CsjBackwardsProp} anywhere in the proof of our main results.
}
{ 
For a proof of Proposition \ref{CsjBackwardsProp}, see \cite[Appendix B.8]{longversion}. We do not use Proposition \ref{CsjBackwardsProp} anywhere in the proof of our main results.
} 

\subsection*{Stability of \texorpdfstring{$\HK$}{TEXT}} \label{StabHkSection}

\begin{figure}
\begin{framed}
\captionof{figure}{Proof of Theorem \ref{ourMainResult1} (stable characterization of $\HK$)}
\label{HkStableDiagram}
    \centering
    \begin{tikzpicture}
        
        \node (M) at (0, 2) {\underline{$\mathbf(M, d, \mu, \mathcal{E}, \mathcal{F})$}};
        \node (M') at (10, 2) {{\underline{$(\Mhat, \dhat, \muhat, \Ehat, \Fhat)$}}};
        
        \node (hk) at (0, 0) {$\HK$};
        \node (hk') at (10, 0) {$\HK$};
        
        \node (scsj) at (0, -2) {$\J+\SCSJ$};
        \node (scsj') at (10, -2) {$\J+\SCSJ$};
        
        \node (csj) at (0, -4) {$\J+\CSJ$};
        \node (csj') at (10, -4) {$\J+\CSJ$};
        
        \node (qe) at (0, -6) {$\J + \QE$};
        \node (e') at (10, -6) {$\J + \E$};
        
        \path[->] (hk) edge node[right] {Prop \ref{HkImpliesJScsj}} (scsj);
        
        \path[->] (scsj) edge node[right] {by definition} (csj);
        
        \path[->] (hk') edge node[below] {\eqref{HkBackwards}} (hk);
        
        \path[->] (csj) edge node[below] {\eqref{JphiBothways} and \eqref{CsjForwards}} (csj');
        
        \path[<->] (qe) edge node[below] {\eqref{JphiBothways} and \eqref{EscapeBothways}} (e');
        
        \path[<->] (hk') edge node[left] {\cite[Theorem 1.13]{ckw}} (scsj');
        \path[<->] (scsj') edge node[left] {\cite[Theorem 1.13]{ckw}} (csj');
        \path[<->] (csj') edge node[left] {\cite[Theorem 1.13]{ckw}} (e');
    \end{tikzpicture}
\end{framed}
\end{figure}

\begin{figure}
\begin{framed}
\captionof{figure}{Proof that $\UHKD + \QE + \Jleq \Longrightarrow \UHK$}
\label{UhkdImpliesUhkDiagram}
    \centering
    \begin{tikzpicture}
        
        \node (M) at (0, 2) {\underline{$\mathbf(M, d, \mu, \mathcal{E}, \mathcal{F})$}};
        \node (M') at (10, 2) {{\underline{$(\Mhat, \dhat, \muhat, \Ehat, \Fhat)$}}};
        
        \node (uhkd*) at (0, 0) {$\UHKD + \QE + \Jleq$};
        
        \node (uhkd*hat) at (10, 0) {$\UHKD + \E + \Jleq$};
        
        \node (uhk) at (0, -2) {$\UHK$};
        
        \node (uhkhat) at (10, -2) {$\UHK$};
        
        \path[->] (uhkd*) edge node[above] {\eqref{UhkdForwards}, \eqref{EscapeBothways}, and \eqref{JleqBothways}} (uhkd*hat);
        
        \path[->] (uhkd*hat) edge node[left] {\cite[Prop 5.3]{ckw}} (uhkhat);
        
        \path[->] (uhkhat) edge node[below] {\eqref{HkBackwards}} (uhk);
    \end{tikzpicture}
\end{framed}
\end{figure}

\begin{figure}
\begin{framed}
\captionof{figure}{Proof that $\PHI \Longrightarrow \PI+\Jleq+\SCSJ+\UJS$}
\label{PhiImpliesDiagram}
    \centering
    \begin{tikzpicture}
        
        \node (phi) at (0, 1) {$\boxed{\PHI}$};
        
        \node (uhkd) at (-6, -2) {$\UHKD$};
        
        \node (ndl) at (0, -2) {$\NDL$};
        
        \node (ujs) at (4, -2) {$\boxed{\UJS}$};
        
        \node (qe) at (-2.6, -4) {$\QE$};
        
        \node (pi) at (-4, -4) {$\boxed{\PI}$};
        
        \node (jleqParent) at (4, -4) {$\NDL + \UJS$};
        
        \node (jleq) at (4, -6) {$\boxed{\Jleq}$};
        
        \node (3things) at (-4, -10) {$\UHKD + \QE + \Jleq$};
        
        \node (conservative) at (4, -10) {$(\mathcal{E}, \mathcal{F})$ is conservative};
        
        \node (uhk) at (-4, -12) {$\UHK$};
        
        \node (2things) at (0, -14) {$\UHK$ + $(\mathcal{E}, \mathcal{F})$ is conservative};
        
        \node (scsj) at (0, -16) {$\boxed{\SCSJ}$};
        
        \path[->] (phi) edge node[left] {\cite[Prop 3.1]{ckw2}} (uhkd);
        
        \path[->] (phi) edge node[left] {\cite[Prop 3.2]{ckw2}} (ndl);
        
        \path[->] (phi) edge node[right] {\cite[Prop 3.3]{ckw2}} (ujs);
        
        \path[->] (ndl) edge node[right] {Prop \ref{NdlImpliesQe}} (qe);
        
        \path[->] (ndl) edge node[left] {\cite[Prop 3.5.i]{ckw2}} (pi);
        
        \path[->] (ndl) edge node[left] {} (jleqParent);
        
        \path[->] (ujs) edge node[right] {} (jleqParent);
        
        \path[->] (jleqParent) edge node[right] {\cite[Cor 3.4]{ckw2}} (jleq);
        
        \path[->] (uhkd) edge node[left] {} (3things);
        
        \path[->] (qe) edge node[right] {} (3things);
        
        \path[->] (jleq) edge node[left] {} (3things);
        
        \path[->] (ndl) edge node[left] {\cite[Prop 2.4]{ckw2}} (conservative);
        
        \path[->] (3things) edge node[right] {Figure \ref{UhkdImpliesUhkDiagram}} (uhk);
        
        \path[->] (uhk) edge node[right] {} (2things);
        
        \path[->] (conservative) edge node[right] {} (2things);
        
        \path[->] (2things) edge node[right] {\cite[Prop 5.3]{ckw}} (scsj);
        
    \end{tikzpicture}
\end{framed}
\end{figure}

\begin{figure}[ht!]
\begin{framed}
\captionof{figure}{Proof of Theorem \ref{ourMainResult3} (stable characterization of $\PHI$)}
\label{PhiStableDiagram}
    \centering
    \begin{tikzpicture}
        \node (M) at (0, 2) {\underline{$\mathbf(M, d, \mu, \mathcal{E}, \mathcal{F})$}};
        \node (M') at (9, 2) {\underline{$\mathbf(\Mhat, \dhat, \muhat, \Ehat, \Fhat)$}};
        
        \node (top) at (0, 0) {$\UHK + \NDL + \UJS$};
        \node (top') at (9, 0) {$\UHK + \NDL + \UJS$};
        
        \node (phi+) at (0, -2) {$\PHIplus$};
        
        \node (ndlujs) at (-2, -4) {$\NDL + \UJS$};
        \node (phi) at (2, -4) {$\PHI$};
        
        \node (ScsjThing) at (0, -6) {$\PI + \Jleq + \SCSJ + \UJS$};
        \node (bottom) at (0, -8) {$\PI + \Jleq + \CSJ + \UJS$};
        \node (bottom') at (9, -8) {$\PI + \Jleq + \CSJ + \UJS$};
        
        \path[->] (top) edge node[right] {\cite[Theorem 4.3]{ckw2}} (phi+);
        \path[<->] (phi+) edge node[left] {\cite[Prop 4.4]{ckw2}} (ndlujs);
        \path[->] (phi+) edge node[right] {by definition} (phi);
        \path[->] (phi) edge node[right] {Figure \ref{PhiImpliesDiagram}} (ScsjThing);
        \path[->] (ScsjThing) edge node[right] {by definition} (bottom);
        
        \path[->] (top') edge node[below] {\eqref{UhkBackwards}, \eqref{NdlBackwards}, and \eqref{UjsBothways}} (top);
        \path[->] (bottom) edge node[below] {\eqref{PoincareForward}, \eqref{JleqBothways}, \eqref{CsjForwards}, and \eqref{UjsBothways}} (bottom');
        
        \path[<->] (bottom') edge node[left] {\cite[Theorem 1.17]{ckw2}}  (top');
        
        \node (phr') at (9, -10) {$\PHR$};
        \node (phr) at (0, -10) {$\PHR$};
        \node (ehr) at (0, -12) {$\EHR$};
        
        \path[->] (bottom') edge node[left] {\cite[Prop 1.17]{ckw2}} (phr');
        \path[->] (phr') edge node[below] {\eqref{PhrBackward}} (phr);
        \path[->] (phr) edge node[right] {clear from definition} (ehr);
        
    \end{tikzpicture}
\end{framed}
\end{figure}

The implication $\HK \Longrightarrow \J + \SCSJ$ in \cite[Theorem 1.13]{ckw} does not require $\RVD$:

\begin{prop} \label{HkImpliesJScsj}
Suppose $(M, d, \mu, \mathcal{E}, \mathcal{F})$ satisfies Assumption \ref{BasicAssumptions}, $\VD$ holds, and $\phi$ is of regular growth. Then
\begin{equation*}
    \HK \Longrightarrow \J + \SCSJ.
\end{equation*}
\end{prop}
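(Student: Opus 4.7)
The plan is to verify, by careful bookkeeping through the proof of \cite[Theorem 1.13]{ckw}, that the chain $\HK \Longrightarrow \J + \E \Longrightarrow \J + \SCSJ$ uses only $\VD$ and the regular growth of $\phi$, never $\RVD$. No new ideas are required; the content of the proposition is that $\RVD$ was not actually needed in the forward direction of that theorem.

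First I would extract $\J$ from $\HK$ via the L\'evy system for $\{X_t\}$: for disjoint Borel sets $A, B$ with positive distance, the L\'evy system formula gives
\begin{equation*}
    \int_A \int_B J(x,y)\, \mu(dx)\,\mu(dy) = \lim_{t \downarrow 0} \frac{1}{t} \int_A \int_B p(t,x,y)\, \mu(dx)\,\mu(dy).
\end{equation*}
For $t \leq \phi(d(x,y)/2)$ the off-diagonal regime $q(t,x,y) = t/(V(x,d(x,y))\phi(d(x,y)))$ of \eqref{QtxyFormula} dominates, and the two-sided $\HK$ estimates yield $J(x,y) \asymp 1/(V(x,d(x,y))\phi(d(x,y)))$ for $\mu\times\mu$-a.e.\ distinct $(x,y)$. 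This step uses only the definition of $\HK$ and elementary properties of $\phi$, no volume growth beyond what is already packaged in $q(t,x,y)$.

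Second, I would derive $\E$ from $\HK$ and then apply the implication $\J + \E \Longrightarrow \SCSJ$ from \cite{ckw}. The upper bound $\Eleq$ follows by integrating $\UHK$ against $1_{B(x,r)}$ over $(0,\phi(r))$ and using $\VD$ to control $V(x,\phi^{-1}(t))$; the lower bound $\Egeq$ follows from $\LHK$ by a standard Dynkin-type argument comparing $p(t,x,y)$ with $p^{B(x,r)}(t,x,y)$, and also only requires $\VD$. For the final step, the construction of a cutoff function $\varphi$ via an equilibrium potential of an annulus, together with the energy estimate on $\int f^2\,d\Gamma(\varphi)$, proceeds using $\Jleq$ and $\Eleq$ together with $\VD$; a line-by-line reading shows $\RVD$ never enters.

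The main obstacle is purely the bookkeeping: \cite{ckw} cites a long chain of preliminary lemmas, and $\RVD$ does appear in several auxiliary results. The work is to certify, lemma by lemma, that each cited result used in the forward direction either proves its conclusion without $\RVD$ or admits an alternative proof using only $\VD$ and the $\HK$ estimates. I expect the only subtle point to be the proof that $\HK \Longrightarrow \Egeq$, where one must avoid the $\RVD$-based lower volume bounds sometimes used in intermediate steps; this can be done instead by exploiting the explicit polynomial form of $q(t,x,y)$ together with a direct application of the strong Markov property at $\tau_{B(x,r)}$.
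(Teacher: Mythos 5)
There is a genuine gap: your middle step, ``derive $\E$ from $\HK$,'' is not just hard to certify without $\RVD$ --- it is false in the setting of this proposition. Under Assumption \ref{BasicAssumptions} and $\VD$ alone, $M$ may contain atoms (this is the whole point of the paper), and $\Eleq$ can never hold on a space with atoms: if $x_0$ is an isolated atom and $0<r\leq D_{x_0}$, then $B(x_0,r)=\{x_0\}$ and $\mathbb{E}^{x_0}\tau_{B(x_0,r)}=1/v(x_0)>0$ is a constant, while $C\phi(r)\to0$ as $r\to0$ (see Proposition \ref{EphiNotPossible} in Appendix \ref{EphiNotPossibleAppendix}). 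Since $\HK$ does hold on such spaces (e.g.\ on graphs, by Theorem \ref{ourMainResult1}), the implication $\HK\Longrightarrow\Eleq$ cannot be true under $\VD$ alone; the standard argument you sketch (integrating $\UHK$ over $B(x,r)$ and making $V(x,r)/V(x,\phi^{-1}(t))$ small) is exactly where $\RVD$ is indispensable, and with atoms no repair is possible because the conclusion itself fails. This is precisely why the paper replaces $\E$ by $\QE$ in its main results, so a route through $\J+\E$ cannot establish the proposition.

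The paper's proof avoids $\E$ entirely and is much shorter: under $\VD$, $\HK\Longrightarrow\J$ by \cite[Proposition 3.3]{ckw}; $\LHK$ implies that $(\mathcal{E},\mathcal{F})$ is conservative by \cite[Proposition 3.1]{ckw}; and $\UHK$ together with conservativeness implies $\SCSJ$ by \cite[Proposition 3.6]{ckw}. Each of these cited propositions is stated under $\VD$ (no $\RVD$), so no lemma-by-lemma bookkeeping is needed. Your first step (extracting $\J$ from $\HK$ via the L\'evy system) is fine and matches the first citation; if you want to salvage your outline, replace the passage through $\E$ by the conservativeness route, or at most aim for $\QE$ rather than $\E$ (noting that $\QE$ is not needed for $\SCSJ$ in the paper's argument).
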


\begin{proof}
Under $\VD$,
\begin{equation*}
    \HK \Longrightarrow \J \qquad\mbox{(by \cite[Proposition 3.3]{ckw})}
\end{equation*}
and
\begin{align*}
    \HK &\Longleftrightarrow \UHK + \LHK &\qquad\mbox{(by definition)}\\
    & \Longrightarrow \UHK + (\mbox{$(\mathcal{E}, \mathcal{F})$ is conservative}) &\qquad\mbox{(by \cite[Proposition 3.1]{ckw})}\\
    &\Longrightarrow \SCSJ &\qquad\mbox{(by \cite[Proposition 3.6]{ckw})}.
\end{align*}
\end{proof}

This allows us to prove Theorem \ref{ourMainResult1}.

\begin{proof}[Proof of Theorem \ref{ourMainResult1}]
See Figure \ref{HkStableDiagram}. The use of the auxiliary space is justified because if $(M, d, \mu, \mathcal{E}, \mathcal{F})$ satisfies any of $\HK$, $\J+\SCSJ$, $\J+\CSJ$, or $\J+\QE$, then $(\mathcal{E}, \mathcal{F})$ admits a jump kernel. By Proposition \ref{CkwAppliesToAuxiliaryProp}, the auxiliary space satisfies Assumption \ref{BasicAssumptions}, $\VD$, and $\RVD$ so the main result of \cite{ckw} applies to the auxiliary space.
\end{proof}

\subsection*{Stability of \texorpdfstring{$\PHI$}{TEXT}}

For the proof of Theorem \ref{ourMainResult3}, it will help to first establish that $\NDL \Longrightarrow \QE$.
We obtain this by a minor change to the proof that $\NDL \Longrightarrow \E$ (under $\VD$ and $\RVD$) in \cite[Proposition 3.5]{ckw2}.

\begin{prop} \label{NdlImpliesQe} Suppose $(M, d, \mu, \mathcal{E}, \mathcal{F})$ satisfies Assumption \ref{BasicAssumptions}, $\VD$ and $\QRVD$ hold, and $\phi$ is of regular growth. Then $\NDL \Longrightarrow \QE$.
\end{prop}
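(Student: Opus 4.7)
The plan is to follow the proof of \cite[Proposition 3.5]{ckw2}, which establishes $\NDL \Longrightarrow \E$ under $\VD$ and $\RVD$, and carefully substitute $\QRVD$ for $\RVD$ at every step where the latter is invoked. Since $\QRVD$ differs from $\RVD$ only in that the volume comparison is dropped at scales $r < D_x$, and since $\QE$ (unlike $\E$) only constrains escape times at radii $r \geq D_x$, we expect every scale arising in the CKW proof to stay above $D_x$ (up to a fixed multiplicative factor), so the modification should be essentially mechanical.

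For the lower bound $\QEgeq$, the plan is to follow the standard argument, which does not use $\RVD$ at all. Fix $x \in M$ and $r > 0$. Applying $\NDL$ with $B = B(x, r)$ and $t = \phi(\epsilon r)$ gives, for every $y \in B(x, \epsilon^2 r) \cap M_0$,
\[
p^B(t, x, y) \geq \frac{c_1}{V(x, \epsilon r)}.
\]
Integrating against $\mu$ over $B(x, \epsilon^2 r)$ and using $\VD$ to compare $V(x, \epsilon^2 r)$ with $V(x, \epsilon r)$, I obtain $\mathbb{P}^x(\tau_B > \phi(\epsilon r)) \geq c_2$, hence $\mathbb{E}^x \tau_B \geq c_3 \phi(r)$ by the regular growth of $\phi$. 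This actually yields the stronger $\Egeq$ and in particular $\QEgeq$, and it holds for all $r > 0$ with no atom-dependent restriction.

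The main obstacle will be the upper bound $\QEleq$. The CKW argument proceeds by producing constants $K > 0$ and $\lambda \in (0, 1)$ such that
\[
\mathbb{P}^x(\tau_{B(x, r)} > K\phi(r)) \leq \lambda,
\]
and then iterating via the strong Markov property to conclude $\mathbb{E}^x \tau_{B(x, r)} \leq K\phi(r)/(1-\lambda)$. The role of $\RVD$ is to furnish a lower bound on a ratio $V(x, \kappa_1 r)/V(x, \kappa_2 r)$ for some fixed $\kappa_1 > \kappa_2 > 0$ that appear in the geometric comparison driving the tail estimate. My plan is to locate each such use of $\RVD$ in the CKW argument and replace it with an application of $\QRVD$; the replacement is valid precisely when the smaller radius $\kappa_2 r$ exceeds $D_x$.

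The careful bookkeeping point, and the place I expect to need the most attention, is to ensure that the restriction $r \geq D_x$ in the definition of $\QE$ is enough to keep every scale arising in the proof above $D_x$ as well. Iterating $\QRVD$ (at the cost of adjusting the constants $\ell$ and $c$ of Definition \ref{QrvdDefinition}) gives a uniform lower bound $V(x, Lr)/V(x, r) \geq C$ at any specified ratio $L > 1$, valid for all $r \geq D_x$; and regular growth of $\phi$ ensures that the auxiliary radii $\phi^{-1}(K\phi(r))$ and similar expressions remain comparable to $r$, hence $\geq D_x$. Once this is verified step by step, the CKW argument transfers without substantive change and delivers $\QEleq$, completing the proof of $\QE$.
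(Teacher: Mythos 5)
Your proposal is correct and takes essentially the same route as the paper: both establish $\Egeq$ by the standard $\NDL$ argument that uses only $\VD$, and both obtain $\QEleq$ by running the $\Eleq$ argument of \cite[Proposition 3.5]{ckw2} only at scales $r$ large enough (relative to $D_x$) that $\QRVD$ can stand in for $\RVD$, which is precisely the range $\QE$ requires. The paper phrases the second step as restricting the CKW argument ``to sufficiently large $r$'' rather than explicitly substituting $\QRVD$ term-by-term, but the underlying observation is the one you make.
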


\begin{proof}
Assume $\NDL$.
The proof of $\Egeq$ is the same as in \cite[Proposition 3.5.ii]{ckw2} (and only uses $\VD$). The proof of $\QEleq$ follows from applying the same argument that \cite{ckw2} uses to prove $\Eleq$ (assuming $\VD$ and $\RVD$), but only to sufficiently large $r$ (which is all that is needed for $\QEleq$).
\end{proof}

\begin{proof}[Proof of Theorem \ref{ourMainResult3}]
In Figure \ref{UhkdImpliesUhkDiagram}, we prove that $\UHKD + \QE + \Jleq \Longrightarrow \UHK$. Then in Figure \ref{PhiImpliesDiagram}, we use this to show that $\PHI \Longrightarrow \PI + \Jleq + \CSJ + \UJS$. We complete the argument with Figure \ref{PhiStableDiagram}.
\end{proof}


\section{Escape times} \label{EscapeSection}

In this section, we prove \eqref{EleqBothways}, \eqref{EgeqBackwards}, and \eqref{EgeqForwards}: the implications from Proposition \ref{MasterProp} relating to escape times.

Let us start by calculating the escape times $\mathbb{E}^{z_0}\widehat{\tau}_{\Bhat(z_0, r)}$ for all $z_0 \in \Mhat$, $r>0$. For large values of $r$ (larger than $D_{\pi(z_0)}$), we will use \eqref{couple} to compare $\mathbb{E}^{z_0} \widehat{\tau}_{\Bhat(z_0, r)}$ to an escape time on the original space. For small $r$ (smaller than $D_{\pi(z_0)}$), we use the ultrametric property \eqref{WxUltrametricBalls} on $W_{\pi(z_0)}$ to show that $\widehat{\tau}_{\Bhat(z_0, r)}$ is exponentially-distributed, and then we compute its parameter.

Recall from \eqref{couple} that $\{\pi(\Xhat_t)\} \overset{d}{=} \{X_t\}$. In other words, we can couple the jump processes $\{X_t\}$ and $\{\Xhat_t\}$ so that $\{\Xhat_t\}$ jumps around on $W_x$ while $\{X_t\}$ is being held at $x$, and $\{\Xhat_t\}$ jumps from $W_x$ to $W_y$ whenever $\{X_t\}$ jumps from $x$ to $y$. If the jump processes are coupled this way, then $ \widehat{\tau}_{\Bhat(z_0, r)} = \tau_{B(x, r)}$ whenever $x \in M$, $z_0 \in W_x$, $r>0$, and $r \geq D_x$. Therefore,
\begin{equation}\label{escapeTimePropBigR}
    \mathbb{E}^{z_0} \widehat{\tau}_{\Bhat(z_0, r)} = \mathbb{E}^x \tau_{B(x, r)} \qquad\mbox{whenever $x \in M$, $z_0 \in W_x$, and $r \geq D_x$}.
\end{equation}
If on the other hand we have $r<D_{\pi(z_0)}$, the following lemma tells us the distribution and expectation of $\mathbb{E}^{z_0}\widehat{\tau}_{\Bhat(z_0, r)}$.
Recall that we say an exponential random variable $\xi$ has \textit{rate} $\lambda$ if $\mathbb{E}\xi = 1/\lambda$. Also recall how the notation $v(x)$ was defined, in \eqref{v(x)Formula}.

\begin{lemma}\label{escapeTimePropSmallR}
Suppose $(M, d, \mu, \mathcal{E}, \mathcal{F})$ satisfies Assumptions \ref{BasicAssumptions}, $\phi$ is of regular growth, $(\mathcal{E}, \mathcal{F})$ admits a jump kernel, and $M=M_A \cup M_C$. Suppose $x \in M_A$, $z_0=(x, w_0) \in W_x$, and $0<r \leq D_x$ Let $m$ be the non-negative integer such that $r \in \left( d^{D_x}_{m+1}, d^{D_x}_m \right]$ (where $d^{D_x}_{m+1}$ and $d^{D_x}_{m}$ are as defined in \eqref{dDm}), or equivalently,
\begin{equation} \label{PhirComparedToPhidx}
    2^{-(m+1)} \phi(D_x) < \phi(r) \leq 2^{-m} \phi(D_x).
\end{equation}
Then

(a) The escape time $\widehat{\tau}_{\Bhat(z_0, r)}$ is exponentially-distributed, with rate
\begin{equation*}
    \frac{v(x) \phi(D_x) + 2^{m+1}-2}{\phi(D_x)}.
\end{equation*}

(b) The expected value of $\widehat{\tau}_{\Bhat(z_0, r)}$ is bounded above and below by
\begin{equation}\label{EscapeTimeExponentialBounds}
    \frac{2^m}{v(x) \phi(D_x) + 2^{m+1}-2} \phi(r) \leq \mathbb{E}^{z_0} \widehat{\tau}_{\Bhat(z_0, r)} < \frac{2^{m+1}}{v(x) \phi(D_x) + 2^{m+1}-2} \phi(r).
\end{equation}
\end{lemma}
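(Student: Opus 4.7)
The approach is to exploit the ultrametric structure of $W_x$ to show that the ball $U := \Bhat(z_0, r)$ looks identical from every point inside it, so the exit rate out of $U$ is a constant $\lambda$ independent of the starting point, which then forces $\widehat{\tau}_U$ to be exponential$(\lambda)$. First I would use \eqref{BhatFormula}, \eqref{VtildeIntermsofM}, and the hypothesis $d^{D_x}_{m+1} < r \leq d^{D_x}_m$ to identify $U$ explicitly as $\{x\} \times E^{w_0}_m$. The ultrametric property \eqref{WxUltrametricBalls} is what makes the proof work: for every $z = (x, w) \in U$ we have $E^w_m = E^{w_0}_m$, so both the ball and its complement $W \setminus E^w_m$ (relative to the $W$-fiber) are the same as seen from $w$ and from $w_0$.

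The key calculation is to evaluate the exit rate
$$\lambda(z) := \int_{\Mhat \setminus U} \Jhat(z, z') \, \muhat(dz') \qquad \text{for } z = (x,w) \in U,$$
and show it equals $[v(x)\phi(D_x) + 2^{m+1}-2]/\phi(D_x)$, independent of $z$. I would split the integral into jumps leaving $W_x$ and jumps staying inside $W_x$. The first piece, via the definition \eqref{JhatFormula} of $\Jhat$ together with the pushforward identity \eqref{integralsAgree}, collapses to $\int_{M \setminus \{x\}} J(x, y)\, \mu(dy) = v(x)$. For the second piece, write $W \setminus E^w_m = \bigsqcup_{j=1}^m A^w_j$; on each annulus $A^w_j$ the formula \eqref{JhatFormulaIntermsofM} gives $\Jhat = 4^j/(\mu(x)\phi(D_x))$ while $\nu(A^w_j) = 2^{-j}$ by \eqref{NuOnSpheres}, so the $\muhat$-factor of $\mu(x)$ cancels and the sum becomes a finite geometric series $\sum_{j=1}^m 2^j/\phi(D_x) = (2^{m+1}-2)/\phi(D_x)$.

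The main obstacle is converting constancy of the exit rate into the exponential distribution of $\widehat{\tau}_U$, which is not immediate for a pure-jump process with an infinite total jump rate, as is the case here. I would address this via the L\'evy system of $\widehat{X}$: the counting process of jumps from $U$ to $U^c$ has predictable compensator $\int_0^{t \wedge \widehat{\tau}_U} \lambda(\widehat{X}_s)\,ds$, which by the previous paragraph equals the deterministic quantity $\lambda \cdot (t \wedge \widehat{\tau}_U)$. A standard thinning/exponential-martingale argument then identifies $\widehat{\tau}_U$ with the first event of a rate-$\lambda$ Poisson process, giving part (a). Part (b) is a one-line corollary: $\mathbb{E}^{z_0}\widehat{\tau}_U = 1/\lambda = \phi(D_x)/[v(x)\phi(D_x) + 2^{m+1}-2]$, and substituting the bounds $2^m \phi(r) \leq \phi(D_x) < 2^{m+1}\phi(r)$ implied by \eqref{PhirComparedToPhidx} produces \eqref{EscapeTimeExponentialBounds}.
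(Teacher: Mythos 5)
Your proposal is correct and follows essentially the same route as the paper: identify $\Bhat(z_0,r)=\{x\}\times E^{w_0}_m$, use the ultrametric property \eqref{WxUltrametricBalls} to see the exit rate is the same constant $\lambda=\bigl(v(x)\phi(D_x)+2^{m+1}-2\bigr)/\phi(D_x)$ from every point of the ball (splitting into jumps out of $W_x$, giving $v(x)$, and the geometric sum over the annuli $A^w_j$), and then take the reciprocal and use $2^m\phi(r)\leq\phi(D_x)<2^{m+1}\phi(r)$ for part (b). The only difference is that the paper passes from ``constant exit rate'' to ``exponential exit time'' by a one-line assertion, whereas you justify it via the L\'evy system/compensator argument, which is a legitimate and indeed more careful treatment of that step (the total jump rate within $W_x$ is genuinely infinite, so some such argument is needed).
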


\begin{proof}
Recall the notation $A^w_m$ for spheres (from $\eqref{AwmDef}$) and $E^w_m$ for balls (from \eqref{EwmDef}).

(a) By \eqref{BhatFormula}, $\Bhat(z_0, r) = \{x\} \times E^{w_0}_m$.
Let $z_1=(x, w_1)$ be any point in $\Bhat(z_0, r)$. By the ultrametric property \eqref{WxUltrametricBalls} on $W_x$, $\Bhat(z_0, r) = \Bhat(z_1, r)$. Therefore,
\begin{align*}
    \int_{\Mhat \setminus \Bhat(z_0, r)} \Jhat(z_1, z) \muhat(dz) &= \int_{\Mhat \setminus \Bhat(z_1, r)} \Jhat(z_1, z) \muhat(dz) \\
    &= \int_{\Mhat \setminus W_x} \Jhat(z_1, z) \muhat(dz) + \int_{W_x \setminus \Bhat(z_1, r)} \Jhat(z_1, z) \muhat(dz)\\
    &= \int_{M \setminus \{x\}} J(y, x) \mu(dy) + \sum_{j=1}^m \int_{\{x\} \times A^{w_1}_j} \Jhat(z_1, z) \muhat(dz) &\mbox{(by \eqref{JhatFormula} and \eqref{integralsAgree})}\\
    &= v(x) + \sum_{j=1}^m \mu(x) \nu(A^{w_1}_j) \cdot \frac{4^j}{\mu(x) \phi(D_x)} &\mbox{(by \eqref{v(x)Formula} and \eqref{JhatFormulaIntermsofM})} \\
    &= v(x) + \sum_{j=1}^m 2^{-j} \cdot \frac{4^j}{\phi(D_x)} &\mbox{(by \eqref{NuDef})}\\
    &= v(x) + \frac{1}{\phi(D_x)} \sum_{j=1}^m 2^j = \frac{v(x) \phi(D_x) + 2^{m+1}-2}{\phi(D_x)}
\end{align*}
In the third equality, we use the fact that $W_x \setminus \Bhat(z_1, r) = \bigcup_{j=1}^m \left( \{x\} \times A^{w_1}_m \right)$ because $\Bhat(z_0, r) = E^{w_1}_m$. The quantity $\int_{\Mhat \setminus \Bhat(z_0, r)} \Jhat(z_1, z) \muhat(dz)$ should be thought of as the rate of jumps outside $\Bhat(z_0, r)$ from $z_1$. Since it is the same for all $z_1 \in \Bhat(z_0, r)$, we can conclude that $\widehat{\tau}_{\Bhat(z_0, r)}$ is exponentially distributed, with rate $\left( v(x) \phi(D_x) + 2^{m+1}-2\right)/\phi(D_x)$.

(b)
Since the mean of an exponential random variable is the reciprocal of its rate,
\begin{equation} \label{EscapePropExponential}
    \mathbb{E}^{z_0} \widehat{\tau}_{\Bhat(z_0, r)} = \frac{\phi(D_x)}{v(x) \phi(D_x) + 2^{m+1}-2}.
\end{equation}
By \eqref{PhirComparedToPhidx},
\begin{equation}\label{EscapeProp1}
    2^m \phi(r) \leq \phi(D_x) < 2^{m+1} \phi(r).
\end{equation}
Equation \eqref{EscapeTimeExponentialBounds} follows from plugging \eqref{EscapeProp1} into \eqref{EscapePropExponential}.
\end{proof}

Now that we understand $\mathbb{E}^z \widehat{\tau}_{\Bhat(z_0, r)}$ for all $z_0 \in \Mhat$ and $r>0$, we can prove \eqref{EleqBothways}, \eqref{EgeqBackwards}, and \eqref{EgeqForwards}. The proofs of \eqref{EgeqBackwards} and the $\Longleftarrow$ direction of \eqref{EleqBothways} are direct applications of \eqref{escapeTimePropBigR}. For the $\Longrightarrow$ direction of \eqref{EleqBothways}, we also use the upper bound from Lemma \ref{escapeTimePropSmallR}(b) to handle small values of $r$. For \eqref{EgeqForwards}, we use the lower bound from Lemma \ref{escapeTimePropSmallR}(b) to handle small values of $r$, but this is only useful if the set $\left\{ v(x) \phi(D_x) \right\}_{x \in M_A}$ is bounded above. Therefore, we also need to use \eqref{bowtie1} to control $v(x) \phi(D_x)$. This is where the additional assumption of $\VD$ and $\Jleq$ in \eqref{EgeqForwards} comes from.

\begin{proof}[Proof of Proposition \ref{MasterProp}, implication \eqref{EleqBothways} ($\Longleftarrow$ direction)]
Suppose $\Eleq$ holds for the auxiliary space.\\
There exists a $C>0$ such that $\mathbb{E}^{z_0} \widehat{\tau}_{\Bhat(z_0, r)} \leq C\phi(r)$ for all $z_0 \in \Mohat$, $r>0$. Fix a point $x \in M_0$ and a positive $r \geq D_x$. If $z_0$ is some representative of $W_x$, then by \eqref{escapeTimePropBigR},
\begin{equation*}
    \mathbb{E}^x \tau_{B(x, r)} = \mathbb{E}^{z_0} \widehat{\tau}_{\Bhat(z_0, r)} \leq C\phi(r).
\end{equation*}
Thus, $\QEleq$ holds for the original space.
\end{proof}

\begin{proof}[Proof of Proposition \ref{MasterProp}, implication \eqref{EgeqBackwards}]
Suppose $\Egeq$ holds for the auxiliary space. There exists a $c>0$ such that $\mathbb{E}^{z_0} \widehat{\tau}_{\Bhat(z_0, r)} \geq c\phi(r)$ for all $z_0 \in \Mohat$, $r>0$. Fix a point $x \in M_0$ and a positive $r \geq D_x$. If $z_0$ is some representative of $W_x$, then by \eqref{escapeTimePropBigR},
\begin{equation*}
    \mathbb{E}^x \tau_{B(x, r)} = \mathbb{E}^{z_0} \widehat{\tau}_{\Bhat(z_0, r)} \geq c\phi(r).
\end{equation*}
Thus, $\QEgeq$ holds for the original space.
\end{proof}

\begin{proof}[Proof of Proposition \ref{MasterProp}, implication \eqref{EleqBothways} ($\Longrightarrow$ direction)]
Suppose $\QEleq$ holds for the original space.\\ There exists a $C>0$ such that $\mathbb{E}^{z_0} \tau_{B(x, r)} \leq C\phi(r)$ for all $x \in M_0$ and all positive $r \geq D_x$.

Fix $z_0 \in \Mohat$ and $r>0$. Let $x=\pi(z_0)$.
If $r \geq D_x$, then by \eqref{escapeTimePropBigR},
\begin{equation*}
    \mathbb{E}^{z_0} \widehat{\tau}_{\Bhat(z_0, r)} = \mathbb{E}^x \tau_{B(x, r)} \leq C \phi(r).
\end{equation*}
Suppose instead that $r \leq D_x$. Let $m$ be the non-negative integer such that $2^{-(m+1)} \phi(D_x) < \phi(r) \leq 2^{-m} \phi(D_x)$. We will handle the cases of $m=0$ and $m \geq 1$ separately. If $m=0$, then $\widehat{\tau}_{\Bhat(z_0, r))} \leq \widehat{\tau}_{W_x}$, since the process escapes $\Bhat(z_0, r)$ before it escapes $W_x$, so
\begin{align*}
    \mathbb{E}^{z_0} \widehat{\tau}_{\Bhat(z_0, r)} &\leq \mathbb{E}^{z_0} \widehat{\tau}_{W_x} = \mathbb{E}^{z_0} \widehat{\tau}_{\Bhat(z_0, D_x)}\\
    &= \mathbb{E}^x \tau_{B(x, D_x)} &\mbox{(by \eqref{escapeTimePropBigR})}\\
    &\leq C \phi(D_x) = C \frac{\phi(D_x)}{\phi(r)} \phi(r) \leq 2C \phi(r).
\end{align*}
If $m \geq 1$, then $\frac{2^{m+1}}{2^{m+1} - 2} \leq \frac{4}{2} = 2$, so
\begin{align*}
    \mathbb{E}^{z_0} \widehat{\tau}_{\Bhat(z_0, r)} &\leq \frac{2^{m+1}}{v(x) \phi(D_x) + 2^{m+1} - 2} \phi(r) &\mbox{(by Lemma \ref{escapeTimePropSmallR}(b))}\\
    &\leq \frac{2^{m+1}}{2^{m+1} - 2} \phi(r) = 2\phi(r).
\end{align*}

Thus, $\Eleq$ holds for the auxiliary space.
\end{proof}

\begin{proof}[Proof of Proposition \ref{MasterProp}, implication \eqref{EgeqForwards}]
Suppose $\QEgeq$, $\VD$, and $\Jleq$ all hold for the original space. By $\QEgeq$, there exists a $c>0$ such that $\mathbb{E}^{z_0} \tau_{B(x, r)} \geq c\phi(r)$ for all $x \in M_0$ and all positive $r \geq D_x$. Since $\VD$ and $\Jleq$ hold, by \eqref{bowtie1}, there exists a $C_{\mathcal{J}}>0$ such that $v(x) \phi(D_x) \leq C_{\mathcal{J}}$ for all $x \in M_A$.

Fix $z_0 \in \Mohat$ and $r>0$. Let $x=\pi(z_0)$.
If $r \geq D_x$, then by \eqref{escapeTimePropBigR},
\begin{equation*}
    \mathbb{E}^{z_0} \widehat{\tau}_{\Bhat(z_0, r)} = \mathbb{E}^x \tau_{B(x, r)} \geq c \phi(r).
\end{equation*}
Suppose instead that $r \leq D_x$. Let $m$ be the non-negative integer such that $2^{-(m+1)} \phi(D_x) < \phi(r) \leq 2^{-m} \phi(D_x)$. Then
\begin{align} \label{EgeqForwardsSmallRCalculation}
\begin{split}
    \mathbb{E}^{z_0} \widehat{\tau}_{\Bhat(z_0, r)} &\geq \frac{2^m}{v(x) \phi(D_x) + 2^{m+1} - 2} \phi(r) \qquad\mbox{(by Lemma \ref{escapeTimePropSmallR}(b))}\\
    &= \frac{1}{2 + \left(v(x)\phi(D_x)-2\right) 2^{-m}} \phi(r)\\
    &\geq \frac{1}{2 + \left(v(x)\phi(D_x)\right) 2^{-m}} \phi(r)\\
    &\geq \frac{1}{2 + C_{\mathcal{J}} 2^{-m}} \phi(r) \geq \frac{1}{2+C_{\mathcal{J}}} \phi(r).
\end{split}
\end{align}
(We need not worry about any of the denominators from \eqref{EgeqForwardsSmallRCalculation} being non-positive, since $v(x) \phi(D_x) + 2^{m+1} - 2 > 2^{m+1} - 2 \geq 0$, and each denominator is replaced with something at least as large.)

Thus, $\Egeq$ holds for the auxiliary space.
\end{proof}

\section{The heat kernel of the auxiliary space} \label{HeatKernelSection1}

In this section, we derive the following explicit formula for the heat kernel of the jump process on the auxiliary space, in terms of the heat kernel of the jump process on the original space. Recall from \eqref{v(x)Formula} that for an $x \in M_A$, $v(x)$ denotes the rate at which $\{X_t\}$ leaves $x$. Recall from \eqref{AwmDef} that $A^w_m$ denotes the set of all $w' \in W$ such that $m$ is the first index of disagreement between $w$ and $w'$, and that $A^w_m$ is a sphere in the metric space $(W, \rho^D)$ for all $D$.

\begin{prop}\label{PhatIsTheHeatKernelProp}
Suppose $(M, d, \mu, \mathcal{E}, \mathcal{F})$ satisfies Assumption \ref{BasicAssumptions}, $\phi$ is of regular growth, $(\mathcal{E}, \mathcal{F})$ admits a jump kernel, and $M=M_A \cup M_C$. If $\{X_t\}$ has a heat kernel $p(t, x, y)$, then $\{\Xhat_t\}$ also has a heat kernel $\phat$, and it is given by the formula
\begin{equation}\label{PhatFormula}
    \phat(t, z, z') = \left\{
    \begin{matrix}
        p(t, \pi(z), \pi(z')) &:& \mbox{if $\pi(z) \neq \pi(z')$, or if $z = z' \in M_C$}\\
        \\
        p(t, x, x) + \frac{e^{-v(x) t}}{\mu(x)} \left( a(t, m, D_x) -1 \right) &:& \mbox{if $x \in M_A$, $z=(x, w)$, $z'=(x, w')$, $w' \in A^w_m$}\\
        \\
        p(t, x, x) + \frac{e^{-v(x) t}}{\mu(x)} \left( a(t, \infty, D_x) -1 \right) &:& \mbox{if $x \in M_A$ and $z=z' \in W_x$}
    \end{matrix}\right.
\end{equation}
where
\begin{equation*}
    a(t, m, D) := 1+\sum_{j=1}^{m-1} 2^{j-1} \exp \left( - \frac{(3 \cdot 2^j-2) t}{\phi(D)} \right) - 2^{m-1} \exp \left( - \frac{(3 \cdot 2^m-2) t}{\phi(D)} \right)
\end{equation*}
and
\begin{equation*}
    a(t, \infty, D) := \lim_{m \to \infty} a(t, m, D) = 1 + \sum_{j=1}^\infty 2^{j-1} \exp\left( - \frac{(3 \cdot 2^j -2) t)}{\phi(D)} \right).
\end{equation*}
\end{prop}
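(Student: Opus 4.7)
The plan is to define $\phat$ by the stated formula and verify the three heat-kernel axioms \eqref{HeatKernelDefinitionDistribution}--\eqref{HeatKernelDefinitionChapmanKolmogorov}. Symmetry \eqref{HeatKerelDefinitionSymmetry} is immediate: the case $\pi(z)\neq\pi(z')$ uses symmetry of $p$, and within each $W_x$ the formula depends on $(w,w')$ only through the symmetric index $m(w,w')$ or sits on the $z=z'$ diagonal. Once the distributional property \eqref{HeatKernelDefinitionDistribution} is proved, Proposition \ref{AeChapmanKolmogorov} gives Chapman--Kolmogorov \eqref{HeatKernelDefinitionChapmanKolmogorov} for $\muhat$-almost every $z'$; to extend to every $(z,z')\in \Mohat\times\Mohat$ (including the diagonal on each $W_x$, where the $a(t,\infty,D_x)$ formula arises as the $m\to\infty$ limit of $a(t,m,D_x)$, whose exponential tail guarantees convergence for any $t>0$) we directly compute $\int \phat(s,z,y)\phat(t,y,z')\,d\muhat(y)$ using the same splitting and spectral expansion developed below.

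By linearity, verifying \eqref{HeatKernelDefinitionDistribution} reduces to two complementary classes of test functions: (i) $f=g\circ\pi$ for bounded $g$ on $M$, and (ii) $f$ supported on a single $W_x$ with $x\in M_A$ and satisfying $\int f\,d\muhat=0$. Beyond the coupling \eqref{couple}, the key structural observation is that jumps into $W_x$ from outside land uniformly on $W_x$: for $z'\notin W_x$ and $z''\in W_x$, $\Jhat(z',z'')=J(\pi(z'),x)$ is independent of $z''$. Since $\nu$ is also stationary for the intra-$W_x$ jump chain, this implies that conditional on $\Xhat_t\in W_x$ together with at least one re-entry having occurred before time $t$, the distribution of $\Xhat_t$ is uniform on $W_x$. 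Moreover, the total rate of leaving $W_x$ from any $z''\in W_x$ is the constant $v(x)$, so killing-at-rate-$v(x)$ describes exit from $W_x$ exactly.

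For class (i), the coupling gives $\mathbb{E}^z f(\Xhat_t)=\int_M g(y)\,p(t,\pi(z),y)\,d\mu(y)$, and this matches the right-hand side once one verifies $\int_{W_x}\phat(t,z,z')\,d\muhat(z')=\mu(x)\,p(t,\pi(z),x)$ for every $z$; in the only non-trivial case, $\pi(z)=x\in M_A$, this reduces to the series identity $\sum_{m\geq 1} 2^{-m} a(t,m,D_x)=1$, a direct summation using $\nu(A^w_m)=2^{-m}$ and the explicit formula for $a$. For class (ii), the uniform-reentry property plus the mean-zero hypothesis kill the ``already re-entered'' contribution, so $\mathbb{E}^z f(\Xhat_t)=\mathbb{E}^z[f(\Xhat_t);T_1>t]$, where $T_1$ is the first exit from $W_x$; this is the transition-density integral of the intra-$W_x$ jump chain killed at rate $v(x)$. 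The ultrametric structure on $(W,\rho^{D_x})$ diagonalizes the unkilled intra-$W_x$ chain explicitly: the Haar-type functions $h_{\eta,m}$ (indicator of one half of $E^\eta_{m-1}$ minus indicator of the other, indexed by $m\geq 1$ and $\eta\in\{0,1\}^{m-1}$) form an orthogonal basis of $L^2(\nu)$, and a direct rate-bookkeeping computation (summing jump rates from a fixed $w$ into each annulus $A^w_k$) shows they are eigenfunctions of the generator with eigenvalue $\lambda_m=(3\cdot 2^m-2)/\phi(D_x)$. Expanding the Dirac density at $w$ in this basis and applying $e^{-(v(x)+\mathcal{L})t}$ term by term yields $e^{-v(x)t}\,a(t,m,D_x)/\mu(x)$ as the killed-chain density (with respect to $\muhat$) from $(x,w)$ to $(x,w')$ with $w'\in A^w_m$, which differs from $\phat(t,z,z')$ by the $z'$-independent constant $p(t,x,x)-e^{-v(x)t}/\mu(x)$; this constant integrates to zero against any mean-zero $f$, completing the class-(ii) verification.

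The main obstacle will be the class-(ii) spectral analysis: assembling the correct Haar-type basis on the ultrametric $(W,\rho^{D_x})$, computing the eigenvalues $\lambda_m$ by careful tallying of jump rates across each sphere $A^w_k$, and recognizing the resulting exponential sum over $m$ as exactly the series $a(t,m,D_x)$ that appears in the statement. Once this identification is in hand, every other step---symmetry, the $\nu$-normalization identity for class (i), and the pointwise extension of Chapman--Kolmogorov via the same spectral expansion---reduces to a direct computation using the explicit formulas.
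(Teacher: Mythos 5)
Your proposal is correct, and its outer structure (couple via $\pi$, split according to whether $\Xhat$ has left $W_x$, exploit that re-entry and killing both act uniformly on $W_x$) matches the paper's. Where you genuinely diverge is the key intra-$W_x$ computation. The paper does \emph{not} do any spectral analysis: it introduces an auxiliary ``refreshings'' process $Z^x_t$ on $W_x$, driven by independent Poisson clocks $N^m$ at rates $\lambda_m$ (namely $4/\phi(D_x)$ for $m=0$ and $3\cdot 2^m/\phi(D_x)$ for $m\ge1$), where the $m$-clock ringing resets the process to a uniform point on its current $m$-cell. The paper verifies this has the right jump kernel, then obtains $\mathbb{P}\bigl(Z^x_t\in\{x\}\times A^w_m\bigr)=2^{-m}a(t,m,D_x)$ by conditioning on $J_t=\min\{m:T_m\le t\}$, where $T_m$ is the first ring of clock $m$; the exponentials $(3\cdot 2^j-2)/\phi(D_x)$ arise as the partial sums $\sum_{k<j}\lambda_k$. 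You instead diagonalize the generator of the intra-$W_x$ chain in a Haar basis; this works, and indeed a direct rate-tally (jump rate $2^k/\phi(D_x)$ into $A^w_k$, weighted $-2$ on the opposite half and $-1$ outside the parent cell) gives exactly the eigenvalue $(3\cdot 2^m-2)/\phi(D_x)$ you state, so the two derivations produce the same $a(t,m,D_x)$. The paper's construction is more self-contained and avoids any eigenfunction machinery; yours is more transparent if one already knows the ultrametric Haar calculus from \cite{um}, and having the explicit spectral expansion makes the pointwise Chapman--Kolmogorov extension on $W_x$ a direct termwise computation rather than the paper's route via Proposition \ref{AeChapmanKolmogorov} plus continuity of $w'\mapsto\phat(t,(x,w_0),(x,w'))$ (which is precisely why the $a(t,\infty,D_x)$ limit appears in \eqref{PhatFormula}). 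Your reduction to the two test-function classes $g\circ\pi$ and mean-zero-on-$W_x$, and the identity $\sum_{m\ge1}2^{-m}a(t,m,D_x)=1$ for class~(i), are both correct and slightly streamline the paper's case-by-case evaluation of $\mathbb{P}^z(\Xhat_t\in S)$. One small point to be explicit about in a full write-up: the claim that ``conditional on $\widehat{\tau}_{W_x}\le t$ and $\Xhat_t\in W_x$, the law of $\Xhat_t$ is uniform on $W_x$'' uses both that each re-entry lands uniformly (since $\Jhat(z',\cdot)$ is constant on $W_x$ for $z'\notin W_x$) \emph{and} that the intra-$W_x$ chain killed at the constant rate $v(x)$ preserves $\nu$ up to the scalar $e^{-v(x)s}$; you note both ingredients, but the composition over possibly multiple excursions should be spelled out.
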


Proposition \ref{PhatIsTheHeatKernelProp} will be the most important tool used to prove \eqref{UhkBackwards}-\eqref{LhkBackwards} and \eqref{NdlBackwards}-\eqref{UhkdForwards} in Section \ref{HeatKernelSection2}.

In order to prove Proposition \ref{PhatIsTheHeatKernelProp}, it will help to first consider a jump process $\{Z^x_t\}$, which we construct below, that takes values on $W_x$ for some $x \in M_A$, and has a jump kernel equal to that of $\{\Xhat_t\}$ restricted to $W_x \times W_x$. One can think of $\{Z^x_t\}$ as having the same law as $\{\Xhat_t\}$, if $\{\Xhat_t\}$ never left $W_x$. However, in order to better analyze heat kernels, we define $\{Z^x_t\}$ not in terms of its jump kernel, but by a different probabilistic interpretation, involving ``refreshings" in which every so often, the process jumps to a uniform point on a ball that it is currently in. We define $\{Z^x_t\}$ in these terms, then show that it has the same jump kernel as $\{\Xhat_t\}$ (if $\{\Xhat_t\}$ never left $W_x$), and then use the ``refreshings" interpretation of $\{Z^x_t\}$ to study the heat kernels of both processes.

Recall that $W$ is the set of infinite binary strings of the form $w = (w(1), w(2), w(3), \dots)$, and that $E^w_m$ is defined as the set of $w' \in W$ such that the first $m$ indices of $w$ and $w'$ agree. For all $x \in M_A$ and $w \in W$, let us refer to $\{x\} \times E^w_m$ as the \textit{$m$-cell} of $(x, w)$.

Fix $x \in M_A$. Let $\{\lambda_m\}_{m=0}^\infty$ be the following sequence (which depends on $x$):
\begin{equation} \label{LambdamDef}
    \lambda_m := \left\{ \begin{matrix}
        \frac{4}{\phi(D_x)} &:& \mbox{if $m=0$}\\
        \\
        \frac{3 \cdot 2^m}{\phi(D_x)} &:& \mbox{if $m \in \mathbb{N}$}.
    \end{matrix}\right.
\end{equation}
Let $\{Z^x_t\}$ be a $W_x$-valued Markovian jump process with the following rules:

\begin{itemize}
    \item For each $m$, let $\{N^m(t)\}_{t \geq 0}$ be a Poisson point process on $(0, \infty)$ with intensity $\lambda_m$. Whenever an event in $N^m$ occurs, the process $\{Z^x_t\}$ jumps to a uniform point on its current $m$-cell. We call this event a ``type-$m$ refreshing."
    \item The Poisson point processes and the destinations of the jumps are all independent.
\end{itemize}
In other words, for each $m \in \mathbb{N}$ there is a Poisson clock which rings with rate $\lambda_m$. The clocks associated with larger $m$ ring more frequently than those associated with smaller $m$. Whenever clock $m$ rings, if the current location is $(x, (w_1, w_2, w_3, \dots))$, all of the bits $w_j$ for $j \geq m$ are reset, uniformly at random and independently.

Let us compute the jump kernel of $\{Z^x_t\}$, to check that it matches that of $\{\Xhat_t\}$.
Recall that $A^w_m$ is the set of $w' \in W$ such that $m$ is the first index of disagreement between $w$ and $w'$. In order for the process $\{Z^x_t\}$ to jump from $(x, w)$ to a point in $\{x\} \times A^w_m$, a type-$j$ refreshing must occur for some $j<m$, and the uniform point on $E^w_j$ that the process jumps to must happen to be in $\{x\} \times A^w_m$. Type-$j$ refreshings occur with rate $\lambda_j$, and they do, the probability that the process jumps from $(x, w)$ to a point in $\{x\} \times A^w_m$ is $\muhat(\{x\} \times A^w_m) / \muhat(\{x\} \times E^w_j)$. Therefore, jumps from $(x, w)$ to $\{x\} \times A^w_m$ occur with rate
\begin{align*}
    \sum_{j=0}^{m-1} \lambda_j \cdot \frac{\muhat(\{x\} \times A^w_m)}{\muhat(\{x\} \times E^w_j)} &= \sum_{j=0}^{m-1} \lambda_j \cdot \frac{\mu(x) \nu(A^w_m) }{\mu(x) \nu(E^w_j)}\\
    &= \sum_{j=0}^{m-1} \lambda_j \frac{2^{-m}}{2^{-j}} &\mbox{(by \eqref{NuDef} and \eqref{NuOnSpheres})}\\
    &= 2^{-m} \sum_{j=0}^{m-1} 2^j \lambda_j\\
    &= 2^{-m} \frac{\left(4 + \sum_{j=1}^{m-1} 2^j \cdot 3 \cdot 2^j \right)}{\phi(D_x)} &\mbox{(by \eqref{LambdamDef})}\\
    &= 2^{-m} \cdot \frac{4^m}{\phi(D_x)}.
\end{align*}
Let $\Jhat_{Z^x}$ be the jump kernel of $\{Z^x_t\}$. By symmetry, $\Jhat_{Z^x}((x, w), \cdot)$ is constant on $\{x\} \times A^w_m$ for all $m$. By our calculation of the rate of jumps from $(x, w)$ to $\{x\} \times A^w_m$, the value of this constant is
\begin{equation} \label{JzxJumpKernel}
    \Jhat_{Z^x}((x, w), (x, w')) = \frac{2^{-m} \cdot \frac{4^m}{\phi(D_x)}}{\muhat(W_x)} = \frac{2^{-m} \cdot \frac{4^m}{\phi(D_x)}}{2^{-m} \mu(x)} = \frac{4^m}{\mu(x) \phi(D_x)} \qquad\mbox{for $w' \in A^w_m$}.
\end{equation}
By \eqref{JhatFormulaIntermsofM} and \eqref{JzxJumpKernel}, $\Jhat_{Z^x}(z, z') = \Jhat(z, z')$ for all $z, z' \in W_x$.

\begin{remark}\label{XhatFromZx}
Suppose $\{Z^x_t\}$ and $\{\Xhat_t\}$ have the same initial value, in $W_x$. Since $\{Z^x_t\}$ is the $W_x$-valued process with the same transition rates within $W_x$ as $\{\Xhat_t\}$, the two processes can be coupled so that $\{Z^x_t\}$ is independent from $\widehat{\tau}_{W_x}$ (the time that $\{\Xhat_t\}$ first leaves $W_x$), and $\Xhat_t = Z^x_t$ for all $0 \leq t < \widehat{\tau}_{W_x}$.
\end{remark}

It may seem like the construction of $\{Z^x_t\}$ was needlessly convoluted. After all, we could have just defined $\{Z^x_t\}$ as the Hunt process associated with the Dirichlet form $\Ehat^x(f, g) = \int_{W_x \times W_x \setminus \diag_{W_x}} (f(z)-f(z')) (g(z)-g(z')) \, \Jhat(dz, dz')$. However, interpreting $\{Z^x_t\}$ in terms of type-$m$ refreshings makes it much easier to calculate the distribution of $Z^x_t$ (for a given $t>0$ and initial value $Z^x_0$). In turn, this helps us to calculate the distribution of $\Xhat_t$ for a given $t>0$ and an initial value $\Xhat_0$.

In the following lemma, we use the ``refeshings" interpretation of $\{Z^x_t\}$ to determine the distribution of $Z^x_t$, for a given $t>0$ and $Z^x_0 = (x, w)$. We find the distribution of the largest $m$ such that the process $\{Z^x_t\}$ has had a type-$m$ refreshing by time $t$, and then argue that the conditional distribution of $Z^x_t$ is uniform on the $m$-cell of $(x, w)$, for this $m$.

\begin{lemma} \label{ZxtKernel}
If $x \in M_A$, $w \in W$, $t>0$, and the initial value of $\{Z^x_t\}$ is $(x, w)$, then
\begin{equation*}
    \mathbb{P}\left(Z^x_t \in \{x\} \times A^w_m \right) = 2^{-m} a(t, m, D_x).
\end{equation*}
\end{lemma}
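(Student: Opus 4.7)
The plan is to exploit the ``refreshings'' description of $\{Z^x_t\}$ to determine the joint distribution of the bits of the second coordinate of $Z^x_t$, and then reduce the probability to a telescoping sum. Define
\[
    J^*(t) := \min\{j \geq 0 : N^j(t) > 0\},
\]
with $\min \emptyset = \infty$. Since every type-$j$ refreshing preserves bits $1,\dots,j$ and resets bits $j+1, j+2, \dots$ independently and uniformly, the first $J^*(t)$ bits of the string in $Z^x_t$ are never disturbed (so they agree with $w$), while the bits of index $>J^*(t)$ are touched at least once, at the time of the first type-$J^*(t)$ refreshing. I would first show by induction on the number of subsequent refreshings that conditional on $J^*(t) = j$ and on the schedule of refreshings in $[0,t]$, the bits $(w'(i))_{i>j}$ of $Z^x_t = (x, w')$ are i.i.d.\ uniform on $\{0,1\}$: each later refreshing (necessarily of type $\geq j$) re-randomizes a suffix of an already i.i.d.\ uniform sequence, which leaves the joint distribution unchanged. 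Integrating out the schedule preserves this conclusion.

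Next, I compute $\mathbb{P}(J^*(t) = j)$ using independence of the Poisson processes $\{N^i\}$. For $j \geq 1$,
\[
    \mathbb{P}(J^*(t) \geq j) = \exp\!\Bigl(-t \sum_{i=0}^{j-1} \lambda_i\Bigr) = \exp\!\Bigl(-\tfrac{(3\cdot 2^j - 2)t}{\phi(D_x)}\Bigr),
\]
using the telescoping identity $4 + \sum_{i=1}^{j-1} 3\cdot 2^i = 3\cdot 2^j - 2$, while $\mathbb{P}(J^*(t) \geq 0) = 1$. Writing $e_0 := 1$ and $e_j := \exp(-(3\cdot 2^j - 2)t/\phi(D_x))$ for $j \geq 1$, we have $\mathbb{P}(J^*(t) = j) = e_j - e_{j+1}$ for each $j \geq 0$.

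Now combine: conditional on $J^*(t) = j$, the event $Z^x_t \in \{x\} \times A^w_m$ requires bits $j+1,\dots,m-1$ to agree with $w$ and bit $m$ to disagree. If $j \geq m$ this probability is $0$; if $j < m$ it is $2^{-(m-1-j)} \cdot 2^{-1} = 2^{j-m}$. Hence
\[
    \mathbb{P}(Z^x_t \in \{x\} \times A^w_m) = \sum_{j=0}^{m-1} 2^{j-m}(e_j - e_{j+1}).
\]
The final step is a straightforward Abel summation: after re-indexing the second sum by $k = j+1$ and collecting terms, the expression becomes
\[
    2^{-m}\Bigl(1 + \sum_{j=1}^{m-1} 2^{j-1} e_j - 2^{m-1} e_m\Bigr) = 2^{-m}\, a(t, m, D_x),
\]
which is the desired identity.

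The substantive point is the structural claim that conditional on $J^*(t) = j$, the bits of index $>j$ are i.i.d.\ uniform; once that is in hand, everything else is bookkeeping. I do not foresee any serious obstacle, but the induction requires one to argue that re-randomization of a suffix of an already uniform i.i.d.\ sequence does not change its distribution, even jointly with the random schedule of refreshings (which is independent of the uniform bits by construction).
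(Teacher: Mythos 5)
Your proposal is correct and follows essentially the same route as the paper: the paper's $J_t := \min\{m : T_m \leq t\}$ is your $J^*(t)$, the paper likewise observes that conditional on $J_t = j$ the process is uniform on $\{x\}\times E^w_j$ (equivalently, bits $\leq j$ match $w$ and bits $>j$ are i.i.d.\ uniform), computes the same conditional probabilities and the same survival probabilities for $J_t$, and finishes with the same regrouping of the resulting sum. Your use of $\mathbb{P}(J^*(t)\geq j)$ and the notation $e_j$ makes the telescoping slightly cleaner to state, but there is no difference in substance.
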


\begin{proof}
For all $m \in \mathbb{Z}_+$, let $T_m$ be the time that the first event in the Poisson point process $\{N^m(t)\}$ associated with $m$ occurs. Then $T_m$ is exponential with rate $\lambda_m$ (i.e., $\mathbb{E}(T_m) = 1/\lambda_m$) and the variables $\{T_m\}_{m=0}^\infty$ are independent.

For all $t>0$, let $J_t := \min\{m : T_m \leq t \}$.
Conditional on $J_t$, the location of $Z^x_t$ is uniform on $\{x\} \times E^w_{J_t}$. Therefore,
\begin{dmath} \label{CondOnJt}
    \mathbb{P}\left( Z^x_t \in \{x\} \times A^w_m \Big| J_t=j \right) = \frac{\muhat \left( \left( \{x\} \times A^w_m \right) \cap \left( \{x\} \times E^w_j \right) \right)}{\muhat \left( \{x\} \times E^w_j \right)}
    = \frac{\mu(x) \nu(A^w_m \cap E^w_j)}{\mu(x) \nu(E^w_j)}
    = \left\{ \begin{matrix}
        \frac{2^{-m}}{2^{-j}} &:& \mbox{if $j<m$}\\
        \\
        0 &:& \mbox{if $j \geq m$}.
    \end{matrix}\right.
\end{dmath}
Conditioning on $J_t$ gives us
\begin{align}\label{ProbZonsphere}
\begin{split}
    \mathbb{P}\left( Z^x_t \in \{x\} \times A^w_m \right) &= \sum_{j=0}^\infty \mathbb{P}\left( J_t=j  \right) \mathbb{P}\left( Z^x_t \in \{x\} \times A^w_m \Big|J_t=j \right)\\
    &= \sum_{j=0}^{m-1} \mathbb{P}(J_t=j) \cdot 2^{j-m} \qquad\mbox{(by \eqref{CondOnJt})}\\
    &= 2^{-m} \sum_{j=0}^{m-1} 2^j \mathbb{P}(J_t=j).
\end{split}
\end{align}

Since $J_t=j$ if and only if \{$T_j \leq t$, and $T_k > t$ for all $k<j$\}, we have
\begin{equation}  \label{PJtj0}
    \mathbb{P}(J_t = 0) = \mathbb{P}(T_0 \leq t) = 1 - e^{-\lambda_0 t} = 1 - e^{-4t/\phi(D_x)}
\end{equation}
and for all $j \geq 1$, we have
\begin{dmath}  \label{PJtj}
    {\mathbb{P}(J_t=j)} = {\mathbb{P}(T_j \leq t)} \prod_{k=0}^{j-1} {\mathbb{P}(T_k>t)}
    = (1-e^{-\lambda_j t}) \prod_{k=0}^{j-1} e^{-\lambda_k t}
    = (1-e^{-\lambda_j t}) \exp \left( - t \sum_{k=0}^{j-1} \lambda_k \right)
    = \left(1-\exp\left( - \frac{t}{\phi(D_x)}(3 \cdot 2^j) \right) \right) \exp \left( - \frac{t}{\phi(D_x)} \cdot \left( 4 + 3 \sum_{k=1}^{j-1} 2^k \right) \right)
    = \left(1-\exp\left( - \frac{t}{\phi(D_x)}(3 \cdot 2^j) \right) \right) \exp \left( - \frac{t}{\phi(D_x)} \cdot \left( 3 \cdot 2^j - 2 \right) \right)
    = \exp \left( - \frac{t}{\phi(D_x)} \left( 3 \cdot 2^j - 2 \right) \right) - \exp \left( - \frac{t}{\phi(D_x)} \left( 3 \cdot 2^{j+1} - 2 \right) \right).
\end{dmath}
By plugging \eqref{PJtj0} and \eqref{PJtj} into \eqref{ProbZonsphere} and grouping like terms, we obtain
\begin{dmath*}
     \mathbb{P}\left( Z^x_t \in \{x\} \times A^w_m \right) = 2^{-m} \left( {1 - e^{-4t/\phi(D_x)} + \sum_{j=1}^{m-1}} 2^j \exp \left( - \frac{t}{\phi(D_x)} \left( 3 \cdot 2^j - 2 \right) \right) - \sum_{j=1}^{m-1} 2^j \exp \left( - \frac{t}{\phi(D_x)} \left( 3 \cdot 2^{j+1} - 2 \right) \right)  \right)
     = 2^{-m} \left( 1+\sum_{j=1}^{m-1} 2^{j-1} \exp \left( - \frac{(3 \cdot 2^j-2) t}{\phi(D_x)} \right) - 2^{m-1} \exp \left( - \frac{(3 \cdot 2^m-2) t}{\phi(D_x)} \right) \right)
    = 2^{-m} a(t, m, D_x).
\end{dmath*}
\end{proof}

Next, we calculate the distribution of $\Xhat_t$ for a given $t>0$ and $\Xhat_0 = z$. This mostly consists of finding $\mathbb{P}^z (\Xhat_t \in S)$ for various sets $S \subseteq \Mhat$. We use Remark \ref{XhatFromZx} and Lemma \ref{ZxtKernel} when $z \in W_x$ and $S \subseteq W_x$ for some $x \in M_A$. In every case, we show that $\mathbb{P}^z(\Xhat_t \in S)$ is equal to exactly what it would be if $\phat$ was the heat kernel of $\{\Xhat_t\}$: namely $\mathbb{P}^z(\Xhat_t \in S) = \int_{\Mhat} \phat(t, z, z') \muhat(dz')$.

\begin{lemma}\label{DistributionHeatKernelPhat}
Suppose $(M, d, \mu, \mathcal{E}, \mathcal{F})$ satisfies Assumption \ref{BasicAssumptions}, $\phi$ is of regular growth, $(\mathcal{E}, \mathcal{F})$ admits a jump kernel, $M=M_A \cup M_C$, and $\{X_t\}$ has a heat kernel $p$.
Let $\phat$ be the right-hand side of \eqref{PhatFormula}. If $z \in \Mhat$, $t>0$, and $S \subseteq \Mhat$, then
\begin{equation*}
    \mathbb{P}^z(\Xhat_t \in S) = \int_S \phat(t, z, z') \muhat(dz').
\end{equation*}
\end{lemma}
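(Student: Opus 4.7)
The plan is a case analysis based on where $z$ lives together with a decomposition of $S$. From the coupling \eqref{couple}, the distribution of $\pi(\Xhat_t)$ under $\mathbb{P}^z$ equals that of $X_t$ under $\mathbb{P}^{\pi(z)}$, so $\mathbb{P}^z(\pi(\Xhat_t)\in B)=\int_B p(t,\pi(z),y)\,\mu(dy)$ for every Borel $B\subseteq M$. The central auxiliary claim is a uniformity statement: for each atom $y\in M_A$, if either $\pi(z)\neq y$, or $\pi(z)=y$ together with the event $\{\widehat{\tau}_{W_y}\leq t\}$, then the conditional law of $\Xhat_t$ on $W_y$ given $\{\pi(\Xhat_t)=y\}$ is the uniform measure $\muhat|_{W_y}/\mu(y)$. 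I would prove this by noting (i) that the jump kernel $\Jhat(z'',\cdot)|_{W_y}=J(\pi(z''),y)$ is constant in its second argument whenever $\pi(z'')\neq y$, so every entry into $W_y$ from outside is uniform on $W_y$ with respect to $\muhat$; and (ii) that by the symmetric refreshing construction of $\{Z^y_t\}$, the uniform measure on $W_y$ is stationary for $\{Z^y_t\}$, which by Remark \ref{XhatFromZx} governs $\Xhat$ during any sojourn inside $W_y$. The strong Markov property applied at the successive hitting/exit times of $W_y$ then propagates uniformity to time $t$.

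With this claim in hand, I decompose $S=(S\cap M_C)\cup\bigcup_{y\in M_A}(S\cap W_y)$. For $S\cap M_C$, the coupling \eqref{couple} together with $\muhat|_{M_C}=\mu|_{M_C}$ yields $\mathbb{P}^z(\Xhat_t\in S\cap M_C)=\int_{S\cap M_C}p(t,\pi(z),\pi(z'))\,\muhat(dz')$, which equals $\int_{S\cap M_C}\phat(t,z,\cdot)\,d\muhat$ by the formula for $\phat$. For $S\cap W_y$ with $y\in M_A$ and $y\neq\pi(z)$, the uniformity claim gives $\mathbb{P}^z(\Xhat_t\in S\cap W_y)=p(t,\pi(z),y)\,\muhat(S\cap W_y)$, matching $\int_{S\cap W_y}\phat(t,z,z')\,\muhat(dz')$ since $\phat(t,z,z')=p(t,\pi(z),y)$ there.

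The only nontrivial piece is $S\cap W_x$ when $x=\pi(z)\in M_A$, which I split on $\{\widehat{\tau}_{W_x}>t\}$ versus $\{\widehat{\tau}_{W_x}\leq t\}$. The exit time $\widehat{\tau}_{W_x}$ is exponential with rate $v(x)$ and, by Remark \ref{XhatFromZx}, is independent of $\{Z^x_t\}$. Hence the first event contributes $e^{-v(x)t}\,\mathbb{P}^z(Z^x_t\in S\cap W_x)$, which by Lemma \ref{ZxtKernel} and countable additivity over the partition of $W_x$ into $\{z\}$ and the spheres $\{x\}\times A^w_m$ (where $z=(x,w)$) equals $\mu(x)^{-1}e^{-v(x)t}\int_{S\cap W_x}a(t,m(z,z'),D_x)\,\muhat(dz')$, with $m(z,z')$ the first index of disagreement between the $W$-coordinates of $z$ and $z'$ (set to $\infty$ on the $\muhat$-null diagonal). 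The second event contributes $\mu(x)^{-1}\muhat(S\cap W_x)\bigl(p(t,x,x)\mu(x)-e^{-v(x)t}\bigr)$ by the uniformity claim applied with $y=x$, combined with $\mathbb{P}^z(\pi(\Xhat_t)=x)=p(t,x,x)\mu(x)$ and $\mathbb{P}^z(\widehat{\tau}_{W_x}>t)=e^{-v(x)t}$.

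Summing the two contributions and comparing term by term to $\int_{S\cap W_x}\phat(t,z,z')\,\muhat(dz')$ via the formula \eqref{PhatFormula} for $\phat$ closes the identity, since the $p(t,x,x)\muhat(S\cap W_x)$ summands and the $\pm\mu(x)^{-1}e^{-v(x)t}\muhat(S\cap W_x)$ summands align, leaving the $a(t,m(z,z'),D_x)$-integral as the common remainder. The main obstacle I expect is the careful bookkeeping for the uniformity claim across arbitrarily many re-entries of $W_y$, which I would handle via the strong Markov property at the successive hit/exit stopping times together with the stationarity of $\muhat|_{W_y}$ for $\{Z^y_t\}$ established through the symmetric refreshing construction.
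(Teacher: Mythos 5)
Your proposal is correct and follows essentially the same route as the paper's proof: the coupling $\{\pi(\Xhat_t)\}\overset{d}{=}\{X_t\}$, the decomposition of $S$ over $M_C$ and the sets $W_y$, uniformity of the conditional law on $W_y$ upon entry or re-entry, the split on $\{\widehat{\tau}_{W_x}>t\}$ versus $\{\widehat{\tau}_{W_x}\leq t\}$ with the exponential($v(x)$) exit time, and Lemma \ref{ZxtKernel} for the never-left contribution. If anything, your explicit justification of the uniformity claim (stationarity of $\muhat|_{W_y}$ for $\{Z^y_t\}$ plus the strong Markov property at successive hit/exit times) is slightly more detailed than the paper's "jumps to a uniform point / by symmetry" remarks, but the argument is the same.
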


\begin{proof}
Fix $x \in M$, $z \in W_x$, and $t>0$. (The point $x$ can be in either $M_A$ or $M_C$.) Suppose the process $\{\Xhat_t\}$ has initial value $z$. We will find the distribution of $\Xhat_t$ by finding $\mathbb{P}^z(\Xhat_t \in S)$ for various measurable sets $S \subseteq \Mhat$.

By \eqref{couple},
\begin{equation} \label{SSubsetMc}
    \mathbb{P}^z(\Xhat_t \in S) = \mathbb{P}^x(X_t \in S) = \int_S p(t, x, y) \mu(dy) \qquad\mbox{for $S \subseteq M_C$}.
\end{equation}
If $y \in M_A$, then by \eqref{couple},
\begin{equation}\label{ProbXhattInWy}
    \mathbb{P}^z(\Xhat \in W_y) = \mathbb{P}^x(X_t=y) = \mu(y) p(t, x, y).
\end{equation}
For $y \neq x$, whenever the process $\{\Xhat_t\}$ first reaches $W_y$, it jumps to a uniform point on $W_y$. Therefore, if $S \subseteq W_y$ for some $y \in M_A$ such that $y \neq x$,
\begin{equation}\label{XhattUniformOnWy}
    \mathbb{P}^z\left(\Xhat_t \in S \Big| \Xhat_t \in W_y\right) = \frac{\muhat(S)}{\muhat(W_y)} = \frac{\muhat(S)}{\mu(y)}.
\end{equation}

Now assume $x \in M_A$. If the process $\{\Xhat_t\}$ leaves but then returns to $W_x$, it returns to a uniform point on $W_x$. Therefore, if $x \in M_A$ and $S \subseteq W_x$,
\begin{equation} \label{XhattUniformOnWxCond}
    \mathbb{P}^z \left( \Xhat_t \in S \Big| \mbox{$\widehat{\tau}_{W_x} \leq t$ and $\Xhat_t \in W_x$} \right) = \frac{\muhat(S)}{\muhat(W_x)} = \frac{\muhat(S)}{\mu(x)}.
\end{equation}
(The event ``$\widehat{\tau}_{W_x} \leq t$ and $\Xhat_t \in W_x$," which we are conditioning on in \eqref{LeftAndReturned}, means that the process $\{\Xhat_t\}$ has left $W_x$, but then returned (at least once) and is currently back in $W_x$ at time $t$.) By \eqref{ProbXhattInWy} and \eqref{XhattUniformOnWy},
\begin{equation} \label{SSubsetWyYnotx}
    \mathbb{P}^z(\Xhat_t \in S) = \mu(y) p(t, x, y) \cdot \frac{\muhat(S)}{\mu(y)} = \muhat(S) p(t, x, y) \qquad\mbox{whenever $y \in M_A$, $y \neq x$, and $S \subseteq W_y$}.
\end{equation}
Note that if $x \in M_A$, $\tau_{\{x\}}$ is exponential($v(x)$), where $v(x)$ is as defined in \eqref{v(x)Formula}. Therefore, if $x \in M_A$, the probability that the process $\{X_t\}$ has never yet left $x$ by time $t$ is
\begin{equation} \label{HkSectionExponential}
    \mathbb{P}^x(\tau_{\{x\}}>t) = e^{-v(x) t}.
\end{equation}
By \eqref{HkSectionExponential} and \eqref{couple}, the probability that the process $\{\Xhat_t\}$ has never left $W_x$ by time $t$ is
\begin{equation}\label{WxHoldingExpo}
    \mathbb{P}^z(\widehat{\tau}_{W_x} >t) = e^{-v(x) t}.
\end{equation}
By \eqref{ProbXhattInWy} and \eqref{WxHoldingExpo}, the probability that the process $\{\Xhat_t\}$ has left $W_x$, then returned at least once, and is currently back in $W_x$ at time $t$ is
\begin{dmath}\label{LeftAndReturned}
    {\mathbb{P}^z\left( \mbox{$\widehat{\tau}_{W_x} \leq t$ and $\Xhat_t \in W_x$} \right)} = {\mathbb{P}^z(\Xhat_t \in W_x) - \mathbb{P}^z \left( \mbox{$\widehat{\tau}_{W_x} > t$ and $\Xhat_t \in W_x$} \right)}
    = \mu(x) p(t, x, x) - e^{-v(x) t}.
\end{dmath}
By and \eqref{LeftAndReturned} and \eqref{XhattUniformOnWxCond}, if $S \subseteq W_x$, then the probability that the process has left $W_x$, then returned at least once, and is currently in $S \subseteq W_x$ at time $t$ is
\begin{dmath} \label{LeftAndReturnedS}
    {\mathbb{P}^z \left( \mbox{$\widehat{\tau}_{W_x \leq t}$ and $\Xhat_t \in S$} \right)}={\mathbb{P}^z \left( \mbox{$\widehat{\tau}_{W_x \leq t}$ and $\Xhat_t \in W_y$} \right) \mathbb{P}^z \left( \Xhat_t \in S \Big| \mbox{$\widehat{\tau}_{W_x} \leq t$ and $\Xhat_t \in W_x$} \right)}
    = \left(\mu(x) p(t, x, x) - e^{-v(x) t}\right) \cdot \frac{\muhat(S)}{\mu(x)}
    = \muhat(S) \left( p(t, x, x) - \frac{e^{-v(x) t}}{\mu(x)} \right).
\end{dmath}
What about the probability that $\Xhat_t \in S$ (for $S \subseteq W_x$) but the process $\{\Xhat_t\}$ has \textit{never} left $W_x$ by time $t$? If $S \subseteq W_x$, then
\begin{dmath}\label{NeverLeavesS}
    {\mathbb{P}^z \left( \mbox{$\widehat{\tau}_{W_x}>t$ and $\Xhat_t \in S$} \right) } = {\mathbb{P}^z(\widehat{\tau}_{W_x}>t) \mathbb{P}\left( Z^x_t \in S \Big| Z^x_0=z \right) \qquad \mbox{(by Remark \ref{XhatFromZx})}}
    = {e^{-v(x) t} \mathbb{P}\left( Z^x_t \in S \Big| Z^x_0=z \right) \qquad \mbox{(by \eqref{WxHoldingExpo})}}.
\end{dmath}
Let us continue to assume $x \in M_A$. By applying \eqref{LeftAndReturnedS} and \eqref{NeverLeavesS} to $S = \{x\} \times A^w_m$, for some $m \in \mathbb{N}$, the probability that $\Xhat_t \in \{x\} \times A^w_m$ is
\begin{align} \label{ApplyingPartitionBasedOnTauhatWx}
\begin{split}
    \mathbb{P}^z \left(\Xhat_t \in \{x\} \times A^w_m \right) &= \mathbb{P}^z \left( \mbox{$\widehat{\tau}_{W_x \leq t}$ and $\Xhat_t \in \{x\} \times A^w_m$} \right)+\mathbb{P}^z \left( \mbox{$\widehat{\tau}_{W_x > t}$ and $\Xhat_t \in \{x\} \times A^w_m$} \right)\\
    &= \mu(x) 2^{-m} \left( p(t, x, x) - \frac{e^{-v(x) t}}{\mu(x)} \right) + e^{-v(x) t} \mathbb{P} \left( Z^x_t \in \{x\} \times A^w_m \Big| Z^x_0=z \right).
\end{split}
\end{align}
By Lemma \ref{ZxtKernel}, $\mathbb{P} \left( Z^x_t \in \{x\} \times A^w_m \Big| Z^x_0=z \right) = 2^{-m} a(t, m, D_x)$. Therefore, \eqref{ApplyingPartitionBasedOnTauhatWx} becomes
\begin{align}\label{ProbXhattInXTimesAwm}
\begin{split}
    \mathbb{P}^z(\Xhat_t \in \{x\} \times A^w_m) &= \mu(x) 2^{-m} \left( p(t, x, x) - \frac{e^{-v(x) t}}{\mu(x)} \right) + e^{-v(x) t} \, 2^{-m} \, a(t, m, D_x)\\
    &= \mu(x) 2^{-m} \left( p(t, x, x) + \frac{e^{-v(x) t}}{\mu(x)} \left( a(t, m, D_x)-1 \right) \right)\\
    &= \muhat\left(\{x\} \times A^w_m\right) \left( p(t, x, x) + \frac{e^{-v(x) t}}{\mu(x)} \left( a(t, m, D_x)-1 \right) \right).
\end{split}
\end{align}
By symmetry, the distribution of $\Xhat_t$ conditioned on $\Xhat_t \in \{x\} \times A^w_m$ is uniform on $\{x\} \times A^w_m$. Therefore, if $S$ is a subset of $\{x\} \times A^w_m$, we can obtain the probability that $\Xhat_t \in S$ by multiplying the probability from \eqref{ProbXhattInXTimesAwm} by $\muhat(S) / \muhat(\{x\} \times A^w_m)$. The result is
\begin{equation}\label{SSubsetAwm}
    \mathbb{P}^z(\Xhat_t \in S) = \muhat(S) \left( p(t, x, x) + \frac{e^{-v(x) t}}{\mu(x)} \left( a(t, m, D_x)-1 \right) \right) \qquad\mbox{for $S \subseteq \{x\} \times A^w_m$}.
\end{equation}

We have now shown how to calculate $\mathbb{P}^z(\Xhat_t \in S)$ for any initial value $z$ and any measurable $S \subseteq \Mhat$. We can complete the proof of the lemma. Fix $z \in \Mhat$, $t>0$, and $S \subseteq \Mhat$. Let $x=\pi(z)$. We will have two cases for our final calculation: $x \in M_C$ and $x \in M_A$.

If $z=x \in M_C$, then
\begin{align} \label{ProofOfPhatXinMc}
\begin{split}
    \mathbb{P}^z(\Xhat_t \in S) &= \mathbb{P}^z(\Xhat_t \in S \cap M_C) + \sum_{y \in M_A} \mathbb{P}^z(\Xhat_t \in S \cap W_y)\\
    &=\int_{S \cap M_C} p(t, x, z') \mu(dz') + \sum_{y \in M_A} \muhat(S \cap W_y) p(t, x, y) \qquad\mbox{(by \eqref{SSubsetMc} and \eqref{SSubsetWyYnotx})}\\
    &= \int_{S \cap M_C} p(t, x, z') \mu(dz') + \sum_{y \in M_A} \int_{S \cap W_y} p(t, x, y) \muhat(dz')\\
    &= \int_S p(t, z, z') \muhat(dz) \qquad\mbox{(by the definition of $\phat$)}.
\end{split}
\end{align}
Now suppose $x \in M_A$. By partitioning $S$,
\begin{equation}\label{622}
    \mathbb{P}^z(\Xhat_t \in S) = \mathbb{P}^z(\Xhat_t \in S \cap M_C) + \sum_{y \in M_A \setminus \{x\}} \mathbb{P}^z(\Xhat_t \in S \cap W_y) + \sum_{m=1}^\infty \mathbb{P}^z\left(\Xhat_t \in S \cap \left( \{x\} \times A^w_m\right)\right).
\end{equation}
By the same calculations as in \eqref{ProofOfPhatXinMc},
\begin{equation}\label{623}
    \mathbb{P}^z(\Xhat_t \in S \cap M_C) + \sum_{y \in M_A \setminus \{x\}} \mathbb{P}^z(\Xhat_t \in S \cap W_y) = \int_{S \setminus W_x} \phat(t, z, z') \muhat(dz')
\end{equation}
For all $m \in \mathbb{N}$, by \eqref{SSubsetAwm},
\begin{align}\label{624}
\begin{split}
    \mathbb{P}^z\left(\Xhat_t \in S \cap \left( \{x\} \times A^w_m\right)\right) &= \muhat\left( S \cap \left( \{x\} \times A^w_m\right) \right) \left( p(t, x, x) + \frac{e^{-v(x) t}}{\mu(x)} \left( a(t, m, D_x)-1 \right) \right)\\
    &= \int_{S \cap \left( \{x\} \times A^w_m\right)} \phat(t, z, z') \muhat(dz') \qquad\mbox{by the definition of $\phat$}.
\end{split}
\end{align}
By plugging \eqref{623} and \eqref{624} into \eqref{622},
\begin{align*}
\begin{split}
    \mathbb{P}^z(\Xhat_t \in S) &=\int_{S \setminus W_x} \phat(t, z, z') \muhat(dz') + \sum_{m=1}^\infty \int_{S \cap \left( \{x\} \times A^w_m\right)} \phat(t, z, z') \muhat(dz')\\
    &=\int_S \phat(t, z, z') \muhat(dz').
\end{split}
\end{align*}
In either case (whether $x \in M_A$ or $x \in M_C$), we have $\mathbb{P}^z(\Xhat_t \in S) = \int_S \phat(t, z, z') \muhat(dz')$.
\end{proof}

Lemma \ref{DistributionHeatKernelPhat} is not quite enough to guarantee that $\phat$ is the heat kernel of $\{\Xhat_t\}$. We still need to verify that $\phat$ satisfies \eqref{HeatKernelDefinitionDistribution}-\eqref{HeatKernelDefinitionChapmanKolmogorov}. Expressed in terms of the auxiliary space, we must show the following:
\begin{align}
    \mathbb{E}^{z_0} f(\Xhat_t) = \int_\Mhat \phat(t, z_0, z) f(z) \muhat(dz) \qquad&\mbox{for all $f \in L^\infty(\Mhat, \muhat)$, $z_0 \in \Mohat$, and $t>0$}. \label{AuxiliaryHeatKernelDefinitionDistribution}\\
    \phat(t, z_0, z_1) = \phat(t, z_1, z_0) \qquad&\mbox{for all $z_0, z_1 \in \Mohat$ and $t>0$}. \label{AuxiliaryHeatKernelDefinitionSymmetry}\\
    \phat(s+t, z_0, z_1) = \int_\Mhat \phat(t, z_0, z) \phat(t, z, z_1) \muhat(dz) \qquad&\mbox{for all $z_0, z_1 \in \Mohat$ and $s,t > 0$}. \label{AuxiliaryHeatKernelDefinitionChapmanKolmogorov}
\end{align}
It is clear that \eqref{AuxiliaryHeatKernelDefinitionDistribution} follows from Lemma \ref{DistributionHeatKernelPhat}. The definition of $\phat(t, z_0, z_1)$ is symmetric in $z_0$ and $z_1$, so \eqref{AuxiliaryHeatKernelDefinitionSymmetry} holds. However, we do not yet have Chapman-Kolmogorov \eqref{AuxiliaryHeatKernelDefinitionChapmanKolmogorov}. In fact, the $a(t, \infty, D_x)$ term that occurs in the formula for $\phat$ was never used in Lemma \ref{DistributionHeatKernelPhat}. Our proof of \eqref{AuxiliaryHeatKernelDefinitionDistribution} would still hold if we changed the value of $\phat(t, (x, w), (x, w))$ for every $t>0$, $x \in M_A$, and $w \in W$. This is no trivial matter, since we need to understand the on-diagonal heat kernel of $\{\Xhat_t\}$ in order to prove \eqref{UhkdForwards} (which says that $\UHKD$ for the original space implies $\UHKD$ for the auxiliary space).

To prove $\phat$ satisfies Chapman-Kolmogorov, we break down \eqref{AuxiliaryHeatKernelDefinitionChapmanKolmogorov} into three cases:
\begin{itemize}
    \item $z_0 \in W_x$ and $z_1 \in W_y$ for some $x \neq y$.
    \item $z_0 = z_1 = x$ for some $x \in M_C \setminus \mathcal{N}$.
    \item $z_0, z_1 \in W_x$ for some $x \in M_A$.
\end{itemize}
We prove the first two cases by direct calculation. For the third case, choose $w_0, w_1 \in W$ such that $z_0 = (x, w_0)$ and $z_1 = (x, w_1)$. Let $w_1$ vary, and treat the left-hand-side and right-hand-side of \eqref{AuxiliaryHeatKernelDefinitionChapmanKolmogorov} as functions of $w_1$. We use Proposition \ref{AeChapmanKolmogorov} to show that these functions of $w_1$ agree almost-everywhere, and then show that they are both continuous so they agree everywhere.

\begin{lemma} \label{PhatChapmamKolmogorov}
Suppose $(M, d, \mu, \mathcal{E}, \mathcal{F})$ satisfies Assumption \ref{BasicAssumptions}, $\phi$ is of regular growth, $(\mathcal{E}, \mathcal{F})$ admits a jump kernel, $M=M_A \cup M_C$, and $\{X_t\}$ has a heat kernel $p$.
Then $\phat$ (as defined in \eqref{PhatFormula}) satisfies \eqref{AuxiliaryHeatKernelDefinitionChapmanKolmogorov}.
\end{lemma}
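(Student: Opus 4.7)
The plan is to partition into three cases based on the positions of $z_0, z_1$: (i) $\pi(z_0) \neq \pi(z_1)$; (ii) $z_0 = z_1 = x$ for some $x \in M_C \cap \Mohat$; and (iii) $z_0, z_1 \in W_x$ for some $x \in M_A$. The first two cases are direct calculations built on Lemma \ref{DistributionHeatKernelPhat}, while the third is the substantive one and requires combining Proposition \ref{AeChapmanKolmogorov} with a symmetry argument and a continuity argument.

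For case (i), set $x = \pi(z_0)$, $y = \pi(z_1)$, so that $\phat(s+t, z_0, z_1) = p(s+t, x, y)$ by \eqref{PhatFormula}. Decompose $\Mhat = M_C \sqcup \bigsqcup_{x' \in M_A} W_{x'}$. On each slab $W_{x'}$ with $x' \notin \{x, y\}$ the integrand $\phat(s, z_0, z) \phat(t, z, z_1)$ equals the constant $p(s, x, x') p(t, x', y)$, contributing $\mu(x') p(s, x, x') p(t, x', y)$ once integrated against $\muhat$. On $W_x$ the second factor reduces to $p(t, x, y)$, so the contribution becomes $p(t, x, y) \int_{W_x} \phat(s, z_0, z)\, \muhat(dz) = \mu(x)\, p(s, x, x)\, p(t, x, y)$ by Lemma \ref{DistributionHeatKernelPhat} together with \eqref{ProbXhattInWy}; the $W_y$ slab contributes symmetrically, and the $M_C$ piece drops in directly. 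Assembling the pieces via \eqref{MeasuresAgree} yields $\int_M p(s, x, z') p(t, z', y)\, \mu(dz') = p(s+t, x, y)$ by Chapman-Kolmogorov on the original space. Case (ii) is handled in precisely the same way with $x = y$.

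Case (iii) is the heart of the proof. Fix $x \in M_A$, $z_0 = (x, w_0)$, and regard both sides of \eqref{AuxiliaryHeatKernelDefinitionChapmanKolmogorov} as functions of $z_1 = (x, w_1) \in W_x$. Since $\phat$ satisfies \eqref{AuxiliaryHeatKernelDefinitionDistribution} by Lemma \ref{DistributionHeatKernelPhat}, Proposition \ref{AeChapmanKolmogorov} gives LHS $=$ RHS for $\nu$-almost every $w_1$. The plan to upgrade this to every $w_1$ is to show that both sides are constant on each sphere $\{x\} \times A^{w_0}_m$ ($m \in \mathbb{N}$) and continuous at the singleton $w_1 = w_0$. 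The LHS is manifestly constant on each sphere by the explicit formula \eqref{PhatFormula}. For the RHS, given $w_1, w_1' \in A^{w_0}_m$, produce a measure-preserving $\rho^{D_x}$-isometry $\sigma$ of $W$ fixing $w_0$ and mapping $w_1$ to $w_1'$ (built by composing the appropriate coordinate-swaps at indices $\geq m$); extend to a $\muhat$-preserving $\dhat$-isometry of $\Mhat$ by setting $\sigma$ to the identity off $W_x$. Because \eqref{PhatFormula} expresses $\phat(r, \cdot, \cdot)$ purely through the projection $\pi$ and, when both arguments lie in a common $W_x$, through their $\dhat$-distance, one obtains $\phat(s, z_0, \sigma(z)) = \phat(s, z_0, z)$ and $\phat(t, \sigma(z), z_1') = \phat(t, z, z_1)$; changing variables $z \mapsto \sigma(z)$ in the RHS integral shows $\mathrm{RHS}(z_1) = \mathrm{RHS}(z_1')$. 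Almost-everywhere equality therefore forces equality on every sphere.

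It remains to handle $w_1 = w_0$, which will follow from continuity of both sides as $w_1 \to w_0$ in $\dhat$. On the LHS, the convergence $a(s+t, m, D_x) \to a(s+t, \infty, D_x)$ as $m \to \infty$ is immediate from the definition. On the RHS, for any fixed $z = (x, w)$ with $w \neq w_0$, once the first disagreement index between $w_0$ and $w_1^{(k)}$ exceeds that of $w$ and $w_0$, the indices between $w$ and $w_1^{(k)}$ and between $w$ and $w_0$ coincide, so $\phat(t, (x, w), (x, w_1^{(k)}))$ is eventually equal to $\phat(t, (x, w), (x, w_0))$; the set $\{w_0\}$ has $\nu$-measure zero. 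The integrand is dominated by $\phat(s, z_0, z)$ times the uniform bound $p(t, x, x) + \frac{e^{-v(x) t}}{\mu(x)}(a(t, \infty, D_x) - 1)$ inside $W_x$ (and by $\phat(s, z_0, z)\cdot \sup_{y \neq x} p(t, y, x)$-type bounds outside, which cause no issue since $\phat(s, z_0, \cdot)$ has total integral $1$ by Lemma \ref{DistributionHeatKernelPhat}), so dominated convergence delivers continuity. The hardest part will be verifying cleanly that the transitive action of $\muhat$-preserving isometries on $\{x\} \times A^{w_0}_m$ interacts correctly with the formula \eqref{PhatFormula}; the remaining bookkeeping of dominating functions and splitting into slabs is routine.
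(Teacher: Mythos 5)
Your proposal is correct, and its skeleton matches the paper's: the same three-case split, the same slab decomposition of $\Mhat$ reducing cases (i)--(ii) to Chapman--Kolmogorov on $(M,d,\mu)$, and the same use of Proposition \ref{AeChapmanKolmogorov} (via Lemma \ref{DistributionHeatKernelPhat}) to get $\nu$-a.e.\ equality in $w_1$ when $z_0,z_1\in W_x$. Where you genuinely diverge is the upgrade from a.e.\ to everywhere. The paper shows both sides, as functions of $w_1$, are continuous on all of $W$ (the left side is locally constant off $w_0$ and continuous at $w_0$ because $a(s+t,\infty,D_x)=\lim_m a(s+t,m,D_x)$; the right side by continuity of $\phat(t,(x,w),(x,\cdot))$ plus dominated convergence) and then uses that $\nu$ has full support. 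You instead prove both sides are constant on each sphere $A^{w_0}_m$ --- the right side by a change of variables under a $\nu$-preserving isometry of $(W,\rho^{D_x})$ fixing $w_0$ and sending $w_1\mapsto w_1'$, extended by the identity off $W_x$ --- and then treat only the single point $w_1=w_0$ by a limit/DCT argument; since each sphere has $\nu$-measure $2^{-m}>0$, the a.e.\ set meets every sphere and constancy finishes. This buys you a softer argument on the integral side (no need to prove its continuity at every point, only the limit along $w_1^{(k)}\to w_0$), at the cost of having to exhibit the symmetry. Two small points to tighten: the phrase ``coordinate-swaps at indices $\geq m$'' must be implemented inside the cylinder $E^{w_1}_m$ only (e.g.\ the map that XORs with $w_1\oplus w_1'$ on $E^{w_1}_m$ and is the identity elsewhere); a global bit-flip at an index $j>m$ is an isometry but does not fix $w_0$, and fixing $w_0$ is exactly what makes $\phat(s,z_0,\sigma(z))=\phat(s,z_0,z)$ --- the restricted map does fix $w_0$ because $w_0\notin E^{w_1}_m$, and your two identities then hold since $\phat$ inside $W_x$ depends only on the $\dhat$-distance and off $W_x$ only on $\pi$. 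Also, in the dominated-convergence step the contribution over $\Mhat\setminus W_x$ is literally independent of $w_1^{(k)}$, so it needs no dominating function (fortunately, since $\sup_{y\neq x}p(t,y,x)$ need not be finite); on $W_x$ your uniform bound is legitimate because $a(t,m,D_x)\leq a(t,\infty,D_x)$ and $\int_{W_x}\phat(s,z_0,z)\,\muhat(dz)\leq 1$.
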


\begin{proof}
We will prove this lemma by showing each of the following smaller claims:

(a) If $z_0 \in W_x$ and $z_1 \in W_y$ for some distinct $x, y \in M_0$ (i.e. $x \neq y$), and $s, t > 0$, then
\begin{equation*}
    \phat(s+t, z_0, z_1) = \int_{\Mhat} \phat(s, z_0, z) \phat(t, z, z_1) \, \muhat(dz).
\end{equation*}

(b) If $x \in M_C \setminus \mathcal{N}$ and $s, t > 0$, then
\begin{equation*}
    \phat(s+t, x, x) = \int_{\Mhat} \phat(s, x, z) \phat(t, z, x) \, \muhat(dz).
\end{equation*}

(c) If $x \in M_A$, $w_0, w_1 \in W$, and $s, t > 0$, then
\begin{equation}\label{ChapKolmOnWxEquation}
    \phat(s+t, (x, w_0), (x, w_1)) = \int_\Mhat \phat(s, (x, w_0), z) \phat(t, z, (x, w_1)) \muhat(dz).
\end{equation}
If (a)-(c) are all proved, then we have \eqref{AuxiliaryHeatKernelDefinitionChapmanKolmogorov} for all $z_0, z_1 \in \Mhat$.
The proofs of (a)-(c) are as follows:

(a) By the definition of $\phat$, and since the heat kernel on the original space satisfies Chapman-Kolmogorov,
\begin{equation}\label{629}
    \phat(s+t, z_0, z_1) = p(s+t, x, y) = \int_M p(s, x, x') p(t, x', y) \mu(dy).
\end{equation}
By breaking $M$ into $\{x\}$, $\{y\}$, and $M \setminus (\{x\} \cup \{y\})$, \eqref{629} becomes
\begin{align}\label{630}
\begin{split}
    \phat(s+t, z_0, z_1) =& \mu(x) p(s, x, y) p(t, x, y)\\
    &+ \mu(y) p(s, x, y)p(t, y, y)\\
    &+ \int_{M \setminus (\{x\} \cup \{y\})} p(s, x, x') p(t, x', y) \mu(dy).
\end{split}
\end{align}
We will handle the three terms of the right-hand side of \eqref{630} separately. First,
\begin{align}\label{631}
\begin{split}
    \mu(x) p(s, x, x) p(t, x, y) &= \mathbb{P}^x(X_s = x) \cdot p(t, x, y)
    = \mathbb{P}^{z_0}(\Xhat_s \in W_x) \cdot p(t, x, y) \qquad\mbox{(by \eqref{couple})}\\
    &= \int_{W_x} \phat(s, z_0, z) \muhat(dz) \cdot p(t, x, y)\\
    &= \int_{W_x} \phat(s, z_0, z) \phat(t, z, z_1) \muhat(dz).
\end{split}
\end{align}
By the same argument,
\begin{equation}\label{632}
    \mu(y) p(s, x, y) p(t, y, y) = \int_{W_y} \phat(s, z_0, z) \phat(t, z, z_1) \muhat(dz). 
\end{equation}
For all $x' \in M\setminus (\{x\} \cup \{y\})$, for all $z \in W_{x'}$, we have $\phat(s, z_0, z) \phat(t, z, z_1) = p(s, x, x')p(t, x', y)$. Therefore,
\begin{equation}\label{633}
    \int_{M \setminus (\{x\} \cup \{y\})} p(s, x, x') p(t, x', y) \mu(dy) = \int_{\Mhat \setminus (W_x \cup W_y)} \phat(s, z_0, z) \phat(t, z, z_1) \muhat(dz).
\end{equation}
By using \eqref{631}-\eqref{633} to evaluate the terms on the right-hand side of \eqref{630},
\begin{align*}
    \phat(s+t, z_0, z_1) &= \int_{\Mhat} \phat(s, z_0, z) \phat(t, z, z_1) \muhat(dz).
\end{align*}

(b) Fix $x \in M_C \setminus \mathcal{N}$ and $s, t >0$. By the definition of $\phat$,
\begin{align*}
    \phat(s+t, z_0, z_1) &= p(s+t, x, x)\\
    &= \int_M p(s, x, x') p(t, x', x) \mu(dx')\\
    &= \int_{M \setminus \{x\}} p(s, x, x') p(t, x', x) \mu(dx') & \mbox{(since $\mu(\{x\}) = 0$)}\\
    &= \int_{\Mhat \setminus W_x} \phat(s, z_0, z) \phat(t, z, z_1) \muhat(dz)\\
    &= \int_{\Mhat} \phat(s, z_0, z) \phat(t, z, z_1) \muhat(dz) &\mbox{(since $\muhat(W_x)=0$)}.
\end{align*}

(c) Fix $x \in M_A$, $w_0 \in W$, and $s, t>0$ (but do not fix $w_1$). Let $F(w_1)$ be the left-hand side of \eqref{ChapKolmOnWxEquation} and let $G(w_1)$ be the right-hand side of \eqref{ChapKolmOnWxEquation}. Recall that we have already established \eqref{AuxiliaryHeatKernelDefinitionDistribution}, as a consequence of Lemma \ref{DistributionHeatKernelPhat}. Therefore, by Proposition \ref{AeChapmanKolmogorov}, $F : W \to \mathbb{R}$ and $G : W \to \mathbb{R}$ agree $\nu$-almost everywhere. In order to show that $F$ and $G$ are the same function, it is enough to show that they are both continuous.

Let us prove that $F$ is continuous first. For all $m \in \mathbb{N}$, $F$ is locally constant (and therefore continuous) on $A^{w_0}_m$. The only point in $W$ that does not belong to any $A^{w_0}_m$ is $w_0$ itself. By the formula for $\phat$,
\begin{align*}
    F(w_0) &= p(s+t, x, x) + \frac{e^{-v(x) t}}{\mu(x)} \left( a(s+t, \infty, D_x) - 1 \right)\\
    &= p(s+t, x, x) + \frac{e^{-v(x) t}}{\mu(x)} \left( \lim_{m \to \infty} a(s+t, m, D_x) - 1 \right) &\mbox{(by the definition of $a(s+t, \infty, D_x)$)}\\
    &= \lim_{m \to \infty} \left( p(s+t, x, x) + \frac{e^{-v(x) t}}{\mu(x)} \left( a(s+t, m, D_x) - 1 \right) \right)\\
    &= \lim_{w \to w_0} F(w).
\end{align*}
Therefore, $F$ is continuous on all of $W$.

For all $w_1 \in W$,
\begin{align*}
    G(w_1) &= \int_{\Mhat} \phat(s, (x, w_0), z) \phat(t, z, (x, w_1)) \muhat(dz)\\
    &= \int_{M \setminus \{x\}} p(s, x, y) p(t, y, x) \mu(dy) + \mu(x) \int_W \phat(s, (x, w_0), (x, w)) \phat(t, (x, w), (x, w_1)) \nu(dw)\\
    &= \int_{M \setminus \{x\}} p(s, x, y) p(t, y, x) \mu(dy) + \mu(x) \int_W \lim_{w' \to w_1} \phat(s, (x, w_0), (x, w)) \phat(t, (x, w), (x, w')) \nu(dw)\\
    & \qquad\mbox{(by the continuity of $\phat(t, (x, w_0), (x, \cdot))$)}\\
    &= \int_{M \setminus \{x\}} p(s, x, y) p(t, y, x) \mu(dy) +  \lim_{w' \to w_1} \mu(x) \int_W \phat(s, (x, w_0), (x, w)) \phat(t, (x, w), (x, w')) \nu(dw)\\
    &\qquad\mbox{(by the Dominated convergence theorem)}\\
    &= \lim_{w' \to w_1} G(w')
\end{align*}
so $G$ is continuous too.
\end{proof}

At last, we have proved Proposition \ref{PhatIsTheHeatKernelProp}.

\begin{proof}[Proof of Proposition \ref{PhatIsTheHeatKernelProp}]
By Lemma \ref{DistributionHeatKernelPhat}, $\phat$ satisfies \eqref{AuxiliaryHeatKernelDefinitionDistribution}. Since the formula for $\phat$ is symmetric in $z$ and $z'$, $\phat$ satisfies \eqref{AuxiliaryHeatKernelDefinitionSymmetry}. By Lemma \ref{PhatChapmamKolmogorov}, $\phat$ satisfies \eqref{AuxiliaryHeatKernelDefinitionChapmanKolmogorov}.
\end{proof}

It would be understandable for the reader to ask where in Lemma \ref{PhatChapmamKolmogorov} we used the $a(t, \infty, D_x)$ that appears in the formula for $\phat(t, (x, w_0), (x, w_0))$. The $a(t, \infty, D_x)$ appears in the formula for $\phat(t, (x, w_0), (x, w_0))$ because it is the value that makes $\phat(t, (x, w_0), (x, \cdot))$ continuous at $w_0$. In the proof of part (c) of Lemma \ref{PhatChapmamKolmogorov}, we use the fact that $\phat(t, (x, w_0), (x, \cdot))$ is continuous.

\section{Comparing the original heat kernel and auxiliary heat kernel} \label{HeatKernelSection2}

The purpose of this section is to prove \eqref{UhkBackwards}-\eqref{LhkBackwards} and \eqref{NdlBackwards}-\eqref{UhkdForwards}, the implications in Proposition \ref{MasterProp} involving heat kernels. Each of these is proved by using Proposition \ref{PhatIsTheHeatKernelProp} to compare the heat kernels of $\{X_t\}$ and $\{\Xhat_t\}$.

Note that if $\{\Xhat_t\}$ contains a heat kernel, it is easy to see from \eqref{couple} that $\{X_t\}$ contains a heat kernel too, in which case Proposition \ref{PhatIsTheHeatKernelProp} tells us that the two heat kernels are related by \eqref{PhatFormula}. Therefore, there are only two possibilities:
\begin{itemize}
    \item Both processes $\{X_t\}$ and $\{\Xhat_t\}$ have heat kernels, and they are related by \eqref{PhatFormula}.
    \item Neither $\{X_t\}$ nor $\{\Xhat_t\}$ has a heat kernel.
\end{itemize}
Thus, whenever a heat kernel-related condition (say $\HK$, $\NDL$, or $\UHKD$) holds on \textit{either} the original space or the auxiliary space, we know that \textit{both} spaces have heat kernels, and they are related by \eqref{PhatFormula}.

\subsection*{\texorpdfstring{$\UHK$ and $\LHK$: proof of \eqref{UhkBackwards}-\eqref{LhkBackwards}}{TEXT}}

In this subsection, we show that $\UHK$ (resp. $\LHK$) for the auxiliary space implies $\UHK$ (resp. $\LHK$) for the original space.
Recall the function $q(t, x, y)$ defined in \eqref{QtxyFormula}, which $p(t, x,y)$ is on the order of if $\HK$ holds. The function defined the same way on the auxiliary space is
\begin{equation*}
    \qhat(t, z_0, z) = \frac{1}{\Vhat(z_0, \phi^{-1}(t))} \wedge \frac{t}{\Vhat(z_0, \dhat(z, z_0)) \phi(\dhat(z,z_0))} \qquad\mbox{for $z_0, z \in \Mhat$, $t>0$}
\end{equation*}
so we must show that if $\phat$ is at most (resp. at least) on the order of $\qhat$, then $p$ is at most (resp. at least) on the order of $q$.

Our strategy is the following: use Proposition \ref{PhatIsTheHeatKernelProp} to compare $p$ to $\phat$, use the hypothesis ($\UHK$ or $\LHK$ for the auxiliary space) to compare $\phat$ to $\qhat$, and then compare $\qhat$ to $q$. If we are able to do this, then we successfully compare $p$ to $q$. The only missing ingredient is a way to compare $\qhat$ to $q$. The following lemma shows how this comparison is made.

\begin{lemma} \label{QhatQComparison}
Suppose $(M, d, \mu, \mathcal{E}, \mathcal{F})$ satisfies Assumption \ref{BasicAssumptions}, $\phi$ is of regular growth, $(\mathcal{E}, \mathcal{F})$ admits a jump kernel, and $M=M_A \cup M_C$.

(a) If $x$ and $y$ are distinct elements of $M$, $z_1 \in W_x$, $z_2 \in W_y$, and $t>0$, then
\begin{equation*}
    \qhat(t, z_1, z_2) = q(t, x, y).
\end{equation*}

(b) If $x \in M_A$, $z_0 \in W_x$, $t>0$, and $t \geq \phi(D_x)$, then
\begin{equation*} \label{WxIntStatement1}
    \int_{W_x} \qhat(t, z_0, z) \muhat(dz) = \mu(x) q(t, x, x).
\end{equation*}

(c) If $x \in M_A$, $z_0 \in W_x$, $t>0$, and $t \leq \phi(D_x)$, then
\begin{equation} \label{WxIntStatement2}
    \int_{W_x} \qhat(t, z_0, z) \muhat(dz) \in [1, 3].
\end{equation}

\end{lemma}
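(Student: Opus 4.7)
The plan is to handle each of (a), (b), and (c) by directly unpacking the definitions of $\qhat$, $\dhat$, and $\Vhat$ via the structural formulas \eqref{DefinitionOfDhat}, \eqref{VhatFormula}, \eqref{VtildeIntermsofM}, and \eqref{onDiagonalQt}, together with the explicit spherical decomposition $W_x\setminus\{z_0\}=\bigsqcup_{m\geq 1}(\{x\}\times A^{w_0}_m)$.

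For (a), since $x\neq y$, $z_1\in W_x$, $z_2\in W_y$, formula \eqref{DefinitionOfDhat} gives $\dhat(z_1,z_2)=d(x,y)$, and $d(x,y)\geq D_x$ by \eqref{DxFormula}, so \eqref{VhatFormula} yields $\Vhat(z_1,\dhat(z_1,z_2))=V(x,d(x,y))$; thus the second argument of the minimum in $\qhat$ matches that in $q$. For the first argument I would split on whether $\phi^{-1}(t)\geq D_x$. In the ``large-$t$'' case \eqref{VhatFormula} gives $\Vhat(z_1,\phi^{-1}(t))=V(x,\phi^{-1}(t))$ and equality is term-by-term. In the ``small-$t$'' case $x$ must be an isolated atom, $V(x,\phi^{-1}(t))=\mu(x)$, $\Vhat(z_1,\phi^{-1}(t))\leq\mu(x)$, and $\phi(d(x,y))\geq\phi(D_x)\geq t$ forces the second argument to be at most $1/\mu(x)$ in both expressions, so both $q$ and $\qhat$ reduce to the common value $t/(V(x,d(x,y))\phi(d(x,y)))$.

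For (b), fix $z_0=(x,w_0)\in W_x$ and $t\geq\phi(D_x)$. The $\dhat$-diameter of $W_x$ is $d^{D_x}_1<D_x\leq\phi^{-1}(t)$, so \eqref{VhatFormula} gives $\Vhat(z_0,\phi^{-1}(t))=V(x,\phi^{-1}(t))\geq\mu(x)$. On each sphere $\{x\}\times A^{w_0}_m$ ($m\geq 1$), $\phi(\dhat(z,z_0))=\phi(D_x)/2^m$, $\Vhat(z_0,\dhat(z,z_0))=\mu(x)\cdot 2^{-m}$, and $\muhat(\{x\}\times A^{w_0}_m)=\mu(x)\cdot 2^{-m}$ by \eqref{VtildeIntermsofM} and \eqref{MuhatDefinition}. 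The second candidate in $\qhat$ then equals $t\cdot 4^m/(\mu(x)\phi(D_x))\geq 4/\mu(x)\geq 1/V(x,\phi^{-1}(t))$, so the minimum is attained by the first candidate for every $m\geq 1$. Hence $\qhat(t,z_0,z)=1/V(x,\phi^{-1}(t))$ for $\muhat$-a.e. $z\in W_x$, and integration combined with \eqref{onDiagonalQt} gives $\mu(x)\,q(t,x,x)$.

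For (c), fix $z_0\in W_x$ and $t\leq\phi(D_x)$, and let $n\geq 0$ be the integer with $\phi(D_x)/2^{n+1}<t\leq\phi(D_x)/2^n$. Then \eqref{VtildeIntermsofM} gives $\Vhat(z_0,\phi^{-1}(t))=\mu(x)\cdot 2^{-n}$, so the first argument of the minimum is the constant $2^n/\mu(x)$, while on $\{x\}\times A^{w_0}_m$ the second argument is $t\cdot 4^m/(\mu(x)\phi(D_x))$. Comparing these shows the minimum is attained by the second for $m\leq n$ and by the first for $m\geq n+1$. Summing over $m\geq 1$ with weights $\mu(x)\cdot 2^{-m}$ yields
\begin{equation*}
\int_{W_x}\qhat(t,z_0,z)\,\muhat(dz)=\frac{t}{\phi(D_x)}\sum_{m=1}^n 2^m+2^n\sum_{m=n+1}^\infty 2^{-m}=\frac{t(2^{n+1}-2)}{\phi(D_x)}+1,
\end{equation*}
which, given the constraint on $t$, lies in $[1,3-2^{1-n}]\subseteq[1,3]$ (equaling $1$ when $n=0$). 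The main obstacle is this last part: correctly identifying the crossover index $n$ between the two arguments of the minimum, tracking the $2^m$ versus $4^m$ cancellations in the sum, and verifying the sharp numerical bounds; parts (a) and (b) are essentially definition chases once one notes that $\dhat$-balls of radius $\geq D_x$ recover the $d$-balls of the original space.
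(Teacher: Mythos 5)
Your proposal is correct and follows essentially the same route as the paper: part (c) uses exactly the paper's decomposition of $W_x$ into the spheres $\{x\}\times A^{w_0}_m$, with your crossover index $n$ playing the role of the paper's $m_0$ and the same sum $\frac{t}{\phi(D_x)}(2^{n+1}-2)+1$ yielding the bounds $[1,3]$, while parts (a) and (b) are the same definition chase via \eqref{DefinitionOfDhat} and \eqref{VhatFormula} (the paper merely packages the case analysis through the reformulations \eqref{whichMin}--\eqref{QhatWhichMin} rather than comparing the two candidates in the minimum directly, as you do).
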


Before we prove Lemma \ref{QhatQComparison}, let us establish a more explicit formula for $q$ and $\qhat$. Observe that $V(x, \phi^{-1}(t))$ is strictly increasing in $t$, and $V(x, d(x, y))\phi(d(x, y)) / t$ is strictly decreasing in $t$. The two are equal for $t = \phi(d(x, y))$. Therefore, \eqref{QtxyFormula} can be rewritten as
\begin{equation} \label{whichMin}
    q(t, x, y) = \left\{ \begin{matrix}
        \frac{1}{V(x, \phi^{-1}(t))} &:& \mbox{if $t \geq \phi(d(x, y))$}\\
        \\
        \frac{t}{V(x, d(x, y)) \phi(d(x, y))} &:& \mbox{if $t \leq \phi(d(x, y))$}.
    \end{matrix}\right.
\end{equation}
By applying \eqref{whichMin} to the auxiliary space,
\begin{equation} \label{QhatWhichMin}
    \qhat(t, z_0, z) = \left\{ \begin{matrix}
        \frac{1}{\Vhat(z_0, \phi^{-1}(t))} &:& \mbox{if $t \geq \phi(\dhat(z, z_0))$}\\
        \\
        \frac{t}{\Vhat(z_0, \dhat(z, z_0)) \phi(\dhat(z, z_0))} &:& \mbox{if $t \leq \phi(\dhat(z, z_0))$}.
    \end{matrix}\right.
\end{equation}

\begin{proof}[Proof of Lemma \ref{QhatQComparison}]

(a) Let $x$ and $y$ be distinct elements of $M$, $z_1 \in W_x$, and $z_2 \in W_y$.
If $t \geq \phi(d(x, y)) =  \phi(\dhat(z_1, z_2))$, then by \eqref{whichMin} and \eqref{QhatWhichMin},
\begin{dmath*}
    \qhat(t, z_1, z_2) = \frac{1}{\Vhat(z_1, \phi^{-1}(t))} = \frac{1}{V(x, \phi^{-1}(t)} \qquad\mbox{(since $\phi^{-1}(t) \geq d(x, y) \geq D_x$)}
    = q(t, x, y).
\end{dmath*}
If $t \leq \phi(d(x, y)) = \phi(\dhat(z_1, z_2))$, then by \eqref{whichMin} and \eqref{QhatWhichMin},
\begin{dmath*}
    \qhat(t, z_1, z_2) = \frac{t}{\Vhat(z_1, \dhat(z_1, z_2)) \phi(\dhat(z_1, z_2))}
    = \frac{t}{\Vhat(z_1, d(x, y)) \phi(d(x, y))}
    = \frac{t}{V(x, d(x, y)) \phi(d(x, y))} \qquad \mbox{(since $d(x, y) \geq D_x$)}
    = q(t, x, y).
\end{dmath*}

(b) Fix $x \in M_A$, $z_0 \in W_x$, and a positive $t \geq D_x$. 
For all $z \in W_x$, \eqref{whichMin} and \eqref{QhatWhichMin} give
\begin{equation*}
    \qhat(t, z_0, z) = \frac{1}{\Vhat(z, \phi^{-1}(t))} = \frac{1}{V(x, \phi^{-1}(t))} = q(t, x, x)
\end{equation*}
so
\begin{equation*}
    \int_{W_x} \qhat(t, z_0, z) \muhat(dz) = \muhat(W_x) q(t, x, x) = \mu(x) q(t, x, x).
\end{equation*}

(c) Fix $x \in M_A$, $z_0 \in W_x$, and $t \in (0, D_x]$. 
This will be the hardest part of Lemma \ref{QhatQComparison} to prove. Essentially, we decompose the integral $\int_{W_x} \qhat(t, z_0, z) \muhat(dz)$ into a sum of integrals over the spheres $\{x\} \times A^{w_0}_m$ (where $z_0 = (x, w_0)$). Equation \eqref{QhatWhichMin} gives a different formula for $\qhat(t, (x, w_0), \cdot)$ on $A^{w_0}_m$ for large $m$ and small $m$, so our sum gets broken into two sums, which we evaluate separately.

Let $m_0$ be the unique non-negative integer such that $\phi^{-1}(t) \in \left( d^{D_x}_{m_0+1}, d^{D_x}_{m_0} \right]$, or
\begin{equation}
    2^{-(m_0+1)}\phi(D_x) < t \leq 2^{-m_0} \phi(D_x).
\end{equation}
Let $w_0$ be the element of $W$ such that $z_0=(x, w_0)$. For all $m \in \mathbb{N}$, let $w_m$ be a representative of $A^{w_0}_m$ (recall how the sphere $A^{w_0}_m$ is defined in \eqref{AwmDef}) and let $z_m=(x, w_m)$. If $m > m_0$, then $t \geq \phi(\dhat(z_0, z_m))$, so
\begin{dmath} \label{WxIntStatementA}
    \qhat(t, z_0, z_m) = \frac{1}{\Vhat(z_0, \phi^{-1}(t))} = \frac{1}{\Vhat(z_0, d^{D_x}_{m_0})} \qquad\mbox{(by our choice of $m_0$)}
    = {\frac{1}{\mu(x) \cdot 2^{-m_0}} = \frac{2^{m_0}}{\mu(x)}}.
\end{dmath}
If $m \leq m_0$, then $t \leq \phi(\dhat(z_0, z_m))$, so
\begin{dmath} \label{WxIntStatementB}
    \qhat(t, z_0, z_m) = \frac{t}{\Vhat(z_0, \dhat(z_0, z_m)) \phi(\dhat(z_0, z_m))}
    = \frac{t}{\Vhat(z_0, d^{D_x}_m) \phi(d^{D_x}_m)}
    = \frac{t}{2^{-m} \mu(x) \cdot 2^{-m} \phi(D_x)} = \frac{4^m t}{\mu(x) \phi(D_x)}.
\end{dmath}
By \eqref{WxIntStatementA} and \eqref{WxIntStatementB},
\begin{dmath} \label{WxIntStatementC}
    \int_{W_x} \qhat(t, z_0, z) \muhat(dz) = \sum_{m=1}^\infty \mu(x) \nu(A^{w_0}_m) \qhat(t, z_0, z_m) 
    = \sum_{m=1}^{m_0} 2^{-m} \cdot \frac{4^m t}{\phi(D_x)} + \sum_{m=m_0+1}^\infty 2^{-m} \cdot 2^{m_0}
    = \frac{t}{\phi(D_x)} \sum_{m=1}^{m_0} 2^m + 2^{m_0} \sum_{m=m_0 + 1} 2^{-m}
    = \frac{t}{\phi(D_x)}(2^{m_0 + 1}-2) + 1
    = \frac{2t}{\phi(D_x)}(2^{m_0}-1) + 1.
\end{dmath}
Obviously, \eqref{WxIntStatementC} implies
\begin{equation*}
    \int_{W_x} \qhat(t, z_0, z) \muhat(dz) \geq 1.
\end{equation*}
Recall that $m_0$ was chosen so that $t / \phi(D_x) \leq 2^{-m_0}$. Thus, \eqref{WxIntStatementC} also implies
\begin{equation*}
    \int_{W_x} \qhat(t, z_0, z)\muhat(dz) \leq 2 \cdot 2^{-m_0} \cdot (2^{m_0} - 1) + 1 \leq 2 + 1 = 3.
\end{equation*}

\end{proof}

Now that we have Lemma \ref{QhatQComparison}, we have all the necessary ingredients to prove \eqref{UhkBackwards}-\eqref{LhkBackwards}. We compare $p$ to $\phat$ using Proposition \ref{PhatIsTheHeatKernelProp}, compare $\phat$ to $\qhat$ using the hypothesis that the condition holds on the auxiliary space, and compare $\qhat$ to $q$ using Lemma \ref{QhatQComparison}.
The proof that $\UHK$ for the auxiliary space implies $\UHK$ for the original space is as follows.

\begin{proof}[Proof of Proposition \ref{MasterProp}, implication \eqref{UhkBackwards}]
Suppose $\UHK$ holds for $(\Mhat, \dhat, \muhat, \Ehat, \Fhat)$; there exists a $C>0$ such that $\phat(t, z_0, z) \leq C\qhat(t, z_0, z)$ for all $t, z_0, z$. Fix $x, y \in M_0$ and $t>0$.

Suppose $x \neq y$. Let $z_0$ be a representative of $W_x$, and let $z$ be a representative of $W_y$. Then
\begin{align*}
    p(t, x, y) &= \phat(t, z_0, z) & \mbox{(by Proposition \ref{PhatIsTheHeatKernelProp})}\\
    &\leq C\qhat(t, z_0, z)\\
    &= Cq(t, x, y) &\mbox{(by Lemma \ref{QhatQComparison}(a))}.
\end{align*}
If on the other hand, $x=y \in M_C \setminus \mathcal{N}$, then
\begin{align*}
    p(t, x, x) &= \phat(t, x, x) &\mbox{(by Proposition \ref{PhatIsTheHeatKernelProp})}\\
    &\leq C\qhat(t, x, x)\\
    &= \frac{C}{\Vhat(x, \phi^{-1}(t))} &\mbox{(by \eqref{QhatWhichMin})}\\
    &= \frac{C}{V(x, \phi^{-1}(t))} &\mbox{(since $\phi^{-1}(t) > 0 = D_x$)}\\
    &= Cq(t, x, x) &\mbox{(by \eqref{whichMin})}.
\end{align*}
Finally, if $x=y \in M_A$,
\begin{dmath} \label{PtxxIntermsOfQhat}
    p(t, x, x) = {\frac{1}{\mu(x)}\mathbb{P}^x(X_t=x)}
    ={\frac{1}{\mu(x)} \mathbb{P}^{z_0}(\Xhat_t \in W_x)}
    = \frac{1}{\mu(x)} \int_{W_x} \phat(t, z_0, z) \muhat(dz)
    \leq \frac{C}{\mu(x)} \int_{W_x} \qhat(t, z_0, z) \muhat(dz).
\end{dmath}

If $t \geq \phi(D_x)$, then by \eqref{PtxxIntermsOfQhat} and Lemma \ref{QhatQComparison}(b),
\begin{equation*}
    p(t, x, x) \leq \frac{C}{\mu(x)} \cdot \mu(x) q(t, x, x) = Cq(t, x, x).
\end{equation*}
Suppose $t \leq \phi(D_x)$. By \eqref{onDiagonalQt},
\begin{equation}\label{HkLemmaLastCaseQ}
    q(t, x, x) = \frac{1}{V(x, \phi^{-1}(t))} = \frac{1}{\mu(x)}.
\end{equation}
By \eqref{PtxxIntermsOfQhat} and Lemma \ref{QhatQComparison}(c),
\begin{equation} \label{HkLemmaLastCase3C}
    p(t, x, x) \leq \frac{3C}{\mu(x)}.
\end{equation}
By \eqref{HkLemmaLastCaseQ} and \eqref{HkLemmaLastCase3C},
\begin{equation*}
    p(t, x, x) \leq 3C q(t, x,x).
\end{equation*}
Therefore, $\UHK$ holds for $(M, d, \mu, \mathcal{E}, \mathcal{F})$.
\end{proof}

The proof of \eqref{LhkBackwards} ($\LHK$ for the auxiliary space implies $\LHK$ for the original space) is almost exactly the same. Simply change the direction of the inequalities and use the lower bound instead of the upper bound from Lemma \ref{QhatQComparison}(c).

\subsection*{\texorpdfstring{$\NDL$: proof of \eqref{NdlBackwards}}{TEXT}}

In this subsection, we prove that if $\VD$ holds, then $\NDL$ for the auxiliary space implies $\NDL$ for the original space. We remind the reader that $\NDL$ says that
\begin{equation} \label{NdlOriginal}
    p^B(t, x, y) \geq \frac{c_1}{V(x_0, \phi^{-1}(t))} \quad\mbox{for all $B=B(x_0, r)$, $0<t\leq \phi(\varepsilon r)$, and $\,x, y \in B(x_0, \varepsilon\phi^{-1}(t)) \cap M_0$}
\end{equation}
where $c_1>0$ and $\varepsilon \in (0, 1)$ are constants, and $p^B$ is the Dirichlet heat kernel. For the auxiliary space, the statement of $\NDL$ becomes
\begin{equation}\label{NdlAuxiliary}
    \phat^{\Bhat}(t, z_1, z_2) \geq \frac{c_1}{\Vhat(z_0, \phi^{-1}(t))} \quad\mbox{for all $\Bhat=\Bhat(z_0, r)$, $0<t\leq \phi(\varepsilon r)$, and $\,z_1, z_2 \in \Bhat(z_0, \varepsilon\phi^{-1}(t)) \cap \Mohat$}.
\end{equation}
(The constants $c_1$ and $\varepsilon$ may not be the same in \eqref{NdlOriginal} and \eqref{NdlAuxiliary}.

We must show that \eqref{NdlAuxiliary} implies \eqref{NdlOriginal}.
In our proof, we consider three cases:
\begin{itemize}
    \item $\varepsilon \phi^{-1}(t) < D_{x_0}$.
    \item $\varepsilon \phi^{-1}(t) \geq D_{x_0}$, and either $x \neq y$ or $x=y \in M_C \setminus \mathcal{N}$.
    \item $\varepsilon \phi^{-1}(t) \geq D_{x_0}$ and $x=y \in M_A$.
\end{itemize}
In all three cases, we use \eqref{couple} to compare $p^B$ to some quantity involving the near-diagonal Dirichlet heat kernel on the auxiliary space, then use the hypothesis that $\NDL$ holds on the auxiliary space to put a lower bound on this quantity. In the first case, the lower bound we obtain involves the ratio $\Vhat(z_0, \phi^{-t}(t)) / \Vhat(z_0, \varepsilon \phi^{-t}(t))$, which we handle using $\VD$. (This is where the assumption of $\VD$ is needed.)

\begin{proof}[Proof of Proposition \ref{MasterProp}, implication \eqref{NdlBackwards}]
Assume $\VD$ holds for the original space and $\NDL$ holds for the auxiliary space. By $\NDL$, there exist constants $c_1>0$ and $\varepsilon \in (0, 1)$ for which \eqref{NdlAuxiliary} holds.

Fix $x_0 \in M$ and $r>0$. Let $B=B(x_0, r)$. Suppose $0<t \leq \phi(\varepsilon r)$ and $x, y \in B(x_0, \varepsilon\phi^{-1}(t)) \cap M_0$. We will show that $p^B(t, x, y)$ is at least some positive constant times $\frac{1}{V(x_0, \phi^{-1}(t))}$. We will treat the three following cases separately.

(a) $\varepsilon \phi^{-1}(t) < D_{x_0}$.

(b) $\varepsilon \phi^{-1}(t) \geq D_{x_0}$, and either $x \neq y$ or $x=y \in M_C \setminus \mathcal{N}$.

(c) $\varepsilon \phi^{-1}(t) \geq D_{x_0}$ and $x=y \in M_A$.

Note that these three cases cover all possibilities. The proofs for each case are as follows:

(a) In this case, $x_0 \in M_A$ (since $D_{x_0} > \varepsilon \phi^{-1}(t) > 0$). Also, $x$ and $y$ must both be $x_0$, since there are no other points in $B(x_0, \varepsilon\phi^{-1}(t))$.
By the definition of Dirichlet heat kernel,
\begin{equation} \label{NdlProofIntermed1}
    p^B(t, x, y) = p^B(t, x_0, x_0) = \frac{1}{\mu(x_0)} \mathbb{P}^{x_0} \left( \mbox{$X_t=x_0$, and $X_s \in B$ for all $0 \leq s \leq t$} \right).
\end{equation}
Since $\VD$ holds for the original space, $\VD$ also holds for the auxiliary space by Proposition \ref{volumeGrowthProp}(b). By applying \eqref{VdAlternate} to the auxiliary space, there exist constants $C>0$ and $d>0$ such that
\begin{equation}\label{VdAlternateAuxiliary}
    \frac{\Vhat(z, R)}{\Vhat(z, r)} \leq C \left(\frac{R}{r}\right)^d \qquad\mbox{for all $z \in \Mhat$, $R \geq r>0$}.
\end{equation}
If $z_0$ is a representative of $W_{x_0}$,
\begin{align*}
    p^B(t, x, y) &= \frac{1}{\mu(x_0)} \mathbb{P}^{x_0} \left( \mbox{$X_t=x_0$, and $X_s \in B$ for all $0 \leq s \leq t$} \right) \qquad\mbox{(by \eqref{NdlProofIntermed1})}\\
    &= \frac{1}{\mu(x_0)} \mathbb{P}^{z_0}\left(\mbox{$\Xhat_t \in W_{x_0}$, and $\Xhat_s \in \pi^{-1}(B)$ for all $0 \leq s \leq t$}\right) \qquad\mbox{(by \eqref{couple})}\\
    &= \frac{1}{\mu(x_0)} \int_{W_x} \phat^{\left(\pi^{-1}(B)\right)}(t, z_0, z) \muhat(dz)\\
    &\geq \frac{1}{\mu(x_0)} \int_{\Bhat(z_0, \varepsilon\phi^{-1}(t))} \frac{c_1}{\Vhat(z_0, \phi^{-1}(t))} \muhat{dz} \qquad\mbox{(since $\NDL$ holds on $(\Mhat, \dhat, \muhat, \Ehat, \Fhat)$)}\\
    &= \frac{c_1}{\mu(x_0)} \cdot \frac{\Vhat(z_0, \varepsilon\phi^{-1}(t))}{\Vhat(z_0, \phi^{-1}(t))}\\
    &\geq \frac{c_2}{\mu(x_0)} \varepsilon^d \qquad\mbox{(by \eqref{VdAlternateAuxiliary}, where $c_2$ is another positive constant)}\\
    &\geq \frac{c_2 \varepsilon^d}{V(x_0, \phi^{-1}(t))} \qquad\mbox{(since $\mu(x_0) \leq V(x_0, \phi^{-1}(t)) $)}.
\end{align*}

(b) In this case, since $\varepsilon \phi^{-1}(t) \geq D_{x_0}$, we have
\begin{equation} \label{NdlBothBalsBig}
    \Bhat(z_0, \varepsilon \phi^{-1}(t)) = \pi^{-1}(B(x_0, \varepsilon\phi^{-1}(t)) \qquad\mbox{and}\qquad \Bhat(z_0, \phi^{-1}(t)) = \pi^{-1}(B)).
\end{equation}
Let $z_1$ be a representative of $W_x$ and let $z_2$ be a representative of $W_y$. Then $z_1, z_2 \in \Bhat(z_0, \varepsilon\phi^{-1}(t)) \cap \Mohat$ by \eqref{NdlBothBalsBig}.
\begin{dmath*}
    p^B(t, x, y) = \phat^{\left( \pi^{-1}(B) \right)}(t, z_1, z_2) \qquad\mbox{(by \eqref{couple}, since $x \neq y$ or $x=y \in M_C$)}
    \geq \frac{c_1}{\Vhat(z_0, \phi^{-1}(t))} \qquad\mbox{(since $z_1, z_2 \in \Bhat(z_0, \varepsilon\phi^{-1}(t)) \cap \Mohat$ and $\NDL$ holds on $(\Mhat, \dhat, \muhat, \Ehat, \Fhat)$)}
    = \frac{c_1}{V(x_0, \phi^{-1}(t))} \qquad\mbox{(by \eqref{NdlBothBalsBig})}.
\end{dmath*}

(c) In this case, we still have $\varepsilon \phi^{-1}(t)\geq D_{x_0}$, so we once again have \eqref{NdlBothBalsBig}.
Note that all of $W_x$ belongs to $\Bhat(z_0, \varepsilon\phi^{-1}(t))$, so
\begin{align*}
    p^B(t, x, x) &= \frac{1}{\mu(x)} \int_{W_x} \phat^{\left( \pi^{-1}(B) \right)}(t, z_0, z) \muhat(dz) \qquad\mbox{(by \eqref{couple})}\\
    &\geq \frac{c_1}{\mu(x)} \frac{\mu(x)}{\Vhat(z_0, \phi^{-1}(t))}= \frac{c_1}{\Vhat(z_0, \phi^{-1}(t))} = \frac{c_1}{V(x_0, \phi^{-1}(t))}.
\end{align*}
\end{proof}

\subsection*{\texorpdfstring{$\UHKD$: proof of \eqref{UhkdForwards}}{TEXT}}

In this subsection we prove that if $\VD$ holds, then $\UHKD$ for the original space implies $\UHKD$ for the auxiliary space.

In other words, we assume an upper bound on the on-diagonal heat kernel $p(t, x, x)$, and prove an upper bound on the on-diagonal heat kernel $\phat(t, z, z)$. Recall from \eqref{PhatFormula} that for $x \in M_C$,
\begin{equation*} \label{PhatOndiagMc}
    \phat(t, x, x) = p(t, x, x)
\end{equation*}
and for $x \in M_A$, $w \in W$,
\begin{equation}\label{PhatOndiagMa}
    \phat(t, (x, w), (x, w)) = p(t, x, x) + \frac{e^{-v(x) t}}{\mu(x)} \left( a(t, \infty, D_x) - 1 \right)
\end{equation}
where $a(t, \infty, D_x)$ is as defined in Proposition \ref{PhatIsTheHeatKernelProp}.
Since we are already assuming an upper bound on $p(t, x, x)$, the difficulty lies in proving an upper bound on the second term in the right-hand-side of \eqref{PhatOndiagMa}.

The first step is to approximate $a(t, \infty, D_x)-1$ from above by a nice function of $t$ and $D_x$. Recall that $a(t, \infty, D_x)$ is defined as an infinite sum. In the following lemma, we replace this sum with an integral (that overestimates each term), and obtain a nice upper bound on $a(t, \infty, D)-1$ for a general $t>0$ and $D>0$.

\begin{lemma} \label{AtdxUpperbound}
For all $t>0$ and $D>0$, if we let $s:=t/\phi(D)$, then
\begin{equation*}
    a(t, \infty, D) -1 \leq \frac{e^{-s}}{s}.
\end{equation*}
\end{lemma}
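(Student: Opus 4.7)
The plan is to observe that $a(t,\infty,D)-1$ is exactly the tail series
\[
a(t,\infty,D)-1 = \sum_{j=1}^\infty 2^{j-1} e^{-(3\cdot 2^j - 2)s},
\]
and to bound this series by an integral. The key trick is to rewrite each coefficient as $2^{j-1}=2^j-2^{j-1}$, so that the sum becomes
\[
\sum_{j=1}^\infty (2^j-2^{j-1})\, e^{-(3\cdot 2^j - 2)s},
\]
which is a right-endpoint Riemann-type sum over the dyadic partition $\{[2^{j-1},2^j]\}_{j\geq 1}$ of $[1,\infty)$.

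Since the function $y\mapsto e^{-(3y-2)s}$ is decreasing in $y$, its value at the right endpoint $y=2^j$ of each interval $[2^{j-1},2^j]$ is at most its value everywhere on that interval. Hence
\[
(2^j-2^{j-1})\, e^{-(3\cdot 2^j - 2)s} \;\leq\; \int_{2^{j-1}}^{2^j} e^{-(3y-2)s}\,dy.
\]
Summing over $j\geq 1$ telescopes the integrals into $\int_1^\infty e^{-(3y-2)s}\,dy$, which can be evaluated explicitly as $e^{2s}\cdot e^{-3s}/(3s)=e^{-s}/(3s)$. This already beats $e^{-s}/s$ by a factor of $3$, so the stated inequality follows.

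I do not expect any genuine obstacle here; the only thing to check carefully is the monotonicity of $y\mapsto e^{-(3y-2)s}$ (which is obvious since $s,D>0$) and the arithmetic $-3s\cdot 1 + 2s = -s$ in the final evaluation of the integral. The proof is essentially two lines once the sum is rewritten in telescoping form.
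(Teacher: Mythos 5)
Your proof is correct and is essentially the same argument as the paper's: both bound the series $\sum_{j\geq 1}2^{j-1}e^{-(3\cdot 2^j-2)s}$ by an integral using monotonicity of the integrand, your dyadic partition of $[1,\infty)$ in the variable $y$ being just the substitution $y=2^x$ applied to the paper's integral $\int_0^\infty 2^x e^{-(3\cdot 2^x-2)s}\,dx$. The only difference is cosmetic: you get the slightly cleaner bound $e^{-s}/(3s)$ instead of the paper's $e^{-s}/(3s\log 2)$, both of which are at most $e^{-s}/s$.
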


\begin{proof}
Recall that $a(t, \infty, D) := 1 + \sum_{j=0}^\infty 2^{j-1} \exp \left( -(3 \cdot 2^j - 2) \frac{t}{\phi(D)} \right)$.
Let $h : (0, \infty) \to (0, \infty)$ be the function defined by
\begin{equation*}
    h(s) := \sum_{j=1}^\infty 2^{j-1} \exp \left( -(3 \cdot 2^j - 2) s \right)
\end{equation*}
so that $a(t, \infty, D) - 1 = h(s)$.
If $x$ belongs to the interval $[j-1, j]$, then $2^x \geq 2^{j-1}$ and $\exp(-(3 \cdot 2^x - 2)s) \geq \exp(-(3 \cdot 2^j - 2)s)$. Therefore,
\begin{dmath*}
    h(s) = \sum_{j=1}^\infty 2^{j-1} \exp(-(3 \cdot 2^j -2)s) \leq \sum_{j=1}^\infty \int_{j-1}^j 2^x \exp(-(3 \cdot 2^x -2)s) \, dx = \int_0^\infty 2^x \exp(-(3 \cdot 2^x -2)s) \, dx
    = \frac{1}{(3 \log 2)s} \int_s^\infty e^{-u} du \qquad\mbox{(by the substitution $u = (3 \cdot 2^x - 2) s$)}
    = \frac{e^{-s}}{s(3 \log 2)}
    \leq \frac{e^{-s}}{s} \qquad\mbox{(since $3 \log 2 \approx 2.079 > 1$)}.
\end{dmath*}
\end{proof}

By applying Lemma \ref{AtdxUpperbound}, equation \eqref{PhatOndiagMa} becomes
\begin{equation}\label{PhatOndiagMaApprox}
    \phat(t, (x, w), (x, w)) \leq p(t, x, x) + \frac{e^{-v(x)t}}{\mu(x)} \cdot \frac{e^{-s}}{s} \qquad\mbox{where} \, s = \frac{t}{\phi(D_x)}.
\end{equation}
We would like to show that the second term in the right-hand-side of \eqref{PhatOndiagMaApprox} is at most some constant times $1/\Vhat((x, w), \phi^{-1}(t))$. This is equivalent to showing that $e^{-v(x) t} \cdot \frac{\Vhat((x, w), \phi^{-1}(t))}{\mu(x)} \cdot \frac{e^{-s}}{s}$ is at most some constant. In the following lemma, we show how to put an upper bound on the quantity $\frac{\Vhat((x, w), \phi^{-1}(t))}{\mu(x)} \cdot \frac{e^{-s}}{s}$.
The argument is very different for $t \geq \phi(D_x)$ than for $t \leq \phi(D_x)$. For $t \geq \phi(D_x)$, we observe that $\mu(x) = \Vhat((x, w), D_x)$ and use $\VD$ and the regular growth of $\phi$ to control the ratio $\frac{\Vhat((x, w), \phi^{-1}(t))}{\mu(x)}$. For $t \leq \phi(D_x)$, we see that $\Vhat((x, w), \phi^{-1}(t)) = 2^{-m} \mu(x)$ for some $m$, and so $\frac{\Vhat((x, w), \phi^{-1}(t))}{\mu(x)} = 2^{-m}$.

\begin{lemma} \label{UhkdOndiagMaLemma}
Suppose $(M, d, \mu, \mathcal{E}, \mathcal{F})$ satisfies Assumption \ref{BasicAssumptions} and $\VD$, $\phi$ is of regular growth, $(\mathcal{E}, \mathcal{F})$ admits a jump kernel, and $M=M_A \cup M_C$.
There exists a $C>0$ such that
\begin{equation*}
    \frac{\Vhat(z_0, \phi^{-1}(t))}{\mu(x)} \cdot \frac{e^{-s}}{s} \leq C \qquad\mbox{for all $x \in M_A$, $z_0 \in W_x$, and $t>0$,} \qquad\mbox{where} \quad s := \frac{t}{\phi(D_x)}.
\end{equation*}
\end{lemma}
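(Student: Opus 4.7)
The plan is to split into the two cases $s \geq 1$ (equivalently $t \geq \phi(D_x)$) and $s < 1$ (equivalently $t < \phi(D_x)$), since the formula \eqref{VhatFormula} for $\Vhat(z_0, r)$ branches at $r = D_x$. In both cases the key observation is that $\mu(x) = V(x, D_x) = \Vhat(z_0, D_x)$, because the open ball $B(x, D_x)$ consists only of the atom $x$ itself.

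For the case $s \geq 1$, I would write
\begin{equation*}
    \frac{\Vhat(z_0, \phi^{-1}(t))}{\mu(x)} = \frac{V(x, \phi^{-1}(t))}{V(x, D_x)}
\end{equation*}
using that $\phi^{-1}(t) \geq D_x$ and the top branch of \eqref{VhatFormula}. Then $\VD$ in its polynomial form \eqref{VdAlternate} gives $V(x, \phi^{-1}(t))/V(x, D_x) \leq C(\phi^{-1}(t)/D_x)^d$, and the regular growth of $\phi$ (specifically the lower inequality $c_1 (R/r)^{\beta_1} \leq \phi(R)/\phi(r) = s$) converts this into $\phi^{-1}(t)/D_x \leq (s/c_1)^{1/\beta_1}$. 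Multiplying by $e^{-s}/s$ gives a bound of the form $C' s^{d/\beta_1 - 1} e^{-s}$, which is uniformly bounded for $s \geq 1$ because $s \mapsto s^a e^{-s}$ is bounded on $[1,\infty)$ for any $a \in \mathbb{R}$.

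For the case $s < 1$, the second branch of \eqref{VhatFormula} gives $\Vhat(z_0, \phi^{-1}(t)) = \mu(x)\,\Vtilde^{D_x}(\phi^{-1}(t))$, and the explicit upper bound from \eqref{VtildeFloorFormula} yields $\Vtilde^{D_x}(\phi^{-1}(t)) \leq 2\phi(\phi^{-1}(t))/\phi(D_x) = 2s$. Hence
\begin{equation*}
    \frac{\Vhat(z_0, \phi^{-1}(t))}{\mu(x)} \cdot \frac{e^{-s}}{s} \leq 2s \cdot \frac{e^{-s}}{s} = 2e^{-s} \leq 2.
\end{equation*}
Taking $C$ to be the larger of the two bounds finishes the lemma. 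No step is really an obstacle here; the only subtlety is ensuring that the two branches of \eqref{VhatFormula} agree at $r = D_x$ so that the identification $\mu(x) = V(x, D_x)$ is consistent, and handling the exponent $d/\beta_1 - 1$ (which could be positive or negative) uniformly by appealing to the boundedness of $s^a e^{-s}$ on $[1,\infty)$.
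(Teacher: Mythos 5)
Your proof is correct and follows essentially the same route as the paper's: split at $t = \phi(D_x)$, use $\VD$ together with the regular growth of $\phi$ to bound the large-time case by $C s^{d/\beta_1-1}e^{-s}$ (bounded on $[1,\infty)$), and use the dyadic volume structure of $W_x$ (your appeal to \eqref{VtildeFloorFormula} is the same fact the paper extracts via the integer $m$) to get the bound $2$ in the small-time case.
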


\begin{proof}
Fix $x$, $z_0$, and $t$. Let $s:=t/\phi(D_x)$. We will treat the following cases separately:

(a) $t \geq \phi(D_x)$.

(b) $t \leq \phi(D_x)$.

The proofs for cases (a) and (b) are:

(a) Since $\VD$ holds, let $d$ be the exponent from \eqref{VdAlternate}.
Since $\phi$ is of regular growth, let $\beta_1$ be the exponent from the lower bound in \eqref{regularGrowth}.
We can assume that $d \geq \beta_1$ because replacing $d$ with a larger value (or $\beta_1$ with a smaller value) does not make \eqref{VdAlternate} (or \eqref{regularGrowth}) cease to be true. By \eqref{VhatFormula}, $\VD$, and the regular growth of $\phi$,
\begin{equation*}
    \frac{\Vhat(z_0, \phi^{-1}(t))}{\mu(x)} = \frac{V(x, \phi^{-1}(t))}{V(x, D_x)} \leq C_1 \left(\frac{\phi^{-1}(t)}{D_x}\right)^d \leq C_2 \left( \frac{t}{\phi(D_x)} \right)^{d/\beta_1} = C_2 s^{d/\beta_1}.
\end{equation*}
Thus,
\begin{equation} \label{UhkdRatioBigT}
    \frac{\Vhat(z_0, \phi^{-1}(t))}{\mu(x)} \cdot \frac{e^{-s}}{s} \leq C_2 s^{\left(d/\beta_1 - 1\right)} e^{-s}.
\end{equation}
Let $\alpha := \frac{d}{\beta_1} - 1$. Recall that we assumed $d \geq \beta_1$, which means that $\alpha \geq 0$. Let $g$ be the function $g(u) = u^\alpha e^{-u}$.
Let $G := \sup_{u \in [1, \infty)} g(u)$. Note that $G$ is finite, since $g$ is continuous on $[1, \infty)$ and $\lim_{u \to \infty} g(u) = 0$.
By \eqref{UhkdRatioBigT},
\begin{equation*}
    \frac{\Vhat(z_0, \phi^{-1}(t))}{\mu(x)} \cdot \frac{e^{-s}}{s} \leq C_2 g(s) \leq C_2 G.
\end{equation*}

(b) Let $m$ be the integer such that $2^{-(m+1)} \phi(D_x) < t \leq 2^{-m} \phi(D_x)$.
Then
\begin{equation} \label{UhkdSmalltCase1}
    \frac{\Vhat(z_0, \phi^{-1}(t))}{\Vhat(z_0, D_x)} = \frac{2^{-m} \mu(x)}{\mu(x)} = 2^{-m}
\end{equation}
and $s \in \left(2^{-(m+1)}, 2^{-m} \right]$ so
\begin{equation} \label{UhkdSmalltCase2}
    \frac{e^{-s}}{s} \leq \frac{1}{s} \leq 2^{m+1}.
\end{equation}
By \eqref{UhkdSmalltCase1} and \eqref{UhkdSmalltCase2},
\begin{equation*}
    \frac{\Vhat(z_0, \phi^{-1}(t))}{\Vhat(z_0, D_x)} \cdot \frac{e^{-s}}{s} \leq 2^{-m} \cdot 2^{m+1} = 2.
\end{equation*}
\end{proof}

Now we prove the desired result, that $\VD$ and $\UHKD$ for the original space implies $\UHKD$ for the auxiliary space. This entails putting an upper bound on the on-diagonal heat kernel $\phat(t, z_0, z_0)$. There are two cases to consider: $z_0 \in M_C$ and $z_0 \in W_x$ for some $x \in M_A$. If $z_0 \in M_C$, the upper bound follows directly from the hypothesis that $\UHKD$ holds on the original space. If $z_0 \in W_x$ for some $x \in M_A$, we use Lemma \ref{AtdxUpperbound} to show put an upper bound on $\phat(t, z_0, z_0)$ of the form \eqref{PhatOndiagMaApprox}. Then we use Lemma \ref{UhkdOndiagMaLemma} to put an upper bound on the right-hand-side of \eqref{PhatOndiagMaApprox}.

\begin{proof}[Proof of Proposition \ref{MasterProp}, implication \eqref{UhkdForwards}]
By the assumption that $\UHKD$ holds on the original space, there exists a $C_2>0$ such that
\begin{equation} \label{UhkdHypothesis}
    p(t, x, x) \leq \frac{C_2}{V(x, \phi^{-1}(t))} \qquad\mbox{for all $x \in M$, $t>0$}.
\end{equation}

Fix $z_0 \in M$ and $t>0$. We would like to show that $\phat(t, z_0, z_0)$ is at most some constant times $1/\Vhat(z_0, \phi^{-1}(t))$. If $z_0 = x$ for some $x \in M_C$, then this is easy:
\begin{equation*}
    \phat(t, z_0, z_0) = p(t, x, x) \leq \frac{C_2}{V(x, \phi^{-1}(t))}= \frac{C_2}{\Vhat(z_0, \phi^{-1}(t))}.
\end{equation*}
Therefore, we can assume that $z_0 \in W_x$ for some $x \in M_A$

Instead of showing that $\phat(t, z_0, z_0)$ is at most a constant times $1/\Vhat(z_0, \phi^{-1}(t))$, it will be slightly easier to show the equivalent claim that $\phat(t, z_0, z_0) \Vhat(z_0, \phi^{-1}(t))$ is at most some constant. Let $s = t/\phi(D_x)$. By \eqref{PhatOndiagMaApprox},
\begin{equation} \label{PhatTimesVhat}
    \phat(t, z_0, z_0) \Vhat(z_0, \phi^{-1}(t)) \leq p(t, x, x) \Vhat(z_0, \phi^{-1}(t)) + e^{-v(x) t} \frac{\Vhat(z_0, \phi^{-1}(t))}{\mu(x)} \cdot \frac{e^{-s}}{s}.
\end{equation}
By \eqref{VhatFormula} and \eqref{UhkdHypothesis},
\begin{equation} \label{PhatTimesVhatFirstterm}
    p(t, x, x) \Vhat(z_0, \phi^{-1}(t)) \,\leq\, p(t, x, x) V(x, \phi^{-1}(t)) \,\leq \,C_2.
\end{equation}
By Lemma \ref{UhkdOndiagMaLemma},
\begin{equation} \label{PhatTimesVhatSecondterm}
    \frac{\Vhat(z_0, \phi^{-1}(t))}{\mu(x)} \cdot \frac{e^{-s}}{s} \leq C_3.
\end{equation}
By \eqref{PhatTimesVhat}, \eqref{PhatTimesVhatFirstterm}, and \eqref{PhatTimesVhatSecondterm},
\begin{equation*}
     \phat(t, z_0, z_0) \Vhat(z_0, \phi^{-1}(t)) \leq C_2 + e^{-v(x) t} C_3 \leq C_2 + C_3.
\end{equation*}
\end{proof}

\section{Cut-off Sobolev and Poincar\'e inequalties} \label{CsPoincSection}

In this section, we prove \eqref{CsjForwards} and \eqref{PoincareForward}: we show that if either $\CSJ$ or $\PI$ holds on the original space, then it also holds on the auxiliary space.

The Poincar\'e and cut-off Sobolev inequalities depend on $\mathcal{F}$, the domain of the regular Dirichlet form. The statements of $\CSJ$ and $\PI$ both contain the phrase ``for all $f \in \mathcal{F}$" (or ``for all $f \in \mathcal{F}_b$" where $\mathcal{F}_b := \mathcal{F} \cap L^\infty(M, \mu)$). (See Definitions \ref{CsjDefinition} and \ref{PoincareDefinition}.) When applied to the auxiliary space, these will involve $\Fhat$. Therefore, in order to prove \eqref{CsjForwards} and \eqref{PoincareForward}, it will be necessary to know which functions on $\Mhat$ are in $\Fhat$.

Throughout this section, let us continue to assume all of the conditions that are necessary for the construction of $(\Mhat, \dhat, \muhat, \Ehat, \Fhat)$. In other words, $(M, d, \mu, \mathcal{E}, \mathcal{F})$ satisfies the basic assumptions, $(\mathcal{E}, \mathcal{F})$ admits a jump kernel, and $M=M_A \cup M_C$.
For a function $f \in L^2(\Mhat, \mu)$, we define the following functions on $M$ that approximate the behavior of $f$ in different ways.

\begin{definition}

For all $f \in L^2(\Mhat, \mu)$, define $f_\mean : M \to \mathbb{R}$ by
\begin{equation}\label{fmeanDef}
    f_\mean(x) = \left\{ \begin{matrix}
        \frac{1}{\mu(x)} \int_{W_x} f \, d\mu &:& \mbox{if $x \in M_A$}\\
        \\
        f(x) &:& \mbox{if $x \in M_C$}.
    \end{matrix}
    \right.
\end{equation}
Similarly, define $f_\rms : M \to \mathbb{R}$ by
\begin{equation}\label{frmsDef}
    f_\rms(x) = \left\{ \begin{matrix}
        \left(\frac{1}{\mu(x)} \int_{W_x} f^2 \, d\mu\right)^{\sfrac12} &:& \mbox{if $x \in M_A$}\\
        \\
        |f(x)| &:& \mbox{if $x \in M_C$}.
    \end{matrix}
    \right.
\end{equation}
\end{definition}

In other words, for all $x$, $f_\mean(x)$ is the mean value of $f$ on $W_x$, and $f_\rms(x)$ is the root-mean-square of $|f|$ on $W_x$. Note that $f_\rms$ only takes non-negative values.

\begin{remark}\label{MaCountable}
We should verify that $f_\mean$ and $f_\rms$ are measurable, for $f \in L^2(\Mhat, \muhat)$. This is quickly verified by the following argument:

(a) The set $M_A$ is at most countable. If it was uncountable, then $(M, d)$ would not be separable, since there would not be a dense countable subset.

(b) Let $G$ be a measurable subset of $\mathbb{R}$. Clearly, $M_C$ is measurable (since its complement is countable). Since $f$ is a measurable function, this means that $\{x \in M_C : f(x) \in G\}$ is a measurable set. The pre-image $f_\mean^{-1}(G)$ only differs from $\{x \in M_C : f(x) \in G\}$ by a countable set (a subset of $M_A$), so the pre-image is itself a measurable set. The same argument, using $|f|$ instead of $f$, applies to $f_\rms$.

\end{remark}

It is clear from the definition of $f_\mean$ and $f_\rms$ that for all measurable $E \subseteq M$,
\begin{equation} \label{FmeanFrmsIntegralsAgree}
    \int_{\pi^{-1}(E)} f \, d\muhat = \int_E f_\mean \, d\mu \qquad\mbox{and}\qquad \int_{\pi^{-1}(E)} f^2 \, d\muhat = \int_E f_\rms^2 \, d\mu.
\end{equation}
Recall that $\pi : \Mhat \to M$ is the projection that maps each $z \in W_x$ to $x$, for all $x \in M$. For any $g : M \to \mathbb{R}$, $(g \circ \pi) : \Mhat \to \mathbb{R}$ is the function that maps each $z \in W_x$ to $g(x)$, for all $x \in M$. In Appendix \ref{DomainsAppendix}, we prove the following two results, each of
which will be used in the proofs of \eqref{CsjForwards} and \eqref{PoincareForward}.

\begin{prop} \label{DomainProp}
Suppose $(M, d, \mu, \mathcal{E}, \mathcal{F})$ satisfies Assumption \ref{BasicAssumptions}, $\phi$ is of regular growth, $(\mathcal{E}, \mathcal{F})$ admits a jump kernel, and $M=M_A \cup M_C$. Then

(a) For all $g \in \mathcal{F}$, the function $g \circ \pi$ belongs to $\Fhat$.

(b) For all $f \in \Fhat$, the function $f_\mean$ belongs to $\mathcal{F}$.

(c) For all $f \in \Fhat$, the function $f_\rms$ belongs to $\mathcal{F}$.
\end{prop}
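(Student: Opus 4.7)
The plan is to exploit the definition of $\Fhat$ as the $\Ehat_1$-closure of the core $\Dhat$ (Definition \ref{DhatDef}) and to show that the three operations $g \mapsto g \circ \pi$, $f \mapsto f_\mean$, and $f \mapsto f_\rms$ interact well with the energy forms. The key input from the original space is Lemma \ref{JumpsAndAtomsPreliminaryFactsLemma}(a), which supplies $\delta_x \in \mathcal{F}$ for every $x \in M_A$.

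For part (a), since $(g \circ \pi)(z) = (g \circ \pi)(z')$ whenever $\pi(z) = \pi(z')$, only pairs with $\pi(z) \neq \pi(z')$ contribute to $\Ehat(g \circ \pi)$; on such pairs $\Jhat(z, z') = J(\pi(z), \pi(z'))$ by \eqref{JhatFormula}, so the double integral reduces via \eqref{integralsAgree} to $\mathcal{E}(g)$. The same identity gives $\|g \circ \pi\|_{L^2(\muhat)} = \|g\|_{L^2(\mu)}$, so $g \mapsto g \circ \pi$ is an $\mathcal{E}_1$-to-$\Ehat_1$ isometry. Using regularity of $(\mathcal{E}, \mathcal{F})$, choose $g_n \in \mathcal{F} \cap C_c(M)$ with $g_n \to g$ in $\mathcal{E}_1$-norm; then $g_n \circ \pi \in \Dhat$ converges to $g \circ \pi$ in $\Ehat_1$-norm, so $g \circ \pi \in \Fhat$.

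Part (b) hinges on two contraction bounds, $\mathcal{E}(h_\mean) \leq \Ehat(h)$ and $\|h_\mean\|_{L^2(\mu)} \leq \|h\|_{L^2(\muhat)}$, valid for all $h \in \Fhatmax$. The second is Jensen's inequality on the probability space $\muhat|_{W_x}/\muhat(W_x)$ combined with \eqref{FmeanIntegralAgreesWithF}; the first follows by writing $h_\mean(x) - h_\mean(y)$ as a double average over $W_x \times W_y$ and applying Cauchy-Schwarz before integrating against $J(x, y)\,\mu(dx)\,\mu(dy)$. Together these say $h \mapsto h_\mean$ is a continuous linear map $(\Fhatmax, \Ehat_1) \to (\Fmax, \mathcal{E}_1)$. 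Approximating $f \in \Fhat$ by $f_n \in \Dhat$, a direct calculation from Definition \ref{DhatDef} yields $(f_n)_\mean = g_n + \sum_{j=1}^{N_n} c_{j,n} \delta_{x_j^n}$ with $g_n \in \mathcal{F} \cap C_c(M)$ and $c_{j,n} \in \mathbb{R}$; by Lemma \ref{JumpsAndAtomsPreliminaryFactsLemma}(a), $(f_n)_\mean \in \mathcal{F}$. By linearity the contractions applied to $h = f - f_n$ give $(f_n)_\mean \to f_\mean$ in $\mathcal{E}_1$-norm, so $f_\mean \in \mathcal{F}$.

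For part (c), the analogous bounds $\mathcal{E}(f_\rms) \leq \Ehat(f)$ and $\|f_\rms\|_{L^2(\mu)} = \|f\|_{L^2(\muhat)}$ hold for all $f \in \Fhatmax$; the $L^2$ identity is \eqref{FrmsSquaredIntegralAgreesWithF}, and the energy bound follows from the pointwise inequality
\[
(f_\rms(x) - f_\rms(y))^2 \;\le\; \tfrac{1}{\muhat(W_x)\muhat(W_y)}\int_{W_x \times W_y}(f(z)-f(z'))^2\,\muhat(dz)\,\muhat(dz'),
\]
(interpreting $\muhat(W_x) = \mu(x)$ on $M_A$ and the integral as evaluation at $x$ on $M_C$), which expands to the inequality $f_\mean(x)f_\mean(y) \leq f_\rms(x)f_\rms(y)$, itself a consequence of $|f_\mean| \leq f_\rms$ (Cauchy-Schwarz on $W_x$). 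Picking $f_n \in \Dhat$ with $f_n \to f$, a direct calculation shows $(f_n)_\rms = |g_n| + \sum_j c_{j,n}' \delta_{x_j^n}$, which lies in $\mathcal{F}$ because $|g_n| \in \mathcal{F}$ (stability of $\mathcal{F}$ under the normal contraction $t \mapsto |t|$) and each $\delta_{x_j^n} \in \mathcal{F}$ by Lemma \ref{JumpsAndAtomsPreliminaryFactsLemma}(a). From $|(f_n)_\rms - f_\rms| \leq (f - f_n)_\rms$ pointwise and the $L^2$ identity, $(f_n)_\rms \to f_\rms$ in $L^2(M, \mu)$; and $\mathcal{E}_1((f_n)_\rms) \leq \Ehat_1(f_n)$ gives $\mathcal{E}_1$-boundedness. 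The main obstacle, unique to part (c), is that $f \mapsto f_\rms$ is nonlinear, so we cannot control differences pointwise and should not expect $(f_n)_\rms \to f_\rms$ in $\mathcal{E}_1$-norm. I will close the argument using that $(\mathcal{F}, \mathcal{E}_1)$ is a Hilbert space: the $\mathcal{E}_1$-bounded sequence $\{(f_n)_\rms\}$ has a subsequence converging weakly in $(\mathcal{F}, \mathcal{E}_1)$ to some $h \in \mathcal{F}$, and the continuous embedding $(\mathcal{F}, \mathcal{E}_1) \hookrightarrow L^2(M, \mu)$ forces this subsequence also to converge weakly to $h$ in $L^2$; combined with strong $L^2$-convergence to $f_\rms$, this gives $h = f_\rms$, so $f_\rms \in \mathcal{F}$.
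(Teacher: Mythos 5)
Your parts (a) and (b) track the paper's argument essentially verbatim: the $\Ehat_1$/$\mathcal{E}_1$ isometry for $g \mapsto g\circ\pi$, the representation $(f_n)_\mean = g_n + \sum_j c_{j,n}\delta_{x_j^n}$ from Definition \ref{DhatDef}, and the contraction $\mathcal{E}_1(h_\mean) \le \Ehat_1(h)$ used in tandem with linearity of $h \mapsto h_\mean$. No gap there.

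Part (c) is where you genuinely diverge, and your route is cleaner. The paper fights the nonlinearity of $f \mapsto f_\rms$ head-on: it writes $u := f - (f_\mean\circ\pi)$ (which has vanishing $W_x$-averages), proves a technical helper result (Lemma \ref{FrmsInFHelper}) showing that a subsequence of the truncations $u_n := u\cdot 1_{\pi^{-1}(I_n)}$ converges to $u$ strongly in $\Ehat_1$-norm by comparing $\Ehat(u-u_{n_k})$ against $\Ehat(u-v_k)$ for an approximating $v_k\in\Dhat$, and then hand-builds an explicit approximant $g_n = f_\mean + \sum_{j\le n}(f_\rms(x_j)-f_\mean(x_j))\delta_{x_j}$ of $f_\rms$ whose $\mathcal{E}_1$-error it splits into several pieces, each driven to zero. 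You instead observe that $(f_n)_\rms \in \mathcal{F}$ explicitly (via $(f_n)_\rms = |g_n| + \sum_j c'_{j,n}\delta_{x_j^n}$, normal contractions, and Lemma \ref{JumpsAndAtomsPreliminaryFactsLemma}(a)), that the contraction bound $\mathcal{E}_1((f_n)_\rms) \le \Ehat_1(f_n)$ makes the sequence bounded in the Hilbert space $(\mathcal{F}, \mathcal{E}_1)$, and that the strong $L^2$-limit $f_\rms$ (from the reverse-triangle-inequality estimate $|(f_n)_\rms - f_\rms| \le (f-f_n)_\rms$) must coincide with any weak $\mathcal{E}_1$-subsequential limit, which lies in $\mathcal{F}$. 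This weak-compactness identification sidesteps Lemma \ref{FrmsInFHelper} entirely and delivers $f_\rms\in\mathcal{F}$ without ever needing strong $\mathcal{E}_1$-convergence. The trade-off is that the paper's method yields an explicit $\mathcal{E}_1$-convergent approximating sequence (which could matter if one later needed quantitative control), whereas yours gives only membership in $\mathcal{F}$; but since membership is all Proposition \ref{DomainProp}(c) asserts, your argument is shorter, more conceptual, and correct.
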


\begin{lemma} \label{differencesSquaredLem}
Suppose $(M, d, \mu, \mathcal{E}, \mathcal{F})$ satisfies Assumption \ref{BasicAssumptions}, $(\mathcal{E}, \mathcal{F})$ admits a jump kernel, and $M=M_A \cup M_C$.
If $E_1$ and $E_2$ are Borel subsets of $M$, $f \in L^2(\Mhat, \muhat)$, and $g : M \times M \rightarrow [0, \infty)$ is measurable, then both
\begin{equation*}
    \int_{E_1} \int_{E_2} (f_\mean(x)-f_\mean(y))^2 g(x, y) \mu(dy) \mu(dx)
\end{equation*}
and
\begin{equation*}
    \int_{E_1} \int_{E_2} (f_\rms(x)-f_\rms(y))^2 g(x, y) \mu(dy) \mu(dx)
\end{equation*}
are less than or equal to
\begin{equation*}
    \int_{\pi^{-1}(E_1)} \int_{\pi^{-1}(E_2)} (f(z)-f(z'))^2 g(\pi(z), \pi(z')) \muhat(dz') \muhat(dz).
\end{equation*}
\end{lemma}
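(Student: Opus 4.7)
The plan is to derive both inequalities from a single pointwise bound on $(f_*(x) - f_*(y))^2$ for $* \in \{\mean, \rms\}$, then integrate this bound and apply a disintegration identity to reach the right-hand side. For each $x \in M$, I would introduce the probability measure $\lambda_x$ on $W_x$ by setting $\lambda_x = \muhat|_{W_x}/\mu(x)$ if $x \in M_A$ (note $\muhat(W_x)=\mu(x)$ in this case) and $\lambda_x = \delta_x$ if $x \in M_C$. Under this unified notation,
\begin{equation*}
    f_\mean(x) = \int_{W_x} f \, d\lambda_x \qquad\mbox{and}\qquad f_\rms(x) = \left(\int_{W_x} f^2 \, d\lambda_x\right)^{1/2}
\end{equation*}
in both the atomic and non-atomic cases.

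The key pointwise bound I would then establish is
\begin{equation*}
    (f_*(x) - f_*(y))^2 \leq \int_{W_x} \int_{W_y} (f(z) - f(z'))^2 \, d\lambda_y(z') \, d\lambda_x(z)
\end{equation*}
for all $x, y \in M$. In the $\mean$-case this is Jensen's inequality on the probability space $(W_x \times W_y, \lambda_x \times \lambda_y)$: the square of the mean of $f(z)-f(z')$ is at most the mean of its square. In the $\rms$-case, the coordinate pullbacks of $f$ to $W_x \times W_y$ are two functions in $L^2(\lambda_x \times \lambda_y)$ with norms exactly $f_\rms(x)$ and $f_\rms(y)$, so the reverse triangle inequality in $L^2$ gives $|f_\rms(x) - f_\rms(y)|$ is at most the $L^2$-norm of their difference, which squares to the right-hand side.

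Multiplying the pointwise bound by $g(x,y)$ and integrating over $E_1 \times E_2$ against $\mu\times\mu$, then applying Fubini, reduces the lemma to the disintegration identity
\begin{equation*}
    \int_E \int_{W_x} h(x, z) \, d\lambda_x(z) \, d\mu(x) = \int_{\pi^{-1}(E)} h(\pi(z), z) \, d\muhat(z)
\end{equation*}
for Borel $E \subseteq M$ and nonnegative measurable $h$, applied once in $(x, z)$ and once in $(y, z')$. This identity follows by splitting $E$ into its $M_A$- and $M_C$-parts: on $M_A$, which is at most countable by Remark \ref{MaCountable}, the $\mu(x)$-weights cancel since $\mu(x)\lambda_x = \muhat|_{W_x}$, turning the integral into $\sum_{x \in E \cap M_A} \int_{W_x} h(x, z) \, d\muhat(z)$; on $M_C$, $\pi$ is the identity, $\lambda_x = \delta_x$, and $\muhat$ coincides with $\mu$. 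The only real subtlety is this bookkeeping between $M_A$ and $M_C$---one must define $\lambda_x$ carefully to sidestep the $\mu(x)=0$ ambiguity on $M_C$---but beyond this, the argument reduces to Jensen's inequality or the $L^2$-triangle inequality combined with Fubini.
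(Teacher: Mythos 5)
Your proposal is correct and follows essentially the same route as the paper: the heart of the argument is Jensen's inequality (for $f_\mean$) and the reverse triangle inequality in $L^2$ (for $f_\rms$) applied to the uniform probability measures on the sets $W_x$, combined with the product structure $\muhat = (\mu_{M_A}\times\nu)+\mu_{M_C}$. Your packaging of the $M_A$/$M_C$ bookkeeping as a single disintegration identity is just a tidier presentation of the same case-splitting the paper carries out.
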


\subsection{Cut-off Sobolev} \label{cutoffSobolevSection}

In this subsection, we show that if $\CSJ$ holds for the original space, then $\CSJ$ holds for the auxiliary space.

We start by mentioning how to compute integrals with respect to the carr\'e du Champ operator. 
Let $\varphi \in \mathcal{F}$, let $h : M \rightarrow [0, \infty)$ be a measurable non-negative function, and let $C$ be a Borel subset of $M$.
By the definition of the carr\'e du Champ operator (defined in \eqref{carreDef}),
\begin{equation} \label{energyIntegral}
    \int_C h \, d\Gamma(\varphi) = \int_{x \in C} h(x) \int_{y \in M} (\varphi(x)-\varphi(y))^2 \, J(dx, dy).
\end{equation}
We will be particularly interested in the case where $\varphi = 1_A$, the indicator of some Borel $A \subseteq C$. In this case,
\begin{equation} \label{energyIntegralIndicator}
    \int_C h \, d\Gamma(1_A) = \int_{x \in A} h(x) \int_{y \in M \setminus A} J(dx,dy) + \int_{x \in C\setminus A} h(x) \int_{y \in A} J(dx,dy).
\end{equation}

In many of the results of this subsection, we will assume that $\VD$ and $\Jleq$ hold for the original space.
If so, let $C_D$ and $C_J$ be the constants such that
\begin{equation}\label{FirstVdJphi}
    V(x, 2r) \leq C_D V(x, r) \qquad \mbox{for all $x \in M$, $r>0$}
\end{equation}
and
\begin{equation}
    J(x, y) \leq \frac{C_J}{V(x, d(x, y)) \phi(d(x, y))} \qquad\mbox{for all $x, y \in M$}.
\end{equation}
Recall the definition of $v(x)$, from \eqref{v(x)Formula}.
If $\VD$ and $\Jleq$ hold, then by Lemma \ref{JumpsAndAtomsPreliminaryFactsLemma}(c) and \eqref{bowtie1}, there exists a constant $C_{\mathcal{J}}$ such that
\begin{equation}\label{CmathcaljforCSJ}
    \mathcal{J}(x, \rho) \leq \frac{C_{\mathcal{J}}}{\phi(\rho)} \quad\mbox{for all $x \in M$, $\rho>0$}, \qquad\mbox{and} \quad
    v(x) \leq \frac{C_{\mathcal{J}}}{\phi(D_x)} \quad\mbox{for all $x \in M_A$}
\end{equation}
(where $\mathcal{J}(x, \rho)$ is as defined in \eqref{MathcaljFormula}).

For all $z_0 \in \Mhat, \rho>0$, let
\begin{equation}\label{MathcalJhatFormula}
    \widehat{\mathcal{J}}(z_0, \rho) := \int_\Mhat \Jhat(z_0, z) \muhat(dz).
\end{equation}
(the analog of $\mathcal{J}(x, \rho)$ for the auxiliary space).

If $\VD$ and $\Jleq$ hold on the original space, they also hold on the auxiliary space by Proposition \ref{volumeGrowthProp} and \eqref{JleqBothways}, so there exist constants $C'_D$, $C'_J$, and $C'_{\mathcal{J}}$ such that
\begin{equation}
    \Vhat(z, 2r) \leq C'_D \Vhat(z, r) \qquad\mbox{for all $z\in\Mhat$, $r>0$},
\end{equation}
\begin{equation}
    \Jhat(z_1, z_2) \leq \frac{C'_J}{\Vhat( z_1, \dhat(z_1, z_2)) \phi(\dhat(z_1, z_2))} \qquad\mbox{for all $z_1, z_2 \in \Mhat$},
\end{equation}
and
\begin{equation}\label{LastVdJphi}
    \widehat{\mathcal{J}}(z, \rho) \leq \frac{C'_{\mathcal{J}}}{\phi(\rho)} \qquad\mbox{for all $z \in \Mhat$, $\rho>0$}.
\end{equation}
(where $\widehat{\mathcal{J}}(z, \rho)$ is as defined in \eqref{MathcalJhatFormula}).
Throughout the proofs in this section, if $\VD$ and $\Jleq$ is an assumption, we will refer to the constants $C_D$, $C_J$, $C_{\mathcal{J}}$, $C'_D$, $C'_J$, and $C'_{\mathcal{J}}$ that satisfy \eqref{FirstVdJphi}-\eqref{LastVdJphi}.

Recall that our goal is to show that $\CSJ$ holds for the auxiliary space. We seek to show that for $\muhat$-almost all $z_0 \in \Mhat$ and all $R \geq r>0$, if we let
\begin{align} \label{S1S2S3}
\begin{split}
    S_1 &:= \Bhat(z_0, R),\\
    S_2 &:= \Bhat(z_0, R+r),\\
    S_3 &:= \Bhat(z_0, R+(1+C_0)r),\\
    V &:= S_2 \setminus S_1,\\
    \mbox{and}\qquad V^* &:= S_3 \setminus \Bhat(z_0, R-C_0 r)
\end{split}
\end{align}
then for each $f \in \Fhat$, there exists a $\widehat{\varphi} \in \cutoff(S_1, S_2)$ such that
\begin{equation} \label{CsjAuxiliaryGoal}
    \int_{S_3} f^2 \, d\Gamma(\widehat{\varphi}) \leq C_1 \int_{V \times V^*} (f(z)-f(z'))^2 \Jhat(dz, dz') + \frac{C_2}{\phi(r)} \int_{S_3} f^2 \, d\muhat.
\end{equation}
Let $x_0 = \pi(z_0)$. The following lemma handles the case when $R \leq D_{x_0}$.

\begin{lemma} \label{CsjForwardsSmallR}

Suppose $(M, d, \mu, \mathcal{E}, \mathcal{F})$ satisfies Assumption \ref{BasicAssumptions}, $\phi$ is of regular growth, $(\mathcal{E}, \mathcal{F})$ admits a jump kernel, and $M=M_A \cup M_C$.
Fix $z_0 \in \Mhat$, $R \geq r>0$, and $f \in \Fhat$. Let $S_1$, $S_2$, $S_3$, $V$, and $V^*$ be as in \eqref{S1S2S3}, and let $x_0 = \pi(z_0)$. If $\VD$ and $\Jleq$ hold on the original space, and $R \leq D_{x_0}$, then
\begin{equation*}
    \int_{S_3} f^2 \, d\Gamma(1_{S_1}) \leq \frac{C}{\phi(R)} \int_{S_3} f^2 \, d\muhat
\end{equation*}
where $C$ is a constant that does not depend on any of $z_0, R, r, f$.

\end{lemma}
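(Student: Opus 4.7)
The plan is to exploit the ultrametric property \eqref{WxUltrametricBalls} on $W_{x_0}$, which applies because $R\leq D_{x_0}$ and $R>0$ force $x_0\in M_A$ and $S_1\subseteq W_{x_0}$. Applying \eqref{energyIntegralIndicator} to the kernel $\Jhat$, the cutoff $1_{S_1}$, and the weight $f^2$ on $S_3$, we get
\begin{equation*}
\int_{S_3} f^2\,d\Gamma(1_{S_1}) = \int_{S_1} f^2(z)\,\Jhat'(z)\,\muhat(dz) + \int_{S_3\setminus S_1} f^2(z)\,\Jhat''(z)\,\muhat(dz),
\end{equation*}
where $\Jhat'(z):=\int_{\Mhat\setminus S_1}\Jhat(z,z')\muhat(dz')$ and $\Jhat''(z):=\int_{S_1}\Jhat(z,z')\muhat(dz')$. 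The goal is to show that each of $\Jhat'(z)$ and $\Jhat''(z)$ is bounded by a constant multiple of $1/\phi(R)$, after which the desired inequality follows by combining the two terms.

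For $\Jhat'(z)$, pick any $z\in S_1\subseteq W_{x_0}$. The ultrametric property \eqref{WxUltrametricBalls} gives $S_1=\Bhat(z_0,R)=\Bhat(z,R)$, so $\Jhat'(z)=\widehat{\mathcal{J}}(z,R)$, which is at most $C'_{\mathcal{J}}/\phi(R)$ by \eqref{LastVdJphi} (valid since $\VD$ and $\Jleq$ are inherited by the auxiliary space). For $\Jhat''(z)$ with $z\in S_3\setminus S_1$, split into two sub-cases. If $z\in W_{x_0}\setminus S_1$, then by \eqref{WxUltrametric}, $\dhat(z,z')=\dhat(z,z_0)\geq R$ for every $z'\in S_1$, and the formula for $\Jhat$ within $W_{x_0}$ (from \eqref{JhatFormula} together with \eqref{VhatFormula}) shows that $\Jhat(z,z')=1/[\Vhat(z_0,\dhat(z,z_0))\phi(\dhat(z,z_0))]$ is constant in $z'$; multiplying by $\muhat(S_1)=\Vhat(z_0,R)$ and using monotonicity of $\Vhat(z_0,\cdot)$ and $\phi$ gives $\Jhat''(z)\leq 1/\phi(R)$.

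The main obstacle, and the only place $\Jleq$ on the original space is actually used, is the remaining sub-case $z\notin W_{x_0}$, since the ultrametric structure no longer directly applies. Here, for $z'\in S_1$, we have $\pi(z')=x_0\neq\pi(z)$, so $\Jhat(z,z')=J(x_0,\pi(z))$ depends only on $\pi(z)$, and $\Jhat''(z)=J(x_0,\pi(z))\,\Vhat(z_0,R)$. Applying $\Jleq$ (with $x_0$ as the first argument) together with $d(x_0,\pi(z))=\dhat(z_0,z)\geq D_{x_0}\geq R$, and using the trivial bounds $V(x_0,d(x_0,\pi(z)))\geq V(x_0,D_{x_0})=\mu(x_0)\geq\Vhat(z_0,R)$ and $\phi(d(x_0,\pi(z)))\geq\phi(R)$, yields $\Jhat''(z)\leq C_J/\phi(R)$. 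Setting $C:=\max(C'_{\mathcal{J}},1,C_J)$, the two pieces combine to
\begin{equation*}
\int_{S_3} f^2\,d\Gamma(1_{S_1}) \leq \frac{C}{\phi(R)}\int_{S_1} f^2\,d\muhat + \frac{C}{\phi(R)}\int_{S_3\setminus S_1} f^2\,d\muhat = \frac{C}{\phi(R)}\int_{S_3} f^2\,d\muhat,
\end{equation*}
as required.
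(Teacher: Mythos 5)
Your proof is correct and follows the same overall strategy as the paper's: apply \eqref{energyIntegralIndicator} to split $\int_{S_3} f^2 \, d\Gamma(1_{S_1})$ into an "inside" term (over $S_1$) and an "outside" term (over $S_3\setminus S_1$), bound the inside term by $\widehat{\mathcal{J}}(z_0,R)\le C'_{\mathcal{J}}/\phi(R)$ via the ultrametric property, and bound the outside term by a constant times $1/\phi(R)$. The one place you diverge is the outside term: the paper handles it uniformly, noting that for $z\in S_3\setminus S_1$, $z'\in S_1$ one has $\dhat(z,z')\ge R$ and (using symmetry of $\Jhat$ plus the fact that $\Vhat(z',R)=\Vhat(z_0,R)$ for $z'\in S_1$) applies $\Jleq$ on the auxiliary space to get $\Jhat(z,z')\le C'_J/[\Vhat(z_0,R)\phi(R)]$, so $\int_{S_1}\Jhat(z,z')\,\muhat(dz')\le C'_J/\phi(R)$. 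You instead split into the sub-cases $z\in W_{x_0}\setminus S_1$ (handled with the explicit formula for $\Jhat$ inside $W_{x_0}$) and $z\notin W_{x_0}$ (handled with $\Jleq$ on the original space). Both routes work and rely on the same inputs; the paper's version is a touch more compact because it leverages the already-established fact that $\Jleq$ transfers to the auxiliary space, while yours is more explicit and shows concretely where each hypothesis enters.
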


\begin{proof}
By \eqref{energyIntegralIndicator},
\begin{dmath}\label{711}
    \int_{S_3} f^2 \, d\Gamma(1_{S_1}) = \int_{z \in S_1} f^2(z) \left( \int_{z' \in \Mhat\setminus S_1} \Jhat(z, z') \muhat(dz') \right) \muhat(dz)\\
    + \int_{z \in S_3\setminus S_1} f^2(z) \left( \int_{z' \in S_1} \Jhat(z,z') \muhat(dz') \right)\muhat(dz).
\end{dmath}
If $z \in S_1$ and $z' \notin S_1$, then $\Jhat(z, z') = \Jhat(z_0, z')$. Therefore, for all $z \in S_1$,
\begin{equation}\label{712}
    \int_{\Mhat\setminus S_1} \Jhat(z,z') \muhat(dz') = \int_{\Mhat\setminus S_1} \Jhat(z_0, z') \muhat(dz') = \widehat{\mathcal{J}}(z_0, R) \leq \frac{C'_{\mathcal{J}}}{\phi(R)}.
\end{equation}
If $z \in S_3 \setminus S_1$ and $z' \in S_1$, then $\dhat(z, z') \geq R$. Therefore, for all $z \in S_3 \setminus S_1$, by $\Jleq$,
\begin{equation} \label{713}
    \int_{z' \in S_1} \Jhat(z, z') \muhat(dz') \leq \int_{z' \in S_1} \frac{C_J'}{\Vhat(z_0, R) \phi(R)} \muhat(dz') = \frac{C_J'}{\phi(R)}.
\end{equation}
By \eqref{712} and \eqref{713}, \eqref{711} becomes
\begin{dmath*}
    \int_{S_3} f^2 \, d\Gamma(1_{S_1}) \leq \frac{C'_{\mathcal{J}}}{\phi(R)} \int_{S_1} f^2 \, d\muhat + \int_{S_3} f^2 \, d\Gamma(1_{S_1}) + \frac{C'_J}{\phi(R)} \int_{S_3 \setminus S_1} f^2 \, d\muhat \leq \frac{\max\{ C_{\mathcal{J}}, C_J \}}{\phi(R)} \int_{S_3} f^2 \, d\muhat.
\end{dmath*}
\end{proof}

For the case when $R \geq D_{x_0}$, it will help to consider the corresponding situation on the original space. Let
\begin{align} \label{B1B2B3}
\begin{split}
    B_1 &:= B(x_0, R),\\
    B_2 &:= B(x_0, R+r),\\
    B_3 &:= B(x_0, R+(1+C_0)r),\\
    U &:= B_2 \setminus B_1,\\
    \mbox{and}\qquad U^* &:= B_3 \setminus B(x_0, R-C_0 r).
\end{split}
\end{align}
The following lemma tells us how every term in \eqref{CsjAuxiliaryGoal} compares to its corresponding term in \eqref{CsjFormula}, with $f_\rms$ playing the role of $f$ in \eqref{CsjFormula}. 

\begin{lemma} \label{CsjComparingTerms}

Suppose $(M, d, \mu, \mathcal{E}, \mathcal{F})$ satisfies Assumption \ref{BasicAssumptions}, $\phi$ is of regular growth, $(\mathcal{E}, \mathcal{F})$ admits a jump kernel, and $M=M_A \cup M_C$.
Fix $z_0 \in \Mhat$, $R \geq r>0$, and $f \in \Fhat$. Let $S_1$, $S_2$, $S_3$, $V$, and $V^*$ be as in \eqref{S1S2S3}, let $x_0 = \pi(z_0)$, and let $B_1$, $B_2$, $B_3$, $U$, and $U^*$ be as in \eqref{B1B2B3}. Let $\varphi$ be a function in $\cutoff(B_1, B_2)$.

If $R \geq D_{x_0}$, then

(a)

\begin{equation*}
    \int_{S_3} f^2 \, d\Gamma(\varphi \circ \pi) = \int_{B_3} f_\rms^2 \, d\Gamma(\varphi).
\end{equation*}

(b)

\begin{equation*}
    \int_{V \times V^*} (f(z)-f(z'))^2 \Jhat(dz, dz') \geq \int_{U \times U^*} (f_\rms(x)-f_\rms(y))^2 J(dx, dy).
\end{equation*}

(c)

\begin{equation*}
    \int_{S_3} f^2 \, d\muhat= \int_{B_3} f_\rms^2 \, d\mu.
\end{equation*}

\end{lemma}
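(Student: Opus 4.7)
The plan is to exploit the structural identity $S_i = \pi^{-1}(B_i)$ for $i=1,2,3$ (and likewise $V = \pi^{-1}(U)$, $V^* = \pi^{-1}(U^*)$), which holds precisely because $R \geq D_{x_0}$: every radius appearing in the definitions of $S_i, V, V^*$ is at least $D_{x_0}$, so \eqref{BhatFormula} gives the pre-image description. All three parts then reduce to passing between integrals on $\Mhat$ (with $f$, $\muhat$, $\Jhat$) and integrals on $M$ (with $f_\rms$, $\mu$, $J$) via \eqref{FmeanIntegralAgreesWithF}, \eqref{FrmsSquaredIntegralAgreesWithF}, and Lemma \ref{differencesSquaredLem}.

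Part (c) is immediate: apply \eqref{FrmsSquaredIntegralAgreesWithF} with $E = B_3$, since $S_3 = \pi^{-1}(B_3)$.

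For part (a), I will first unfold $\int_{S_3} f^2 \, d\Gamma(\varphi \circ \pi)$ using \eqref{energyIntegral}. The key observation is that $\varphi \circ \pi$ is constant on each fiber $W_x$, so $(\varphi \circ \pi)(z) - (\varphi \circ \pi)(z') = \varphi(\pi(z)) - \varphi(\pi(z'))$ vanishes exactly when $\pi(z) = \pi(z')$. On the complementary set, $\Jhat(z,z') = J(\pi(z), \pi(z'))$ by the definition \eqref{JhatFormula}. Therefore the inner integral
\begin{equation*}
    G(z) := \int_{z'\in\Mhat} (\varphi(\pi(z))-\varphi(\pi(z')))^2 \, \Jhat(z,z')\, \muhat(dz')
\end{equation*}
depends on $z$ only through $\pi(z)$, and in fact $G(z) = g(\pi(z))$ where $g(x) := \int_M (\varphi(x)-\varphi(y))^2 J(x,y) \, \mu(dy)$. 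Then \eqref{FrmsSquaredIntegralAgreesWithF} applied to the outer integral of $f^2(z)\, g(\pi(z))$ over $\pi^{-1}(B_3)$ yields $\int_{B_3} f_\rms^2 \, g\, d\mu = \int_{B_3} f_\rms^2 \, d\Gamma(\varphi)$, where the last equality uses \eqref{energyIntegral} on the original space.

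For part (b), I will first discard the non-negative contribution from the diagonal fibers to obtain
\begin{equation*}
    \int_{V\times V^*} (f(z)-f(z'))^2 \, \Jhat(dz,dz') \;\geq\; \int_{V\times V^*,\,\pi(z)\neq\pi(z')} (f(z)-f(z'))^2 \, J(\pi(z),\pi(z'))\, \muhat(dz)\muhat(dz'),
\end{equation*}
where I used $\Jhat(z,z')=J(\pi(z),\pi(z'))$ off the fibers. Extending $J$ by $0$ on $\diag_M$ (which contributes nothing since $\pi(z)=\pi(z')$ inside $W_x$ corresponds to diagonal values), I can replace $V\times V^*$ by $\pi^{-1}(U)\times \pi^{-1}(U^*)$ and apply Lemma \ref{differencesSquaredLem} with $g(x,y)=J(x,y)$, $E_1=U$, $E_2=U^*$, to bound the right-hand side from below by $\int_{U\times U^*}(f_\rms(x)-f_\rms(y))^2 J(dx,dy)$.

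The main obstacle, though really more of a bookkeeping point than a genuine difficulty, is the asymmetry in part (b): since $\Jhat$ carries extra mass on each fiber $W_x \times W_x$ that has no analog in $J$, part (b) must be an inequality, and one must verify that discarding these intra-fiber contributions is legitimate. This is resolved by the non-negativity of $(f(z)-f(z'))^2$. Parts (a) and (c) are equalities because $\varphi\circ\pi$ is fiberwise constant, so the intra-fiber energy is automatically zero and no information is lost.
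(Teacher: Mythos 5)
Your approach is essentially the same as the paper's: part (a) pulls the fiber-constant weight $\alpha(x) = \int(\varphi(x)-\varphi(y))^2 J(x,y)\mu(dy)$ through the outer integral so that \eqref{FrmsSquaredIntegralAgreesWithF} (equivalently, introducing the auxiliary function $\sqrt{\alpha\circ\pi}\cdot f$ and computing its rms) yields the identity; part (b) drops the intra-fiber contribution of $\Jhat$ and applies Lemma \ref{differencesSquaredLem}; part (c) is \eqref{FrmsSquaredIntegralAgreesWithF} directly. The logical skeleton is the same as the paper's.

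However, one claim in your plan is false, and it matters that you flag it: $V^* = \pi^{-1}(U^*)$ does \emph{not} hold in general under the hypothesis $R \geq D_{x_0}$. The radius $R - C_0 r$ appearing in the definition of $V^*$ may be strictly less than $D_{x_0}$ (for instance $R = r = D_{x_0}$ and $C_0=1$ gives $R - C_0 r = 0$), so $\Bhat(z_0, R-C_0 r)$ need not be $\pi^{-1}(B(x_0, R-C_0r))$. In that regime $B(x_0, R-C_0 r)$ is either empty or $\{x_0\}$, while $\Bhat(z_0, R-C_0r)$ is a proper subset of $W_{x_0}$; the correct relation is
\begin{equation*}
    V^* = \pi^{-1}(U^*) \cup \left\{ z \in W_{x_0} : \dhat(z_0, z) \geq R-C_0 r \right\} \supseteq \pi^{-1}(U^*),
\end{equation*}
and the containment is strict when $0 < R - C_0 r < D_{x_0}$. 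Your proof of (b) survives because you only use this containment in the direction $\int_{V\times V^*} \geq \int_{\pi^{-1}(U)\times\pi^{-1}(U^*)}$, i.e.\ you shrink the integration domain to obtain a lower bound, which is exactly what the statement requires and exactly what the paper does after explicitly noting the failure of equality. But as written, your ``replace'' step reads as an identity substitution justified by the false equality, and that reasoning would fail silently if the inequality in (b) ran the other way. The fix is simply to state the inclusion $\pi^{-1}(U^*)\subseteq V^*$ rather than equality, and then note that restricting a non-negative integrand to a smaller domain only decreases the integral.
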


\begin{proof}

(a) For all $x \in M$, let $$\alpha(x) := \int_{y\in M} (\varphi(x)-\varphi(y))^2 J(x, y) \mu(dy).$$ For all $z \in \Mhat$, let $$g(z) := \sqrt{\alpha(\pi(z))} f(z).$$ We claim that $g_\rms = \sqrt{\alpha} \cdot f_\rms.$ Indeed, for all $x \in M_A$,
\begin{equation*}
    g_\rms(x) = \left( \frac{1}{\mu(x)} \int_{W_x} \alpha(x) f^2 \, d\muhat \right)^{\sfrac12} = \sqrt{\alpha(x)} f_\rms(x)
\end{equation*}
and for all $x \in M_C$,
\begin{equation*}
    g_\rms(x) = g(x) = \sqrt{\alpha(x)} f(x) = \sqrt{\alpha(x)} f_\rms(x).
\end{equation*}
By \eqref{energyIntegral},
\begin{align*}
    \int_{S_3} f^2 \, d\Gamma(\varphi \circ \pi) &= \int_{z \in S_3} f^2(z) \left( \int_{z' \in \Mhat} (\varphi(\pi(z))-\varphi(\pi(z')))^2 \Jhat(z, z') \muhat(dz') \right) \muhat(dz)\\
    &=\int_{z \in S_3} f^2(z) \left( \int_{y \in M} (\varphi(\pi(z))-\varphi(y))^2 J(\pi(z), y) \mu(dy) \right) \muhat(dz)\\
    &= \int_{z \in S_3} \alpha(\pi(z)) f^2(z) \muhat(dz) &\mbox{(by the definition of $\alpha$)}\\
    &= \int_{S_3} g^2 \, d\muhat &\mbox{(by the definition of $g$)}\\
    &= \int_{B_3} g_\rms^2 \, d\mu &\mbox{(by \eqref{FmeanFrmsIntegralsAgree})}\\
    &= \int_{B_3} \alpha(x) \left( f_\rms(x) \right)^2 \mu(dx) &\mbox{(since $g_\rms = \sqrt{\alpha} \cdot f_\rms$)}\\
    &= \int_{B_3} \left( f_\rms(x) \right)^2 \int_M (\varphi(x)-\varphi(y))^2 J(x, y) \mu(dy) \mu(dx)\\
    &= \int_{B_3} f_\rms^2 \, d\Gamma(\varphi).
\end{align*}

(b) Since $R+r \geq R \geq D_{x_0}$, by \eqref{BhatFormula} we have $V = \pi^{-1}(U)$.
It is however possible for $V^*$ not to be equal to $\pi^{-1}(U^*)$, since $R-C_0 r$ might be less than $D_{x_0}$. The exact relation between $U^*$ and $V^*$ is
\begin{equation*}
    V^* = \pi^{-1}(U^*) \cup \left\{ z \in W_x : \dhat(z_0, z) \geq R-C_0 r \right\}.
\end{equation*}
By Lemma \ref{differencesSquaredLem},
\begin{equation} \label{vv*uu*differencesSquared}
    \int_{U \times U^*} (f_\rms(x)-f_\rms(y))^2 \, J(dx, dy) \leq \int_{\pi^{-1}(U) \times \pi^{-1}(U^*)} (f(z)-f(z'))^2 \, \Jhat(dz, dz').
\end{equation}
Since $\pi^{-1}(U) = V$ and $\pi^{-1}(U^*) \subseteq V^*$, \eqref{vv*uu*differencesSquared} implies
\begin{equation*}
    \int_{U \times U^*} (f_\rms(x)-f_\rms(y))^2 \, J(dx, dy) \leq \int_{V \times V^*} (f(z)-f(z'))^2 \, \Jhat(dz, dz').
\end{equation*}

(c) follows from \eqref{FmeanFrmsIntegralsAgree}.

\end{proof}

We now prove the desired result that $\VD$, $\Jleq$, and $\CSJ$ for the original space implies $\CSJ$ for the auxiliary space. Lemma \ref{CsjForwardsSmallR} tells us that \eqref{CsjAuxiliaryGoal} holds when $R \leq D_{x_0}$. For $R \geq D_{x_0}$, we use the assumption of $\CSJ$ on the original space to get \eqref{CsjFormula} with ($f_\rms$ playing the role of $f$) and then use Lemma \ref{CsjComparingTerms} to compare the terms of \eqref{CsjAuxiliaryGoal} with those of \eqref{CsjFormula}. We find that every comparison goes in the direction we need.

\begin{proof}[Proof of Proposition \ref{MasterProp}, implication \eqref{CsjForwards}]

Since $\CSJ$ holds on the original space, there exists an $E \subseteq M$ of full measure, and constants $C_0, C_1, C_2$ such that \eqref{CsjFormula} holds whenever $x_0 \in E$.

Fix $z_0 \in \pi^{-1}(E)$, $R \geq r>0$, and $f \in \Fhat$. Let $S_1$, $S_2$, $S_3$, $V$, and $V^*$ be as in \eqref{S1S2S3}, let $x_0 = \pi(z_0)$, and let $B_1$, $B_2$, $B_3$, $U$, and $U^*$ be as in \eqref{B1B2B3}. We must show that there exists a $\widehat{\varphi} \in \cutoff(S_1, S_2)$ (which may depend on $z_0$, $R$, $r$, and $f$) such that \eqref{CsjAuxiliaryGoal} holds.

If $R \leq D_{x_0}$, let $\widehat{\varphi}$ be the indicator $1_{S_1}$. Recall how $\Dhat$ and $\Fhat$ are defined (see Definition \ref{DhatDef}). It is clear that $1_{S_1} = H_{x_0, h}$ for a locally constant function $h : W \to \mathbb{R}$, so $1_{S_1} \in \Dhat \subseteq \Fhat$. It is also clear that $0 \leq 1_{S_1} \leq 1$ everywhere in $\Mhat$, $1_{S_1}=1$ in $S_1$, and $1_{S_1} = 0$ on $S_2^c$. Therefore, $\widehat{\varphi} = 1_{S_1} \in \cutoff(S_1, S_2)$. By Lemma \ref{CsjForwardsSmallR},
\begin{equation*}
    \int_{S_3} f^2 \, d\Gamma(\widehat{\varphi}) \leq \frac{C}{\phi(R)} \int_{S_3} f^2 \, d\muhat \leq \frac{C}{\phi(r)} \int_{S_3} f^2 \, d\muhat
\end{equation*}
(where $C$ is the constant from Lemma \ref{CsjForwardsSmallR} that does not depend on any of $z_0, R, r, f$).

Suppose $R \geq D_{x_0}$.
By Proposition \ref{DomainProp}(c), $f_\rms \in \mathcal{F}$. Since $x_0 \in E$, by our assumption of $\CSJ$ on the original space, there exists a $\varphi \in \cutoff(B_1, B_2)$ such that
\begin{equation} \label{CsjOnFL2}
    \int_{B_3} f_\rms^2 \, d\Gamma(\varphi) \leq C_1 \int_{U \times U^*} (f_\rms(x)-f_\rms(y))^2 J(dx, dy) + \frac{C_2}{\phi(r)} \int_{B_3} f_\rms^2 \, d\mu.
\end{equation}
Let $\widehat{\varphi} = \varphi \circ \pi$. By Proposition \ref{DomainProp}(a), $\varphi \circ \pi \in \Fhat$. It is clear from the fact that $\varphi \in \cutoff(B_1, B_2)$ that $0 \leq \varphi \circ \pi \leq 1$ everywhere in $\Mhat$, $\varphi \circ \pi = 1$ in $S_1$, and $\varphi \circ \pi = 0$ on $S_2^c$. Therefore, $\widehat{\varphi} = \varphi \circ \pi \in \cutoff(S_1, S_2)$.
We then have
\begin{align*}
    \int_{S_3} f^2 \, d\Gamma(\widehat{\varphi}) &= \int_{B_3} f_\rms^2 \, d\Gamma(\varphi) \qquad\mbox{(by Lemma \ref{CsjComparingTerms}(a))}\\
    &\leq C_1 \int_{U \times U^*} (f_\rms(x)-f_\rms(y))^2 J(dx, dy) + \frac{C_2}{\phi(r)} \int_{B_3} f_\rms^2 \, d\mu \qquad\mbox{(by \eqref{CsjOnFL2})}\\
    &\leq C_1 \int_{V \times V^*} (f(z)-f(z'))^2 \Jhat(dz, dz') + \frac{C_2}{\phi(r)} \int_{S_3} f^2 \, d\muhat \qquad\mbox{(by Lemma \ref{CsjComparingTerms} parts (b) and (c))}.
\end{align*}

\end{proof}

If we repeat the same argument, but let $\varphi$ be a cut-off function such that
\begin{equation*}
    \int_{B_3} g^2 \, d\Gamma(\varphi) \leq C_1 \int_{U \times U^*} (g(x)-g(y))^2 J(dx, dy) + \frac{C_2}{\phi(r)} \int_{B_3} g^2 \, d\mu
\end{equation*}
for \textit{all} $g \in \mathcal{F}$ (and then define $\widehat{\varphi}$ the same way, but before $f \in \Fhat$ has been fixed), we obtain \eqref{CsjAuxiliaryGoal} for \textit{all} $f \in \mathcal{F}$. Therefore, we have the following result.

\begin{prop} \label{ScsjForwards}
Suppose $(M, d, \mu, \mathcal{E}, \mathcal{F})$ satisfies Assumption \ref{BasicAssumptions}, $\phi$ is of regular growth, $(\mathcal{E}, \mathcal{F})$ admits a jump kernel, and $M=M_A \cup M_C$. If $\VD$ and $\Jleq$ hold for $(M, d, \mu, \mathcal{E}, \mathcal{F})$, then
\begin{equation*}
    \boxed{\mbox{$\SCSJ$ for $(M, d, \mu, \mathcal{E}, \mathcal{F})$}} \Longrightarrow \boxed{\mbox{$\SCSJ$ for $(\Mhat, \dhat, \muhat, \Ehat, \Fhat)$}}.
\end{equation*}
\end{prop}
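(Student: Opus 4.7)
The plan is to repeat essentially verbatim the argument used for implication \eqref{CsjForwards}, but taking care that the cut-off function $\widehat{\varphi}$ we produce on the auxiliary space depends only on $z_0, R, r$ and not on the choice of $f \in \Fhat$. Since the proof of \eqref{CsjForwards} already picks $\widehat{\varphi}$ in an $f$-free way in each of its two cases, this is really a matter of checking that the dependence structure goes through. In particular, both Lemma \ref{CsjForwardsSmallR} and Lemma \ref{CsjComparingTerms} are used as black boxes: they hold for any $f \in \Fhat$ and any fixed $\widehat{\varphi} = 1_{S_1}$ or $\widehat{\varphi} = \varphi \circ \pi$.

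First I would fix $z_0 \in \pi^{-1}(E)$ (where $E \subseteq M$ is the full-measure set on which $\SCSJ$ gives us cut-off functions) and $R \geq r > 0$, and let $x_0 = \pi(z_0)$. Define $S_1, S_2, S_3, V, V^*$ as in \eqref{S1S2S3} and, when $R \geq D_{x_0}$, define $B_1, B_2, B_3, U, U^*$ as in \eqref{B1B2B3}. In the case $R \leq D_{x_0}$, I would set $\widehat{\varphi} := 1_{S_1}$; as in the proof of \eqref{CsjForwards}, $1_{S_1} \in \Dhat \subseteq \Fhat$ and $1_{S_1} \in \cutoff(S_1, S_2)$, and Lemma \ref{CsjForwardsSmallR} gives
\begin{equation*}
    \int_{S_3} f^2 \, d\Gamma(\widehat{\varphi}) \leq \frac{C}{\phi(R)} \int_{S_3} f^2 \, d\muhat \leq \frac{C}{\phi(r)} \int_{S_3} f^2 \, d\muhat
\end{equation*}
for \emph{every} $f \in \Fhat$, with the same $\widehat{\varphi}$.

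In the case $R \geq D_{x_0}$, I would invoke the assumption that $\SCSJ$ holds on the original space to produce a single $\varphi \in \cutoff(B_1, B_2)$ (depending only on $x_0, R, r$) such that
\begin{equation*}
    \int_{B_3} g^2 \, d\Gamma(\varphi) \leq C_1 \int_{U \times U^*} (g(x)-g(y))^2 J(dx, dy) + \frac{C_2}{\phi(r)} \int_{B_3} g^2 \, d\mu \qquad \text{for all } g \in \mathcal{F},
\end{equation*}
and then set $\widehat{\varphi} := \varphi \circ \pi$, again depending only on $z_0, R, r$. By Proposition \ref{DomainProp}(a), $\widehat{\varphi} \in \Fhat$, and exactly as in the proof of \eqref{CsjForwards}, $\widehat{\varphi} \in \cutoff(S_1, S_2)$.

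Now, for any $f \in \Fhat$, Proposition \ref{DomainProp}(c) gives $f_\rms \in \mathcal{F}$, so applying the displayed $\SCSJ$ inequality to $g = f_\rms$ and then chaining through Lemma \ref{CsjComparingTerms}(a)--(c) yields
\begin{equation*}
    \int_{S_3} f^2 \, d\Gamma(\widehat{\varphi}) = \int_{B_3} f_\rms^2 \, d\Gamma(\varphi) \leq C_1 \int_{U \times U^*} (f_\rms(x)-f_\rms(y))^2 J(dx, dy) + \frac{C_2}{\phi(r)} \int_{B_3} f_\rms^2 \, d\mu
\end{equation*}
and the right-hand side is bounded above by $C_1 \int_{V \times V^*} (f(z)-f(z'))^2 \Jhat(dz, dz') + \frac{C_2}{\phi(r)} \int_{S_3} f^2 \, d\muhat$. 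Crucially, $\widehat{\varphi}$ was chosen before $f$ was fixed, so the same $\widehat{\varphi}$ witnesses \eqref{CsjFormula} (for the auxiliary space) simultaneously for all $f \in \Fhat$. This establishes $\SCSJ$ on $(\Mhat, \dhat, \muhat, \Ehat, \Fhat)$. The argument contains no real obstacle beyond verifying the dependencies: the only thing to be careful about is confirming that the constants $C_0, C_1, C_2$ and the exceptional set structure carry over unchanged, which they do because the constants produced by Lemmas \ref{CsjForwardsSmallR} and \ref{CsjComparingTerms} do not depend on $f$.
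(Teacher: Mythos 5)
Your proposal is correct and is essentially the paper's own argument: the paper proves this proposition by repeating the proof of \eqref{CsjForwards} verbatim, choosing the cut-off $\varphi$ from $\SCSJ$ on the original space (so it works for all $g \in \mathcal{F}$) and defining $\widehat{\varphi}$ (either $1_{S_1}$ or $\varphi \circ \pi$) before $f \in \Fhat$ is fixed, exactly as you do. Your check that Lemmas \ref{CsjForwardsSmallR} and \ref{CsjComparingTerms} and their constants are $f$-independent is the only point that needed verifying, and you handled it correctly.
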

(We do not use Proposition \ref{ScsjForwards} in this paper, but point it out for its own sake.)

\subsection{Poincar\'e inequality}

In this subsection, we show that $\PI$ for the original space implies $\PI$ for the auxiliary space.

Recall from Definition \ref{PoincareDefinition} that for $f \in \mathcal{F}$ and $B=B(x, r)$, we use $f_B$ to denote the mean value of $f$ on $B$. Let us do the same for $f \in \Fhat$ and $S = \Bhat(z, r)$:

\begin{equation*}
    f_S := \frac{1}{\muhat(S)} \int_S f \, d\muhat.
\end{equation*}
In the following lemma, we see that for $x_0 \in M_A$, a form of the Poincar\'e inequality on $\Mhat$ holds at small scales (within $W_{x_0}$).

\begin{lemma} \label{PiHelperProp}
Suppose $(M, d, \mu, \mathcal{E}, \mathcal{F})$ satisfies Assumption \ref{BasicAssumptions}, $(\mathcal{E}, \mathcal{F})$ admits a jump kernel, and $M=M_A \cup M_C$.
If $x_0 \in M_A$, $z_0 \in W_{x_0}$, $0<r \leq D_{x_0}$, and $S = \Bhat(z_0, r)$, then
\begin{equation*}
    \int_S (f-f_S)^2 \, d\muhat \leq \frac{\phi(r)}{2} \int_{S\times S} (f(z)-f(z'))^2 \Jhat(dz, dz')
\end{equation*}
for all $f \in\Fhat$.
\end{lemma}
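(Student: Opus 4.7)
The plan is to reduce the desired Poincaré inequality on $S$ to a simple pointwise comparison between the jump kernel $\Jhat$ and the product measure $\muhat \otimes \muhat / \muhat(S)$, exploiting the fact that $S$ lies entirely inside $W_{x_0}$ and is therefore an ultrametric ball with a clean tree-like description.

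First I would invoke the standard variance identity
\begin{equation*}
    \int_S (f-f_S)^2 \, d\muhat = \frac{1}{2\muhat(S)} \int_{S\times S} (f(z)-f(z'))^2 \, \muhat(dz)\muhat(dz'),
\end{equation*}
which holds for any $f \in L^2(S, \muhat)$. Given this, the lemma follows immediately once we establish the pointwise lower bound
\begin{equation*}
    \phi(r) \, \Jhat(z, z') \geq \frac{1}{\muhat(S)} \qquad \text{for all distinct } z, z' \in S,
\end{equation*}
since we can then absorb the factor $1/(2\muhat(S))$ into $\phi(r)/2$ against the measure $\Jhat(dz, dz') = \Jhat(z,z')\muhat(dz)\muhat(dz')$.

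To verify the pointwise bound, let $m$ be the non-negative integer with $d^{D_{x_0}}_{m+1} < r \leq d^{D_{x_0}}_m$, equivalently $2^{-(m+1)} \phi(D_{x_0}) < \phi(r) \leq 2^{-m} \phi(D_{x_0})$. By \eqref{BhatFormula} and \eqref{VtildeIntermsofM} we have $S = \{x_0\} \times E^{w_0}_m$ (where $z_0 = (x_0, w_0)$) and $\muhat(S) = \mu(x_0) \cdot 2^{-m}$. Now if $z=(x_0,w)$ and $z'=(x_0,w')$ are any two distinct points in $S$, then $w$ and $w'$ both lie in $E^{w_0}_m$, so they agree on the first $m$ coordinates; hence their first index $j$ of disagreement satisfies $j \geq m+1$. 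By \eqref{JhatFormulaIntermsofM},
\begin{equation*}
    \Jhat(z, z') = \frac{4^j}{\mu(x_0)\phi(D_{x_0})} \geq \frac{4^{m+1}}{\mu(x_0)\phi(D_{x_0})}.
\end{equation*}
Combining this with the lower bound $\phi(r) > 2^{-(m+1)}\phi(D_{x_0})$ yields
\begin{equation*}
    \phi(r) \, \Jhat(z,z') > 2^{-(m+1)}\phi(D_{x_0}) \cdot \frac{4^{m+1}}{\mu(x_0)\phi(D_{x_0})} = \frac{2 \cdot 2^m}{\mu(x_0)} = \frac{2}{\muhat(S)},
\end{equation*}
which is stronger than what we need.

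There is no real obstacle here; the entire argument rests on the ultrametric description of $\Bhat(z_0,r) \subseteq W_{x_0}$ and the explicit formulas for $\muhat$ and $\Jhat$ on $W_{x_0}$ derived in Section \ref{auxiliarySection}. The only mildly subtle point is the observation that the first index of disagreement $j$ must be at least $m+1$ whenever both endpoints lie in the ball $S = E^{w_0}_m$; once this is noted, the numbers $4^{m+1}$, $2^{-(m+1)}$ and $2^{-m}$ conspire to give a factor $2$ to spare, comfortably yielding the claimed bound with constant $\phi(r)/2$.
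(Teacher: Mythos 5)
Your proof is correct and follows essentially the same route as the paper: the variance identity on the ultrametric ball $S$, followed by a pointwise comparison of $\phi(r)\,\Jhat(z,z')$ with $1/\muhat(S)$ on $S\times S$. The only difference is cosmetic -- you verify the pointwise bound by the explicit formula \eqref{JhatFormulaIntermsofM} and the observation that the first index of disagreement is at least $m+1$, whereas the paper obtains the same comparison from the defining formula \eqref{JhatFormula} together with monotonicity of $\Vhat(z,\cdot)$ and $\phi$ and the identity $\Vhat(z,r)=\muhat(S)$ coming from \eqref{WxUltrametricBalls}.
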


\begin{proof}

Let $w_0$ be the element of $W_{x_0}$ such that $z_0=(x_0, w_0)$. If $\xi$ and $\eta$ are independent random variables, each uniform on $S$, then for all $f \in \Fhat$,
\begin{dmath} \label{PiHelper1}
    \int_S (f-f_S)^2 \, d\muhat = \muhat(S) \Var(f(\xi))\\
    = \frac{\muhat(S)}{2} \mathbb{E} \left[ (f(\xi)-f(\eta))^2 \right]\\
    = \frac{1}{2\muhat(S)} \int_{S} \int_{S} (f(z)-f(z'))^2 \muhat(dz') \muhat(dz).
\end{dmath}
If $z$ and $z'$ belong to $S=\Bhat(z_0, r)$, then $z=(x, w)$ and $z'=(x, w')$ for some $w, w'$. By the ultrametric property \eqref{WxUltrametric} on $W_{x_0}$,
\begin{equation*}
    \dhat(z, z') = \rho^{D_x}(w, w') \leq \max \left\{ \rho^{D_x}(z, z_0), \rho^{D_x}(z_0, z') \right\} = \max \left\{ \dhat(z, z_0), \dhat(z_0, z') \right\} < r.
\end{equation*}
By multiplying the integrand of \eqref{PiHelper1} by a quantity greater than or equal to $1$ (namely, $\frac{\Vhat(z, r)}{\Vhat\left(z, \dhat(z, z')\right)} \cdot \frac{\phi(r)}{\phi\left(\dhat(z,z')\right)}$),
\begin{equation}\label{PiHelper2}
    \int_S (f-f_S)^2 \, d\muhat \leq \frac{1}{2 \muhat(S)} \int_S \int_S (f(z)-f(z'))^2 \cdot \frac{\Vhat(z, r)}{\Vhat\left(z, \dhat(z, z')\right)} \cdot \frac{\phi(r)}{\phi\left(\dhat(z,z')\right)} \muhat(dz') \muhat(dz).
\end{equation}
Note that for all $z \in \Bhat(z_0, r)$, by the ultrametric property \eqref{WxUltrametricBalls} on $W_{x_0}$, we have $\Bhat(z, r) = \Bhat(z_0, r)=S$ (and thus $\Vhat(z, r) = \muhat(S)$). Thus, the $\Vhat(z, r)$ and $\muhat(S)$ terms in \eqref{PiHelper2} cancel:

\begin{align*} \label{PiHelper3}
    \int_S (f-f_S)^2 \, d\muhat &\leq \frac{\phi(r)}{2} \int_S \int_S (f(z)-f(z'))^2 \cdot \frac{1}{\Vhat\left(z, \dhat(z, z')\right) \phi\left(\dhat(z, z')\right)} \muhat(dz') \muhat(dz)\\
    &= \frac{\phi(r)}{2} \int_S \int_S (f(z)-f(z'))^2 \Jhat(dz, dz') \qquad\mbox{(by \eqref{JhatFormula})}.
\end{align*}
\end{proof}

Now to prove \eqref{PoincareForward} in full generality. Let us briefly explain our proof technique. Recall how $f_\mean$ was defined (in \eqref{fmeanDef}). Suppose $f \in \Fhat$ and $S = \Bhat(z_0, r)$. Let $x_0 = \pi(z_0)$ and $B=B(x_0, r)$. If $r \leq D_{x_0}$, we can simply use Lemma \ref{PiHelperProp} to put the necessary upper bound on $\int_S (f-f_S)^2 \, d\muhat$. If $r \geq D_{x_0}$, we first show that $\int_S (f-f_S)^2 \, d\muhat$ is at most the integral $\int_B \left[ f_\mean - (f_\mean)_B \right]^2 \, d\mu$, plus the sum (over all atoms $x \in B$) of $\int_{W_x} (f-f_{W_x})^2\, d\muhat$. We use the hypothesis that the Poincar\'e inequality holds on the original space to handle $\int_B \left[ f_\mean - (f_\mean)_B \right]^2 \, d\mu$, and we use Lemma \ref{PiHelperProp} to handle $\int_{W_x} (f-f_{W_x})^2\, d\muhat$ for each $x$. We also need to invoke Lemma \ref{DomainProp}(b) to guarantee that $f_\mean \in \mathcal{F}$.

\begin{proof}[Proof of Proposition \ref{MasterProp}, implication \eqref{PoincareForward}]

Let $\kappa$ and $C$ be the constants from $\PI$ for $(M, d, \mu, \mathcal{E}, \mathcal{F})$. Fix $z_0 \in \Mhat$, $r>0$, and $f \in \Fhat_b$ (where $\Fhat_b = \{ f \in \Fhat : \norm{f}_\infty < \infty\}$). Let $S = \Bhat(z_0, r)$ and $x_0 = \pi(z_0)$.

If $r \leq D_{x_0}$, then by Lemma \ref{PiHelperProp},
\begin{equation*}
    \int_S (f-f_S)^2 \, d\muhat \leq \frac{\phi(r)}{2} \int_{S \times S} (f(z)-f(z'))^2 \Jhat(dz, dz').
\end{equation*}

Suppose $r \geq D_{x_0}$. Let $B=B(x_0, r)$ and note that $S = \pi^{-1}(B)$, by \eqref{BhatFormula}. Recall the function $f_\mean$, defined in \eqref{fmeanDef}. We will apply the Poincar\'e inequality to $f_\mean$ and $B$.

By Proposition \ref{DomainProp}(b), $f_\mean \in \mathcal{F}$. Clearly, $\norm{f_\mean}_\infty \leq \norm{f}_\infty < \infty$, so $f_\mean \in \mathcal{F}_b$. By \eqref{FmeanFrmsIntegralsAgree}, $(f_\mean)_B = f_S$. Using the inequality $(a+b)^2 \leq 2a^2 + 2b^2$,
\begin{dmath}\label{A^2B^2inequality}
    \int_S (f-f_S)^2 \, d\muhat = \int_S \left( \left[ f-(f_\mean \circ \pi) \right] - \left[ (f_\mean \circ \pi) - (f_\mean)_B \right] \right)^2 \, d\muhat
    \leq 2 \int_S \left[ f-(f_\mean \circ \pi) \right]^2 \, d\muhat + 2\int_S \left[ (f_\mean \circ \pi) - (f_\mean)_B \right]^2 \, d\muhat 
    = 2 \sum_{x \in B \cap M_A} \int_{W_x} (f-f_{W_x})^2 \, d\muhat + 2 \int_B \left[ f_\mean - (f_\mean)_B \right]^2 \, d\mu.
\end{dmath}
By Lemma \ref{PiHelperProp},
\begin{equation} \label{Prop54ForMa}
    \int_{W_x} (f-f_{W_x})^2 \, d\muhat \leq \frac{\phi(D_x)}{2} \int_{W_x \times W_x} (f(z)-f(z'))^2 \Jhat(dz, dz') \qquad\mbox{for all $x \in M_A$}.
\end{equation}
For all $x \in B \cap M_A$, either $x=x_0$ (in which case, $D_x = D_{x_0}\leq r$) or $x \neq x_0$ (in which case, $D_x \leq d(x_0, x) < r$). In either case, $D_x \leq r$.
Therefore, we can replace the $\phi(D_x)$ in \eqref{Prop54ForMa} with $\phi(r)$, to obtain
\begin{equation}\label{Prop54ForB}
    \int_{W_x} (f-f_{W_x})^2 \, d\muhat \leq \frac{\phi(r)}{2} \int_{W_x \times W_x} (f(z)-f(z'))^2 \Jhat(dz, dz') \qquad\mbox{for all $x \in B \cap M_A$}.
\end{equation}
By $\PI$ for $(M, d, \mu, \mathcal{E}, \mathcal{F})$,
\begin{equation} \label{PoincSubcall}
    \int_{B} (f_\mean - (f_\mean)_B )^2 \, d\mu \leq C\phi(r) \int_{\kappa B \times \kappa B} (f_\mean(x)-f_\mean(y))^2 J(dx, dy).
\end{equation}
Since $\kappa r \geq r \geq D_{x_0}$, we have $\pi^{-1}(\kappa B) = \kappa S$. By \eqref{PoincSubcall} and Lemma \ref{differencesSquaredLem},
\begin{equation}\label{PoincSubcall'}
    \int_{B} (f_\mean - (f_\mean)_B )^2 \, d\mu \leq C\phi(r) \int_{\kappa S \times \kappa S} (f(z)-f(z'))^2 \Jhat(dz, dz').
\end{equation}
By \eqref{A^2B^2inequality}, \eqref{Prop54ForB}, and \eqref{PoincSubcall'},
\begin{dmath*}
    \int_{S} (f-f_S)^2 \, d\muhat \leq \phi(r) \sum_{x \in B \cap M_A} \int_{W_x \times W_x} (f(z)-f(z'))^2 \Jhat(dz, dz') + 2C \phi(r) \int_{\kappa S \times \kappa S} (f(z)-f(z'))^2 \Jhat(dz, dz')
    = \phi(r) \int_{\kappa S\times \kappa S} \left(2C + 1_{\left\{ \pi(z)=\pi(z') \in B \cap M_A  \right\}}\right) (f(z)-f(z'))^2 \Jhat(dz, dz')
    \leq (2C+1) \phi(r) \int_{\kappa S \times \kappa S} (f(z)-f(z'))^2 \Jhat(dz, dz').
\end{dmath*}
\end{proof}

\appendix

\section{Domains of the regular Dirichlet forms} \label{DomainsAppendix}

In this appendix we prove Proposition \ref{DomainProp} and Lemma \ref{differencesSquaredLem}.
\LongVersionShortVersion{}{In some cases, we include more detailed version of these proofs in \cite[Appendix A]{longversion}.}

We use the notation $\norm{f}_p$ for the $L^p$-norm of a function $f$, regardless of whether $f$ is a function on $M$ or $\Mhat$.
It follows from \eqref{integralsAgree} that for any $g \in \mathcal{F}$,
\begin{equation} \label{GcircpiAgresswithG}
    \norm{g \circ \pi}_2 = \norm{g}_2, \qquad \Ehat(g \circ \pi) = \mathcal{E}(g), \qquad\mbox{and} \qquad \Ehat_1(g \circ \pi) = \mathcal{E}_1(g).
\end{equation}
We use \eqref{GcircpiAgresswithG} to easily prove Proposition \ref{DomainProp}(a), but first let us briefly recall how $\Fhat$ was defined. In Section \ref{auxiliaryEFconstruction}, we defined the bilinear form $\Ehat$ (given by \eqref{EhatFormula}) on a subset $\Fhatmax$ of $L^2(\Mhat, \muhat)$. We defined the $\Ehat_1$-norm on $\Fhatmax$. by \eqref{Ehat1normFormula}. We then constructed a set $\Dhat \subseteq \Fhatmax$ in Definition \ref{DhatDef}, defined $\Ehat$ as the $\Ehat_1$-closure of $\Dhat$, and used Lemma \ref{constructRegularDirichletForms} to show that $(\Ehat, \Fhat)$ was a regular Dirichlet form.

\begin{proof}[Proof of Proposition \ref{DomainProp}(a)]

Recall that $\mathcal{F} \cap C_c(M)$ is a core of $(\mathcal{E}, \mathcal{F}$), and is therefore dense in $\mathcal{F}$ under the $\mathcal{E}_1$-norm. Let $\{g_n\}$ be a sequence in $\mathcal{F} \cap C_c(M)$ such that $\norm{g_n - g}_{\mathcal{E}_1} \to 0$. By Definition \ref{DhatDef}, $g_n \circ \pi \in \Dhat$ for all $n$. By \eqref{GcircpiAgresswithG}, 
\begin{equation} \label{gnCircPiToGCircpi}
    \norm{(g_n \circ \pi) - (g \circ \pi)}_{\Ehat_1} = \norm{(g_n - g) \circ \pi}_{\Ehat_1} = \norm{g_n - g}_{\mathcal{E}_1} \to 0.
\end{equation}
Recall that $\Fhat$ is defined as the $\Ehat_1$-closure of $\Dhat$. Since $g_n \circ \pi \in \Dhat$ for all $n$, \eqref{gnCircPiToGCircpi} means that $g \circ \pi \in \Fhat$.
\end{proof}

\LongVersionShortVersion
{ 
Now let us prove Lemma \ref{differencesSquaredLem}.
It will help to consider the following collection of random variables.
Let $\{A_x\}_{x \in M}$ be an independent collection of random variables such that $A_x$ is uniform on $W_x$ for all $x$. (If $x \in M_C$, this means $A_x=x$ with probability $1$.) Let $\{B_x\}_{x \in M}$ be an independent copy of $\{A_x : x \in M\}$.
Another way to express the definitions of $f_\mean$ and $f_\rms$ is
\begin{equation} \label{FmeanIntermsofRvs}
    f_\mean(x) = \mathbb{E}[f(A_x)] = \mathbb{E}[f(B_x)]
\end{equation}
and
\begin{equation} \label{FrmsIntermsofRvs}
    f_\rms(x) = \norm{f(A_x)}_2 = \norm{f(B_x)}_2.
\end{equation}
Note that if $X$ and $Y$ are random variables with finite second moment, the reverse triangle inequality gives
\begin{equation} \label{ReverseTriangleInequality}
    \left| \norm{X}_2 - \norm{Y}_2 \right|  \leq \norm{X-Y}_2.
\end{equation}
We will use \eqref{FmeanIntermsofRvs} and \eqref{FrmsIntermsofRvs} to prove results involving $f_\mean$ and $f_\rms$. Usually this will entail applying Jensen's inequality or \eqref{ReverseTriangleInequality} to the variables from $\{A_x\} \cup \{B_x\}$.
}
{ 
Now let us prove Lemma \ref{differencesSquaredLem}.
Note that if $\xi$ and $\eta$ are random variables with finite second moment, the reverse triangle inequality gives
\begin{equation} \label{ReverseTriangleInequality}
    \left| \norm{\xi}_2 - \norm{\eta}_2 \right|  \leq \norm{\xi-\eta}_2.
\end{equation}
} 

\begin{proof}[Proof of Lemma \ref{differencesSquaredLem}]

\LongVersionShortVersion{
Let
\begin{align*}
    \alpha_1 &:= \int_{E_1} \int_{E_2} (f_\mean(x)-f_\mean(y))^2 g(x, y) \mu(dy) \mu(dx),\\
    \alpha_2 &:= \int_{E_1} \int_{E_2} (f_\rms(x)-f_\rms(y))^2 g(x, y) \mu(dy) \mu(dx),\\
    \mbox{and} \qquad \beta &:=\int_{\pi^{-1}(E_1)} \int_{\pi^{-1}(E_2)} (f(z)-f(z'))^2 g(\pi(z), \pi(z')) \muhat(dz') \muhat(dz).
\end{align*}
We must show that both $\alpha_1$ and $\alpha_2$ are less than or equal to $\beta$.
By \eqref{FmeanIntermsofRvs} and Jensen's inequality,
\begin{align*}
    \alpha_1 = & \sum_{x \in E_1 \cap M_A} \sum_{y \in E_2 M_A} \mu(x)\mu(y) \left( \mathbb{E}[f(A_x)-f(B_y)]\right)^2 g(x, y)\\
    &+ \sum_{x \in E_1 \cap M_A} \mu(x) \int_{y\in E_2 \cap M_C} \left( \mathbb{E}[f(A_x)-f(y)]\right)^2 g(x, y) \mu(dy)\\
    &+ \sum_{y \in E_2 \cap M_A} \mu(y) \int_{x\in E_1 \cap M_C} \left( \mathbb{E}[f(x)-f(B_y)]\right)^2 g(x, y) \mu(dx)\\
    &+ \int_{(E_1 \cap M_C) \times (E_2 \cap M_C)} (f(x)-f(y))^2 g(x, y) \mu(dy) \mu(dx)\\
    \leq & \sum_{x \in E_1 \cap M_A} \sum_{y \in E_2 \cap M_A} \mu(x)\mu(y) \mathbb{E} \left[\left( f(A_x)-f(B_y) \right)^2\right] g(x, y)\\
    &+ \sum_{x \in E_1 \cap M_A} \mu(x) \int_{y\in E_2 \cap M_C} \mathbb{E} \left[\left( f(A_x)-f(y) \right)^2\right] g(x, y) \mu(dy)\\
    &+ \sum_{y \in E_2 \cap M_A} \mu(y) \int_{x\in E_1 \cap M_C} \mathbb{E} \left[\left( f(x)-f(B_y) \right)^2\right] g(x, y) \mu(dx)\\
    &+ \int_{(E_1 \cap M_C) \times (E_2 \cap M_C)} (f(x)-f(y))^2 g(x, y) \mu(dy) \mu(dx)\\
    =& \beta.
\end{align*}
By \eqref{FrmsIntermsofRvs} and \eqref{ReverseTriangleInequality},
\begin{align*}
    \alpha_2 =& \sum_{x \in E_1 \cap M_A} \sum_{y \in E_2 \cap M_A} \mu(x) \mu(y) \left| \norm{f(A_x)}_2 - \norm{f(B_y)}_2 \right|^2 g(x, y)\\
    &+ \sum_{x \in E_1 \cap M_A} \mu(x) \int_{y \in E_2\cap M_C} \left| \norm{f(A_x)}_2 - |f(y)| \right|^2 g(x, y) \mu(dy)\\
    &+ \sum_{y \in E_2 \cap M_A} \mu(y) \int_{x \in E_1 \cap M_C} \left||f(x)|-\norm{f(B_y)}_2 \right|^2 g(x, y) \mu(dx)\\
    &+ \int_{(E_1 \cap M_C) \times (E_2 \cap M_C)} (f(x)-f(y))^2 g(x, y) \mu(dy) \mu(dx)\\
    \leq& \sum_{x \in E_1 \cap M_A} \sum_{y \in E_2 \cap M_A} \mu(x) \mu(y) \norm{f(A_x)-f(B_y)}_2^2 g(x, y)\\
    &+ \sum_{x \in E_1 \cap M_A} \mu(x) \int_{y \in E_2\cap M_C} \norm{f(A_x)-f(y)}_2^2 g(x, y) \mu(dy)\\
    &+ \sum_{y \in E_2 \cap M_A} \mu(y) \int_{x \in E_1 \cap M_C} \norm{f(x)-f(B_y)}_2^2 g(x, y) \mu(dx)\\
    &+ \int_{(E_1 \cap M_C) \times (E_2 \cap M_C)} (f(x)-f(y))^2 g(x, y) \mu(dy) \mu(dx)\\
    =& \beta.
\end{align*}
}
{If
\begin{align*}
    \alpha_1 &:= \int_{E_1} \int_{E_2} (f_\mean(x)-f_\mean(y))^2 g(x, y) \mu(dy) \mu(dx),\\
    \alpha_2 &:= \int_{E_1} \int_{E_2} (f_\rms(x)-f_\rms(y))^2 g(x, y) \mu(dy) \mu(dx),\\
    \mbox{and} \qquad \beta &:=\int_{\pi^{-1}(E_1)} \int_{\pi^{-1}(E_2)} (f(z)-f(z'))^2 g(\pi(z), \pi(z')) \muhat(dz') \muhat(dz),
\end{align*}
then $\alpha_1 \leq \beta$ by Jensen's inequality and $\alpha_2 \leq \beta$ by \eqref{ReverseTriangleInequality}.}
\end{proof}

We still need to prove that for a function $f \in \Fhat$, both $f_\mean$ and $f_\rms$ belong to $\mathcal{F}$. For the proof of $f_\mean \in \mathcal{F}$, we will construct a sequence $\{f_n\} \subseteq \Dhat$ such that $f_n \to f$ in the $\Ehat_1$-norm, and show that $\{ (f_n)_\mean \} \subseteq \mathcal{F}$ and $(f_n)_\mean \to f_\mean$ in the $\mathcal{E}_1$-norm. By comparing $\mathcal{E}((f_\mean)_n - f_\mean)$ to $\Ehat_1(f_n-f)$, we conclude that $f_\mean \in \mathcal{F}$.

\LongVersionShortVersion{ 
The following lemma is used to make the comparison between $\mathcal{E}((f_\mean)_n - f_\mean)$ and $\Ehat_1(f_n-f)$.

\begin{lemma}\label{FhatSmallThanFInEhat1Norm}
Suppose $(M, d, \mu, \mathcal{E}, \mathcal{F})$ satisfies Assumption \ref{BasicAssumptions}, $(\mathcal{E}, \mathcal{F})$ admits a jump kernel, and $M=M_A \cup M_C$.
For all $f \in L^2(\Mhat, \muhat)$, $\norm{f_\mean}_2 \leq \norm{f}_2$ and $\mathcal{E}(f_\mean) \leq \Ehat(f)$. Consequently, $\mathcal{E}_1(f_\mean) \leq \Ehat_1(f)$.
\end{lemma}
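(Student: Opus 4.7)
The plan is to prove the two inequalities separately (the third assertion is just their sum) and then reduce everything to Lemma~\ref{differencesSquaredLem} and a single application of Jensen.

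First I would handle $\norm{f_\mean}_2 \leq \norm{f}_2$. For each $x \in M_A$, Jensen (or Cauchy--Schwarz) applied to the probability measure $\mu(x)^{-1} \muhat\big|_{W_x}$ gives $f_\mean(x)^2 \leq \mu(x)^{-1} \int_{W_x} f^2 \, d\muhat$, hence $\mu(x) f_\mean(x)^2 \leq \int_{W_x} f^2 \, d\muhat$. For $x \in M_C$, $f_\mean(x)^2 = f(x)^2$. Summing over $x \in M_A$ and integrating over $M_C$, and using that $M = M_A \cup M_C$ together with \eqref{MuhatDefinition}, yields $\int_M f_\mean^2 \, d\mu \leq \int_{\Mhat} f^2 \, d\muhat$.

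Next I would prove $\mathcal{E}(f_\mean) \leq \Ehat(f)$. The idea is to extend $J(x,y)$ to the diagonal by setting $J(x,x) := 0$; this extension is measurable and does not affect $\mathcal{E}(f_\mean)$ since $(f_\mean(x)-f_\mean(x))^2 = 0$. Thus
\begin{equation*}
    \mathcal{E}(f_\mean) = \int_{M \times M} (f_\mean(x)-f_\mean(y))^2 \, J(x,y) \, \mu(dx)\,\mu(dy).
\end{equation*}
Applying Lemma~\ref{differencesSquaredLem} with $E_1 = E_2 = M$ and $g(x,y) = J(x,y)$ bounds this above by
\begin{equation*}
    \int_{\Mhat \times \Mhat} (f(z)-f(z'))^2 \, J(\pi(z),\pi(z')) \, \muhat(dz)\,\muhat(dz').
\end{equation*}
Now I would split this according to whether $\pi(z)=\pi(z')$. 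On the set where $\pi(z) = \pi(z')$ the integrand vanishes because $J(x,x) = 0$ by our extension. On the set where $\pi(z) \neq \pi(z')$, the definition \eqref{JhatFormula} gives $J(\pi(z), \pi(z')) = \Jhat(z,z')$, so the remaining integral equals
\begin{equation*}
    \int_{\{\pi(z) \neq \pi(z')\}} (f(z)-f(z'))^2 \, \Jhat(z,z') \, \muhat(dz)\,\muhat(dz'),
\end{equation*}
which is at most $\Ehat(f)$ (the difference being the nonnegative contribution from pairs $z \neq z'$ with $\pi(z) = \pi(z')$, which lie in $\Mhat \times \Mhat \setminus \diag_{\Mhat}$).

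Finally, adding the two inequalities $\norm{f_\mean}_2^2 \leq \norm{f}_2^2$ and $\mathcal{E}(f_\mean) \leq \Ehat(f)$ yields $\mathcal{E}_1(f_\mean) \leq \Ehat_1(f)$. The only mildly delicate point is the bookkeeping between $\diag_M$ versus $\diag_{\Mhat}$ and the corresponding extension of $J$ to the diagonal; once that is set up, Lemma~\ref{differencesSquaredLem} does the rest and there is no real obstacle.
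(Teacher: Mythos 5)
Your proof is correct and takes essentially the same route as the paper's: both reduce $\norm{f_\mean}_2 \leq \norm{f}_2$ to Jensen applied on each $W_x$, and both obtain $\mathcal{E}(f_\mean) \leq \Ehat(f)$ by applying Lemma~\ref{differencesSquaredLem} with $g(x,y) = J(x,y)\,1_{\{x\neq y\}}$ (your "extend $J$ by zero to the diagonal") and then dominating the resulting kernel $J(\pi(z),\pi(z'))\,1_{\{\pi(z)\neq\pi(z')\}}$ by $\Jhat(z,z')$ on $\Mhat\times\Mhat\setminus\diag_{\Mhat}$. The only difference is cosmetic phrasing; the substance and the key lemma are identical.
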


\begin{proof}
For the $L^2$-norms,
\begin{align*}
    \norm{f_\mean}_2^2 &= \sum_{x \in M_A} \mu(x) (f_\mean(x))^2 + \int_{M_C} f^2 \, d\mu\\
    &= \sum_{x \in M_A} \mu(x) \left( \mathbb{E} \left[ f(A_x) \right] \right)^2 + \int_{M_C} f^2 \, d\mu \qquad\mbox{(by \eqref{FmeanIntermsofRvs})} \\
    &\leq \sum_{x \in M_A} \mu(x) \mathbb{E} \left[ f(A_x)^2 \right] + \int_{M_C} f^2 \, d\mu \qquad\mbox{(by Jensen's inequality)}\\
    &= \sum_{x \in M_A} \int_{W_x} f^2 \, d\muhat + \int_{M_C} f^2 \, d\mu\\
    &= \norm{f}_2.
\end{align*}
For the energies,
\begin{align*}
    \mathcal{E}(f_\mean) &= \int_{M\times M \setminus \diag_M} (f_\mean(x)-f_\mean(y))^2 J(x, y) \mu(dy) \mu(dx)\\
    &\leq \int_{\Mhat\times\Mhat \setminus \diag_{\Mhat}} (f(z)-f(z'))^2 J(\pi(z), \pi(z')) 1_{\left\{ \pi(z) \neq \pi(z') \right\}} \muhat(dz')\muhat(dz) \qquad\mbox{(by Lemma \ref{differencesSquaredLem})}\\
    &\leq \int_{\Mhat\times\Mhat \setminus \diag_{\Mhat}} (f(z)-f(z'))^2 \Jhat(z, z') \muhat(dz')\muhat(dz)\\ 
    &= \Ehat(f).
\end{align*}
\end{proof}
}
{ 
\begin{remark} \label{FmeanLowEnergy}
Suppose $f \in L^2(\Mhat, \muhat)$. By Jensen's inequality, $\norm{f_\mean}_2 \leq \norm{f}_2$. By Lemma \ref{differencesSquaredLem}, $\mathcal{E}(f_\mean) \leq \mathcal{E}(f)$. Consequently, $\mathcal{E}_1(f_\mean) \leq \Ehat(f)$.
\end{remark}
} 

\begin{proof}[Proof of Proposition \ref{DomainProp}(b)]
Recall that $\Fhat$ is defined as the $\Ehat_1$-closure of $\Dhat$. Since $f \in \Fhat$, there exists a sequence $\{f_n\} \subseteq \Dhat$ such that $f_n \to f$ in the $\Ehat_1$-norm.

For all $n$, by the definition of $\Dhat$, $f_n$ has a representation of the form
\begin{equation}\label{FmeanInF1}
    f_n = (g_n \circ \pi) + \sum_{j=1}^{N_n} H_{x^n_j, h^n_j}
\end{equation}
where $g_n \in \mathcal{F} \cap C_c(M)$, $N_n$ is a non-negative integer, and $x^n_j \in M_A$ and $h^n_j \in \Dtilde$ for all $1 \leq j\leq N_n$.

By taking averages over each $W_x$, \eqref{FmeanInF1} gives
\begin{equation}\label{FmeanInF2}
    (f_n)_\mean = g_n + \sum_{j=1}^{N_n} \left(\int_W h^n_j d\nu\right) \delta_{x_j}.
\end{equation}
For all $n$, $g_n$ belongs to $\mathcal{F}$ by construction, and each $\delta_{x^n_j}$ belongs to $\mathcal{F} \cap C_c(M)$ by Lemma \ref{JumpsAndAtomsPreliminaryFactsLemma}(a).
Thus, by \eqref{FmeanInF2}, $(f_n)_\mean \in \mathcal{F}$.

\LongVersionShortVersion{By Lemma \ref{FhatSmallThanFInEhat1Norm},}{By Remark \ref{FmeanLowEnergy},}
\begin{equation}\label{FmeanInF4}
    \norm{(f_n)_\mean - f_\mean}_{\mathcal{E}_1} = \norm{(f_n-f)_\mean}_{\mathcal{E}_1} \leq \norm{f_n-f}_{\Ehat_1} \to 0.
\end{equation}
Since $(f_n)_\mean \in \mathcal{F}$ for all $n$ and $\mathcal{F}$ is complete with respect to the $\mathcal{E}_1$-norm, \eqref{FmeanInF4} implies $f_\mean \in \mathcal{F}$.
\end{proof}

All that remains now is to prove Proposition \ref{DomainProp}(c). This is by far the hardest result in this section to prove.
Note that in the proof of Proposition \ref{DomainProp}(b), we used the fact that $(f_n-f)_\mean = (f_n)_\mean - f_\mean$. Unfortunately, the same distributive property does not hold for root-mean-square averages, so we can not use the same argument to show that $f_\rms \in \mathcal{F}$. Instead, we approximate $f_\rms$ by a series $\{g_n\} \subseteq \mathcal{F}$ such that $g_n = f_\mean$ except for on a finite set $I_n$, on which $g_n = f_\rms$, where the increasing union $\bigcup_n I_n$ is all of $M_A$.
We show that $\Ehat_1(f_\rms - g_n) \to 0$ (at least along a subsequence), but the argument is calculation-heavy and difficult to summarize, because so many terms arise from the quantity $\Ehat(f_\rms-g_n)$.

We start with the following lemma. The reader should think of $u$ in this lemma as $f-(f_\mean \circ \pi)$, where $f$ is a function in $\Fhat$, and $f_\rms$ is the function we are trying to show belongs to $\mathcal{F}$.
By Remark \ref{MaCountable}(a), $M_A$ is countable.

\begin{lemma} \label{FrmsInFHelper}
Suppose $(M, d, \mu, \mathcal{E}, \mathcal{F})$ satisfies Assumption \ref{BasicAssumptions}, $(\mathcal{E}, \mathcal{F})$ admits a jump kernel, and $M=M_A \cup M_C$.
Suppose $u \in \Fhat$ and $u_\mean$ is identically $0$. (In other words, $\int_{W_x} u \, d\muhat = 0$ for all $x \in M_A$, and $u(x)=0$ for all $x \in M_C$.) Let $\{x_j\}$ be an enumeration of $M_A$. (Since $M_A$ is countable, such an enumeration exists.) For all $n$, let
\begin{equation*}
    I_n = \{x_j : 1\leq j \leq n\},\qquad J_n = \{x_j : j>n \},
\end{equation*}
and
\begin{equation} \label{unuDefun}
    u_n = u \cdot 1_{(\pi^{-1}(I_n))}.
\end{equation}
Then there exists a sunsequence $\{u_{n_k}\}$ such that $\Ehat_1(u_{n_k}-u) \to 0$.
\end{lemma}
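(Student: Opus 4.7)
The plan is to show that the full sequence $u_n$ converges to $u$ in the $\Ehat_1$-norm (which is strictly stronger than what the lemma demands), by proving $\norm{u - u_n}_2 \to 0$ and $\Ehat(u - u_n) \to 0$ separately. The hypothesis $u_\mean \equiv 0$ is the key structural input: it forces $u = 0$ $\muhat$-a.e.\ on $M_C$ (since $u$ agrees with $u_\mean$ there), and it makes $u$ have mean zero on each $W_{x_j}$. Thus $u$ is concentrated on $\bigcup_j W_{x_j}$, and $u - u_n = u \cdot 1_{\pi^{-1}(J_n)}$. The $L^2$ convergence is then immediate from the dominated convergence theorem applied to $u^2 \cdot 1_{\pi^{-1}(J_n)}$, since $\pi^{-1}(J_n) \downarrow \emptyset$.

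For the energy, I would decompose
\begin{equation*}
\Ehat(u-u_n) = \underbrace{\int_{\pi^{-1}(J_n) \times \pi^{-1}(J_n)} (u(z)-u(z'))^2 \, \Jhat(dz, dz')}_{=: A_n} + \underbrace{2\int_{\pi^{-1}(J_n)} u(z)^2 \!\! \int_{\Mhat \setminus \pi^{-1}(J_n)} \!\! \Jhat(z, z') \muhat(dz') \muhat(dz)}_{=: B_n}.
\end{equation*}
The interior term $A_n$ goes to $0$ by dominated convergence using $\Ehat(u) < \infty$ as majorant, since $\pi^{-1}(J_n)^2 \downarrow \emptyset$. The boundary term $B_n$ is the real difficulty. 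Since $\pi^{-1}(J_n) \subseteq \pi^{-1}(M_A)$, for $z \in W_{x_j}$ with $j > n$ we can dominate the inner integral by $\int_{\Mhat \setminus W_{x_j}} \Jhat(z, z') \muhat(dz') = v(x_j)$, giving
\begin{equation*}
B_n \leq 2 \sum_{j > n} v(x_j) \int_{W_{x_j}} u^2 \, d\muhat.
\end{equation*}

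The main obstacle, and the crux of the argument, is therefore to establish the finiteness
\begin{equation*}
\sum_{j=1}^\infty v(x_j) \int_{W_{x_j}} u^2 \, d\muhat \leq \tfrac{1}{2} \Ehat(u),
\end{equation*}
from which the tail vanishes as $n \to \infty$. I would prove this by a direct algebraic identity, exploiting $u_\mean \equiv 0$. Split $\Ehat(u)$ into (i) within-$W_{x_j}$ contributions, which are nonnegative and can be discarded for a lower bound, and (ii) the remaining contributions where $\pi(z) \neq \pi(z')$. On the latter region, $\Jhat(z, z') = J(\pi(z), \pi(z'))$ does not depend on the $W$-coordinate, and expanding $(u(z) - u(z'))^2 = u(z)^2 - 2u(z)u(z') + u(z')^2$ integrated over $W_{x_j} \times W_{x_k}$ (or $W_{x_j} \times M_C$) kills the cross-term because $\int_{W_{x_j}} u \, d\muhat = 0$. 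Collecting the surviving squared terms and using symmetry of $J$ yields exactly $2 \sum_j v(x_j) \int_{W_{x_j}} u^2 \, d\muhat$, after recognizing $\sum_{k \neq j} J(x_j, x_k) \mu(x_k) + \int_{M_C} J(x_j, y) \mu(dy) = v(x_j)$.

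Combining $A_n \to 0$ and $B_n \to 0$ gives $\Ehat(u - u_n) \to 0$, and together with $\norm{u - u_n}_2 \to 0$ this is $\Ehat_1(u - u_n) \to 0$; the passage to a subsequence required by the statement is therefore vacuous. I do not expect any difficulty in verifying that $u_n \in \Fhatmax$, since the bound on $\Ehat(u - u_n)$ via $\Ehat(u)$ automatically shows $u - u_n$ (and hence $u_n$) has finite energy.
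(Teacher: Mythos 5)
Your proof is correct, but it proceeds along a genuinely different route from the paper's. The paper never isolates the quantity $\sum_j v(x_j)\int_{W_{x_j}}u^2\,d\muhat$; instead it takes an approximating sequence $\{v_k\}\subseteq\Dhat$ with $\Ehat_1(v_k-u)\to 0$, writes each $v_k$ as $(g_k\circ\pi)+\sum_{j\le n_k}H_{x_j,h^k_j}$, and uses the fact that on each $W_x$ the mean-zero function $u$ minimizes the $L^2$-distance to constants to conclude $\Ehat(u-u_{n_k})\le\Ehat(u-v_k)$ block by block; this is why the paper only obtains a subsequence, tied to the representation sizes $n_k$. You instead bound the boundary term by the tail of $2\sum_j v(x_j)\int_{W_{x_j}}u^2\,d\muhat$ and prove this series is at most $\Ehat(u)$ by expanding the off-block part of the energy and cancelling the cross-terms via $\int_{W_{x_j}}u\,d\muhat=0$ and $u\equiv 0$ on $M_C$; together with dominated convergence for the interior term and the $L^2$ part, this gives convergence of the \emph{full} sequence, so the subsequence in the statement is indeed vacuous for you. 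Your identity checks out: summing over the blocks $W_{x_j}\times W_{x_k}$ ($j\neq k$), $W_{x_j}\times M_C$, $M_C\times W_{x_j}$ and using $v(x_j)=\sum_{k\neq j}J(x_j,x_k)\mu(x_k)+\int_{M_C}J(x_j,y)\,\mu(dy)$ yields exactly $2\sum_j v(x_j)\int_{W_{x_j}}u^2\,d\muhat\le\Ehat(u)$. The one point worth making explicit in a write-up is that the expansion of $(u(z)-u(z'))^2$ must be performed block by block (each block has finite $\muhat\times\muhat$-measure and constant jump kernel, so Fubini--Tonelli applies and the cross-term is a product of finite integrals), rather than over the whole off-diagonal region at once, since the termwise integrals over the full region are a priori exactly the possibly-infinite sum you are trying to control. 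In exchange for this small computation, your argument is more self-contained (it does not invoke the density of $\Dhat$ or the structure of its elements) and yields a quantitative tail bound; the paper's argument avoids any computation with $v(x)$ but is intrinsically tied to the core representation and delivers only subsequential convergence.
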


\begin{proof}
Clearly, $\norm{u-u_n}_2^2 = \sum_{j>n} \norm{u \cdot 1_{W_{x_j}}}_2^2 \to 0$ (since this is the tail of $\sum_j \norm{u \cdot 1_{W_{x_j}}}_2^2 = \norm{u}_2^2$, which is finite. Therefore, $u_n \to u$ in $L^2$-norm. All that remains to show is that $\Ehat(u_{n_k} - u) \to 0$ for some subsequence $\{n_k\}$.

Recall that $\Fhat$ is the $\Ehat_1$-closure of $\Dhat$. Since $u \in \Fhat$, there exists a sequence $\{v_k\} \subseteq \Dhat$ such that $\Ehat_1(v_k-u) \to 0$.
For all $k$, since $v_k \in \Dhat$, we can represent $v_k$ as
\begin{equation*}
    v_k = (g_k \circ \pi) + \sum_{j=1}^{n_k} H_{x_j, h^k_j}
\end{equation*}
where $g_k$ belongs to $\mathcal{F} \cap C_c(M)$, $n_k$ is a non-negative integer, and $h^k_j \in \Dtilde$ for all $j \leq n$.
We will show that $\Ehat(u-u_{n_k}) \leq \Ehat(u-v_k)$.

For all $z \in\Mhat$,
\begin{equation}\label{unu1}
    (u-v_k)(z) = \left\{ \begin{matrix}
        u(z) - g_k(x_j) - H_{x_j, h^k_j}(z) &:& \mbox{if $z \in W_{x_j}$ for some $j \leq n_k$}\\
        \\
        u(z)-g_k(x_j) &:& \mbox{if $z \in W_{x_j}$ for some $j > n_k$}\\
        \\
        -g_k(z) &:& \mbox{if $z\in M_C$}
    \end{matrix}\right.
\end{equation}
and
\begin{equation}\label{unu2}
    (u-u_{n_k})(z) = \left\{
    \begin{matrix}
        u(z) &:& \mbox{if $z \in \pi^{-1}(J_{n_k})$}\\
        \\
        0 &:& \mbox{otherwise}.
    \end{matrix}\right.
\end{equation}
For all $x \in M_A$ and $C \in \mathbb{R}$, since the mean of $u$ on $W_x$ is $0$,
\begin{equation}\label{unu3}
    \int_{W_x} (u-C)^2 \, d\muhat \geq \int_{W_x} u^2 \, d\muhat.
\end{equation}
Similarly, if $x$ and $y$ are distinct elements of $M_A$, then
\begin{equation}\label{unu4}
    \int_{W_x \times W_y} (u(z)-u(z')-C)^2 \, d\muhat\, d\muhat \geq \int_{W_x \times W_y} (u(z)-u(z'))^2 \, d\muhat\, d\muhat \qquad\mbox{for all $C \in \mathbb{R}$}.
\end{equation}

For the sake of brevity, for any $S \subseteq \Mhat \times \Mhat$, let
\begin{equation*}
    \alpha(S) := \int_{(z, z') \in E} \left((u-u_{n_k})(z) - (u-u_{n_k})(z')\right)^2 \Jhat(dz, dz')
\end{equation*}
and
\begin{equation*}
    \beta(S) := \int_{(z, z') \in E} \left((u-v_k)(z) - (u-v_k)(z')\right)^2 \Jhat(dz, dz')
\end{equation*}
We would like to show that $\alpha(\Mhat\times\Mhat) = \Ehat(u_{n_k}-u)$ is less than or equal to $\beta(\Mhat\times\Mhat) = \Ehat(u-v_k)$. We will do so by breaking $\Mhat \times \Mhat$ into various sets $S$, and showing that $\alpha(S) \leq \beta(S)$ for each one. In each case, we use \eqref{unu1} and \eqref{unu2} to express $\alpha(S)$ and $\beta(S)$, and use \eqref{unu3} and \eqref{unu4} to conclude that $\beta(S)$ is the larger one.

Note that $\Mhat = M_C \cup \pi^{-1}(I_{n_k}) \cup \pi^{-1}(J_{n_k})$.
By \eqref{unu2}, whenever $z$ and $z'$ both belong to $M_C \cup \pi^{-1}(I_{n_k})$, we have $(u-u_{n_k})(z)-(u-u_{n_k})(z') = 0$. Therefore,
\begin{equation} \label{DomainAlphaBetaFirst}
    \alpha \left( \left( M_C \cup \pi^{-1}(I_{n_k}) \right) \times \left( M_C \cup \pi^{-1}(I_{n_k}) \right) \right) = 0 \leq \beta \left( \left( M_C \cup \pi^{-1}(I_{n_k}) \right) \times \left( M_C \cup \pi^{-1}(I_{n_k}) \right) \right).
\end{equation}
If $x \in J_{n_k}$ and $z' \in M_C \cup \pi^{-1}(I_n)$, then
\begin{align*}
    \int_{W_x} ((u-v_k)(z) - (u-v_k)(z'))^2 \, \muhat(dz) &= \int_{W_x} (u(z)-g_k(x)-(u-v_k)(z'))^2 \, \muhat(dz) \qquad\mbox{(by \eqref{unu1})}\\
    &\geq \int_{W_x} u^2(z) \, \muhat(dz) \qquad\mbox{(by \eqref{unu3}, with $C = g_k(x) + (u-v_k)(z')$)}\\
    &= \int_{W_x} ((u-u_{n_k})(z) - (u-u_{n_k})(z'))^2 \, \muhat(dz) \qquad\mbox{(by \eqref{unu2})}.
\end{align*}
Therefore,
\begin{align}
\begin{split}
    \beta(\pi^{-1}(J_{n_k}) \times (M_C \cup \pi^{-1}(I_n))) &= \sum_{x \in J_{n_k}} \int_{z' \in M_C \cup \pi^{-1}(I_{n_k})} J(x, \pi(z')) \int_{W_x} ((u-v_k)(z) - (u-v_k)(z'))^2 \, \muhat(dz) \, \muhat(dz') \\
    &\geq \sum_{x \in J_{n_k}} \int_{z' \in M_C \cup \pi^{-1}(I_{n_k})} J(x, \pi(z')) \int_{W_x} ((u-u_{n_k})(z) - (u-u_{n_k})(z'))^2 \, \muhat(dz) \, \muhat(dz') \\
    &= \alpha(\pi^{-1}(J_{n_k}) \times (M_C \cup \pi^{-1}(I_n))).
\end{split}
\end{align}
If $x$ and $y$ are distinct elements of $J_{n_k}$, then
\begin{align}
\begin{split}
    \beta(W_x \times W_y) &= J(x, y) \int_{z \in W_x} \int_{z' \in W_y} \left(u(z) - g_k(x) - u(z') + g_k(y)\right)^2 \, \muhat(dz') \, \muhat(dz) \qquad\mbox{(by \eqref{unu1})}\\
    &\geq J(x, y) \int_{W_x \times W_y} (u(z)-u(z'))^2 \, \muhat(dz') \muhat(dz) \qquad\mbox{(by \eqref{unu4} with $C=g_k(x)-g_k(y)$)}\\
    &= \alpha(W_x \times W_y) \qquad\mbox{(by \eqref{unu2})}.
\end{split}
\end{align}
If $x \in J_{n_k}$, then by \eqref{unu1} and \eqref{unu2},
\begin{equation} \label{DomainAlphaBetaLast}
    \alpha(W_x \times W_x) = \int_{W_x \times W_x} (u(z)-u(z'))^2 \, \Jhat(dz, dz') = \beta(W_x \times W_x).
\end{equation}
By \eqref{DomainAlphaBetaFirst}-\eqref{DomainAlphaBetaLast}, we see that $\alpha(\Mhat \times \Mhat) \leq \beta(\Mhat \times \Mhat)$, or $\Ehat(u-u_{n_k}) \leq \Ehat(u-v_k)$. Since $\Ehat(u-v_k) \to 0$, this means that $\Ehat(u-u_{n_k}) \to 0$ as $k \to \infty$.
\end{proof}

Note that the key idea in our proof of Lemma \ref{FrmsInFHelper} (using \eqref{unu3} and \eqref{unu4} to compare energies) would not work if it weren't for the assumption that $u_\mean$ is identically $0$. Let us now prove that $f_\rms \in \mathcal{F}$ for all $f \in \Fhat$. Recall that our strategy is to approximate $f_\rms$ by a series $\{g_n\} \subseteq \mathcal{F}$ such that $g_n = f_\mean$ except for on a finite set $I_n$, on which $g_n = f_\rms$ (where the increasing union $\bigcup_n I_n$ is all of $M_A$), and show that $\Ehat_1(f_\rms - g_n) \to 0$ along a subsequence. We decompose the quantity $\Ehat_1(f_\rms - g_n)$ into a sum of several terms, each of which we show goes to $0$ (along a subsequence). One of these terms is bounded above by $\Ehat(u-u_{n_k})$, where $u = f - (f_\mean \circ \pi)$. We handle this term using Lemma \ref{FrmsInFHelper}.

\begin{proof}[Proof of Proposition \ref{DomainProp}(c)]
Let us assume without loss of generality that $f \geq 0$. We can do this because $|f| \in \Fhat$ and $|f|_\rms = f_\rms$, so proving the result for $|f|$ proves it for $f$. 

As in Lemma \ref{FrmsInFHelper}, let $\{x_j\}$ be an enumeration of $M_A$. Let $u := f-(f_\mean \circ \pi)$. Note that $u \in \Fhat$ and $u_\mean$ is identically $0$, just like in the setting of Lemma \ref{FrmsInFHelper}. Let $\{u_n\}$ be as in \eqref{unuDefun}.

Even though we have not yet shown that $f_\rms \in \mathcal{F}$, we do know that by Lemma \ref{differencesSquaredLem} that $\mathcal{E}(f_\rms) \leq \Ehat(f) < \infty$, and that
$\norm{f_\rms}_2^2 = \norm{f}_2^2 < \infty$, so $f_\rms \in \mathcal{F}_\max$.

Let $h:= f_\rms - f_\mean$. By Proposition \ref{DomainProp}(b), $f_\mean \in \mathcal{F} \subseteq \Fmax$. Since $\Fmax$ is a vector space, $h \in \Fmax$. Thus,
\begin{align*}
    \infty &> \mathcal{E}(h) = \int_{M\times M \setminus \diag_M} (h(x)-h(y))^2 J(dx, dy)\\
    &= \int_{M_A \times M_A \setminus \diag_{M_A}} (h(x)-h(y))^2 J(dx, dy) + 2 \int_{x \in M_A} \int_{y \in M_C} h^2(x) J(dx, dy) \qquad\mbox{(since $h$ is $0$ on $M_C$)}.
\end{align*}
In particular, we have both
\begin{equation} \label{FrmsInf1}
    \int_{M_A \times M_A \setminus \diag_{M_A}} (h(x)-h(y))^2 J(dx, dy) < \infty
\end{equation}
and
\begin{equation}\label{FrmsInf2}
    \int_{x \in M_A} \int_{y \in M_C} h^2(x) J(dx, dy) < \infty.
\end{equation}

For all $n$, let
\begin{equation*}
    g_n := f_\mean + \sum_{j \leq n} h(x_j) \delta_{x_j}
\end{equation*}
so that
\begin{equation} \label{CloseapproxtoFl2}
    f_\rms - g_n = \sum_{j>n} h(x_j) \delta_{x_j}.
\end{equation}
By Proposition \ref{DomainProp}(b) and Lemma \ref{JumpsAndAtomsPreliminaryFactsLemma}(a), $g_n \in \mathcal{F}$.
We will show that some subsequence of $\{g_n\}$ converges to $f_\rms$ under the $\mathcal{E}_1$-norm.

First,
\begin{dmath} \label{FrmsInf5}
    \norm{f_\rms-g_n}_2^2 = \sum_{j>n} \mu(x_j) h^2(x_j) = \sum_{j>n} \mu(x_j) \left[ f_\rms(x_j) - f_\mean(x_j) \right]^2 \leq \sum_{j>n}\mu(x_j) f_\rms^2(x_j) \to 0
\end{dmath}
since $\sum_{j>n}\mu(x_j) f_\rms^2(x_j)$ is the tail of $\sum_j \mu(x_j) f_\rms(x_j) = \int_{M_A} f_\rms^2 \, d\mu = \int_{\pi^{-1}(M_A)} f^2 \, d\muhat$, which is finite.

By \eqref{CloseapproxtoFl2},
\begin{equation} \label{Efl2-gn}
    \mathcal{E}(f_\rms-g_n) = \int_{J_n \times J_n \setminus \diag_{J_n}} (h(x)-h(y))^2 J(dx, dy) + 2 \int_{x \in J_n} \int_{y \in M\setminus J_n} h^2(x) J(dx, dy).
\end{equation}
Note that the decreasing intersection $\bigcap_{n \in \mathbb{N}} J_n \times J_n \setminus \diag_{J_n}$ is empty. Therefore, by \eqref{FrmsInf1} and continuity from above,
\begin{equation}\label{FrmsInf7}
    \lim_{n\to\infty} \int_{J_n\times J_n \setminus \diag_{J_n}} (h(x)-h(y))^2 J(dx, dy) = 0.
\end{equation}
\LongVersionShortVersion
{ 
For all $x \in M_A$, by \eqref{FrmsIntermsofRvs} and \eqref{ReverseTriangleInequality},
\begin{equation*}
    h(x) = f_\rms(x) - f_\mean(x) = \norm{f(A_x)}_2 - \norm{f_\mean(x)}_2 \leq \norm{f(A_x)-f_\mean(x)}_2 = u_\rms(x).
\end{equation*}
}
{ 
For all $x \in M_A$, f $\xi$ is uniformly distributed on $W_x$, then by \eqref{ReverseTriangleInequality},
\begin{equation*}
    h(x) = f_\rms(x) - f_\mean(x) = \norm{f(\xi)}_2 - \norm{f_\mean(x)}_2 \leq \norm{f(\xi)-f_\mean(x)}_2 = u_\rms(x).
\end{equation*}
} 
Therefore,
\begin{align}\label{FrmsInf9}
\begin{split}
    \int_{x \in J_n} \int_{y \in M\setminus J_n} h^2(x) J(dx,dy) &\leq \int_{x \in J_n} \int_{y \in M\setminus J_n} u_\rms^2(x) J(dx,dy)\\
    &\leq \int_{z \in \widehat{J_n}} \int_{z' \in \widehat{M}\setminus \widehat{J_n}} u^2(z) \Jhat(dz, dz') \qquad\mbox{(by Lemma \ref{differencesSquaredLem})}\\
    &= \int_{z \in \widehat{J_n}} \int_{z' \in \widehat{M}\setminus \widehat{J_n}} \left( (u-u_n)(z) - (u-u_n)(z')\right) \Jhat(dz, dz')\\
    &\leq \Ehat(u-u_n).
\end{split}
\end{align}
Let $\{n_k\}$ be the subsequence given by Lemma \ref{FrmsInFHelper} such that $\Ehat_1(u-u_{n_k}) \to 0$.
By \eqref{Efl2-gn}, \eqref{FrmsInf7}, and \eqref{FrmsInf9},
\begin{equation}\label{FrmsInf10}
    \Ehat(f_\rms - g_{n_k}) \to 0.
\end{equation}
By \eqref{FrmsInf5} and \eqref{FrmsInf10},
\begin{equation}\label{FrmsInf11}
    \Ehat_1(f_\rms - g_{n_k}) = \norm{f_\rms - g_n}_2^2 + \Ehat(f_\rms - g_{n_k}) \to 0.
\end{equation}
Since $\Fhat$ is closed and $g_{n_k} \in \Fhat$ for all $k$, \eqref{FrmsInf11} implies that $f_\rms \in \mathcal{F}$.
\end{proof}

\LongVersionShortVersion{

\numberwithin{equation}{subsection}
\numberwithin{theorem}{subsection}

\section{Additional details}

\subsection{\texorpdfstring{$\VD + \mbox{quasi-uniform perfectness} \Longrightarrow \QRVD$}{TEXT}} \label{UniformPerfectnessAppendix}

\begin{proof}[Proof of Lemma \ref{VdPlusQufImpliesQrvd}]
By quasi-uniform perfectness, there exists a $C_U > 1$ such that $B(x, C_U r) \setminus B(x, r)$ is non-empty for all $x \in M$, $r \geq D_x$.
By $\VD$, there exists a constant $C_D$ such that
\begin{equation*}
    V(x, (2C_U+1)r) \leq C_D V(x, r) \qquad\mbox{for all $x \in M$, $r>0$}.
\end{equation*}
We will show that
\begin{equation}\label{GraphQrvdGoal}
    V(x, (2C_U+1)r) \geq \left(1+\frac{1}{C_D-1}\right) V(x, r) \qquad\mbox{for all $x \in M$, $r \geq D_x$}.
\end{equation}
Obviously, $C_D$ must be greater than $1$, or else $B(x, r)^c$ would have measure $0$ for all $x$ and $r$. Thus, \eqref{GraphQrvdGoal} is enough to establish $\QRVD$.

Fix $x \in M$ and $r \geq D_x$. By the uniform perfectness, there exists a $y \in B(x, 2C_U r) \setminus B(x, 2r)$. The balls $B(x, r)$ and $B(y, r)$ are disjoint, because if there was some $z \in B(x, r) \cap B(y, r)$, we would have
\begin{equation*}
    2r \leq d(x, y) \leq d(x, z)+d(z,y) < r + r = 2r.
\end{equation*}
Both $B(x, r)$ and $B(y, r)$ are contained in $B(x, (2C_U+1)r)$, because for all $z \in B(y, r)$,
\begin{equation*}
    d(x, z) \leq d(x, y)+d(y, z) < 2C_U r+r = (2C_U+1)r.
\end{equation*}
By the same argument, both are contained in $B(y, (2C_U+1)r)$. Therefore,
\begin{equation}\label{2ballsInbigXball}
    V(x, r) + V(y, r) \leq V(x, (2C_U+1)r)
\end{equation}
and
\begin{equation} \label{2ballsInbigYball}
    V(x, r) + V(y, r) \leq V(x, (2C_U+1)r) \leq C_D V(y, r).
\end{equation}
Simplifying \eqref{2ballsInbigYball}, we obtain
\begin{equation}\label{Simplified2ballsInbigYball}
    \frac{1}{C_D-1} V(x, r) \leq V(y, r).
\end{equation}
By \eqref{2ballsInbigXball} and \eqref{Simplified2ballsInbigYball},
\begin{equation*}
    \left(1+\frac{1}{C_D-1}\right) V(x, r) \leq V(x, r) + V(y, r) \leq V(x, (2C_U+1)r),
\end{equation*}
completing the proof of \eqref{GraphQrvdGoal}.
\end{proof}

\subsection{Section \ref{preliminariesSection} proofs} \label{preliminariesSectionProofs}

Let us prove all the facts stated in Section \ref{preliminariesSection}.
We first prove Proposition \ref{AeChapmanKolmogorov} (if $p(t, x, y)$ satisfies \eqref{HeatKernelDefinitionDistribution}, then it satisfies \eqref{HeatKernelDefinitionChapmanKolmogorov} for almost every $z$). 

\begin{proof}[Proof of Proposition \ref{AeChapmanKolmogorov}]
Fix $x \in M_0$ and $s, t > 0$. Let $F(z)$ be the left-hand-side of \eqref{HeatKernelDefinitionChapmanKolmogorov}, and $G(z)$ be the right-hand-side of \eqref{HeatKernelDefinitionChapmanKolmogorov}. We will show that $F=G$ $\mu$-a.e by showing that $\int_M F(z) f(z) \, \mu(dz) = \int_M G(z) f(z) \, \mu(dz)$ for all $f \in L^\infty(M, \mu)$. Fix such an $f$. By \eqref{HeatKernelDefinitionDistribution} the Markov property, and Fubini's theorem,
\begin{align*}
    \int_M F(z) f(z) \, \mu(dz) &= \int_M p(s+t, x, z) \, \mu(dz)\\
    &= \mathbb{E}^x f(X_{s+t})\\
    &=\mathbb{E}^x \left( \mathbb{E}^{X_s} f(X_t) \right)\\
    &=\mathbb{E}^x \left( \int_M p(t, X_s, z) f(z)\, \mu(dz) \right)\\
    &= \int_M p(s, x, y) \int_M  p(t, y, z) f(z) \, \mu(dz) \, \mu(dy)\\
    &= \int_M \int_M p(s, x, y) p(t, y, z) \mu(dz) \, f(z) \, \mu(dz)\\
    &= \int_M G(z) f(z) \, \mu(dz).
\end{align*}
\end{proof}

In order to prove Lemma \ref{constructRegularDirichletForms} (the recipe with which to construct regular Dirichlet forms on a general measure space $(\mathcal{X}, m)$), it will help to review some of the basic theory and terminology of Dirichlet forms. Given a $\sigma$-finite measure space $(\mathcal{X}, \mathcal{A}, m)$, recall that a \textit{Dirichlet form} is a pair $(\mathscr{E}, \mathscr{F})$ satisfying the following axioms:

\begin{itemize}
    \item $\mathscr{F}$ is a dense linear subspace of $L^2(\mathcal{X}, m)$
    \item $\mathscr{E} : \mathscr{F} \times \mathscr{F} \to \mathbb{R}$ is bilinear.
    \item If
    \begin{equation*}
        \mathscr{E}_\alpha(f, g) := \mathscr{E}(f, g) + \alpha \int_{\mathcal{X}} fg \,dm \qquad\mbox{for all $f, g \in \mathscr{F}, \alpha>0$}
    \end{equation*}
    then $\mathscr{F}$ is closed with respect to the norm $\norm{f}_{\mathscr{E}_1} = \sqrt{\mathscr{E}_1(f, f)}$. (We will henceforth refer to this norm as ``the $\mathscr{E}_1$-norm.")
    \item For all $f \in \mathscr{F}$, the function 
    \begin{equation*}
        f^*(x) := \max\left\{ 0, \min\left\{f(x), 1\right\} \right\}
    \end{equation*}
    also belongs to $\mathscr{F}$, and
    \begin{equation*}
        \mathscr{E}(f^*, f^*) \leq \mathscr{E}(f,f).
    \end{equation*}
\end{itemize}
The fourth and last of the above axioms is called the Markovian property, and a space $\mathcal{D}$ of measurable functions of $\mathcal{F}$ is called \textit{Markovian} if $f^* \in \mathcal{D}$ for all $f \in \mathcal{D}$. The operation $f \mapsto f^*$ is a \textit{normal contraction}: in other words, $|f^*| \leq |f|$ and $|f^*-g^*| \leq |f-g|$ for all $f, g$.

Now suppose $\mathcal{X}$ is a locally compact, separable metric space, and $m$ is a positive Radon measure on $\mathcal{X}$ with full support.
A subspace $\mathcal{C}$ of $\mathscr{F} \cap C_c(\mathcal{X})$ is called a \textit{core} if the following are also satisfied:

\begin{itemize}
    \item $\mathcal{C}$ is dense in $\mathscr{F}$ under the $\mathscr{E}_1$-norm.
    \item $\mathcal{C}$ is dense in $C_c(\mathcal{X})$ under the uniform norm.
\end{itemize}
A Dirichlet form with a core is called \textit{regular}. One can see from this definition that a Dirichlet form $(\mathscr{E}, \mathscr{F})$ is regular iff $\mathscr{F} \cap C_c(\mathcal{X})$ is a core.

Let us now prove Lemma \ref{constructRegularDirichletForms}. We must show that if $\mathscr{E}$, $\Fscrmax$, $\mathscr{E}$, $\mathscr{E}_1$, $\mathcal{D}$, and $\mathscr{F}$ are all as in Lemma \ref{constructRegularDirichletForms}, then $(\mathscr{E}, \mathscr{F})$ is a regular Dirichlet form on $L^2(\mathcal{X}, m)$, with $\mathcal{D}$ as a core. This is a simple matter of checking each of the axioms listed above.

\begin{proof}[Proof of Lemma \ref{constructRegularDirichletForms}]
Since $\mathcal{D}$ is dense in $C_c(\mathcal{X})$ under the uniform norm, it is also dense in $C_c(\mathcal{X})$ under the $L^2$-norm. By \cite[Proposition 7.9]{fol}, $C_c(\mathcal{X})$ is dense in $L^2(\mathcal{X}, m)$, under the $L^2$-norm, and therefore so is $\mathcal{D}$. Since $\mathcal{D}$ is dense in $L^2(\mathcal{X}, m)$, so is the larger $\mathscr{F}$.

Let us show that $\mathscr{F} \subseteq \Fscrmax$. Note that $\Fscrmax$ is complete under the $\mathscr{E}_1$-norm. This is a standard result. (See \cite[Example 1.2.4]{fukush}. In fact, $(\mathscr{E}, \Fscrmax)$ is a Dirichlet form, although it is not necessarily regular.)
Since $\mathcal{D}$ is a subset of $\Fscrmax$,
\begin{equation*}
    \mathscr{F} = \overline{\mathcal{D}}^{\mathscr{E}_1} \subseteq \overline{\Fscrmax}^{\mathscr{E}_1} = \Fscrmax.
\end{equation*}
For all $f, g \in \mathscr{F}$, since $\mathscr{F} \subseteq \Fscrmax$, by Cauchy-Schwarz,
\begin{equation*}
    \left|\mathscr{E}(f, g) \right| \leq \mathscr{E}(f)^{\sfrac12} \mathscr{E}(g)^{\sfrac12} <\infty.
\end{equation*}
Therefore, $\mathscr{E} : \mathscr{F} \times\mathscr{F}\to\mathbb{R}$. It is clear from \eqref{EscrFormula} that $\mathscr{E}$ is bilinear.

Because $\mathscr{F}$ was defined as the $\mathscr{E}_1$-closure of $\mathcal{D}$, $\mathscr{F}$ is closed under the $\mathscr{E}_1$-norm.

In order to show that $\mathscr{F}$ is Markovian, fix $f \in \mathscr{F}$, so that we can show that $f^* \in \mathscr{F}$ (where $f^* := 0 \vee (f \wedge 1)$, as in \eqref{Markovian}). Let $\{f_n\}$ be a sequence of functions in $\mathcal{D}$ such that $\norm{f_n-f}_{\mathscr{E}_1} \rightarrow 0$. For all $n$, because $\mathcal{D}$ is Markovian, $f_n^* \in\mathcal{D}$. For all $x, y \in \mathcal{X}$, $$|f_n^*(x)-f^*(y)| \leq |f_n(x)-f(y)|.$$
Therefore, $$\mathscr{E}(f_n^*-f^*) \leq \mathscr{E}(f_n-f) \rightarrow 0 \qquad\mbox{and}\qquad\norm{f_n^* - f^*}_{L^2(\mathcal{X}, m)} \leq \norm{f_n - f}_{L^2(\mathcal{X}, m)} \rightarrow 0$$
so $\norm{f_n^* - f^*}_{\mathscr{E}_1} \rightarrow 0$. Since $\mathscr{F}$ is the $\mathscr{E}_1$-closure of $\mathcal{D}$, and $f^*$ is the $\mathscr{E}_1$-limit of $\{f_n^*\} \subseteq \mathcal{D}$, $f^* \in \mathscr{F}$. Since $|f^*(x)-f^*(y)| \leq |f(x)-f(y)|$ for all $x, y \in \mathcal{X}$, $\mathscr{E}(f^*, f^*) \leq \mathscr{E}(f,f)$.

We have shown that $(\mathscr{E}, \mathscr{F})$ is a Dirichlet form. All that remains is to show that $\mathcal{D}$ is a core. By the definition of $\mathscr{F}$, $\mathcal{D}$ is dense in $\mathscr{F}$ in $\mathscr{E}_1$-norm. By hypothesis, $\mathcal{D}$ is dense in $C_c(\mathcal{X})$ under the uniform norm.
\end{proof}

Before we prove Lemma \ref{JumpsAndAtomsPreliminaryFactsLemma}, let us review one more standard fact from the theory of Dirichlet forms. A core $\mathcal{C}$ of a regular Dirichlet form $(\mathcal{E}, \mathcal{F})$ is called \textit{standard} if $\mathcal{C}$ is a dense linear subspace of $C_c(\mathcal{X})$ and for all $\varepsilon>0$, there exists a function $\phi_\varepsilon :\mathbb{R}\to [-\varepsilon, 1+\varepsilon]$ such that
\begin{itemize}
    \item $\phi_\varepsilon(t) = t$ for all $t \in [0, 1]$.
    \item $0 \leq \phi_\varepsilon(t) - \phi_\varepsilon(s) \leq t-s$ for all $s<t$.
    \item $\phi_\varepsilon \circ f \in \mathcal{C}$ for all $f \in \mathcal{C}$.
\end{itemize}
We call $\mathcal{C}$ a \textit{standard special core} if $\mathcal{C}$ is a standard core and also satisfies the additional properties
\begin{itemize}
    \item $\mathcal{C}$ is a dense subalgebra of $C_c(\mathcal{X})$
    \item Whenever $K \subseteq U$, $K$ is compact, $U$ is open, and $\overline{U}$ is compact, there exists a non-negative $f \in \mathcal{F} \cap C_c(M)$ such that $f=0$ outside of $U$ and $f=1$ on $K$.
\end{itemize}
It is known (see \cite[Exercise 1.4.1]{fukush}) that every regular Dirichlet form has a standard special core:

\begin{lemma}\label{standardSpecial}
Let $\mathcal{X}$ be a locally compact separable metric space. Let $m$ be a positive Radon measure on $X$ with full support.
If $(\mathscr{E}, \mathscr{F})$ is a regular Dirichlet form on $L^2(\mathcal{X}, m)$, then $\mathscr{F}\cap C_c(M)$ is a standard special core.
\end{lemma}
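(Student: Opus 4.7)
The plan is to verify in turn each clause in the definition of standard special core, applied to $\mathcal{C} := \mathscr{F} \cap C_c(\mathcal{X})$. That $\mathcal{C}$ is a dense linear subspace of $C_c(\mathcal{X})$ under the uniform norm is almost immediate: it is closed under linear combinations since both $\mathscr{F}$ and $C_c(\mathcal{X})$ are, and by the regularity of $(\mathscr{E}, \mathscr{F})$ some core $\mathcal{C}_0$ is dense in $C_c(\mathcal{X})$ by definition, with $\mathcal{C}_0 \subseteq \mathcal{C}$. For the truncation family witnessing standardness, I would take the piecewise linear function $\phi_\epsilon(t) := (t \vee (-\epsilon)) \wedge (1+\epsilon)$. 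It satisfies $\phi_\epsilon(t)=t$ on $[0,1]$, has image $[-\epsilon,1+\epsilon]$, is non-decreasing and $1$-Lipschitz, and vanishes at $0$. Being a normal contraction, $\phi_\epsilon \circ f \in \mathscr{F}$ for every $f \in \mathscr{F}$ by the extended Markov property of Dirichlet forms; and the vanishing at $0$ ensures $\supp(\phi_\epsilon \circ f) \subseteq \supp(f)$, so $\phi_\epsilon \circ f \in \mathcal{C}$ when $f \in \mathcal{C}$.

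For the subalgebra property I would use the polarization $fg = \tfrac{1}{4}[(f+g)^2-(f-g)^2]$, reducing the claim to showing $f^2 \in \mathscr{F}$ whenever $f \in \mathscr{F}$ is bounded. Given $|f| \leq M$, set $T(t) := ((t \vee -M) \wedge M)^2$. Then $T$ is globally $2M$-Lipschitz with $T(0)=0$, so $T/(2M)$ is a normal contraction, $T \circ f \in \mathscr{F}$, and $T \circ f = f^2$ pointwise. Applied to $f,g \in \mathcal{C}$, which are bounded since they are compactly supported and continuous, this gives $(f \pm g)^2 \in \mathscr{F}$; continuity and compact support are preserved, so $(f \pm g)^2 \in \mathcal{C}$ and hence $fg \in \mathcal{C}$.

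For the Urysohn-type property, fix compact $K \subseteq U$ with $\overline{U}$ compact. Using that $\mathcal{X}$ is locally compact Hausdorff, choose an open $V$ with $K \subseteq V \subseteq \overline{V} \subseteq U$, and use Urysohn's lemma to produce a continuous $g : \mathcal{X} \to [0,1]$ with $g|_K = 1$ and $g = 0$ off $V$, so $g \in C_c(\mathcal{X})$. By the first step, pick $h \in \mathcal{C}$ with $\|h-g\|_\infty < 1/3$, so that $h > 2/3$ on $K$ and $h < 1/3$ on $\overline{V}^c$. Let $\psi : \mathbb{R} \to [0,1]$ be the piecewise linear function equal to $0$ on $(-\infty,1/3]$, equal to $1$ on $[2/3,\infty)$, and linear in between. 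Then $\psi$ is $3$-Lipschitz with $\psi(0)=0$, so $\psi/3$ is a normal contraction and $\psi \circ h \in \mathscr{F}$; moreover $\psi \circ h \in C_c(\mathcal{X})$ because $\psi(0)=0$. Hence $f := \psi \circ h \in \mathcal{C}$ with $f|_K = 1$ and $f = 0$ off $U$.

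The main obstacle throughout is the extension of the Markov property from the single unit truncation $t \mapsto (t \vee 0) \wedge 1$ to arbitrary Lipschitz functions $T$ with $T(0)=0$; this is what makes the constructions above legal. I would not reprove it, but invoke it from \cite[Theorem 1.4.1]{fukush}, where it is obtained by approximating $T$ by finite linear combinations of rescaled unit contractions and passing to the limit using the $\mathscr{E}_1$-closedness of $\mathscr{F}$.
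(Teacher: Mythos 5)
Your proof is correct. Note that the paper does not actually prove this lemma: it is stated as a known fact and attributed to \cite[Exercise 1.4.1]{fukush}, so there is no in-paper argument to compare against. What you have written is essentially the standard solution to that exercise: uniform density of $\mathscr{F}\cap C_c(\mathcal{X})$ in $C_c(\mathcal{X})$ from regularity; the truncations $\phi_\epsilon(t)=(t\vee(-\epsilon))\wedge(1+\epsilon)$ together with the fact that normal contractions operate on Dirichlet forms (\cite[Theorem 1.4.1]{fukush}) for standardness; polarization plus the Lipschitz truncated square for the subalgebra property (equivalently, one could cite directly that bounded elements of $\mathscr{F}$ form an algebra); and uniform approximation of a Urysohn function followed by a Lipschitz post-composition for the cut-off property. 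The individual verifications check out: each auxiliary map you compose with is, after rescaling, a normal contraction fixing $0$, so it preserves membership in $\mathscr{F}$, continuity, and compact support. One small point worth recording explicitly: a standard special core is in particular a core, so one should also note that $\mathscr{F}\cap C_c(\mathcal{X})$ is $\mathscr{E}_1$-dense in $\mathscr{F}$; this follows at once from regularity (any core $\mathcal{C}_0$ is contained in $\mathscr{F}\cap C_c(\mathcal{X})$), by the same inclusion argument you used for uniform density, and is observed in the paper immediately before the lemma.
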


Let us return our setting of $(M, d, \mu, \mathcal{E}, \mathcal{F})$.
Recall that $M_A$ is the set of isolated atoms in $M$, as defined in \eqref{MaMcDef}.
We use Lemma \ref{standardSpecial} to show that the indicator $\delta_x$ belongs to $\mathcal{F}$ for all $x \in M_A$.

\begin{proof}[Proof of Lemma \ref{JumpsAndAtomsPreliminaryFactsLemma}(a)]
Fix $x \in M_A$.
By Lemma \ref{standardSpecial}, since $(\mathcal{E}, \mathcal{F})$ is a regular Dirichlet form, $\mathcal{F} \cap C_c(M)$ is a standard special core. Since $x$ is isolated, $\{x\}$ is open, closed, and compact.
By the definition of a standard special core, there exists an $f \in \mathcal{F} \cap C_c(M)$ such that $f=0$ outside of $\{x\}$ and $f=1$ on $\{x\}$. Therefore, $\delta_x \in \mathcal{F}$.
\end{proof}

We use the fact that $\delta_x \in \mathcal{F}$ to show that $v(x)$ is finite for all $x \in M_A$.

\begin{proof}[Proof of Lemma \ref{JumpsAndAtomsPreliminaryFactsLemma}(b)]
Fix $x \in M_A$.
By Lemma \ref{JumpsAndAtomsPreliminaryFactsLemma}(a), $\delta_x \in \mathcal{F}$. Therefore,
\begin{equation*}
    \infty > \mathcal{E}(\delta_x, \delta_x) = \int_{M \times M} (\delta_x(y)-\delta_x(z))^2 J(dy, dz) = 2 \int_{\{x\} \times (M \setminus \{x\})} J(y, z) \mu(dz) \mu(dy) = 2\mu(x) v(x).
\end{equation*}
\end{proof}

Lemma \ref{JumpsAndAtomsPreliminaryFactsLemma}(c) states that if $\VD$ and $\Jleq$ hold, then $\mathcal{J}(x, \rho) \leq C_{\mathcal{J}}/\phi(\rho)$ for all $x \in M$ and $\rho>0$, where $C_{\mathcal{J}}$ is some constant. We show this by expressing the integral $\mathcal{J}(x, \rho) = \int_{M \setminus \{x\}} J(x, y) \, \mu(dy)$ as a sum of integrals of $J(x, \cdot)$ over growing annuli. Each annulus's volume is bounded above by $\VD$, and the value of $J(x, \cdot)$ in each annulus is bounded above by $\Jleq$.

\begin{proof}[Proof of Lemma \ref{JumpsAndAtomsPreliminaryFactsLemma}(c)]
Assume $\VD$ and $J_{\phi, \leq}$. Let $C$ be the constant from \eqref{VdFormula}. By $\Jleq$, there exists a constant $C_0>0$ such that
\begin{equation*}
    J(x, y) \leq \frac{C_0}{V(x, d(x, y)) \phi(d(x, y))}
\end{equation*}
for all distinct $x, y$.

Fix $x \in M$ and fix $\rho>0$. For all $ k \in \mathbb{N}$, let $E_k = \{y \in M : 2^{k-1}\rho \leq d(x, y) < 2^k\rho\}$. The $E_k$'s form a partition of $M \setminus B(x, \rho)$. Therefore,
\begin{equation}\label{bigJumpsPartition1}
    \mathcal{J}(x, \rho) = \sum_{k=1}^\infty \int_{E_k} J(x, y) \mu(dy).
\end{equation}
For all $y \in E_k$,
\begin{equation}\label{BigJumpsyEk1}
    J(x, y) \leq \frac{C_0}{V(x, d(x, y)) \phi(d(x, y))} \leq \frac{C_0}{V(x, 2^{k-1} \rho) \phi(2^{k-1} \rho)}.
\end{equation}
By \eqref{bigJumpsPartition1} and \eqref{BigJumpsyEk1},
\begin{align*}
    \mathcal{J}(x, \rho) &\leq \sum_{k=1}^\infty \frac{C_0 \mu(E_k)}{V(x, 2^{k-1} \rho) \phi(2^{k-1} \rho)} = C_0 \sum_{k=1}^\infty \frac{V(x, 2^k \rho)-V(x, 2^{k-1} \rho)}{V(x, 2^{k-1} \rho)} \cdot \frac{1}{\phi(2^{k-1} \rho)}\\
    &\leq C_0 (C-1) \sum_{k=1}^\infty \frac{1}{\phi(2^{k-1} \rho)} \qquad\mbox{(by $\VD$)}\\
    &\leq \frac{C_0(C-1)}{\phi(\rho)} \sum_{k=1}^\infty c_1^{-1} 2^{-\beta_1 (k-1)} \qquad\mbox{(by \eqref{regularGrowth})}.
\end{align*}
\end{proof}

Lemma \ref{JumpsAndAtomsPreliminaryFactsLemma}(d), the lower bound for $\mathcal{J}(x, \rho)$, is proved in a similar same way.

\begin{proof}[Proof of Lemma \ref{JumpsAndAtomsPreliminaryFactsLemma}(d)]
Assume $\QRVD$ and $\Jgeq$. Let $c$ and $\ell$ be the constants from \eqref{QrvdFormula}. By $\Jgeq$, there exists a $c_0>0$ such that
\begin{equation*}
    J(x, y) \geq \frac{c_0}{V(x, d(x, y)) \phi(d(x, y))}
\end{equation*}
for all distinct $x, y$.

Fix $x \in M$ and a positive $\rho \geq D_x$. For all $k \in \mathbb{N}$, let $E_k = \{y \in M : \ell^{k-1} \rho \leq d(x, y) < \ell^k \rho\}$. The $E_k$'s form a partition of $M \setminus B(x, \rho)$. Therefore,
\begin{equation} \label{bigJumpsPartition2}
    \mathcal{J}(x, \rho) = \sum_{k=1}^\infty \int_{E_k} J(x, y) \mu(dy).
\end{equation}
For $\mu$-almost all $y \in E_k$,
\begin{equation}\label{BigJumpsyinEk2}
    J(x, y) \geq \frac{c_0}{V(x, d(x, y)) \phi(d(x, y))} \geq \frac{c_0}{V(x, \ell^k \rho) \phi(\ell^k \rho)}.
\end{equation}
By \eqref{bigJumpsPartition2} and \eqref{BigJumpsyinEk2},
\begin{align*}
    \mathcal{J}(x, \rho) &\geq \sum_{k=1}^\infty \frac{c_0 \mu(E_k)}{V(x, \ell^k \rho) \phi(\ell^k \rho)} = c_0 \sum_{k=1}^\infty \frac{V(x, \ell^k \rho)-V(x, \ell^{k-1} \rho)}{V(x, \ell^k \rho)} \cdot \frac{1}{\phi(\ell^k \rho)}\\
    &\geq c_0 (1-c^{-1}) \sum_{k=1}^\infty \frac{1}{\phi(\ell^k \rho)} \qquad\mbox{(by $\QRVD$)}\\
    &\geq \frac{c_0(1-c^{-1})}{\phi(\rho)} \sum_{k=1}^\infty c_2^{-1} \ell^{-\beta_2 k} \qquad\mbox{(by \eqref{regularGrowth})}
\end{align*}
\end{proof}

\subsection{\texorpdfstring{$\Eleq$} i is not possible for spaces with atoms} \label{EphiNotPossibleAppendix}

Recall that an exponentially-distributed random variable $\xi$ is said to be Exponential($\lambda$) (or have \textit{rate} $\lambda$) if its mean is $1/\lambda$. Recall that $M_A$ is the set of isolated atoms in $(M, d, \mu)$.

\begin{prop}\label{EphiNotPossible}
Suppose $(M, d, \mu, \mathcal{E}, \mathcal{F})$ satisfies Assumption \ref{BasicAssumptions}, $\phi$ is of regular growth, and $(\mathcal{E}, \mathcal{F})$ admits a jump kernel.
If $M_A$ is non-empty, then $\Eleq$ does not to hold.
\end{prop}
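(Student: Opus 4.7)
The plan is to exhibit, for a single atom, a family of radii $r \downarrow 0$ along which $\mathbb{E}^x \tau_{B(x,r)}$ stays bounded below by a fixed positive constant while $\phi(r) \to 0$, so that the inequality in $\Eleq$ cannot hold with any uniform constant $C$.

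First I would fix an atom $x \in M_A$, which exists by assumption. Since $\mu(\{x\}) > 0$ and the properly exceptional set $\mathcal{N}$ has $\mu(\mathcal{N}) = 0$, we have $x \notin \mathcal{N}$, so $x \in M_0$ and the statement of $\Eleq$ does apply at $x$. For any $r \in (0, D_x)$, by the definition of $D_x$ we have $B(x,r) = \{x\}$, so $\tau_{B(x,r)} = \tau_{\{x\}}$, which is precisely the first jump time of $\{X_t\}$ started at $x$.

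Next I would invoke the standard fact about Hunt processes coming from a pure-jump regular Dirichlet form with a jump kernel: the holding time at an isolated atom $x$ is exponential with rate equal to the total outgoing jump rate
\begin{equation*}
    v(x) \;=\; \int_{M\setminus\{x\}} J(x,y)\,\mu(dy),
\end{equation*}
so $\mathbb{E}^x \tau_{\{x\}} = 1/v(x)$ (with the convention $1/0 = \infty$). By Lemma \ref{JumpsAndAtomsPreliminaryFactsLemma}(b), $v(x) < \infty$, so $\mathbb{E}^x \tau_{\{x\}}$ is a strictly positive, finite (or infinite) constant depending only on $x$. Hence, for every $r \in (0, D_x)$,
\begin{equation*}
    \mathbb{E}^x \tau_{B(x,r)} \;=\; \frac{1}{v(x)} \;>\; 0,
\end{equation*}
independently of $r$.

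Finally, to contradict $\Eleq$, suppose for contradiction that some constant $C$ satisfied $\mathbb{E}^x \tau_{B(x,r)} \leq C\phi(r)$ for all $r>0$. Since $\phi$ is continuous with $\phi(0)=0$, we may choose $r \in (0, D_x)$ small enough that $C\phi(r) < 1/v(x)$, yielding $1/v(x) \leq C\phi(r) < 1/v(x)$, a contradiction. The only place where some care is needed is justifying that $\tau_{\{x\}}$ really is exponential of rate $v(x)$ under the Hunt process associated with $(\mathcal{E},\mathcal{F})$; I expect this to be the main (though mild) obstacle, and would handle it by appealing to the identification of $v(x)$ as the total mass of the L\'evy-type kernel $J(x,\cdot)\,\mu(d\cdot)$ governing exits from the singleton $\{x\}$ (using that $\delta_x \in \mathcal{F}$ by Lemma \ref{JumpsAndAtomsPreliminaryFactsLemma}(a), which makes $v(x)$ well-defined and finite).
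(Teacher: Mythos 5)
Your proposal is correct and essentially coincides with the paper's own proof: both fix an atom $x \in M_A$, observe that $B(x,r)=\{x\}$ for $r$ below $D_x$ so that $\mathbb{E}^x\tau_{B(x,r)} = 1/v(x)$ is a positive constant independent of $r$ (using Lemma \ref{JumpsAndAtomsPreliminaryFactsLemma}(b) for finiteness of $v(x)$), and then contradict $\Eleq$ by letting $\phi(r)\to 0$. Your remark that $x \notin \mathcal{N}$ because $\mu(\{x\})>0$ is a small extra justification the paper leaves implicit, but the argument is the same.
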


\begin{proof}

Recall how we defined the exit times $\tau_U$ in \eqref{tauFormula}.
Let $x_0$ be an element of $M_A$. If the process $X$ starts at $x_0$, it leaves $x_0$ with rate equal to $\int_{M \setminus \{x_0\}} J(x_0, y) \mu(dy)=v(x_0)$ (where $v(x_0)$ is as defined in \eqref{v(x)Formula}). Therefore, $\tau_{\{x_0\}}$ is Exponential($v(x_0)$)-distributed. By Lemma \ref{JumpsAndAtomsPreliminaryFactsLemma}(b),
\begin{equation*}
    \mathbb{E}^x \tau_{\{x_0\}} = \frac{1}{v(x_0)} \in (0, \infty].
\end{equation*}
Assume for the sake of contradiction that there exists a $C>0$ such that
\begin{equation} \label{assumeEphiForContradiction}
    \mathbb{E}^x \tau_{B(x, r)} \leq C \phi(r) \qquad\mbox{for all $x$}.
\end{equation}
If $0 < r \leq D_{x_0}$, then $B(x_0, r) = \{x_0\}$, so
\begin{equation}\label{smallREscape}
    \mathbb{E}^{x_0} \tau_{B(x_0, r)} = \frac{1}{v(x_0)}.
\end{equation}
By \eqref{assumeEphiForContradiction} and \eqref{smallREscape}, for all $r \in (0, D_{x_0}]$,
\begin{equation*}
    \frac{1}{v(x_0)} \leq C \phi(r).
\end{equation*}
This is a contradiction because $C\phi(r)$ approaches $0$ for small $r$, while $1/v(x_0)$ is constant and positive.
\end{proof}

\subsection{Auxiliary metric measure space satisfies Assumption \ref{BasicAssumptions}(a)} \label{AppendixAssumption1}

In order to prove Proposition \ref{volumeGrowthProp}(a), there are three things to check, namely

\begin{itemize}
    \item $(\Mhat, \dhat)$ is a locally compact separable metric space.
    \item  $\muhat$ is a positive Radon measure with full support.
    \item $\muhat(\Mhat) = \infty$.
\end{itemize}

We establish in part (b) of the following lemma that $(\Mhat, \dhat)$ is locally compact and separable. Part (a) is an intermediary result about compactness of a set $K \subseteq M$ carrying over to its preimage under the projection $\pi : \Mhat \to M$.

\begin{lemma} \label{MhatLocallyCompactSeparable}
Suppose $(M, d, \mu)$ satisfies Assumption \ref{BasicAssumptions}(a), $\phi$ is of regular growth, and $M=M_A \cup M_C$.

(a) If $K$ is a compact subset of $M$, then $\pi^{-1}(K)$ is a compact subset of $\Mhat$.

(b) The metric space $(\Mhat, \dhat)$ is locally compact and separable.
\end{lemma}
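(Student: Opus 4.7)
The plan is to prove (a) by sequential compactness, then derive (b) from it. Two preliminary observations drive the argument. First, the projection $\pi : \Mhat \to M$ is $1$-Lipschitz: by \eqref{DefinitionOfDhat}, $\dhat(z, z') = d(\pi(z), \pi(z'))$ whenever $\pi(z) \neq \pi(z')$, and trivially $d(\pi(z), \pi(z')) = 0 \leq \dhat(z, z')$ otherwise. Second, $(W, \rho^D)$ is compact for every $D > 0$: by \eqref{VtildeIntermsofM} the $\rho^D$-balls are exactly the cylinder sets $E^w_m$, so the $\rho^D$-topology coincides with the product topology on $\{0,1\}^{\mathbb{N}}$, which is compact by Tychonoff. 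Consequently each $W_x$ (for $x \in M_A$) is compact in $(\Mhat, \dhat)$.

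For (a), I would take any sequence $\{z_n\} \subseteq \pi^{-1}(K)$. Compactness of $K$ yields a subsequence with $\pi(z_{n_k}) \to x \in K$. If $x \in M_C$, then for each $k$ either $\pi(z_{n_k}) = x$ (forcing $z_{n_k} = x \in W_x = \{x\}$) or $\pi(z_{n_k}) \neq x$ (so $\dhat(z_{n_k}, x) = d(\pi(z_{n_k}), x) \to 0$); either way $z_{n_k} \to x \in \pi^{-1}(K)$. If $x \in M_A$, then $D_x > 0$ and $B(x, D_x) = \{x\}$, so $\pi(z_{n_k}) = x$ for all sufficiently large $k$; the tail of $\{z_{n_k}\}$ lies in the compact set $W_x$, from which I extract a further convergent subsequence.

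For (b), local compactness at $z \in M_C$ will follow from (a) applied to a compact neighborhood $K$ of $z$ in $M$, since $\pi^{-1}(K)$ is then a compact neighborhood of $z$ in $\Mhat$ by continuity of $\pi$. For $z \in W_x$ with $x \in M_A$, formula \eqref{BhatFormula} combined with $B(x, D_x) = \{x\}$ gives $W_x = \Bhat(z, D_x)$, which is an open compact neighborhood. Separability will reduce to the fact that $M_A$ is countable (Remark \ref{MaCountable}(a)), $M_C$ is separable as a subspace of the separable $M$, and each $W_x$ is separable because it is compact metric; a countable dense subset of $\Mhat$ is then assembled by unioning countable dense subsets of $M_C$ and of each $W_x$. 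The only genuine subtlety is ensuring that the metric $\dhat$ behaves well across the decomposition $\Mhat = M_C \cup \bigcup_{x \in M_A} W_x$, but the $1$-Lipschitz property of $\pi$ makes this bookkeeping routine.
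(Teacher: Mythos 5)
Your proposal is correct and follows essentially the same route as the paper's proof: sequential compactness for (a) with the same case split ($x \in M_C$ versus $x \in M_A$, using isolation of atoms and compactness of $W$), and (b) via preimages of compact neighborhoods together with a countable dense set built from $M_C$ and the countably many $W_x$. Your only additions are cosmetic — an explicit Tychonoff justification that $(W, \rho^D)$ is compact (which the paper asserts without proof) and treating points of $W_x$ via $W_x = \Bhat(z, D_x)$ rather than through part (a) — and both are fine.
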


\begin{proof}
(a) We will use sequential compactness. Let $\{z_n\}$ be a sequence in $\pi^{-1}(K)$. Let $x_n := \pi(z_n)$ for all $n$. Since $\{x_n\}$ is a sequence in $K$, and $K$ is compact, there exists a subsequence $\{x_{n_k}\}$ that converges to some $x_0 \in K$. If $x_0 \in M_C$, then $\{z_{n_k}\}$ also converges to $x_0$, since $\dhat(z_{n_k}, x_0) = d(x_{n_k}, x_0) \to 0$. If $x_0 \in M_A$, then all but finitely many terms in $\{x_{n_k}\}$ are equal to $x_0$. Therefore, we can assume without loss of generality that $x_{n_k} = x_0$ for all $k$ (by replacing $\{n_k\}$ with a further subsequence if necessary). Let $w_k$ be the element of $W$ such that $z_{n_k} = (x_0, w_k)$. Since $W$ is compact, there exists a subsequence $\{w_{k_j}\}$ that converges to some $w_0 \in W$. Then $\{z_{n_{k_j}}\}$ converges to $(x_0, w_0) \in \widehat{K}$.

(b) Fix $z \in \Mhat$. Let $x=\pi(z)$. Since $(M,d)$ is locally compact, there exist an open $U$ and a compact $K$ such that $x \in U \subseteq K$.
Clearly, $\pi^{-1}(U)$ is open. By part (a), $\pi^{-1}(K)$ is compact. Therefore, $z$ belongs to an open subset of a compact set. Thus, $\Mhat$ is locally compact.

Since $(M, d)$ is separable, there exists a dense countable $E \subseteq M$. Then
\begin{equation*}
    (E \cap M_C) \cup \left\{ (x, w) : \mbox{$x \in D \cap M_A$, and $w(i)=0$ for all but finitely $i$} \right\}
\end{equation*}
is a dense countable subset of $\Mhat$. Thus, $\Mhat$ is separable.
\end{proof}

The following lemma establishes that $\muhat$ is a Radon measure with full support. Recall that a Radon measure is a measure that is finite on compact sets, and regular. Just like in Lemma \ref{MhatLocallyCompactSeparable}, part (a) is an intermediary result about compactness in the auxiliary space.

\begin{lemma}\label{MuhatRadonFullSupport}
Suppose $(M, d, \mu)$ satisfies Assumption \ref{BasicAssumptions}(a), $\phi$ is of regular growth, and $M=M_A \cup M_C$.

(a) If $S$ is a compact subset of $\Mhat$, then $\pi(S) = \{x \in M : \mbox{$W_x$ intersects $S$}\}$ is a compact subset of $M$.

(b) The measure $\muhat$ is finite on compact sets.

(c) The measure $\muhat$ is regular.

(d) The measure $\muhat$ has full support.

(By parts (b) and (c), $\muhat$ is a Radon measure.)
\end{lemma}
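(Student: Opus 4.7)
\textbf{Plan for Lemma \ref{MuhatRadonFullSupport}.} The proof splits naturally by part, and each part reduces to routine checking against the explicit formulas from Section \ref{auxiliaryMmsConstruction}.

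For part (a), the plan is to observe that the projection $\pi : \Mhat \to M$ is $1$-Lipschitz, hence continuous: from the definition \eqref{DefinitionOfDhat}, whenever $\pi(z) \neq \pi(z')$ we have $d(\pi(z), \pi(z')) = \dhat(z, z')$, while if $\pi(z) = \pi(z')$ then $d(\pi(z), \pi(z')) = 0 \leq \dhat(z, z')$. Compactness of $\pi(S) = \pi(S)$ then follows because the continuous image of a compact set is compact. For part (b), given a compact $K \subseteq \Mhat$, part (a) yields that $\pi(K)$ is compact in $M$, so $\mu(\pi(K)) < \infty$ since $\mu$ is Radon on $M$. Combining the containment $K \subseteq \pi^{-1}(\pi(K))$ with the identity \eqref{MeasuresAgree} gives $\muhat(K) \leq \muhat(\pi^{-1}(\pi(K))) = \mu(\pi(K)) < \infty$.

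For part (c), the plan is to invoke a standard result: by Lemma \ref{MhatLocallyCompactSeparable}(b), $(\Mhat, \dhat)$ is a locally compact, separable metric space, hence $\sigma$-compact, and every Borel measure on such a space that is finite on compact sets is automatically regular (see, e.g., \cite[Theorem 7.8]{fol}). Part (b) supplies the finiteness hypothesis, so regularity of $\muhat$ follows. For part (d), given any non-empty open $U \subseteq \Mhat$, I would pick $z \in U$ and $r > 0$ with $\Bhat(z, r) \subseteq U$, then compute $\muhat(\Bhat(z, r))$ using \eqref{VhatFormula}. If $r \geq D_{\pi(z)}$, then $\muhat(\Bhat(z, r)) = V(\pi(z), r) > 0$ because $\mu$ has full support; if $r < D_{\pi(z)}$, then $\pi(z) \in M_A$ (isolatedness forces it to be an atom since $M = M_A \cup M_C$), so $\muhat(\Bhat(z, r)) = \mu(\pi(z)) \, \Vtilde^{D_{\pi(z)}}(r)$, which is strictly positive since $\mu(\pi(z)) > 0$ and $\Vtilde^{D_{\pi(z)}}(r) > 0$ by \eqref{VtildeFloorFormula}.

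The main obstacle, if any, is part (c): making sure the standard regularity theorem is applied correctly in our topological setting. The other three parts are direct consequences of continuity of $\pi$, the identity \eqref{MeasuresAgree}, and the explicit formula \eqref{VhatFormula} for $\Vhat$, so they require essentially no new ideas beyond what was developed in Section \ref{auxiliaryMmsConstruction}.
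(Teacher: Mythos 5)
Your proposal is correct, and parts (a), (b), (d) follow essentially the paper's own route: the paper proves (a) by pulling back an open cover of $\pi(S)$ (which implicitly uses exactly the continuity of $\pi$ that you make explicit via the $1$-Lipschitz bound $d(\pi(z),\pi(z')) \leq \dhat(z,z')$ from \eqref{DefinitionOfDhat}), and its proofs of (b) and (d) are the same computations you give, with (d) phrased by splitting on $\pi(z) \in M_A$ versus $M_C$ rather than through \eqref{VhatFormula}. The genuine difference is part (c). The paper proves regularity by hand: it fixes a Borel set $S$, splits it over $M_C$ and the countably many $W_x$ with $x \in M_A$, and builds inner compact and outer open approximants using the regularity of $\nu$ on each $W$ and of $\mu$ on $M$, controlling the errors with an exhaustion of $M_A$ and a summable weight function $h$. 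You instead invoke the standard fact (Folland, Theorem 7.8) that on a locally compact Hausdorff space in which every open set is $\sigma$-compact, any Borel measure finite on compact sets is regular, feeding in Lemma \ref{MhatLocallyCompactSeparable}(b) and your part (b). This is a legitimate and much shorter argument; the only point to tighten is that the theorem requires every \emph{open} subset of $\Mhat$ to be $\sigma$-compact, not merely $\Mhat$ itself. That hypothesis does hold here, since $(\Mhat,\dhat)$ is a locally compact, separable (hence second countable) metric space, so every open subset is again locally compact and Lindel\"of, hence $\sigma$-compact — but you should state this rather than just "$\sigma$-compact." What the paper's longer construction buys is self-containedness and an explicit description of the approximating sets; what your route buys is brevity and a clean separation between the topological input (Lemma \ref{MhatLocallyCompactSeparable}) and the measure-theoretic conclusion.
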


\begin{proof}

(a) Let $K = \pi(S)$, and let $\{U_\alpha\}_{\alpha\in A}$ be an open cover of $K$. For each $\alpha$, $\pi^{-1}(U_\alpha)$ is open, so $\{\pi^{-1}(U_\alpha)\}_{\alpha \in A}$ is an open cover of $S$. By the compactness of $S$, there exists a finite $B \subseteq A$ such that
\begin{equation}\label{RadonCover}
    \bigcup_{\alpha \in B} \pi^{-1}(U_\alpha) \supseteq S.
\end{equation}
Fix $x \in K$. By the definition of $K$, there exists a $z \in W_x \cap S$. By \eqref{RadonCover}, there exists an $\alpha \in B$ such that $z \in \pi^{-1}(U_\alpha)$, so $x \in U_\alpha$. Therefore, $\{U_\alpha\}_{\alpha \in B}$ covers $K$.
Since every open cover of $K$ has a finite subcover, $K$ is compact.

(b) Let $S$ be a compact subset of $\Mhat$. Let $K = \{ x \in M : W_x \cap S \neq \emptyset \}$. By part (a), $K$ is compact.
Note that $S \subseteq \pi^{-1}(K)$.
Since $\mu$ is finite on compact sets, $\muhat(S) \leq \muhat(\pi^{-1}(K)) = \mu(K) < \infty$.

(c) Let $S$ be a Borel subset of $\Mhat$. We will construct a sequence $\{K_n\}$ of compact subsets of $S$, and a sequence $\{U_n\}$ of open supersets of $S$, such that $\lim_{n\to\infty} \muhat(K_n) = \muhat(S) = \lim_{n\to\infty} \muhat(U_n)$.
Note that there can only be countably many elements of $M_A$ (otherwise $(M, d)$ could not be separable). Since $M_A$ is uncountable, there exists an $h : M_A \to (0, \infty)$ such that
\begin{equation} \label{RadonHSmall}
    \sum_{x \in M_A} h(x) \leq 1.
\end{equation}
There also exists an increasing sequence $\{I_n\}$ such that each $I_n$ is a finite subset of $M_A$, and $\bigcup_{n \in \mathbb{N}} I_n = M_A$. (This is sometimes called an \textit{exhaustion} of $M_A$.) Since $\muhat$ is a measure, it is continuous from below, so
\begin{equation}\label{RadonInContBelow}
    \lim_{n\to\infty} \muhat\left(S \cap \pi^{-1}(I_n)\right) = \muhat\left(S \cap \pi^{-1}(M_A)\right).
\end{equation}
For all $x \in M_A$, let $E_x := \{w \in W : (x, w) \in S\}$. Recall that $\nu$ is regular (since it is a Radon measure). Thus, we can choose a sequence $\{K^x_n\}$ of compact subsets of $W$, and a sequence $\{U^x_n\}$ of open subsets of $W$, such that $K^x_n \subseteq E_x \subseteq U^x_n$ for all $n$, and
\begin{equation} \label{RadonKxnUxn}
    \nu(E_x) - 2^{-n} \cdot \frac{h(x)}{\mu(x)} \leq \nu(K^x_n) \leq \nu(U^x_n) \leq \nu(E_x) + 2^{-n}\cdot \frac{h(x)}{\mu(x)} \qquad\mbox{for all $n$}.
\end{equation}
Let
\begin{equation*}
    K^A_n := \bigcup_{x \in I_n} \left\{ (x, w) : w \in K^x_n \right\} \qquad \mbox{and} \qquad U^A_n := K^A_n := \bigcup_{x \in M_A} \left\{ (x, w) : w \in U^x_n \right\}.
\end{equation*}
For all $n$,
\begin{align*}
    0 \leq \muhat\left(S \cap \pi^{-1}(I_n)\right) - \muhat(K^A_n) &= \sum_{x \in I_n} \Big[ \muhat \left( \{x\} \times E_x \right) - \left( \{x\} \times K^x_n \right) \Big]\\
    &= \sum_{x \in I_n} \mu(x) \left[ \nu(E_x) - \nu(K^x_n) \right]\\
    &\leq \sum_{x \in I_n} 2^{-n} h(x) &\mbox{(by \eqref{RadonKxnUxn})}\\
    &\leq 2^{-n} &\mbox{(by \eqref{RadonHSmall})}
\end{align*}
so
\begin{equation}\label{RadonLimitSameAsIn}
    \lim_{n\to\infty} \muhat(K^A_n) = \lim_{n\to\infty} \muhat\left(S \cap \pi^{-1}(I_n)\right).
\end{equation}
By \eqref{RadonInContBelow} and \eqref{RadonLimitSameAsIn},
\begin{equation}\label{RadonKAnLimit}
    \lim_{n\to\infty} \muhat(K^A_n) = \muhat\left(S \cap \pi^{-1}(M_A)\right).
\end{equation}
Similarly,
\begin{align*}
    0 \leq \muhat\left(U^A_n\right) - \muhat\left(S \cap \pi^{-1}(M_A)\right) &= \sum_{x \in M_A} \Big[ \muhat \left( \{x\} \times U^x_n \right) - \left( \{x\} \times E_x \right) \Big]\\
    &= \sum_{x \in M_A} \mu(x) \left[ \nu(U^x_n) - \nu(E_x) \right]\\
    &\leq \sum_{x \in M_A} 2^{-n} h(x) &\mbox{(by \eqref{RadonKxnUxn})}\\
    &\leq 2^{-n} &\mbox{(by \eqref{RadonHSmall})}
\end{align*}
so
\begin{equation}\label{RadonUAnLimit}
    \lim_{n\to\infty} \muhat(U^A_n) = \muhat\left(S \cap \pi^{-1}(M_A)\right).
\end{equation}
By the regularity of $\mu$, we can choose a sequence $\{K^C_n\}$ of compact subsets of $S \cap M_c$, and a sequence $\{U^C_n\}$ of open supersets of $S \cap M_C$, such that
\begin{equation} \label{RadonKCnUCn}
    \lim_{n\to\infty} \mu(K^C_n) = \mu(S \cap M_C) = \lim_{n\to\infty} \mu(U^C_n).
\end{equation}
For all $n$, let
\begin{equation*}
    K_n:= K^A_n \cup K^C_n \qquad\mbox{and}\qquad U_n := U^A_n \cup U^C_n.
\end{equation*}
By \eqref{RadonKAnLimit} and \eqref{RadonKCnUCn},
\begin{equation*}
    \lim_{n\to\infty} \muhat(K_n) = \lim_{n\to\infty} \muhat(K^A_n) + \lim_{n\to\infty} \muhat(K^C_n)
    = \muhat\left( S \cap \pi^{-1}(M_A) \right) + \muhat(S \cap M(C) = \muhat(S).
\end{equation*}
By \eqref{RadonUAnLimit} and \eqref{RadonKCnUCn},
\begin{equation*}
    \lim_{n\to\infty} \muhat(U_n) = \lim_{n\to\infty} \muhat(U^A_n) + \lim_{n\to\infty} \muhat(U^C_n)
    = \muhat\left( S \cap \pi^{-1}(M_A) \right) + \muhat(S \cap M(C) = \muhat(S).
\end{equation*}

(d) In order to show that $\muhat$ has full support, we will show that any non-empty open subset of $\Mhat$ has positive measure. Let $S$ be a non-empty open subset of $\Mhat$. Let $z$ be an element of $S$. If $z =(x, w)$ for some $x \in M_A$ and $w \in W$, then there must be an open $U \subseteq W$ such that $(x, w') \in S$ for all $w' \in U$. Therefore, $\muhat(S) \geq \mu(x) \nu(U) > 0$. On the other hand, if $z \in M_C$, choose an $r>0$ such that $\Bhat(z, r) \subseteq S$. since $r \geq 0 = D_z$, $\Bhat(z, r) = \pi^{-1}(B(z, r))$. Since $\mu$ has full support, $\muhat(S) \geq \Vhat(z, r) = V(z, r) >0$.
\end{proof}

Let us check that we have proven everything we needed to in this appendix. By Lemma \ref{MhatLocallyCompactSeparable}(b), $(\Mhat, \dhat)$ is a locally compact, separable metric space. By Lemma \ref{MuhatRadonFullSupport} parts (b) and (c), $\muhat$ is a positive Radon measure
on $\Mhat$. By Lemma \ref{MuhatRadonFullSupport} part (d), $\muhat$ has full support. Since $\mu(M)$ is infinite, so is $\muhat(\Mhat)$ by \eqref{MeasuresAgree}.

\subsection{\texorpdfstring{$\Dhat$}{TEXT} satisfies the conditions of Lemma \ref{constructRegularDirichletForms}} \label{DhatAppendix}

In this appendix, we prove Lemma \ref{DhatSatisfiesLemma}. In other words, we show that the set $\Dhat$ constructed in Definition \ref{DhatDef} has the following properties
\begin{itemize}
    \item $\Dhat$ is a subspace of $\Fhatmax \cap C_c(\Mhat)$.
    \item $\Dhat$ is Markovian (in the sense of \eqref{Markovian})
    \item $\Dhat$ is dense in $\Fhatmax \cap C_c(\Mhat)$ under the uniform norm.
\end{itemize}
(assuming all of the conditions that are necessary for the construction of $(\Mhat, \dhat, \muhat, \Ehat, \Fhat)$ to be well-defined). We prove each property separately.

\subsubsection*{$\Dhat$ is a subspace of $\Fhatmax \cap C_c(\Mhat)$}

\begin{proof}[Proof of Lemma \ref{DhatSatisfiesLemma} part (a)]
Recall that $\Dhat$ is defined as the linear span of $\left\{ g \circ \pi : g \in \mathcal{F} \cap C_c(M) \right\}$ and $\left\{ H_{x, h} : x \in M_A, h \in \Dtilde \right\}$, where $\pi : \Mhat\to M$ is the projection that maps every point in $W_x$ to $x$ for all $x$, and $H_{x, h}$ is as defined in \eqref{HxhFormula}. Thus, it is enough to show that $g \circ \pi \in \Fhat_\max \cap C_c(\Mhat)$ for all $g \in \mathcal{F} \cap C_c(M)$, and $H_{x, h} \in \Fhat_\max\cap C_c(\Mhat)$ for all $x \in M_A$, $h \in \Dtilde$.

Fix $g \in \mathcal{F}\cap C_c(M)$. It is clear that $g \circ \pi \in \Fmax$, since $\norm{g \circ \pi}_2 = \norm{g}_2 < \infty$ and $\Ehat(g\circ \pi) = \mathcal{E}(g) < \infty$. Furthermore, $g \circ \pi$ is compactly supported, since it is supported on $\pi^{-1}(\supp(g))$, which is compact by Lemma \ref{MhatLocallyCompactSeparable}(a). It remains to show $g \circ \pi$ is continuous. Note that $g \circ \pi$ is locally constant (and therefore continuous) on $\pi^{-1}(M_A)$. Fix $x \in M_C$ and $\varepsilon>0$. By the continuity of $g$, there exists a $\delta>0$ such that $|g(y)-g(x)| < \varepsilon$ for all $y \in B(x, \delta)$. By \eqref{BhatFormula}, $\Bhat(x, \delta) = \pi^{-1}(B(x, \delta))$, so
\begin{equation*}
    |(g \circ \pi)(z) - (g \circ \pi)(x)| = |g(\pi(z))-g(x)| < \varepsilon \qquad\mbox{for all $z \in \Bhat(x, \delta)$}.
\end{equation*}
Therefore, $g \circ \pi$ is continuous on $M_C$.

Fix $x \in M_A$ and $h \in \Dtilde$. We will show $H_{x,h} \in \Fmax\cap C_c(\Mhat)$. Firstly, $H_{x, h} \in \Fmax$, since $\norm{H_{x, h}}_2^2 = \mu(x) \norm{h}_2^2 < \infty$ and 
\begin{dmath*}
    \Ehat(H_{x, h}) = \int_{\Mhat \times \Mhat \setminus \diag_{\Mhat}} (H_{x, h}(z)-H_{x,h}(z'))^2 \Jhat(dz, dz')
    = 2 \int_{z \in W_x} \int_{\Mhat \setminus W_x} (H_{x,h}(z))^2 \Jhat(z, z') \muhat(dz') \muhat(dz)\\ + \int_{W_x \times W_x \setminus \diag_{W_x}} (H_{x, h}(z)-H_{x, h}(z'))^2 \Jhat(z, z') \muhat(dz') \muhat(dz)
    = 2 \mu(x) v(x) \int_{W} h^2 d\nu + \mu(x)^2 \int_{W\times W \setminus \diag_W} (h(w)-h(w'))^2 \cdot \frac{\Jtilde^{D_x}(w, w')}{\mu(x)} \nu(dw') \nu(dw)
    = {2\mu(x) v(x) \norm{h}_2^2 + \mu(x) \Etilde^{D_x}(h) < \infty}
\end{dmath*}
(where $v(x)$ is as defined in \eqref{v(x)Formula} and $\diag_{W_x} := \{ (z, z) : z \in W_x\}$).
Next, $H_{x, h}$ is compactly supported, because it is supported on $W_x$. Lastly, $H_{x, h}$ is continuous because $W_x$ is its own connected component of $\Mhat$ and $h$ is continuous.
\end{proof}

\subsubsection*{$\Dhat$ is Markovian}

\begin{proof}[Proof of Lemma \ref{DhatSatisfiesLemma}(b)]

Let us use the notation $u^* := 0 \vee (u \wedge 1)$, just like in \eqref{Markovian}).

Fix $f = g \circ \pi + \sum_{j=1}^N H_{x_j, h_j} \in \Dhat$. We will show that $f^* \in \Dhat$. 
Let $g' : M \to \mathbb{R}$ be the function
\begin{equation*}
    g'(x) = \left\{ \begin{matrix}
        0 &:& \mbox{if $x \in \{x_1, \dots, x_N\}$}\\
        g(x) &:& \mbox{if $x \notin \{x_1, \dots, x_N\}$}.
    \end{matrix}\right.
\end{equation*}
We can also write $g'$ as
\begin{equation}\label{MarkovProp4Gprime}
    g' = g - \sum_{j=1}^N g(x_j) \delta_{x_j}.
\end{equation}
For all $1 \leq j \leq N$, let $h'_J : W\to\mathbb{R}$ be the function
\begin{equation*}
    h'_j(w) = h_j(w) + g(x_j).
\end{equation*}
If $z = (x_j, w) \in W_{x_j}$ for some $1 \leq j \leq N$, then $f(z) = g(x_j) + h_j(w) = h'_j(w)$. If $z \in \Mhat \setminus \bigcup_{j=1}^N W_{x_j}$, then $f(z) = g(\pi(z)) = g'(\pi(z))$. Therefore, another way to express $f$ is
\begin{equation}\label{AlternateFinMarkovianProp}
    f = (g' \circ \pi) + \sum_{j=1}^N H_{x_j, h'_j}.
\end{equation}
If $1 \leq j \leq N$, for all $z = (x_j, w) \in W_{x_j}$, by \eqref{AlternateFinMarkovianProp},
\begin{equation}\label{MarkovProp4h}
    f^*(z) = 0 \vee (f(z) \wedge 1) = 0 \vee (h'_j(w) \wedge 1) = (h'_j)^*(w).
\end{equation}
For all $z \in \Mhat \setminus \bigcup_{j=1}^N W_{x_j}$, by \eqref{AlternateFinMarkovianProp},
\begin{equation}\label{MarkovProp4g}
    f^*(z) = 0 \vee (f(z) \wedge 1) = 0 \vee (g'(\pi(z)) \wedge 1) = (g')^*(\pi(z)).
\end{equation}
By \eqref{MarkovProp4h} and \eqref{MarkovProp4g},
\begin{equation}\label{MarjovProp4Fstar}
    f^* = ((g')^* \circ \pi) + \sum_{j=1}^N H_{x_j, (h'_j)^*}.
\end{equation}

We know by Lemma \ref{JumpsAndAtomsPreliminaryFactsLemma}(a) that $\delta_{x_j} \in \mathcal{F}$ for all $j$. Therefore, by \eqref{MarkovProp4Gprime}, $g' \in \mathcal{F}$. Since $\supp(g') \subseteq \supp(g)$, $g'$ is compactly supported. Since $\mathcal{F} \cap C_c(M)$ is Markovian, $(g')^* \in \mathcal{F}\cap C_c(M)$.
For all $1 \leq j \leq N$, since $h_j \in \Dtilde$ and $h'_j$ differs from $h_j$ by a constant, $h'_j \in \Dtilde$. Since $\Dtilde$ is Markovian, $(h'_j)^* \in \Dtilde$.
By \eqref{MarjovProp4Fstar}, $f^* \in \Dhat$.

Since $|f^*(z_1)-f^*(z_2)| \leq |f(z_1)-f(z_2)|$ for all $z_1$ and $z_2$, we also have $\Ehat(f^*) \leq \Ehat(f)$.
\end{proof}

\subsubsection*{$\Dhat$ is dense in $\Fhatmax \cap C_c(\Mhat)$ under the uniform norm}

\begin{proof}[Proof of Lemma \ref{DhatSatisfiesLemma}(c)]

Fix $f \in C_c(\Mhat)$ and $\varepsilon>0$. We will construct an $f' \in \Dhat$ such that $\norm{f'-f}_\infty < \varepsilon$.

Let $S = \supp(f)$, which is compact.
For all $y \in M_C$, by the continuity of $f$, there exists a $\delta_y > 0$ such that
\begin{equation}\label{DhatUnifDenseBydy}
    |f(z)-f(y)| < \frac{\varepsilon}{4} \qquad\mbox{for all $z \in B(y, \delta_y)$}.
\end{equation}
The collection
\begin{equation*}
    \{W_x : x \in M_A \} \cup \{B(y, \delta_y) : y \in M_C \}
\end{equation*}
is an open cover of $S$. By compactness, it has a finite subcover. In other words, there exist finite $\{x_j\}_{j=1}^n \subseteq M_A$ and $\{y_k\}_{k=1}^m \subseteq M_C$ such that
\begin{equation}\label{DhatUnifDenseCover}
    S \subseteq \bigcup_{j=1}^n W_{x_j} \cup \bigcup_{k=1}^m B(y_k, \delta_{y_k}).
\end{equation}
The main idea that drives this proof is that for all $x \in M \setminus \{x_1, \dots, x_n\}$, $f$ is approximately constant on $W_x$. Fix $x \in M \setminus \{x_1, \dots, x_n\}$ and $z_1, z_2 \in W_x$. By \eqref{DhatUnifDenseCover}, since $z_1$ does not belong to $W_{x_j}$ for any $1 \leq j \leq n$, $z_1$ must belong to $B(y, \delta_y)$ for some $y \in M_C$. Since $\dhat(z_2, y) = d(x, y) = \dhat(z_1, y)$, $z_2$ also belongs to $B(y, \delta_y)$. By the triangle inequality,
\begin{align}\label{DhatUnidDenseFapproxConst}
\begin{split}
    |f(z_1) - f(z_2)| &\leq |f(z_1) - f(y)| + |f(y) - f(z_2)|\\
    &\leq \frac{\varepsilon}{4} + \frac{\varepsilon}{4} \qquad\mbox{(by \eqref{DhatUnifDenseBydy})}\\
    &= \frac{\varepsilon}{2}.
\end{split}
\end{align}

For all $x \in M$, let $z_x$ be a representative of $W_x$. (If $x \in M_C$, then $z_x$ must of course be $x$.) Let $g: M \to \mathbb{R}$ be the function
\begin{equation}\label{DhatUnifDenseG}
    g(x) = f(z_x).
\end{equation}
If $\{x_n\}$ is a sequence in $M$ converging to $x \in M$, then $z_{x_n} \to z_x$ and $g(x_n) = f(z_{x_n}) \to f(z_x) = g(x)$ by the continuity of $f$. Thus, $g$ is continuous. Moreover, $g$ is supported on $\pi(S)$, which is compact by Lemma \ref{MuhatRadonFullSupport}(a), so $g \in C_c(M)$. Recall that $\mathcal{F} \cap C_c(M)$ is a core of $(\mathcal{E}, \mathcal{F})$, so $\mathcal{F} \cap C_c(M)$ is dense in $C_c(M)$ under the uniform norm. Thus, there exists a $g' \in \mathcal{F} \cap C_c(M)$ such that
\begin{equation}\label{DhatUnifDenseG'gClose}
    \norm{g'-g}_\infty < \frac{\varepsilon}{2}.
\end{equation}
For all $x \in M_A$, let $h_x :W \to \mathbb{R}$ be the function
\begin{equation}\label{DhatUnifDenseHx}
    h_x(w) = f(x, w) - f(z_x).
\end{equation}
Since $f$ is continuous on $W_x$, $h_j$ is continuous. Since $W$ is compact, $h_j \in C_c(W)$. Thus, there exists an $h'_j \in \Dtilde$ such that
\begin{equation}\label{DhatUnifDenseH'hClose}
    \norm{h'_j - h_j}_\infty < \frac{\varepsilon}{2}.
\end{equation}
By \eqref{DhatUnifDenseG} and \eqref{DhatUnifDenseHx},
\begin{equation*}
    f = (g \circ \pi) + \sum_{x \in M_A} H_{x_j, h_{x_j}}.
\end{equation*}

Let
\begin{equation*}
    f' := (g' \circ \pi) + \sum_{j=1}^n H_{x_j, h'_{x_j}}.
\end{equation*}
If $z = (x_j, w) \in W_{x_j}$ for some $1 \leq j \leq n$,
\begin{align}\label{DhatUnifDenseWxj}
\begin{split}
    |f'(x)-f(x)| &= |g'(x_j) + h'_{x_j}(w) - g(x_j) - h_{x_j}(w)|\\
    &\leq |g'(x_j)-g(x_j)| + |h'_{x_j}(w) - h_{x_j}(w)|\\
    &< \frac{\varepsilon}{2} + \frac{\varepsilon}{2} \qquad\mbox{(by \eqref{DhatUnifDenseG'gClose} and \eqref{DhatUnifDenseH'hClose})}\\
    &= \varepsilon.
\end{split}
\end{align}
If $z = (x, w) \in W_x$ for some $x \in M_A \setminus \{x_1, \dots, x_n\}$, then
\begin{align}\label{DhatUnifDenseWx}
\begin{split}
    |f'(z)-f(z)| &= |g'(x) - g(x) - h_x(w)|\\
    &= |g'(x) - g(x) - f(z) + f(z_x)| \qquad\mbox{(by \eqref{DhatUnifDenseHx})}\\
    &\leq |g'(x)-g(x)| + |f(z_x)-f(z)|\\
    &< \frac{\varepsilon}{2} + \frac{\varepsilon}{2} \qquad\mbox{(by \eqref{DhatUnifDenseG'gClose} and \eqref{DhatUnidDenseFapproxConst})}\\
    &= \varepsilon.
\end{split}
\end{align}
If $z \in M_C$, then by \eqref{DhatUnifDenseG'gClose},
\begin{equation}\label{DhatUnifDenseMc}
    |f'(z)-f(z)| = |g'(z)-g(z)| < \frac{\varepsilon}{2}.
\end{equation}
By \eqref{DhatUnifDenseWxj}, \eqref{DhatUnifDenseWx}, and \eqref{DhatUnifDenseMc}, $\norm{f'-f}_\infty < \varepsilon$.
\end{proof}

\subsection{\texorpdfstring{Jump kernels: proof of \eqref{JleqBothways}-\eqref{UjsBothways}}{TEXT}} \label{JumpKernelAppendix}

The fact that $\Jleq$ holds for the original space iff $\Jleq$ holds for the auxiliary space (and similarly, $\Jgeq$ holds for the original space iff $\Jgeq$ holds for the auxiliary space) is a direct consequence of \eqref{JhatFormula}, the formula that defined the jump kernel $\Jhat$ of $(\Mhat, \dhat, \muhat, \Ehat, \Fhat)$.

\begin{proof}[Proof of Proposition \ref{MasterProp}, implications \eqref{JleqBothways} and \eqref{JgeqBothways}]
Let
\begin{align*}
    c &= \essinf \left\{ J(x, y) V(x, d(x, y)) \phi(d(x, y)) : \mbox{$x, y \in M$ and $x\neq y$} \right\},\\
    C &= \esssup \left\{ J(x, y) V(x, d(x, y)) \phi(d(x, y)) : \mbox{$x, y \in M$ and $x\neq y$} \right\},\\
    \widehat{c} &= \essinf \left\{ \Jhat(z, z') \Vhat(z, \dhat(z, z')) \phi(\dhat(z, z')) : \mbox{$z, z' \in \Mhat$ and $z \neq z'$} \right\},\\
    \mbox{and} \qquad \widehat{C} &= \esssup \left\{ \Jhat(z, z') \Vhat(z, \dhat(z, z')) \phi(\dhat(z, z')) : \mbox{$z, z' \in \Mhat$ and $z \neq z'$} \right\}.
\end{align*}
By \eqref{JhatFormula}, for all distinct $z, z' \in \Mhat$,
\begin{equation*}
    \Jhat(z, z') \Vhat(z, \dhat(z, z')) \phi(\dhat(z, z'))  = \left\{ \begin{matrix}
        J(x, y) V(x, d(x, y)) \phi(d(x, y)) &:& \mbox{if $x = \pi(z) \neq \pi(z') = y$}\\
        \\
        1 &:& \mbox{if $\pi(z) = \pi(z')$}.
    \end{matrix}\right.
\end{equation*}
Thus,
\begin{equation*}
    \widehat{c}= c \wedge 1 \qquad\mbox{and}\qquad \widehat{C}= C \vee 1
\end{equation*}
so
\begin{equation*}
    \boxed{\mbox{$\Jleq$ for $(M, d, \mu, \mathcal{E}, \mathcal{F})$}} \quad\Longleftrightarrow\quad C<\infty \quad\Longleftrightarrow\quad \widehat{C}=C \vee 1 < \infty \quad\Longleftrightarrow\quad \boxed{\mbox{$\Jleq$ for $(\Mhat, \dhat, \muhat, \Ehat, \Fhat)$}}
\end{equation*}
and

\begin{equation*}
    \boxed{\mbox{$\Jgeq$ for $(M, d, \mu, \mathcal{E}, \mathcal{F})$}} \quad\Longleftrightarrow\quad c>0 \quad\Longleftrightarrow\quad \widehat{c}=c \wedge 1 > 0 \quad\Longleftrightarrow\quad \boxed{\mbox{$\Jgeq$ for $(\Mhat, \dhat, \muhat, \Ehat, \Fhat)$}}.
\end{equation*}
\end{proof}

We divide the proof of \eqref{UjsBothways} into two (one part for each direction). To show that $\UJS$ for the original space implies $\UJS$ for the auxiliary space, we must show that for almost all $z_0, z_1 \in \Mhat$, the value of $\Jhat(z_0, z_1)$ is at most some constant times $\int_{\Bhat(z_0, r)} \Jhat(z_0, z_1) \, \muhat(dz)$, for all $r \in \left(0, \frac{\dhat(z, z')}{2} \right]$. If $z_0$ and $z_1$ belong to $W_{x_0}$ for the same $x_0$, we show this directly. Otherwise, we use some of the equations we have established (\eqref{integralsAgree}, \eqref{BhatFormula}, \eqref{WxUltrametric}, \eqref{JhatFormula}) to express the quantities $\Jhat(z_0, z_1)$ and $\int_{\Bhat(z_0, r)} \Jhat(z_0, z_1) \, \muhat(dz)$ in terms of the original space, and use the hypothesis that $\UJS$ holds for $(M, d, \mu, \mathcal{E}, \mathcal{F})$. 

\begin{proof}[Proof of Proposition \ref{MasterProp}, implication \eqref{UjsBothways} ($\Longrightarrow$ direction)]

Suppose $\UJS$ holds for $(M, d, \mu, \mathcal{E}, \mathcal{F})$. There exists a $C>0$ and a set $E \subseteq M \times M \setminus \diag_M$ of full measure such that for all $(x_0, x_1) \in E$,
\begin{equation} \label{assumeUjs}
    J(x_0, x_1) \leq \frac{C}{V(x_0, r)} \int_{B(x_0, r)} J(x, x_1) \mu(dx) \qquad\mbox{for all $r \in \left( 0, \frac{d(x_0, x_1)}{2} \right]$}.
\end{equation}
By $\UJS$, $(\mathcal{E}, \mathcal{F})$ admits a jump kernel, so the auxiliary space $(\Mhat, \dhat, \muhat, \Ehat, \Fhat)$ is well-defined.
Let
\begin{equation*}
    \widehat{E} = \{(z_0, z_1) \in \Mhat \times \Mhat : (\pi(z_0), \pi(z_1)) \in E\}.
\end{equation*}

Fix $(z_0, z_1) \in \widehat{E}$. Let $x_0=\pi(z_0)$ and $x_1 = \pi(z_1)$. Note that $x_0 \neq x_1$, since $(x_0, x_1) \in E \subseteq M\times M\setminus\diag_M$. Thus,
\begin{equation} \label{JsEqualUjs}
    \Jhat(z_0, z_1) = J(x_0, x_1).
\end{equation}
Fix $r$ such that $0 < r \leq \dhat(z_0, z_1)/2 = d(x_0, x_1)/2$. Suppose $r \geq D_{x_0}$. Note that if $z \in \Bhat(z_0, r)$, then $z \notin W_{x_1}$ (because $\dhat(z, z_0)$ is less than $r<d(x_1, x_0) = d(z_1, z_0)$), and thus $\Jhat(z, z_1) = J(\pi(z), x_1)$. This gives us
\begin{dmath} \label{UjsForwardBigr}
    J(x_0, x_1) \leq \frac{C}{V(x_0, r)} \int_{B(x_0, r)} J(x, x_1) \mu(dx)\qquad\mbox{(by \eqref{assumeUjs})}
    = \frac{C}{\Vhat(z_0, r)} \int_{\Bhat(z_0, r)} \Jhat(z, z_1) \muhat(dz) \qquad\mbox{(by \eqref{BhatFormula}, \eqref{JhatFormula}, and \eqref{integralsAgree})}.
\end{dmath}
On the other hand, if $r < D_{x_0}$, then $\Bhat(z_0, r) \subseteq W_{x_0}$, so $\Jhat(z, z_1) = J(x_0, x_1)$ for all $z \in \Bhat(z_0, r)$. This means that
\begin{equation} \label{UjsForwardSmallR}
    \frac{1}{\Vhat(z_0, r)} \int_{\Bhat(z_0, r)} \Jhat(z, z_1) \muhat(dz) = J(x_0, x_1).
\end{equation}
By \eqref{JsEqualUjs}, \eqref{UjsForwardBigr}, and \eqref{UjsForwardSmallR},
\begin{equation} \label{UjsOnEhat}
    \Jhat(z_0, z_1) = J(x_0, x_1) \leq \frac{C \vee 1}{\Vhat(z_0, r)} \int_{\Bhat(z_0, r)} \Jhat(z, z_1) \muhat(dz) \qquad\mbox{for all $(z_0, z_1) \in \widehat{E}$ and $0 < r \leq \frac{\dhat(z_0, z_1)}{2}$}.
\end{equation}
Now let $z_0$ and $z_1$ be distinct elements of $W_{x}$ for some $x \in M_A$, and fix $r$ such that $0 < r \leq \dhat(z, z')$. By \eqref{WxUltrametric}, the restriction of $\dhat$ to $W_{x_0}$ is ultrametric, so
\begin{equation} \label{UjsDhatzDhatz0}
    \dhat(z, z_1) \leq \max \left\{ \dhat(z, z_0), \dhat(z_0, z_1) \right\} = \dhat(z_0, z_1)\qquad\mbox{for all $z \in \Bhat(z_0, r)$}.
\end{equation}
By \eqref{JhatFormula}, the restriction of $z \mapsto \Jhat(z, z_1)$ to $W_x$ is a decreasing function of $\dhat(z, z_1)$, so by \eqref{UjsDhatzDhatz0},
\begin{equation*}
    \Jhat(z_0, z_1) \leq \Jhat(z, z_1) \qquad\mbox{for all $z \in \Bhat(z_0, r)$}.
\end{equation*}
Thus,
\begin{equation} \label{UjsOnDiaghat}
    \Jhat(z_0, z_1) \leq \frac{1}{\Vhat(z_0, r)} \int_{\Bhat(z_0, r)} \Jhat(z, z_1) \muhat(dz) \qquad\mbox{whenever $x \in M_A$ and $z_0, z_1 \in W_{x_0}$ are distinct}.
\end{equation}
Note that
\begin{equation} \label{setForUjsForward}
    \widehat{E} \cup \bigcup_{x \in M_A} \left\{ (z_0, z_1) : \mbox{$z_0, z_1 \in W_x$ and $z_0 \neq z_1$} \right\}
\end{equation}
is a subset of $\Mhat \times \Mhat \setminus \{ (z, z) : z \in \Mhat \}$ with full measure. By \eqref{UjsOnEhat} and \eqref{UjsOnDiaghat},
\begin{equation*}
    \Jhat(z_0, z_1)  \leq \frac{C \vee 1}{\Vhat(z_0, r)} \int_{\Bhat(z_0, r)} \Jhat(z, z_1) \muhat(dz) \qquad\mbox{for all $(z_0, z_1)$ in the set described in \eqref{setForUjsForward}}.
\end{equation*}
Thus, $\UJS$ holds for $(\Mhat, \dhat, \muhat, \Ehat, \Fhat)$.
\end{proof}

Conversely, in order to show that $\UJS$ for the auxiliary space implies $\UJS$ for the original space, we employ the same strategy: to show that $J(x_0, x_1)$ is at most some constant times $\int_{B(x_0, r)} J(x, x_1) \, \mu(dx)$, we use \eqref{VhatFormula} and \eqref{JhatFormula} to express $J(x_0, x_1)$ and $\int_{B(x_0, r)} J(x, x_1) \, \mu(dx)$ in terms of the auxiliary space, and then use the hypothesis that $\UJS$ holds for $(\Mhat, \dhat, \muhat, \Ehat, \Fhat)$.

\begin{proof}[Proof of Proposition \ref{MasterProp}, implication \eqref{UjsBothways} ($\Longleftarrow$ direction)]

If $\UJS$ holds for the auxiliary space, there exists an $S \subseteq \Mhat\times\Mhat\setminus\{(z, z) : z \in \Mhat\}$ of full measure such that
\begin{equation} \label{assumeUjsBackwards}
    \Jhat(z_0, z_1) \leq \frac{C}{\Vhat(z_0, r)} \Jhat(z, z_1) \muhat(dz) \qquad\mbox{for all $(z_0, z_1) \in S$}.
\end{equation}
Let
\begin{equation*}
    E = \left\{ (x_0, x_1) \in M \times M \setminus\diag_M: (W_{x_0} \times W_{x_1}) \cap S \neq \emptyset \right\}.
\end{equation*}
Note that $E$ has full measure.

Fix $(x_0, x_1) \in E$ and $r$ such that $0 < r \leq d(x_0, x_1)/2$. Let $(z_0, z_1)$ be a representative of $(W_{x_0} \times W_{x_1}) \cap S$.
If $r \geq D_{x_0}$,
\begin{align*}
    J(x_0, x_1) &= \Jhat(z_0, z_1)\\
    & \leq \frac{C}{\Vhat(z_0, r)} \int_{\Bhat(z_0, r)} \Jhat(z, z_1) \muhat(dz) &\mbox{(by \eqref{assumeUjsBackwards})}\\
    &= \frac{C}{V(x_0, r)} \int_{B(x_0, r)} J(x, x_1) \mu(dx). &\quad\mbox{(by \eqref{VhatFormula} and \eqref{JhatFormula})}.
\end{align*}
If $r \leq D_{x_0}$, then $B(x_0, r) = \{x_0\}$, so
\begin{equation*}
    \frac{1}{V(x_0, r)} \int_{B(x_0, r)} J(x, x_1) \mu(dx) = \frac{1}{\mu(x_0)} \cdot \mu(x_0) J(x_0, x_1)=J(x_0, x_1).
\end{equation*}
In either case,
\begin{equation*}
    J(x_0, x_1) \leq \frac{C \vee 1}{V(x_0, r)} \int_{B(x_0, r)} J(x, x_1) \mu(dx) \qquad\mbox{for all $(x_0, x_1)\in E$}
\end{equation*}
so $\UJS$ holds for the original space.
\end{proof}

\subsection{Parabolic H\"{o}lder regularity: proof of \eqref{PhrBackward}} \label{PhrAppendix}

Here we show that if $\PHR$ holds for the auxiliary space, then $\PHR$ holds for the original space.

\begin{proof}[Proof of Proposition \ref{MasterProp}, implication \eqref{PhrBackward}]
Suppose $\PHR$ holds for $(\Mhat, \dhat, \muhat, \Ehat, \Fhat)$, and let $c$, $\theta$, and $\varepsilon$ be the constants witnessing this.
Fix $x_0 \in M$, $t_0 \geq 0$, and $r>0$. Let $u(t, x)$ be a bounded measurable function that is caloric on $Q(t_0, x_0, \phi(r), r) = (t_0, t_0+\phi(r)) \times B(x_0, r)$.
We would like to show that there is a properly exceptional set $\mathcal{N}_u \supseteq \mathcal{N}$ such that \eqref{PhrFormula} holds for all $s, t \in (t_0, t_0+\phi(r))$ and $x, y \in B(x_0, \varepsilon r) \setminus \mathcal{N}_u$.

Let $\widehat{u}(t, z) := u(t, \pi(z))$ for all $t \geq 0$ and $z \in \Mhat$. Let $z_0$ be a representative of $W_{x_0}$. It is clear from \eqref{couple} and the definition of caloric that $\widehat{u}$ is caloric on
\begin{equation*}
    \widehat{Q}(t_0, z_0, \phi(r), r) := (t_0, t_0+\phi(r)) \times \Bhat(z_0, r).
\end{equation*}
(In fact, $\widehat{u}$ is caloric on $(t_0, t_0+\phi(r)) \times \Bhat(z_0, \max\{r, D_x\}) \supseteq (t_0, t_0+\phi(r)) \times \Bhat(z_0, r)$.)
Since $\PHR$ holds on the auxiliary space, there exists a properly exception $\widehat{\mathcal{N}}_{\widehat{u}} \supseteq \widehat{\mathcal{N}}$ such that
\begin{equation} \label{PhrOnAuxiliary}
    |\widehat{u}(s, z) - \widehat{u}(t, z')| \leq c \left( \frac{\phi^{-1}(|s-t|) + \dhat(z, z')}{r} \right)^\theta \esssup_{[t_0, t_0+\phi(r)] \times \Mhat} |\widehat{u}|
\end{equation}
for all $s, t \in (t_0, t_0+\phi(r))$ and $z, z' \in \Bhat(z_0, \varepsilon r) \setminus \widehat{\mathcal{N}}_{\widehat{u}}$.

Let $\mathcal{N}_u := \{ x \in M : W_x \subseteq \widehat{N}_{\widehat{u}} \}$. Now fix $s, t \in (t_0, t_0+\phi(r))$ and $x, y \in B(x_0, \varepsilon r) \setminus \mathcal{N}_u$. Let $z_1$ be a representative of $W_x \cap \Bhat(z_0, \varepsilon r) \setminus \widehat{\mathcal{N}}_{\widehat{u}}$. (One can check that such a representative exists by considering the cases:
\begin{itemize}
    \item $x = x_0 \in M_A$.
    \item $x = x_0 \in M_C\setminus \mathcal{N}_u$.
    \item $x \neq x_0$.
\end{itemize}
In the first case, $W_x \cap \Bhat(z_0, r)$ has positive measure, so it must have an element that does not belong to $\widehat{\mathcal{N}}_{\widehat{u}}$.
In the second and third cases, we can let $z_1$ be any element of $W_x \setminus \widehat{\mathcal{N}}_{\widehat{u}}$, which is non-empty since $x \in \mathcal{N}_u$, and then we will have $\dhat(z_1, z_0) = d(x, y) < \varepsilon r$.)

Similarly, let $z_2$ be a representative of $W_y \cap \Bhat(z_0, \varepsilon r) \setminus \widehat{\mathcal{N}}_{\widehat{u}}$.
These representatives $z_1$ and $z_2$ can be chosen such that $z_1=z_2$ if $x=y$. This way, $\dhat(z_1, z_2) = d(x, y)$.
Then
\begin{align*}
    |u(s, x) - u(t, y)| &= |\widehat{u}(s, z_1) - \widehat{u}(t, z_2)|\\
    &\leq c \left( \frac{\phi^{-1}(|s-t|) + \dhat(z_1, z_2)}{r} \right)^\theta \esssup_{[t_0, t_0+\phi(r)] \times \Mhat} |\widehat{u}| \qquad\mbox{(by \eqref{PhrOnAuxiliary})}\\
    &= c \left( \frac{\phi^{-1}(|s-t|) + d(x, y)}{r} \right)^\theta \esssup_{[t_0, t_0+\phi(r)] \times M} |u|.
\end{align*}
\end{proof}

\subsection{Cut-off Sobolev for auxiliary space implies cut-off Sobolev for original space} \label{CsjBackwardsAppendix}

Here we prove Proposition \ref{CsjBackwardsProp}: if $(M, d, \mu, \mathcal{E}, \mathcal{F})$ satisfies $\VD$ and $\Jleq$, then $\CSJ$ for the auxiliary space implies $\CSJ$ for the original space.
Let $C_D$, $C_J$, and $C_{\mathcal{J}}$ be as in Section \ref{cutoffSobolevSection} (see equations \eqref{FirstVdJphi}-\eqref{CmathcaljforCSJ}).

Assume $\CSJ$ holds on the auxiliary space, and let $C_0$, $C_1$, and $C_2$ be the constants witnessing this.

Fix $x_0 \in M$, $R \geq r>0$, and $f \in \mathcal{F}$. Let $B_1$, $B_2$, $B_3$, $U$, and $U^*$ be as in \eqref{B1B2B3}. We will like to show, using the assumption that $\CSJ$ holds for the auxiliary space, that there exists a $\varphi \in \cutoff(B_1, B_2)$ such that
\begin{equation} \label{CsjOriginalGoalAppendix}
    \int_{B_3} f^2 \, d\Gamma(\varphi) \leq C_1 \int_{U \times U^*} (f(x)-f(y))^2 \, J(dx, dy) + \frac{C_3}{\phi(r)} \int_{B_3} f^2 \, d\mu
\end{equation}
where $C_3$ is a constant that depends on $C_D$, $C_J$, $C_{\mathcal{J}}$, $C_0$, $C_1$, and $C_2$ (but not on $z_0$, $R$, $r$, or $f$) and $C_1$ is the familiar constant from $\CSJ$ on the auxiliary space.

Our proof is very similar to what we did in Section \ref{cutoffSobolevSection}. We let $z_0$ be a representative from $W_{x_0}$, and let $S_1$, $S_2$, $S_3$, $V$, and $V^*$ be as in \eqref{S1S2S3}.
For $R \leq D_{x_0}$, we prove in a simple lemma (Lemma \ref{CsjBackwardsSmallRLemma}) 
that $\int_{S_3} f^2 \, d\Gamma(\delta_{x_0})$ is at most some constant times $\frac{1}{\phi(r)}\int_{B_3} f^2 \, d\mu$, so we can take $\varphi$ to be the indicator $\delta_{x_0}$.
For $R \geq D_{x_0}$, we prove a lemma (Lemma \ref{CsjBackwardsBigRLemma}) that compares every term in \eqref{CsjOriginalGoalAppendix} to an analogous term in the auxiliary space. We find that every term compares in the direction we need it to, so putting these lemmas together (and using the hypothesis that $\CSJ$ holds for the auxiliary space), we obtain \eqref{CsjOriginalGoalAppendix}.

\begin{lemma} \label{CsjBackwardsSmallRLemma}

Suppose $(M, d, \mu, \mathcal{E}, \mathcal{F})$ satisfies Assumption \ref{BasicAssumptions}, $\phi$ is of regular growth, $(\mathcal{E}, \mathcal{F})$ admits a jump kernel, and $M=M_A \cup M_C$.
Fix $x_0 \in M$, $R \geq r>0$, and $f \in \mathcal{F}$. Let $B_1$, $B_2$, $B_3$, $U$, and $U^*$ be as in \eqref{B1B2B3}.
If $\VD$ and $\Jleq$ hold on the original space, and $R \leq D_{x_0}$, then
\begin{equation*}
    \int_{B_3} f^2 \, d\Gamma(\delta_{x_0}) \leq \frac{C}{\phi(R)} \int_{B_3} f^2 \, d\mu
\end{equation*}
where $C$ is a constant that depends only on $C_D$, $C_J$, and $C_{\mathcal{J}}$.

\end{lemma}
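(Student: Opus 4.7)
The plan is to observe that since $R > 0$ and $R \leq D_{x_0}$, we must have $D_{x_0} > 0$, so $x_0 \in M_A$ by the definition of $M_A$. Hence $\delta_{x_0} \in \mathcal{F}$ by Lemma \ref{JumpsAndAtomsPreliminaryFactsLemma}(a), and moreover $B_1 = B(x_0, R) = \{x_0\}$. So the proof directly parallels the proof of Lemma \ref{CsjForwardsSmallR}, but in the original space, using $\delta_{x_0} = 1_{\{x_0\}}$ in place of the indicator $1_{S_1}$.

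First I would apply \eqref{energyIntegralIndicator} with $A = \{x_0\}$ and $C = B_3$, which splits $\int_{B_3} f^2 \, d\Gamma(\delta_{x_0})$ into two terms: one integrating over $\{x_0\}$ (whose inner integral is $\mu(x_0) v(x_0)$), and one integrating over $B_3 \setminus \{x_0\}$ (whose inner integral is $J(x, x_0) \mu(x_0)$). For the first term, I invoke \eqref{bowtie1} together with $\phi(D_{x_0}) \geq \phi(R)$ (since $R \leq D_{x_0}$ and $\phi$ is increasing) to conclude that $\mu(x_0) v(x_0) \leq C_{\mathcal{J}}/\phi(R)$.

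The main (and only slightly tricky) step will be the second term: I need to show that $J(x, x_0) \mu(x_0) \leq C'/\phi(R)$ for all $x \in B_3 \setminus \{x_0\}$, where $C'$ depends only on $C_J$ and $C_D$. The analog in Lemma \ref{CsjForwardsSmallR} was immediate because $\muhat(S_1) = \Vhat(z_0, R)$ was the natural volume appearing in the $\Jleq$ bound, but here $\mu(x_0)$ is a priori unrelated to $V(x, d(x, x_0))$. Since $d(x, x_0) \geq D_{x_0} \geq R$, we have $B(x_0, R) \subseteq B(x, 2 d(x, x_0))$, so by $\VD$:
\begin{equation*}
    \mu(x_0) \,\leq\, V(x_0, R) \,\leq\, V(x, 2 d(x, x_0)) \,\leq\, C_D \, V(x, d(x, x_0)).
\end{equation*}
Combining this with $\Jleq$ and $\phi(d(x, x_0)) \geq \phi(R)$ yields
\begin{equation*}
    J(x, x_0) \mu(x_0) \,\leq\, \frac{C_J \, C_D}{\phi(R)}.
\end{equation*}

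Assembling the two bounds gives
\begin{equation*}
    \int_{B_3} f^2 \, d\Gamma(\delta_{x_0}) \,\leq\, \frac{C_{\mathcal{J}}}{\phi(R)} \, f^2(x_0) \mu(x_0) + \frac{C_J C_D}{\phi(R)} \int_{B_3 \setminus \{x_0\}} f^2 \, d\mu \,\leq\, \frac{\max\{C_{\mathcal{J}}, C_J C_D\}}{\phi(R)} \int_{B_3} f^2 \, d\mu,
\end{equation*}
as required.
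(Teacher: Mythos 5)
Your proof is correct and follows essentially the same route as the paper's: the same decomposition of $\int_{B_3} f^2\, d\Gamma(\delta_{x_0})$ via \eqref{energyIntegralIndicator} into the on-atom term $\mu(x_0) f^2(x_0) v(x_0)$ and the off-atom term $\mu(x_0)\int_{B_3\setminus\{x_0\}} f^2(y)\, J(x_0,y)\,\mu(dy)$, with the first handled by \eqref{bowtie1} and $\phi(R)\leq\phi(D_{x_0})$, and your observation that $x_0\in M_A$ (so that $v(x_0)$ and \eqref{bowtie1} apply, and $\delta_{x_0}\in\mathcal{F}$) is a point the paper leaves implicit. The one place you diverge is the off-atom term: you apply $\Jleq$ with the ball centered at the far point $x$ and then invoke $\VD$ (via $\mu(x_0)\leq V(x_0,R)\leq V(x,2d(x,x_0))\leq C_D V(x,d(x,x_0))$) to compare $\mu(x_0)$ with $V(x,d(x,x_0))$; the paper instead applies $\Jleq$ centered at $x_0$, so that $V(x_0,d(x_0,y))\geq V(x_0,D_{x_0})=\mu(x_0)$ immediately (because $B(x_0,D_{x_0})=\{x_0\}$), which dispenses with $\VD$ for this step and yields the slightly better constant $\max\{C_{\mathcal{J}},C_J\}$ in place of your $\max\{C_{\mathcal{J}},C_J C_D\}$. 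Both are acceptable, since the lemma allows $C$ to depend on $C_D$. One small slip in your write-up: the intermediate claim that ``$\mu(x_0)v(x_0)\leq C_{\mathcal{J}}/\phi(R)$'' should read $v(x_0)\leq C_{\mathcal{J}}/\phi(R)$ --- this is what \eqref{bowtie1} plus monotonicity of $\phi$ gives, and it is the bound your final display actually uses, so the argument is unaffected.
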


\begin{proof}
By \eqref{energyIntegralIndicator} and the definition of $v(x_0)$ (see \eqref{v(x)Formula}),
\begin{equation}\label{741}
    \int_{B_3} f^2 \, d\Gamma(\delta_{x_0}) = \mu(x_0) f^2(x_0) v(x_0) + \mu(x_0) \int_{B_3 \setminus \{x_0\}} f^2(y) J(x_0, y) \mu(dy).
\end{equation}
By \eqref{CmathcaljforCSJ},
\begin{equation}\label{742}
    v(x_0) \leq \frac{C_{\mathcal{J}}}{\phi(D_{x_0})}.
\end{equation}
For all $y \in B_3 \setminus \{x_0\}$, by $\Jleq$,
\begin{align}\label{743}
\begin{split}
    J(x_0, y) &\leq \frac{C_J}{V(x_0, d(x_0, y))\phi(d(x_0, y))}\\
    &\leq \frac{C_J}{V(x_0, D_{x_0}) \phi(D_{x_0})} \qquad\mbox{(since $d(x_0, y) \geq D_{x_0}$)}\\
    &= \frac{C_J}{\mu(x_0) D_{x_0}}.
\end{split}
\end{align}
By plugging \eqref{742} and \eqref{743} into \eqref{741},
\begin{align*}
    \int_{B_3} f^2 \, d\Gamma(\delta_{x_0}) &\leq \frac{C_{\mathcal{J}}}{\phi(D_{x_0}} \mu(x_0) f^2(x_0) + \frac{C_J}{\phi(D_{x_0})} \int_{B_3 \setminus \{x_0\}} f^2(y) \mu(dy)\\
    &= \frac{C_{\mathcal{J}}}{\phi(D_{x_0}} \int_{\{x_0\}} f^2 \, d\mu + \frac{C_J}{\phi(D_{x_0})} \int_{B_3 \setminus \{x_0\}} f^2 \, d\mu\\
    &\leq \frac{\max\{C_{\mathcal{J}}, C_J \}}{\phi(D_{x_0})} \int_{B_3} f^2 \, d\mu.
\end{align*}
\end{proof}

\begin{lemma} \label{CsjBackwardsBigRLemma}
Suppose $(M, d, \mu, \mathcal{E}, \mathcal{F})$ satisfies Assumption \ref{BasicAssumptions}, $\phi$ is of regular growth, $(\mathcal{E}, \mathcal{F})$ admits a jump kernel, and $M=M_A \cup M_C$.
Fix $x_0 \in M$, $R \geq r>0$, and $f \in \mathcal{F}$. Let $B_1$, $B_2$, $B_3$, $U$, and $U^*$ be as in \eqref{B1B2B3}, let $z_0$ be a representative of $W_{x_0}$, and let $S_1$, $S_2$, $S_3$, $V$, and $V^*$ be as in \eqref{S1S2S3}. Let $\psi$ be a function in $\cutoff(S_1, S_2)$.
If $\VD$ and $\Jleq$ hold on the original space, and $R \leq D_{x_0}$, then

(a)
\begin{equation*}
    \int_{S_3}  (f \circ \pi)^2 \, d\Gamma(\psi) \geq \int_{B_3} f^2 \, d\Gamma(\psi_\mean).
\end{equation*}

(b) There exists a $C>0$ (which depends only on $C_D$, $C_J$, and $C_{\mathcal{J}}$) such that
\begin{equation*}
    \int_{V \times V^*} \left((f \circ \pi)(z) - (f \circ \pi)(z') \right)^2 \, \Jhat(dz, dz') \leq \int_{U \times U^*} (f(x)-f(y))^2 J(dx, dy) + \frac{C}{\phi(r)} \int_{B_3} f^2 \, d\mu.
\end{equation*}

(c)
\begin{equation*}
    \int_{S_3} (f \circ \pi)^2 \, d\muhat = \int_{B_3} f^2 \, d\mu.
\end{equation*}

\end{lemma}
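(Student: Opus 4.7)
The three comparison inequalities mirror those of Lemma \ref{CsjComparingTerms} but with the inequalities reversed, since here auxiliary-space data dominates original-space data. Part (c) is immediate: the lifted function satisfies $(f \circ \pi)_\rms \equiv |f|$ on $M$, so \eqref{FrmsSquaredIntegralAgreesWithF} gives the equality directly.

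For part (a), introduce the ``jump-density'' functions
\begin{equation*}
\beta(x) := \int_M (\psi_\mean(x)-\psi_\mean(y))^2\, J(x, y)\, \mu(dy), \qquad \widehat{\beta}(z) := \int_\Mhat (\psi(z)-\psi(z'))^2\, \Jhat(z, z')\, \muhat(dz'),
\end{equation*}
so the two sides read $\int_{B_3} f^2 \beta\, d\mu$ and $\int_{S_3} (f \circ \pi)^2 \widehat{\beta}\, d\muhat$. Applying Lemma \ref{differencesSquaredLem} to $\psi$ with $E_1 = \{x\}$ and $E_2 = M$, combined with the pointwise bound $J(\pi(z), \pi(z')) \le \Jhat(z, z')$ (equality when $\pi(z) \neq \pi(z')$, trivial otherwise), yields $\mu(x)\beta(x) \le \int_{W_x} \widehat{\beta}\, d\muhat$ for every $x \in M$. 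Since $f \circ \pi$ is constant (equal to $f(x)$) on each fiber $W_x$, multiplying by $f^2(x)$ and integrating against $\mu$ over $B_3$ gives $\int_{B_3} f^2 \beta\, d\mu \le \int_{\pi^{-1}(B_3)} (f \circ \pi)^2 \widehat{\beta}\, d\muhat$; the identity $\pi^{-1}(B_3) = S_3$, valid whenever the outer radius $R + (1+C_0) r$ exceeds $D_{x_0}$, then finishes the argument.

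For part (b), exploit that $(f \circ \pi)(z) = f(\pi(z))$: within-fiber jumps contribute nothing to the LHS, and for between-fiber jumps $\Jhat(z, z') = J(\pi(z), \pi(z'))$. Pushing forward via $\pi$ using \eqref{integralsAgree} rewrites the LHS as $\int_{\pi(V) \times \pi(V^*)}(f(x)-f(y))^2 J(x, y)\, \mu(dx)\,\mu(dy)$. A case analysis based on the position of the annuli relative to $D_{x_0}$ shows $\pi(V) = U$ and $\pi(V^*) \setminus U^* \subseteq \{x_0\}$, with the extra singleton appearing only when the inner radius $R - C_0 r$ drops below $D_{x_0}$. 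The resulting cross-contribution $\mu(x_0)\int_{U}(f(x)-f(x_0))^2 J(x, x_0)\, \mu(dx)$ is split via $(a-b)^2 \le 2a^2 + 2b^2$; each piece is bounded by a constant multiple of $\tfrac{1}{\phi(D_{x_0})}\int_{B_3} f^2\, d\mu$ using $\Jleq$ (to estimate $J(x, x_0)$ uniformly for $x$ at distance at least $D_{x_0}$ from $x_0$) and \eqref{bigJumpsVd} (to bound the total outflow $\mathcal{J}(x_0, D_{x_0})$). Finally, the inequality $D_{x_0} > R - C_0 r \ge (1-C_0) r$ (valid exactly when the $x_0$-singleton contributes, and using $R \ge r$), together with the regular growth of $\phi$, converts $1/\phi(D_{x_0})$ into $C/\phi(r)$ with $C$ depending only on $C_0$, $C_J$, $C_{\mathcal{J}}$, and $\beta_1$.

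The main obstacle is the careful case analysis in part (b): identifying precisely when the projection $\pi(V^*)$ picks up the point $x_0$, pinning down $\pi(V) = U$ by checking that $V$ indeed lies outside $W_{x_0}$, and ensuring in each subcase that the bound $D_{x_0} > R - C_0 r$ (or a related lower bound on $D_{x_0}$) holds so that $\phi(D_{x_0})$ can be replaced by $\phi(r)$ up to a constant depending only on the fixed parameters $C_0$, $C_D$, $C_J$, $C_{\mathcal{J}}$, $\beta_1$.
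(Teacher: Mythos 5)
Your overall strategy matches the paper's: (c) by pushing forward along $\pi$, (a) by comparing the carr\'e-du-champ densities through Lemma \ref{differencesSquaredLem} and the bound $J(\pi(z),\pi(z'))1_{\{\pi(z)\neq\pi(z')\}}\le\Jhat(z,z')$, and (b) by isolating the cross-term produced by the part of $V^*$ lying in $W_{x_0}$ and splitting it with $(a-b)^2\le 2a^2+2b^2$. (Note both you and the paper work under $R\ge D_{x_0}$, which is the intended hypothesis here despite the typo in the statement.) Two remarks on (a): your application of Lemma \ref{differencesSquaredLem} with $E_1=\{x\}$ is vacuous for $x\in M_C$, since then $\mu(\{x\})=\muhat(W_x)=0$ and the inequality reads $0\le 0$; but the non-atomic part of $B_3$ is exactly where you afterwards integrate $f^2\beta$ against $\mu$, so you still need $\beta(x)\le\widehat{\beta}(x)$ for $\mu$-a.e.\ $x\in M_C$ (true by Jensen on each target fiber $W_y$, $y\in M_A$), or more cleanly a single application of Lemma \ref{differencesSquaredLem} with $E_1=B_3$, $E_2=M$ and weight $g(x,y)=f^2(x)J(x,y)$, which works because $f\circ\pi$ is constant on fibers. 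Also, "pushing forward rewrites the LHS as an integral over $\pi(V)\times\pi(V^*)$" is an over-estimate rather than an identity ($V^*$ is not a union of fibers), but that is the direction you need. These are easy fixes.

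The substantive gap is the last step of (b). You estimate $J(x,x_0)$ and the outflow using only $d(x,x_0)\ge D_{x_0}$, which produces a factor $1/\phi(D_{x_0})$, and then convert it to $C/\phi(r)$ via $D_{x_0}>R-C_0r\ge(1-C_0)r$. This fails when $C_0=1$, which Definition \ref{CsjDefinition} allows: then $(1-C_0)r=0$, and with $R=r$ the singleton contribution is present while $D_{x_0}$ may be arbitrarily small compared to $r$, so $1/\phi(D_{x_0})$ is not $O(1/\phi(r))$. Even for $C_0<1$, your constant depends on $C_0$ and the scaling exponents of $\phi$, whereas the lemma asks for dependence on $C_D,C_J,C_{\mathcal{J}}$ only. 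The repair is to use the stronger bound $d(x,x_0)\ge R$ available for every $x\in U$: then $\Jleq$ gives $J(x,x_0)\le C_J/(V(x_0,R)\phi(R))$, and the volume factors are absorbed by $\mu(x_0)\le V(x_0,R)$ for the $\int_U f^2$ piece and by either $\mathcal{J}(x_0,R)\le C_{\mathcal{J}}/\phi(R)$ or $\mu(U)\le(C_D-1)V(x_0,R)$ (via $\VD$, as the paper does) for the $f^2(x_0)$ piece; both terms are then bounded by a constant times $\frac{1}{\phi(R)}\int_{B_3}f^2\,d\mu\le\frac{1}{\phi(r)}\int_{B_3}f^2\,d\mu$, and $\phi(D_{x_0})$ never enters.
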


\begin{proof}
(a) Write out $\int_{B_3} f^2 \, d\Gamma(\psi_\mean)$ using the definition of the carr\'e du Champ operator and apply Lemma \ref{differencesSquaredLem}.

(b)
Since $R+(1+C_0)r \geq R+r \geq R \geq D_{x_0}$,
\begin{equation} \label{V*formula}
    V = \pi^{-1}(U) \qquad\mbox{and} \qquad V^* = \pi^{-1}(U^*) \cup \left\{ z \in W_{x_0} : \dhat(z_0, z) \geq R-C_0 r\right\}.
\end{equation}
Let $$\alpha := \int_{U \times U^*} (f(x)-f(y))^2 J(dx, dy)$$ and $$\beta:= \int_{V \times V^*} \left( (f\circ \pi)(z) - (f\circ \pi)(z') \right)^2 \Jhat(dz, dz').$$
By \eqref{V*formula},
\begin{align}\label{762}
\begin{split}
    \beta &= \left( \int_{V \times \pi^{-1}(U^*)} + \int_{V \times \left(W_{x_0} \setminus \Bhat(z_0, R - C_0 r)\right)} \right) \Big( (f\circ \pi)(z) - (f\circ \pi)(z') \Big)^2 \Jhat(dz, dz')\\
    &\leq \left( \int_{V \times \pi^{-1}(U^*)} + \int_{V \times W_{x_0} } \right) \Big( (f\circ \pi)(z) - (f\circ \pi)(z') \Big)^2 \Jhat(dz, dz')\\
    &= \int_{U \times U^*} (f(x)-f(y))^2 J(dx, dy) + \int_{U \times \{x_0\}} (f(x)-f(y))^2 J(dx, dy)\\
    &= \alpha + \mu(x_0) \int_U (f(x)-f(x_0))^2 J(dx, dy).
\end{split}
\end{align}
For all $x \in U$, by the inequality $(a-b)^2 \leq 2a^2 + 2b^2$, we have
\begin{equation}\label{763}
    (f(x)-f(x_0))^2 \leq 2f^2(x) + 2f^2(x_0).
\end{equation}
For all $x \in U$, $d(x_0, x) \geq R$, so by $\Jleq$,
\begin{equation}\label{764}
    J(x_0, x) \leq \frac{C_J}{V(x_0, R) \phi(R)}.
\end{equation}
By \eqref{762}, \eqref{763}, and \eqref{764},
\begin{align}\label{765}
\begin{split}
    \beta &\leq \alpha + \frac{2C_J \mu(x_0)}{V(x_0, R) \phi(R)} \left( \int_U f^2 \, d\mu + \mu(U) f^2(x_0) \right) \\
    &\leq \alpha + \frac{2C_J}{\phi(R)} \left( \frac{\mu(x_0)}{V(x_0, R)} \int_U f^2 \, d\mu + \frac{\mu(U)}{V(x_0, R)} \int_{\{x_0\}} f^2 \, d\mu \right).
\end{split}
\end{align}
Obviously, $\mu(x_0) / V(x_0, R) \leq 1$. By $\VD$,
\begin{equation*}
    \frac{\mu(U)}{V(x_0, R)} = \frac{V(x_0, R+r)-V(x_0, R)}{V(x_0, R)} \leq \frac{V(x_0, 2R)-V(x_0, R)}{V(x_0, R)} \leq C_D - 1.
\end{equation*}
Thus, \eqref{765} becomes
\begin{align}\label{766}
\begin{split}
    \beta &\leq \alpha + \frac{2C_J}{\phi(R)} \left( \int_U f^2 \, d\mu + (C_D-1) \int_{\{x_0\}} f^2 \, d\mu \right) \\
    &\leq \alpha + \frac{2C_J}{\phi(R)} \left( \int_{B_3} f^2 \, d\mu + (C_D-1) \int_{B_3} f^2 \, d\mu \right)\\
    &= \alpha + \frac{2C_J C_D}{\phi(R)} \int_{B_3} f^2 \, d\mu\\
    &\leq \alpha + \frac{2C_J C_D}{\phi(r)} \int_{B_3} f^2 \, d\mu \qquad\mbox{(since $R \geq r$)}.
\end{split}
\end{align}

(c) follows from \eqref{integralsAgree}.
\end{proof}

Proving Proposition \ref{CsjBackwardsProp} is a simple matter of putting Lemmas \ref{CsjBackwardsSmallRLemma} and \ref{CsjBackwardsBigRLemma} together, and using the hypothesis that $\CSJ$ holds on the auxiliary space.

\begin{proof}[Proof of Proposition \ref{CsjBackwardsProp}]
If $\CSJ$ holds on the auxiliary space, there exists an $S \subseteq \Mhat$ of full measure, and constants $C_0, C_1, C_2$ such that for all $z_0 \in S$, for all $R \geq r > 0$, and for all $g \in \Fhat$, there exists a $\psi \in \cutoff\left(\Bhat(z_0, R), \Bhat(z_0, R+r)\right)$ such that
\begin{equation*}
    \int_{\Bhat(z_0, R+(1+C_0)r)} g^2 \, d\Gamma(\psi) \leq C_1 \int_{V \times V^*} (g(z)-g(z'))^2 \Jhat(dz, dz') + \frac{C_2}{\phi(r)} \int_{\Bhat(z_0, R+(1+C_0)r)} g^2 \, d\muhat
\end{equation*}
where 
\begin{equation*}
    V := \Bhat(z_0, R+r) \setminus \Bhat(z_0, R)
\end{equation*}
and
\begin{equation*}
    V^* := \Bhat(z_0, R+(1+C_0)r) \setminus \Bhat(z_0, R-C_0 r).
\end{equation*}
Let $E$ be the set of $x \in M$ such that $W_x$ intersects $S$.
Clearly, $E$ has full measure.

Fix $x_0 \in E$, $R \geq r>0$, and $f \in \mathcal{F}$. Let $B_1$, $B_2$, $B_3$, $U$, and $U^*$ be as in \eqref{B1B2B3}. Let $z_0$ be an element of $W_{x_0} \cap S$ (which we know is non-empty since $x_0 \in E$). Let $S_1$, $S_2$, and $S_3$ be as in \eqref{S1S2S3}.

If $R \leq D_{x_0}$, let $\varphi = \delta_{x_0}$. By Lemma \ref{JumpsAndAtomsPreliminaryFactsLemma}(a), $\delta_{x_0} \in \mathcal{F}$. Since $R$ is less than or equal to $D_{x_0}$, $\delta_{x_0} = 1$ in $B_1$. Clearly, $\delta_{x_0}=0$ on $B_2^c$. Therefore, $\varphi = \delta_{x_0} \in \cutoff(B_1, B_2)$. By Lemma \ref{CsjBackwardsSmallRLemma},
\begin{equation*}
    \int_{B_3} f^2 \, d\Gamma(\varphi) \leq \frac{C}{\phi(R)} \int_{B_3} f^2 \, d\mu \leq \frac{C}{\phi(r)} \int_{B_3} f^2 \, d\mu
\end{equation*}
(where $C$ is the constant from Lemma \ref{CsjBackwardsSmallRLemma}).

Suppose $R \geq D_{x_0}$. By Proposition \ref{DomainProp}(a), the function $f \circ \pi$ belongs to $\Fhat$. Therefore, since $\CSJ$ holds for the auxiliary space, there exists a $\psi \in \cutoff(S_1, S_2)$ such that
\begin{equation} \label{CsjHypothesisAppendix}
    \int_{S_3} (f \circ \pi)^2 \, d\Gamma(\psi) \leq C_1 \int_{V \times V^*} \left( (f \circ \pi)(z) - (f \circ \pi)(z') \right)^2 \Jhat(dz, dz') + \frac{C_2}{\phi(r)} \int_{S_3} (f \circ \pi)^2 \, d\muhat.
\end{equation}
Let $\varphi = \psi_\mean$. By Proposition \ref{DomainProp}(b), $\psi_\mean \in \mathcal{F}$. Since $R \geq D_{x_0}$ and $\psi \in \cutoff(S_1, S_2)$, we have $\psi_\mean=1$ on $B_1$ and $\psi_\mean=0$ on $B_2^c$. We also have $0 \leq \psi_\mean \leq 1$ everywhere, since $\psi_\mean(x)$ is an average of values between $0$ and $1$ for all $x$. Therefore, $\varphi = \psi_\mean \in \cutoff(B_1, B_2)$. By \eqref{CsjHypothesisAppendix} and all three parts of Lemma \ref{CsjBackwardsBigRLemma},
\begin{align*}
    \int_{B_3} f^2 \, d\Gamma(\varphi) &\leq \int_{S_3} (f \circ \pi)^2 \, d\Gamma(\psi)\\
    &\leq C_1 \int_{V \times V^*} \left( (f \circ \pi)(z) - (f \circ \pi)(z') \right)^2 \Jhat(dz, dz') + \frac{C_2}{\phi(r)} \int_{S_3} (f \circ \pi)^2 \, d\muhat\\
    &\leq C_1 \left( \int_{U \times U^*} (f(x)-f(y))^2 J(dx, dy) + \frac{C}{\phi(r)} \int_{B_3} f^2 \, d\mu \right) + \frac{C_2}{\phi(r)} \int_{B_3} f^2 \, d\mu\\
    &= C_1 \int_{U \times U^*} (f(x)-f(y))^2 J(dx, dy) + \frac{C_3}{\phi(r)} \int_{B_3} f^2 \, d\mu
\end{align*}

\end{proof}

As with \eqref{CsjForwards}, an almost identical argument can be used to show that if $(M, d, \mu, \mathcal{E}, \mathcal{F})$ satisfies Assumption \ref{BasicAssumptions}, $\phi$ is of regular growth, $(\mathcal{E}, \mathcal{F})$ admits a jump kernel, $M=M_A \cup M_C$, and $\VD$ and $\Jleq$ hold for $(M, d, \mu, \mathcal{E}, \mathcal{F})$, then $$ \boxed{\mbox{$\SCSJ$ for $(\Mhat, \dhat, \muhat, \Ehat, \Fhat)$}} \Longrightarrow \boxed{\mbox{$\SCSJ$ for $(M, d, \mu, \mathcal{E}, \mathcal{F})$}}.$$
}{}

\end{document}